\newtheorem{theorem}{Theorem}[section]
\newtheorem{lemma}[theorem]{Lemma}
\newtheorem{proposition}[theorem]{Proposition}
\newtheorem{corollary}[theorem]{Corollary}
\newtheorem{definition}[theorem]{Definition}
\newtheorem{remark}[theorem]{Remark}
\newtheorem{example}[theorem]{Example}
\newcommand{\F}{\mathcal F}
\begin{document}

\title{An introduction to Malliavin calculus}
\author{Luciano Tubaro, Margherita Zanella}

\maketitle

\tableofcontents

\section{Introduction}

Malliavin calculus is named after P. Malliavin who first initiated this calculus with his seminal work \cite{Mal78}, see also \cite{Malliavin}.
There he laid the foundations of what is now known as the “Malliavin calculus", an infinite-dimensional differential calculus in a Gaussian framework, and used it to give a probabilistic proof of Hörmander’s theorem. This new calculus proved to be extremely successful and soon a number of authors studied variants and simplifications, see e.g. \cite{KS84, KS85, KS87, W84, Nualart, Stroock, Bis1, Bis2, Shi1, Shi2, Zakai}.

The techniques of Malliavin calculus prove to be sufficiently flexible to obtain results for a wide range of problems, including for example the study of the regularity of the image law of solutions of stochastic partial differential equations (see e.g. \cite{BoZa2,BoZa1,FerZan, MNQS13, M99, M95, MN08, PZS93, MCMS01, Nualart} and the therein references), the study of ergodic problems, (see among all \cite{HaiMat}), or the study of integrations by parts formulas on level sets in infinite-dimensional spaces (see e.g. \cite{Addo1,BDPT,BoTuZa,DLT}). Moreover, there are several applications in finance, see e.g. \cite{AL21} and in numerical analysis (see e.g. \cite{Talay1}, \cite{Talay2} and \cite{Crisan}).

The general context for Malliavin calculus consists of a probability space $(\Omega, \mathcal{F}, \mathbb{P})$ and a Gaussian separable Hilbert space $\mathcal{H}_1$, that is a closed subspace of $L^2(\Omega,\mathcal{F}, \mathbb{P})$ consisting of centered Gaussian random variables. The space $\mathcal{H}_1$ (also known as the first Wiener Chaos) induces an orthogonal decomposition, known as the Wiener Chaos Decomposition, of the corresponding $L^2(\Omega,\sigma(\mathcal{H}_1),\mathbb{P})$ space of square integrable random variables that are measurable with respect to the $\sigma$-field generated by $\mathcal{H}_1$. 
To characterize elements in $\mathcal{H}_1$ it is useful to fix a separable Hilbert space $\mathcal{H}$ and to consider a unitary operator $W$ between the two spaces.
The fundamental operators of Malliavin calculus are: the generator of the Ornstein–Uhlenbeck semigroup $L$, the derivative operator $D$ and the adjoint of the Malliavin derivative, usually called divergence operator $\delta$.  The three operators are linked by a specific relation.
These notions can be introduced without assuming any topological or linear structure on the probability space $\Omega$. We will introduce them in the first part of these notes. In details, in Section \ref{GHS_sec} we introduce the Gaussian Hilbert space $\mathcal{H}_1$, in Section \ref{WCD_sec} we prove the Wiener Chaos Decomposition Theorem. In Section \ref{OU_sec} we introduce the Ornstein-Uhlenbeck semigroup and its infinitesimal generator. Section \ref{Malliavin_sec} is devoted to the construction of the Malliavin derivative operator and in Section \ref{div_sec} we introduce the divergence operator. The relation of these operators is analyzed in Section \ref{relation_sec}.

In the usual applications $\Omega$ is often a linear topological space and the Malliavin derivative operator can be introduced as a differential operator. 
This is for instance the case of the Malliavin derivatives presented in the books \cite{Bogachev} by Bogachev and \cite{DaPrato} by Da Prato. In Section \ref{top_sec} we recall the construction of these two derivatives. Then we show that they can be interpreted as two (different) examples of the general notion of Malliavin derivative of Section \ref{Malliavin_sec}, which is general enough to admit as special cases the definitions of Malliavin derivative given in probability spaces with a linear topological structure.

We recall some notions needed throughout the lecture notes in the Appendices. 

\bigskip The material for these lecture notes was mostly inspired by the books  \cite{Nualart}, \cite{Janson}, \cite{Bogachev} and \cite{DaPrato}.

\section{Gaussian Hilbert spaces}
\label{GHS_sec}

\subsection{Preliminaries}

We recall some notions from probability theory and functional analysis, at the same time fixing some basic notation. For more details see e.g. \cite{Jac_Pro}.
\\
Let $(\Omega, \mathcal{F}, \mathbb{P})$ be a probability space i.e. a measure space $(\Omega, \mathcal{F})$ endowed with a measure $\mathbb{P}$ with $\mathbb{P}(\Omega)=1$.

\textbf{Random variables.}
Let $(E, \mathcal{E})$  be a measurable space. A \emph{random variable} with values in $E$, or a $E$-valued random variable, is a measurable function $X:(\Omega, \mathcal{F}, \mathbb{P}) \rightarrow (E, \mathcal{E})$, in the sense that 
\[
X^{-1}(A):=\{\omega \in \Omega \ : \ X(\omega) \in A\} \in \mathcal{F}, \quad \forall A \in \mathcal{E}.
\]
In what follows, $E$ will be a topological space and $\mathcal{E}$ will be the $\sigma$-algebra generated by the open sets (in the topology of $E$), namely the borelian sets $\mathcal{B}(E)$. When $(E, \mathcal{E})=(\mathbb{R}, \mathcal{B}(\mathbb{R}))$ we speak of \emph{real} random variables.
The \emph{$\sigma$-algebra generated by a $E$-valued random variable} $X$ is the $\sigma$-algebra $\sigma(X)\subset \mathcal{F}$ generated by the sets $\{X^{-1}(A) \ : \  A  \in \mathcal{E}\}$; it is the smallest $\sigma$-algebra on $\Omega$ that makes $X$ measurable.
Given a set $A$ of random variables on $(\Omega, \mathcal{F}, \mathbb{P})$, we denote by $\sigma(A)$ the $\sigma$-field generated by the random variables in $A$, that is, the smallest $\sigma$-field that makes all the random variables in $A$ measurable. Clearly, $\sigma(A) \subseteq \mathcal{F}$.

The \emph{distribution} or \emph{law} of the random variable $X$ on $(E, \mathcal{E})$ is the probability measure $\mu:= \mathbb{P} \circ X^{-1}$ defined as
\[
\mu(A)=\mathbb{P}(X \in A):=\mathbb{P}\left(\{\omega \in \Omega \ : \ X(\omega) \in A\} \right), \qquad A \in \mathcal{E}.
\]
The measure $\mu$ correspond to the push-forward measure $X_{\sharp}\mathbb{P}$ of $\mathbb{P}$ via $X$.

Let $(E, \mathcal{E})$ and $(G, \mathcal{G})$ be two measurable spaces and let $X$ and $Y$ be two random variables with values in $(E, \mathcal{E})$ and $(G, \mathcal{G})$, with distribution $\mu$ and $\nu$, respectively. Then $(X,Y): (\Omega, \mathcal{F}, \mathbb{P}) \rightarrow (E \times G, \mathcal{E} \otimes \mathcal{G})$ is a random variable with values in the product space $E \times G$ endowed with the product $\sigma$-algebra $\mathcal{E} \otimes \mathcal{G}$. The distribution of $(X,Y)$ is the probability measure $\pi$ on $(E \times G, \mathcal{E} \otimes \mathcal{G})$ defined as 
\[
\pi(A \times B)=\mathbb{P}\left( \{\omega \in \Omega\ : \ X(\omega) \in A, \ Y(\omega) \in B\}\right), \qquad A \in \mathcal{E}, \ B \in \mathcal{G}.
\]
This defines uniquely $\pi$ on the whole $\mathcal{E} \otimes \mathcal{G}$ by definition of product $\sigma$-algebra. The measure $\pi$ is usually called the \emph{joint distribution} (or joint law) of the random variables $X$ and $Y$. Given the joint distribution $\pi$ of $X$ and $Y$, one has 
\[
\mu(A)=\pi(A \times \Omega) =\mathbb{P}(X \in A), \quad A \in \mathcal{E}, \qquad \nu(B)=\pi(\Omega\times B) =\mathbb{P}(Y \in B), \quad B \in \mathcal{G}.
\]
The probability measures $\mu$ and $\nu$ are called the \emph{marginal} distributions of $X$ and $Y$, respectively.
We emphasize that the underlying probability space $(\Omega, \mathcal{F}, \mathbb{P})$ is arbitrary and often we will not mention it explicitly. Obviously, whenever we talk about joint distributions of random variables, it is understood that they are defined on the same probability space. 

Let $(E, \mathcal{B}(E))$ be a Banach space endowed with the Borel $\sigma$-field and let $\mu$ be a probability measure on $(E, \mathcal{B}(E))$. The \emph{characteristic function}, also called \emph{Fourier transform}, of $\mu$ is defined as 
\[
\hat \mu:E^* \rightarrow \mathbb{C}, \qquad \hat \mu(y):= \int_E e^{i \langle y,x\rangle}\,{\rm d}\mu(x), \quad y \in E^*.
\]
 It is known that characteristic functions determine measures, that is, given $\mu$ and $\nu$, two probability measures on $(E, \mathcal{B}(E))$, then 
 \[
 \mu = \nu \quad \iff \quad \hat \mu= \hat \nu.
 \]
 By characteristic function $\varphi_X$ of a $E$-valued random variable $X$ we mean the characteristic function of its law, that is $\varphi_X=\hat \mu$, where $\mu=\mathbb{P} \circ X^{-1}$. It holds 
 \[
 \varphi_X(y)=\int_Ee^{i \langle x,y\rangle}\,{\rm d}\mu(x)=\mathbb{E}\left[ e^{i\langle X,y\rangle}\right], \quad y \in E^*.
 \]
We also recall that the moment-generating function $M_X$ of a $E$-valued random variable $X$ is defined as
 \[
M_X(y)
 =\mathbb{E}\left[ e^{\langle X,y\rangle}\right], \quad y \in E^*,
 \]
provided this expectation exists.

\textbf{Moments and $L^p(\Omega)$ spaces.}
Let $X$ be a random variable with values in $(\bar{\mathbb{R}}, \mathcal{B}(\bar{\mathbb{R}})$, with $\bar{\mathbb{R}}:=\mathbb{R} \cup \{ \pm \infty\}$. The \emph{expected value} of $X$ is defined as
\[ 
\mathbb{E}[X]:= \int_\Omega X(\omega)\, {\rm d}\mathbb{P}(\omega), 
\]
and the integral has to be understood according to the usual measure theory. We call $\mathbb{E}\left[|X|^p\right]$ the \emph{$p$-th moment of $X$}, for all $p \in [1, \infty)$. 
The variance of $X$ is defined as 
\[
\mathbb{V}\text{ar}(X):=\mathbb{E}\left[ |X- \mathbb{E}[X]|^2\right]= \mathbb{E}\left[|X|^2\right]- \mathbb{E}[X]^2,
\]
and we define the \emph{covariance} of two real-valued random variables $X$, $Y$ as
\[
\mathbb{C}\text{ov}(X,Y):= \mathbb{E}\left[ (X- \mathbb{E}[X])(Y- \mathbb{E}[Y])\right]=\mathbb{E}[XY]-\mathbb{E}[X]\mathbb{E}[Y].
\]
We say that the two random variables $X$ and $Y$ are \emph{uncorrelated} if $\mathbb{C}\text{ov}(X,Y)=0$.
Notice that if $X$ and $Y$ are real random variables with zero mean, then $\mathbb{E}[|X|^2]=\mathbb{V}ar(X)$ and $\mathbb{E}[XY]=\mathbb{C}\text{ov}(X,Y)$. In particular, two such variables are orthogonal iff they are uncorrelated. 

The Lebesgue space $L^p(\Omega, \mathcal{F}, \mathbb{P};\mathbb{R})$ (usually abbreviated as $L^p(\Omega)$ or $L^p$) corresponds to the (equivalent class up to sets of zero probability of) random variables with values in $(\bar{\mathbb{R}}, \mathcal{B}(\bar{\mathbb{R}})$ with finite $p$-th moment.
\\
If $E$ is an Hilbert space, endowed with the scalar product $\langle \cdot, \cdot\rangle_E$, and $\mathcal{B}(E)$ is its Borel $\sigma$-algebra, for $p\in [1, \infty]$, the space $L^p(\Omega, \mathcal{F}, \mathbb{P};E)$ (usually abbreviated as $L^p(\Omega; E)$) is the linear space of all random variables $X:(\Omega, \mathcal{F}, \mathbb{P})\rightarrow (E, \mathcal{B}(E))$ such that the norm $\|X\|_{L^p(\Omega; E)}$ is finite. Here, $\|X\|_{L^p(\Omega; E)}= 
 \left(\mathbb{E}\left[\|X\|_E^p\right]\right)^{\frac 1p}$ for $p \in [1,\infty)$ and for $p=\infty$ this is modified to $\|X\|_{L^\infty(\Omega; E)}=  \text{ess sup}\|X\|_E$.
 $L^p(\Omega;E)$ is a Banach space for $p \in [1, \infty]$.
We will mainly use $L^2(\Omega; E)$ which is a Hilbert space with inner product $\langle X, Y\rangle_{L^2(\Omega; E)}=\mathbb{E}\left[\langle X,Y\rangle_E \right]$.

\subsection{Gaussian random variables}
\begin{definition}
[Gaussian measures on $\mathbb{R}$] A probability measure $\gamma$ on $(\mathbb{R}, \mathcal{B}(\mathbb{R}))$ is called \emph{Gaussian} if it is either a Dirac measure $\delta_\mu$ at a point $\mu$, or a measure absolutely continuous with respect to the Lebesgue measure with density
\begin{equation*}
\frac{1}{\sqrt{2\pi \sigma^2}}e^{-\frac{(x-\mu)^2}{2\sigma^2}},
\end{equation*}
with $\mu \in \mathbb{R}$ and $\sigma>0$. In this case we call $\mu$ the \emph{mean}, $\sigma^2$ the \emph{variance} of $\gamma$ and we set $\gamma=:\mathcal{N}(\mu, \sigma^2)$. If $\gamma=\delta_{\mu}$, we set $\gamma=:\mathcal{N}(\mu, 0)$ and call $\gamma$ \emph{degenerate}. We say that $\gamma$ is \emph{centered} if $\mu=0$ and \emph{standard} if $\mu=0$, $\sigma=1$. If $\gamma=\delta_\mu$, $\mu$ is still called the mean of $\gamma$.
\end{definition}
By simple computations one verifies that 
\begin{equation*}
\mu=\int_{\mathbb{R}}x\, \gamma({\rm d} x), \qquad \sigma^2=\int_{\mathbb{R}}(x-\mu)^2\, \gamma({\rm d} x).
\end{equation*}
For every $\mu \in \mathbb{R}$, $\sigma\ge 0$, the \emph{characteristic function} of a Gaussian measure $\gamma=\mathcal{N}(\mu, \sigma^2)$ (i.e. its Fourier transform) is given by
\begin{equation}
\label{car_fun_Gau_1D}
\widehat \gamma (t):= \int_{\mathbb{R}}e^{itx}\, \gamma({\rm d}x)=e^{i \mu t-\frac{\sigma^2t^2}{2}}.
\end{equation}
Conversely, one proves that a probability measure on $\mathbb{R}$ is Gaussian iff its characteristic function has this form. Thus it is easy to recognize a Gaussian measure from its characteristic function.
\begin{definition}[Real-valued Gaussian random variable]
Let $\xi:(\Omega, \mathcal{F}, \mathbb{P}) \rightarrow (\mathbb{R}, \mathcal{B}(\mathbb{R}))$ be a random variable. We say that $\xi$ is a  \emph{Gaussian} or \emph{normal} random variable if its law $\gamma$  is a Gaussian measure; in particular if  $\gamma=\mathcal{N}(\mu, \sigma^2)$ we write $ \xi \sim \mathcal{N}(\mu, \sigma^2)$. In this case $\xi$ has mean $\mu$ and variance $\sigma^2$.
\end{definition}
The random variable is said to be \emph{centered} if $\mu=0$ and \textit{standard} if $\mu=0$ and $\sigma=1$. In the degenerate case $\sigma=0$ the random variable is a.s. equal to $\mu$. 
Moreover Gaussian random variables have finite moment of every order. 

\begin{definition}
($\mathbb{R}^n$-valued Gaussian random variable). Let $\xi:(\Omega, \mathcal{F}, \mathbb{P}) \rightarrow (\mathbb{R}^n, \mathcal{B}(\mathbb{R}^n))$, $\xi=(\xi_1,...,\xi_n)$ be a random variable. We say that $\xi$ is Gaussian (equivalently, $\xi_1, ... \xi_n$ have \emph{joint gaussian distribution}) if $\sum_{i=1}^n \lambda_i\xi_i$ has gaussian distribution for any $\lambda_1,...\lambda_n$ in $\mathbb{R}$.
\end{definition}
We say that an infinite set of random variables have a joint gaussian distribution if every finite subset has. It is easy to verify that, if $\xi_1,...\xi_n$ are real-valued independent Gaussian random variables, then $\xi=(\xi_1,...,\xi_n)$ is Gaussian.
\\
Characteristic functions are of help when studying Gaussian random variables: $\xi$ is a $\mathbb{R}^n$-valued Gaussian random variable iff its characteristic function has the form
\[
\varphi_\xi(u)=e^{i\langle \mu, u\rangle -\frac 12 \langle u, Qu\rangle}, \qquad u \in \mathbb{R}^n,
\]
where $\mu \in \mathbb{R}^n$ and $Q$ is a $n \times n$ symmetric nonnegative semi-definite matrix. $\mu=(\mathbb{E}[\xi_1],...,\mathbb{E}[\xi_n])$ is the mean of $\xi$, $Q$, given by $Q_{ij}=\mathbb{C}\text{ov}(\xi_i,\xi_j)$, $i,j=1,...,n$, is the covariance matrix of $\xi$.
The moment-generating function of the $\mathbb{R}^n$-valued Gaussian random variable $\xi$ is of the form
\[
M_\xi(u)=e^{\langle \mu, u\rangle +\frac 12 \langle u, Qu\rangle}, \qquad u \in \mathbb{R}^n.
\]
We recall that a $\mathbb{R}^n$-valued Gaussian random variable $\xi$ admits a density on $\mathbb{R}^n$ iff the covariance matrix $Q$ is non-degenerate, that is det$Q\ne 0$. This means that, when $n \ge 2$, there exist non-constant random variables without densities (when $n=1$ a normal variable is either constant or with a density).

Two or more jointly Gaussian random variables are independent if and only if their covariance vanish. For centered Gaussian random variables, this is equivalent to the variables being orthogonal.

Two elementary but important properties of $\mathbb{R}^n$-valued random variables are as follow. If $\xi$ is a $\mathbb{R}^n$-valued Gaussian random variable and $\zeta$ is a $\mathbb{R}^m$-valued Gaussian random variable, such that $\xi$ and $\zeta$ are independent, then $Z=(\xi, \zeta)$ is a $\mathbb{R}^{n+m}$-valued Gaussian random variable. If $\xi$ is a $\mathbb{R}^n$-valued Gaussian random variable with mean $\mu$ and covariance matrix $Q$, given a $m \times n$ matrix $A$ and a vector $b$ in $\mathbb{R}^m$, $\zeta=A\xi+b$ is a $\mathbb{R}^m$-valued Gaussian random variable with mean $A\mu+b$ and covariance matrix $AQA^T$.


\subsection{Gaussian Hilbert spaces}
For this part we mainly refer to \cite{Janson}, see also \cite{Neveu}.
\begin{definition}
\label{Gaussian space}
\begin{itemize}
\item [(i)] A \emph{Gaussian linear space} is a real linear space of random variables, defined on some probability space $(\Omega, \mathcal{F}, \mathbb{P})$, such that each variable in the space is centered Gaussian. Obviously, a Gaussian linear space is a linear subspace of $L^2(\Omega; \mathbb{R})$, and we use the norm and the inner product of $L^2$ on it.
\item [(ii)] A \emph{Gaussian Hilbert space} is a Gaussian linear space which is complete, i.e. a closed subspace of $L^2(\Omega; \mathbb{R})$ consisting of centered Gaussian random variables.
\end{itemize}
\end{definition}
Note we require all random variables in a Gaussian space to be real.
Definition \ref{Gaussian space} requires that each variable in a Gaussian space has a normal distribution but it is easy to see that the variables are jointly normal too.
\begin{proposition}
\label{Gaus_joint_distr}
Any set of random variables in a Gaussian linear space has a joint normal distribution.
\end{proposition}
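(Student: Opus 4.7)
The proposition is really a direct unpacking of the definitions, so my plan is to argue that essentially no work is needed beyond invoking the earlier characterisation of joint Gaussianity via linear combinations.

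First I would reduce to the finite case. By the convention stated just before the $\mathbb{R}^n$-valued Gaussian definition, an infinite family of random variables is said to have joint Gaussian distribution exactly when every finite subfamily does. So it suffices to fix an arbitrary finite collection $\xi_1,\dots,\xi_n$ in the Gaussian linear space $\mathcal{H}$ and prove that $(\xi_1,\dots,\xi_n)$ is an $\mathbb{R}^n$-valued Gaussian random variable in the sense of the definition above.

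Next I would invoke the criterion for $\mathbb{R}^n$-valued Gaussianity: $(\xi_1,\dots,\xi_n)$ is Gaussian iff for every $\lambda_1,\dots,\lambda_n\in\mathbb{R}$ the real random variable $\sum_{i=1}^n \lambda_i \xi_i$ has a Gaussian distribution. This is exactly where the linearity of $\mathcal{H}$ enters: since $\mathcal{H}$ is closed under real linear combinations, $\sum_{i=1}^n \lambda_i \xi_i$ belongs to $\mathcal{H}$, and every element of $\mathcal{H}$ is a centered Gaussian random variable by definition. Hence each such linear combination is Gaussian, and the criterion yields that $(\xi_1,\dots,\xi_n)$ is jointly Gaussian.

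There is no real obstacle; the content of the proposition is the observation that ``being jointly Gaussian'' is precisely ``every linear combination is Gaussian'', which dovetails with the definition of a Gaussian linear space. The only minor subtlety worth pointing out in the write-up is the degenerate case $\lambda_1=\cdots=\lambda_n=0$, where the sum is the constant $0$ and should be considered Gaussian in the degenerate sense $\mathcal{N}(0,0)=\delta_0$ — but this is already covered by the Gaussian measure definition at the start of the section.
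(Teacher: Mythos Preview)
Your proposal is correct and follows exactly the same route as the paper: reduce to finite subsets, take an arbitrary real linear combination, use the linear-space property to conclude the combination lies in $G$ and is therefore Gaussian, and apply the $\mathbb{R}^n$-Gaussianity criterion. The paper's proof is slightly terser and omits the remark on the degenerate $\lambda=0$ case, but the argument is identical.
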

\begin{proof}
It is sufficient to consider a finite set of random variables. Let $\xi_1, ..., \xi_n$ belong to a Gaussian linear space $G$ and let $\lambda_1,...,\lambda_n \in \mathbb{R}$. By the linear structure of the space immediately follows that $\sum_{i=1}^n \lambda_i \xi_i \in G$. This yields that  $\sum_{i=1}^n \lambda_i \xi_i$ has a normal distribution which implies that $\xi_1,..., \xi_n$ has a joint normal distribution.
\end{proof}

A Gaussian linear space can always be completed to a Gaussian Hilbert space.

\begin{theorem}
\label{closure_linear_space}
If $G\subset L^2(\Omega, \mathcal{F},\mathbb{P})$ is a Gaussian linear space, then its closure $\bar G$ in $L^2(\Omega, \mathcal{F},\mathbb{P})$ is a Gaussian Hilbert space.
\end{theorem}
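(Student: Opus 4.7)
The plan is to verify the two requirements for $\bar G$ to be a Gaussian Hilbert space: it is a closed subspace of $L^2$, and every one of its elements is a centered Gaussian random variable. The first is immediate: $\bar G$ is the $L^2$-closure of a linear subspace, hence it is itself a closed linear subspace of $L^2(\Omega,\mathcal{F},\mathbb{P})$. So the entire content is to show that the class of centered Gaussian random variables is stable under $L^2$-limits.

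To do this, I would fix $\xi \in \bar G$ and pick a sequence $(\xi_n) \subset G$ with $\xi_n \to \xi$ in $L^2$. Since each $\xi_n$ is centered with variance $\sigma_n^2 := \mathbb{E}[\xi_n^2]$, the $L^2$ convergence gives $\mathbb{E}[\xi] = \lim_n \mathbb{E}[\xi_n] = 0$ and $\sigma_n^2 \to \sigma^2 := \mathbb{E}[\xi^2]$. In particular, $\xi$ is centered. The remaining task is to identify the law of $\xi$ as $\mathcal{N}(0,\sigma^2)$.

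For that, I would use characteristic functions. By formula \eqref{car_fun_Gau_1D}, for each $n$ and each $t \in \mathbb{R}$,
\[
\varphi_{\xi_n}(t) = \mathbb{E}\bigl[e^{it\xi_n}\bigr] = e^{-\sigma_n^2 t^2/2}.
\]
Since $L^2$ convergence implies convergence in probability and hence in distribution, $\varphi_{\xi_n}(t) \to \varphi_\xi(t)$ pointwise. Combining this with $\sigma_n^2 \to \sigma^2$ gives
\[
\varphi_\xi(t) = e^{-\sigma^2 t^2/2},
\]
which by the characterization of Gaussian measures via their Fourier transform forces $\xi \sim \mathcal{N}(0,\sigma^2)$. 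Thus every element of $\bar G$ is centered Gaussian, completing the argument.

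There is no real obstacle here; the only point that deserves care is the passage from $L^2$-convergence of the $\xi_n$ to pointwise convergence of their characteristic functions, and the observation that the pointwise limit of Gaussian characteristic functions of the form $e^{-\sigma_n^2 t^2/2}$ (with $\sigma_n^2$ convergent) is again of Gaussian type. Everything else is formal: linearity and closedness of $\bar G$ come for free from its definition as an $L^2$-closure.
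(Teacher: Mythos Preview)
Your proof is correct and follows essentially the same route as the paper: both reduce to showing that an $L^2$-limit of centered Gaussians is centered Gaussian, using that $L^2$-convergence implies convergence in distribution together with $\sigma_n^2\to\sigma^2$. You simply make the convergence-in-distribution step explicit via characteristic functions, whereas the paper states it more tersely.
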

\begin{proof}
$\bar G$ is an Hilbert space being a closed subset of the Hilbert space $L^2(\Omega, \mathcal{F},\mathbb{P})$. It remains to prove that every elements in $\bar  G$ has a centered normal distribution. Let $\xi \in \bar G$, there exists a sequence $\xi_n \in G$ such that $\xi_n \rightarrow \xi$ in $L^2$. Let $\sigma_n^2=\|\xi\|^2_{L^2}=\mathbb{E}\left[|\xi|^2\right]$ and $\sigma_n^2=\|\xi_n\|^2_{L^2}=\mathbb{E}\left[|\xi_n|^2\right]$.
Then $\sigma_n^2 \rightarrow \sigma^2$ as $n \rightarrow \infty$. Since convergence in $L^2$ implies convergence in distribution and $\xi_n \sim \mathcal{N}(0, \sigma_n^2)$ converges in distribution to $\mathcal{N}(0, \sigma^2)$, as $n \rightarrow \infty$, we get $\xi \sim \mathcal{N}(0, \sigma^2)$ and this concludes the proof.
\end{proof}

Since Gaussian random variables have moments of any orders, the Gaussian Hilbert spaces are subset of $L^p$ spaces.
\begin{proposition}
\label{L^p_Gauss}
A Gaussian Hilbert space is a closed subspace of $L^p(\Omega, \mathcal{F},\mathbb{P})$, $1\le p< \infty$.
\end{proposition}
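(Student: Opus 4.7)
The plan is to establish, for each $p \in [1,\infty)$, both that a Gaussian Hilbert space $H_1$ is contained in $L^p(\Omega,\mathcal{F},\mathbb{P})$ and that it is closed there. I would begin with an elementary lemma asserting that for a centered Gaussian $\xi \sim \mathcal{N}(0,\sigma^2)$ one has
\[
\mathbb{E}\left[|\xi|^p\right] = c_p \, \sigma^p, \qquad c_p := \frac{2^{p/2}}{\sqrt{\pi}}\,\Gamma\!\left(\frac{p+1}{2}\right),
\]
by a direct integration against the Gaussian density and the substitution $y=x/\sigma$. This immediately shows $H_1 \subset L^p$ (since $\sigma = \|\xi\|_{L^2} < \infty$ for $\xi\in H_1$) and, more importantly, gives the norm equivalence
\[
\|\xi\|_{L^p} = c_p^{1/p}\,\|\xi\|_{L^2}, \qquad \forall\,\xi \in H_1.
\]

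For the closedness, I take a sequence $(\xi_n) \subset H_1$ with $\xi_n \to \xi$ in $L^p$ and split into two cases. When $p \ge 2$, Jensen's inequality on the probability space yields $\|\eta\|_{L^2} \le \|\eta\|_{L^p}$ for every $\eta \in L^p$, so $\xi_n \to \xi$ in $L^2$ and the $L^2$-closedness of $H_1$ forces $\xi \in H_1$. When $1 \le p < 2$, I argue via Cauchyness: since $\xi_n - \xi_m \in H_1$ (using that $H_1$ is a linear space), the norm equivalence above gives
\[
\|\xi_n - \xi_m\|_{L^2} = c_p^{-1/p}\,\|\xi_n - \xi_m\|_{L^p},
\]
so $(\xi_n)$ is $L^2$-Cauchy and converges to some $\tilde\xi \in H_1$. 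Both $L^p$- and $L^2$-convergence imply convergence in probability; by uniqueness of the limit in probability, $\xi = \tilde\xi$ almost surely, and hence $\xi \in H_1$.

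The only nontrivial ingredient is the Gaussian norm equivalence, and even that reduces to a one-line scaling computation on the Gaussian density. Everything else is a clean splitting by the value of $p$ relative to $2$ and an appeal to Theorem \ref{closure_linear_space}-style closedness in $L^2$. I do not expect any genuine obstacle: the case $p<2$ is the one to write carefully, because there $L^p \supsetneq L^2$ and the inclusion of norms goes the wrong way, which is exactly why the Gaussian-specific equivalence is needed to salvage the argument.
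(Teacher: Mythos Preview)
Your argument is correct and is essentially the standard one (the paper itself just refers to \cite[Theorem 1.4]{Janson}, whose proof proceeds via the same moment identity $\|\xi\|_{L^p}=c_p^{1/p}\|\xi\|_{L^2}$). One small simplification: the case split on $p$ is unnecessary, since the norm equivalence on $H_1$ already shows that the $L^p$ and $L^2$ topologies coincide on $H_1$ for \emph{every} $p\in[1,\infty)$, so the Cauchy-sequence argument you wrote for $p<2$ works verbatim for all $p$ and the separate appeal to Jensen for $p\ge 2$ can be dropped.
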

\begin{proof}
See \cite[Theorem 1.4]{Janson}.
\end{proof}

In view of Theorem \ref{closure_linear_space}, with no loss of generality, we can consider only Gaussian Hilbert spaces. We will deal with these in most of what follows and we will use the symbol $\mathcal{H}_1$ to denote a generic Gaussian Hilbert space.
We emphasize that incomplete Gaussian spaces naturally appear in many concrete situations. However, thanks to Theorem \ref{closure_linear_space}, it will be sufficient to consider their closure to obtain a Gaussian Hilbert space; an example is the following.

\begin{example}
\label{X^gam_ex_sec1}
Let $X$ be a real Banach space, or more generally, a locally convex topological vector space. 
A Borel probability measure $\gamma$ on $X$ is said to be centered Gaussian if every continuous liner functional $x^* \in X^*$, regarded as a random variable defined on the probability space $(X, \mathcal{B}(X), \gamma)$, is centered Gaussian. It is immediate to see that $X^*$ is a Gaussian linear space and, by Theorem \ref{closure_linear_space}, its closure in $L^2(X, \mathcal{B}(X), \gamma)$ is a Gaussian Hilbert space (notice that some elements $x^*$ may be $0$-$\mu$-a.e., in which case the Gaussian space is really a quotient space of $X^*$). 
This example is of great interest in applications. We will come back to it in Section \ref{top_sec}, considering an infinite dimensional separable real Banach space (see in particular Definition \ref{Xstargammadef} and Corollary \ref{X^* Gauss}).\end{example}


Having provided the definition of Gaussian Hilbert spaces, we devote the remaining part of this Section to provide some ways of characterizing them.
 The following results highlight how Gaussian Hilbert spaces can be constructed (and characterized) in terms of the closed linear span of sets of jointly Gaussian centered random variables.

\begin{proposition}
\label{lin_span_ex}
If $\{\xi_i\}_{i \in I}$ is any set of centered jointly Gaussian random variables on $(\Omega, \mathcal{F}, \mathbb{P})$, then the linear span of $\{\xi_i\}_{i \in I}$ is a Gaussian linear space. The closure in $L^2(\Omega)$ of the linear span of $\{\xi_i\}_{i \in I}$  is a Gaussian Hilbert space.
\end{proposition}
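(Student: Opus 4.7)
The plan is to reduce both assertions directly to the definitions given earlier, and to invoke Theorem \ref{closure_linear_space} for the second assertion.

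For the first claim, let $V$ denote the linear span of $\{\xi_i\}_{i\in I}$. By construction, $V$ is a real linear subspace of $L^2(\Omega,\mathcal{F},\mathbb{P})$, so I only need to check that each $\eta \in V$ is a centered Gaussian random variable. Any such $\eta$ is a finite linear combination $\eta = \sum_{j=1}^{n} \lambda_j \xi_{i_j}$ with $\lambda_j \in \mathbb{R}$ and indices $i_1,\dots,i_n \in I$. By the convention stated just after the definition of $\mathbb{R}^n$-valued Gaussian random variable (an infinite family is jointly Gaussian when every finite subfamily is), the tuple $(\xi_{i_1},\dots,\xi_{i_n})$ is an $\mathbb{R}^n$-valued Gaussian random variable, so by definition every real linear combination of its components is Gaussian. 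Hence $\eta$ is Gaussian, and since $\mathbb{E}[\eta] = \sum_j \lambda_j \mathbb{E}[\xi_{i_j}] = 0$ by centeredness of the $\xi_i$, the variable $\eta$ is centered. This verifies that $V$ is a Gaussian linear space in the sense of Definition \ref{Gaussian space}(i).

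For the second claim, once $V$ is known to be a Gaussian linear space, Theorem \ref{closure_linear_space} applies verbatim and says that the $L^2$-closure $\overline{V}$ is a Gaussian Hilbert space. There is nothing further to prove.

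There is no real obstacle here; the only subtlety is making sure to invoke the right definitional level. The proposition could mislead one into trying to prove a new approximation-in-distribution argument, but that work has already been packaged inside Theorem \ref{closure_linear_space}; the content of this proposition is purely that linear spans of jointly Gaussian families sit inside the framework of Definition \ref{Gaussian space}, which is immediate from the definition of joint Gaussianity for (possibly infinite) families.
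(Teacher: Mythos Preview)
Your proof is correct and follows exactly the same approach as the paper: the paper's proof simply states that the first assertion is immediate and invokes Theorem~\ref{closure_linear_space} for the second, while you have spelled out the ``immediate'' step in detail.
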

\begin{proof}
It is immediate to see that the linear span of $\{\xi_i\}_{i \in I}$ is a Gaussian linear space. Its closure is a Gaussian Hilbert space in view of Theorem \ref{closure_linear_space}.
\end{proof}

\begin{corollary}
\label{lin_span_ex_ind}
Let $\{\xi_i\}_{i \in I}$ be any set (finite or infinte, possibly uncountable) of independent centered Gaussian random variables. Their closed linear span $\{\sum_i \lambda_i \xi_i : \sum_i \lambda_i^2< \infty\}$ is a Gaussian Hilbert space. Conversely, every Gaussian Hilbert space $\mathcal{H}_1$ is of this form.
\end{corollary}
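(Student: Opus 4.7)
The plan is to treat the two directions separately, each being a short assembly of results already in the excerpt.

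For the forward implication, I would first note that independent centered Gaussian random variables are automatically jointly Gaussian: this is immediate from the characteristic function of a product measure, and was observed in the preamble on $\mathbb{R}^n$-valued Gaussian variables. Consequently, the family $\{\xi_i\}_{i\in I}$ falls under the hypotheses of Proposition~\ref{lin_span_ex}, so its closed linear span in $L^2(\Omega)$ is a Gaussian Hilbert space. To identify this closed span concretely, I would use that independence plus centering forces pairwise orthogonality in $L^2$; after normalizing so that $\|\xi_i\|_{L^2}=1$ (or absorbing the variances into the coefficients), Pythagoras gives $\bigl\|\sum_i \lambda_i \xi_i\bigr\|_{L^2}^2=\sum_i \lambda_i^2$, and the standard $\ell^2$ theory identifies the closed span with the set of sums $\sum_i \lambda_i \xi_i$ satisfying $\sum_i\lambda_i^2<\infty$.

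For the converse, let $\mathcal{H}_1$ be an arbitrary Gaussian Hilbert space. Since $\mathcal{H}_1$ is a Hilbert space, it admits an orthonormal basis $\{e_i\}_{i\in I}$ (finite, countable, or uncountable, depending on the cardinality of the basis). Each $e_i$ lies in $\mathcal{H}_1$, hence is a centered Gaussian random variable by the definition of a Gaussian Hilbert space. By Proposition~\ref{Gaus_joint_distr}, the whole family $\{e_i\}_{i\in I}$ has a joint normal distribution. Orthonormality in $L^2$ together with $\mathbb{E}[e_i]=0$ means $\mathrm{Cov}(e_i,e_j)=\mathbb{E}[e_i e_j]=\delta_{ij}$, so the $e_i$ are uncorrelated. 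Using the fact recalled in the preamble that jointly Gaussian random variables are independent iff their covariances vanish, the $e_i$ are mutually independent. Finally, because $\{e_i\}$ is a basis, $\mathcal{H}_1$ equals its closed linear span $\bigl\{\sum_i \lambda_i e_i:\sum_i \lambda_i^2<\infty\bigr\}$, which has precisely the claimed form.

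The only mildly delicate point is the uncountable case: one should check that only countably many coefficients $\lambda_i$ can be nonzero in a convergent $L^2$-expansion, which is a standard Hilbert space fact and requires no Gaussian input. I do not anticipate any substantial obstacle — the proof is really a bookkeeping exercise that combines Proposition~\ref{lin_span_ex}, Proposition~\ref{Gaus_joint_distr}, and the ``uncorrelated $\Rightarrow$ independent'' implication for jointly Gaussian variables.
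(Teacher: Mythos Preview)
Your proposal is correct and follows essentially the same approach as the paper: invoke Proposition~\ref{lin_span_ex} for the forward direction, and for the converse take an orthonormal basis of $\mathcal{H}_1$, observe its elements are uncorrelated hence independent centered Gaussians, and conclude that their closed span recovers $\mathcal{H}_1$. You supply more detail than the paper does---in particular the explicit $\ell^2$-identification of the closed span and the remark on the uncountable case---but the skeleton of the argument is the same.
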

\begin{proof}
The first assertion is an immediate consequence of Proposition \ref{lin_span_ex}. For what concerns the second assertion, given any orthonormal basis $\{\xi_i\}_{i \in I}$ in the space $\mathcal{H}_1$, its elements are uncorrelated, thus independent, centered Gaussian random variables. Their closed linear span is then equal to the given space. 
\end{proof}

As particular cases of Proposition \ref{lin_span_ex} we provide the following examples.

\begin{example}
\label{1DGauss}
Let $\xi$ be any non-degenerate centered Gaussian random variable on a probability space $(\Omega, \mathcal{F}, \mathbb{P})$. Then $\mathcal{H}_1=\{\lambda \xi, \ \lambda \in \mathbb{R}\}$ is a one-dimensional Gaussian Hilbert space. 
Notice that the existence of such $\mathcal{H}_1$ relies on the existence of a Gaussian random variable $\xi$. Two possible ways of constructing such a random variable are as follows:
\begin{itemize}
\item consider the probability space $(\mathbb{R}, \mathcal{B}(\mathbb{R}), \gamma)$, with $\gamma$ the standard Gaussian measure, i.e. $\gamma(A)=\int_A\frac{1}{\sqrt{2\pi}}e^{-\frac{x^2}{2}}\, {\rm d}x$, $A \in \mathcal{B}(\mathbb{R})$. Then $\xi= \emph{Id}$ is a standard normal Gaussian random variable and $\mathcal{H}_1= \{\lambda \emph{Id}, \lambda\in\mathbb{R}\} \subset L^2(\mathbb{R}, \mathcal{B}(\mathbb{R}), \gamma)$. Notice that we can also think $\mathcal{H}_1$ as $\mathbb{R}^*$.
\item Consider the probability space $((0,1), \mathcal{B}(0,1), \mathcal{L})$, with $\mathcal{L}$ the Lebesgue measure on $(0,1)$. Denoted by $\Phi$ the moment generating function of a standard Gaussian random variable, that is $\Phi(t)= \int_{-\infty}^t \frac{1}{\sqrt{2\pi}}e^{-\frac{x^2}{2}}\, {\rm d}x$, $t \in \mathbb{R}$, it is easy to see that $\xi= \Phi^{-1}$ is a standard normal Gaussian random variable.
\end{itemize}
\end{example}

\begin{example}
Let $\xi_1,..., \xi_n$ have a joint gaussian distribution with zero mean. Then their linear span $Span(\xi_1,..., \xi_n)=\{ \sum_{i=1}^n \lambda_i\xi_i, \ \lambda_i \in \mathbb{R}\}$ is a finite-dimensional Gaussian Hilbert space. 
\end{example}

\begin{example}
\label{BM_ex}
\begin{itemize}
\item [(i)]
Let $\{B_t\}_{t \ge 0}$ be a standard Brownian motion on the probability space $(\Omega,\mathcal{F}, \mathbb{P})$. By Proposition \ref{lin_span_ex}, the space $\mathcal{H}_1$ 
given by the closure of $Span \{B_t\}_{t \ge 0}$ in $L^2(\Omega)$
 is a Gaussian Hilbert space. 
\item [(ii)]
More generally, if $\{X_t\}_{t \in T}$ is any Gaussian stochastic process in discrete or continuous time formed by zero-mean random variables, then the closed linear span of $\{X_t\}_{t \in T}$ is a Gaussian Hilbert space.
\end{itemize}
\end{example}

Different characterizations of Gaussian Hilbert spaces are possible. Of particular interest is the one provided in Proposition \ref{Gaussian_isometries} below.
We recall that a \emph{linear isometry} of a Hilbert space into another is a linear map that preserves the inner product. Linear isometries that are onto are called \emph{unitary operators} (for more details see e.g. \cite{RS} and \cite{Brezis}).

\begin{proposition}
\label{Gaussian_isometries}
Let $\mathcal{H}$ be a real 
Hilbert space. Then, there exists a Gaussian Hilbert space $\mathcal{H}_1$ (with the same dimension of $\mathcal{H}$) and a unitary operator $h \mapsto U(h)$ of $\mathcal{H}$ onto  $\mathcal{H}_1$. That is, $\mathcal{H}_1=\{U(h), \ h \in \mathcal{H}\}$ and  for any $h, k \in \mathcal{H}$,
\begin{equation*}
\mathbb{E}\left[U(h)U(k) \right]= \langle h, k \rangle_{\mathcal{H}}.
 \end{equation*}

\end{proposition}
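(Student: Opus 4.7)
The plan is to realize $\mathcal{H}_1$ as the closed linear span of a sufficiently rich family of independent standard Gaussian random variables, indexed by an orthonormal basis of $\mathcal{H}$, and then to let $U$ be the unique linear extension of the obvious map between bases.

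First I would fix an orthonormal basis $\{e_i\}_{i \in I}$ of the real Hilbert space $\mathcal{H}$ (the index set $I$ has cardinality equal to $\dim \mathcal{H}$). Next I construct, on some probability space $(\Omega,\mathcal{F},\mathbb{P})$, a family $\{\xi_i\}_{i \in I}$ of independent standard centered Gaussian random variables indexed by the same set $I$. For countable $I$ this is a standard product-space construction (take the product of copies of $(\mathbb{R},\mathcal{B}(\mathbb{R}),\gamma)$ from Example \ref{1DGauss}); for uncountable $I$ one invokes Kolmogorov's extension theorem to produce a consistent family with the prescribed finite-dimensional Gaussian marginals. By Corollary \ref{lin_span_ex_ind}, the closed linear span $\mathcal{H}_1 := \overline{\mathrm{Span}\{\xi_i : i \in I\}}^{L^2}$ is a Gaussian Hilbert space, and it coincides with $\bigl\{\sum_i \lambda_i \xi_i : \sum_i \lambda_i^2 < \infty\bigr\}$, since the $\xi_i$ are orthonormal in $L^2(\Omega)$ (being uncorrelated with unit variance).

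Then I would define $U : \mathcal{H} \to \mathcal{H}_1$ by setting $U(e_i) := \xi_i$ and extending by linearity and $L^2$-continuity: for $h = \sum_i a_i e_i \in \mathcal{H}$ with $\sum_i a_i^2 < \infty$, the series $\sum_i a_i \xi_i$ converges in $L^2(\Omega)$ precisely because $\{\xi_i\}$ is an orthonormal family in $L^2$ and $\sum_i a_i^2 < \infty$; set $U(h) := \sum_i a_i \xi_i$. Linearity is immediate from this construction. To verify the isometry property, for $h = \sum_i a_i e_i$ and $k = \sum_i b_i e_i$ one computes, using orthonormality of the $\xi_i$ in $L^2$,
\begin{equation*}
\mathbb{E}\bigl[U(h)U(k)\bigr] = \sum_{i,j} a_i b_j\, \mathbb{E}[\xi_i \xi_j] = \sum_i a_i b_i = \langle h, k\rangle_{\mathcal{H}}.
\end{equation*}
Surjectivity is essentially built in: every $\xi_i$ lies in $U(\mathcal{H})$, and $U(\mathcal{H})$ is closed in $L^2$ (as the isometric image of a complete space), so it contains the closed linear span of $\{\xi_i\}$, which is exactly $\mathcal{H}_1$. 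Finally, since $U$ is an isometric bijection, $\dim \mathcal{H}_1 = \dim \mathcal{H}$.

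The only real obstacle is the measure-theoretic construction of the family $\{\xi_i\}_{i \in I}$ for uncountable $I$; everything else is a routine verification using the orthonormality of $\{\xi_i\}$ in $L^2$ and the definition of the closed linear span. In the case most relevant to these notes, $\mathcal{H}$ is separable, $I$ is countable, and a plain countable product of one-dimensional Gaussian measures suffices.
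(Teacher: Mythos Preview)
Your proof is correct and follows essentially the same approach as the paper: both fix an orthonormal basis $\{e_i\}$ of $\mathcal{H}$, construct an independent family $\{\xi_i\}$ of standard Gaussians on some probability space, and define $U$ by $U(h)=\sum_i \langle h,e_i\rangle_{\mathcal H}\,\xi_i$, checking the isometry via orthonormality of the $\xi_i$ in $L^2$. The only cosmetic difference is that the paper defines $\mathcal{H}_1:=\{U(h):h\in\mathcal H\}$ directly (so surjectivity is tautological), whereas you first set $\mathcal{H}_1$ to be the closed linear span of the $\xi_i$ and then verify that $U$ maps onto it; your version is slightly more careful about the existence of the $\xi_i$ and about surjectivity, but the argument is the same.
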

\begin{proof}
Let $\{e_i\}_{i \in I}$ be an orthonormal basis in $\mathcal{H}$. 
Let $\{\xi_i\}_{i \in I}$ be a collection of independent standard gaussian random variables, defined on some probability space $(\Omega, \mathcal{F},\mathbb{P})$, with the same index set $I$ (the cardinality of $I$ can be finite or infinite countable). Every element $h \in \mathcal{H}$ can be uniquely written as $h=\sum_{i \in I} \langle h, e_i\rangle_{\mathcal{H}}e_i$. We introduce the mapping $\mathcal{H} \ni h \mapsto U(h):= \sum_{i \in I}\langle h, e_i\rangle_{\mathcal{H}} \xi_i$. 
By construction, the random variable $U(h)$ is Gaussian. Moreover, since the $\xi_i$ are independent, centered and have unit variance, $U(h)$ is centered and, for any $h, k \in \mathcal{H}$,
\begin{equation*}\mathbb{E}\left[U(h)U(k) \right]=\mathbb{E} \left[ \sum_{i \in I}\langle h, e_i\rangle_{\mathcal{H}} \xi_i \sum_{j \in I}\langle k, e_j\rangle_{\mathcal{H}} \xi_j \right]=\sum_{i \in I}\langle h, e_i\rangle_{\mathcal{H}}\langle k, e_i\rangle_{\mathcal{H}}= \langle h, k \rangle_{\mathcal{H}}.
\end{equation*}
This entails that $U$ is a unitary operator of $\mathcal{H}$ onto the Gaussian Hilbert space $\mathcal{H}_1:=\{U(h) \ : \ h \in \mathcal{H}\}$, and concludes the proof.
 \end{proof}

The role of the space $\mathcal{H}$ and the operator $U$, in the above result, is to suitable index the elements in $\mathcal{H}_1$. We point out that, fixed a generic Gaussian Hilbert space $\mathcal{H}_1$, there are infinitely many possible choices of real Hilbert spaces $\mathcal{H}$ (with the same dimension as $\mathcal{H}_1$) and unitary operators $U$ such that $\mathcal{H}_1=\{U(h) \ : \ h \in \mathcal{H}\}$. For instance, since $\mathcal{H}_1$ is itself a real Hilbert space (w.r.t. the usual $L^2(\Omega)$ inner product), it follows that $\mathcal{H}_1$ can be trivially represented by choosing $\mathcal{H}$ equal to $\mathcal{H}_1$ itself and $U$ equal to the identity operator. In general, given an Hilbert space $\mathcal{H}$, there are infinitely many different ways of choosing an orthonormal basis $\{e_i\}_{i \in I}$ in $\mathcal{H}$ and an orthonormal basis $\{\xi_i\}_{i \in I}$ in $\mathcal{H}_1$, each choice giving a \emph{different} unitary operator $U$ of the form $\mathcal{H} \ni h=\sum_{i \in I}\langle h, e_i\rangle_{\mathcal{H}}e_i \mapsto U(h)=\sum_{i \in I}\langle h, e_i\rangle_{\mathcal{H}}\xi_i$.

Plainly, the subtlety in the use of Proposition \ref{Gaussian_isometries}, is that one has to select an Hilbert space $\mathcal{H}$ and a unitary operator $U$
that are well adapted to the specific problem at hand. 
We provide some concrete examples; see \cite[Chapter 1]{Janson} and \cite[Chapter 2.1]{NP} for further examples.

\begin{example}[\textbf{Euclidean spaces}]
Fix an integer $d \ge 1$ and let $(\xi_1,...,\xi_d)$ be a Gaussian vector whose components are independent standard Gaussian random variables. Let $\mathcal{H}_1$ be the Gaussian Hilbert space given by the the linear span of $\{\xi_i\}_{i=1}^n$. Set $\mathcal{H}=\mathbb{R}^d$ and let $(e_1,...,e_n)$ be the canonical orthonormal basis of $\mathbb{R}^d$ (w.r.t. the usual Euclidean inner product). Any $h \in \mathbb{R}^d$ can be uniquely written as $h=\sum_{i=1}^n \langle h, e_i\rangle e_i$. Define $U:\mathbb{R}^d \rightarrow \mathcal{H}_1$ ad $U(h):=\sum_{i=1}^n\langle h, e_i\rangle \xi_i$. $U$ is an unitary operator from $\mathbb{R}^d$ onto $\mathcal{H}_1$. In particular, $\mathcal{H}_1=\{U(h) \ : \ h \in \mathbb{R}^d\}$.
\end{example}

\begin{example}[\textbf{Gaussian Hilbert spaces derived from covariances}]
\label{ex_cov}
Let $X=\{X_t\}_{t \ge 0}$ be a real-valued centered Gaussian process on a probability space $(\Omega, \mathcal{F}, \mathbb{P})$ with covariance function given by \begin{equation}
\label{Cov_Y}
 R(s,t):=\mathbb{E}\left[ X_sX_t\right], \quad s,t \ge 0.
\end{equation}
Starting from $X$ one can construct a Gaussian Hilbert space $\mathcal{H}_1$,  through the characterization of Proposition \ref{Gaussian_isometries}, as follows.
\begin{itemize}
\item [(i)] Define $\mathcal{E}$ as the collection of all finite linear combinations of indicator functions of the form $\pmb{1}_{[0,t]},\ t \ge 0$.
\item [(ii)] Define $\mathcal{H}_R$ to be the Hilbert space given by the closure of $\mathcal{E}$ with respect to the scalar product
\begin{equation*}
\langle h, k\rangle_{\mathcal{H}_R}=\sum_{i,j}a_i b_j R(s_i, t_j), 
\end{equation*}
for $h=\sum_i a_i {\pmb 1}_{[0,s_i]}$ and $k=\sum_j b_j {\pmb 1}_{[0,t_j]}$, elements in $\mathcal{E}$.
\item[(iii)] Introduce the linear map $I$ that associates to an element $h=\sum_i a_i {\pmb 1}_{[0,s_i]}$ in $\mathcal{E}$ the element $I(h)=\sum_j a_i X_{s_i}$. The mapping $I$ defines an isometry from $\mathcal{E}$ onto the (incomplete) linear Gaussian space spanned by $X$.
\item [(iv)] For $h \in \mathcal{H}_R$ define $I(h)$ as the $L^2(\Omega)$ limit of any sequence of type $I(h_n)$ where $\{h_n\}_n \subset \mathcal{E}$ converges to $h$ in $\mathcal{H}_R$.
\item [(v)] By construction, $\{I(h), h \in \mathcal{H}_R\}$ is a Gaussian Hilbert space. In fact it equals the closure in $L^2(\Omega)$ of the linear Gaussian space spanned by $X$, which is a Gaussian Hilbert space in view of Corollary \ref{lin_span_ex_ind}.
\end{itemize}

Gaussian Hilbert spaces of this type naturally appear in the study of stochastic differential equations driven by a Gaussian noise. 
\begin{footnote}{
Extension of the above construction to Hilbert-valued centered Gaussian processes is also possible, see for instance \cite{DalQue}. 
}
\end{footnote}
For instance, 
\begin{itemize}
\item [1.] $R(s,t)=s\wedge t$ is the covariance function of a standard Brownian motion $B$. In this case, the isometry $I$ of $L^2([0, \infty))$ onto the Gaussian Hilbert space $\mathcal{H}_1$ spanned by $B$, is known as the stochastic \emph{It\^o-Wiener} integral and it is usually written as $h \mapsto \int_0^\infty h(s)\, {\rm d}B_s$. Thus we have the characterization $\mathcal{H}_1=\{\int_0^\infty h(s)\, {\rm d}B_s, \ h \in L^2([0,\infty))\}$.
\item [2.] $R(s,t)=\frac 12 \left( s^{2H}+t^{2H}-|t-s|^{2H}\right)$ is the covariance function of a fractional Brownian motion with Hurst parameter $H$. The Gaussian Hilbert space associated to the fractional Brownian motion has been considered for instance in \cite{BoZa1} and \cite{BoZa2}.
\end{itemize}
\end{example}

\begin{example}
Let $T>0$ and let $D\subset \mathbb{R}^2$ be a smooth domain. Let $Q:L^2(D) \rightarrow L^2(D)$ be a positive symmetric bounded linear operator.
We define $L^2_Q$ as the completition of the space of all square integrable functions $\varphi:D \rightarrow \mathbb{R}$ with respect to the scalar product
\begin{equation*}
\langle \varphi, \psi \rangle_{L^2_Q}= \langle Q\varphi, \psi\rangle_{L^2}.
\end{equation*}
Set $\mathcal{H}=L^2(0,T;L^2_Q)$. 
This space is a real separable Hilbert space with respect to the scalar product 
\begin{equation}
\langle f,g\rangle_{\mathcal{H}}=\int_0^T\langle f(s), g(s)\rangle_{L^2_Q} \, 
{\rm d}s =\int_0^T\langle Qf(s), g(s)\rangle_{L^2} \, {\rm d}s.
\end{equation}
Let $\{\xi_i\}_{i=1}^\infty$ be a collection of real independent Gaussian random variables, defined on some probability space $(\Omega, \mathcal{F}, \mathbb{P})$. 
Given an orthonormal basis $\{e_i\}_{i=1}^\infty$ in $\mathcal{H}$, every element $h \in \mathcal{H}$ can be uniquely written as $h=\sum_{i=1}^\infty \langle h, e_i\rangle_{\mathcal{H}}e_i$. By orthonormality, one has $\|h\|^2_{\mathcal{H}}=\sum_{i=1}^\infty \langle h, e_i\rangle^2_{\mathcal{H}}< \infty$. From this fact we deduce that, as $n \rightarrow \infty$, the sequence 
$\sum_{i=1}^n\langle h, e_i\rangle_{\mathcal{H}} \xi_i$ converges in $L^2(\Omega)$ and almost surely to some random variable that we denote by $U(h)=\sum_{i=1}^\infty\langle h, e_i\rangle_{\mathcal{H}} \xi_i$.
By construction, the random variable $U(h)$ is a centered Gaussian random variable with variance given by $\|h\|^2_{\mathcal{H}}$
and $U$ is a unitary operator of $\mathcal{H}$ onto the Gaussian Hilbert space $\mathcal{H}_1:=\{U(h) \ : \ h \in \mathcal{H}\}$.
This Gaussian Hilbert space has been considered for instance in \cite{FerZan}.
\end{example}

\begin{example}
In Section \ref{top_sec} we will characterize the Gaussian Hilbert space of Example \ref{X^gam_ex_sec1} as $\mathcal{H}=\{U(h), h \in \mathcal{H}\}$, in terms of different choices of Hilbert spaces $\mathcal{H}$ and unitary operators $U$ (see Corollaries \ref{char_X^*_gam-H} and \ref{corX_gam_Hil}).
\end{example}


\subsection{The Isserlis formula}
The Isserlis formula, later rediscovered by Wick, is a formula of the joint product moments $\mathbb{E} \left[ \xi_1 \cdot \cdot \cdot \xi_n\right]$, where $\xi_1, ... , \xi_n$ are centered jointly Gaussian variables.

\begin{theorem}
\label{Iss_thm}
[\textbf{Isserlis formula or Wick product}].
Let $\xi_1, ... , \xi_n$ be centered jointly Gaussian random variables. Then 
\begin{equation}
\label{Iss_formula}
\mathbb{E} \left[ \xi_1 \cdot \cdot \cdot \xi_n\right] = \sum\prod_k \mathbb{E}\left[ \xi_{i_{k}} \xi_{j_{k}}\right],
\end{equation}
where the sum is over all the partitions of $\{1,...,n\}$ into disjoint pairs $\{i_k, j_k\}$.
\end{theorem}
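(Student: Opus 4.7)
The plan is to compute both sides through the multivariate moment generating function. For centered jointly Gaussian $(\xi_1, \ldots, \xi_n)$ with covariance matrix $Q_{ij} = \mathbb{E}[\xi_i \xi_j]$, the moment generating function (introduced earlier in the notes) is
\[
M(t) = \mathbb{E}\bigl[\exp(t_1 \xi_1 + \cdots + t_n \xi_n)\bigr] = \exp\Bigl(\tfrac{1}{2} \sum_{i,j=1}^n Q_{ij} t_i t_j\Bigr), \qquad t \in \mathbb{R}^n.
\]
Since this is entire in $t$ and all moments of Gaussian vectors exist, differentiation under the expectation is justified, so
\[
\mathbb{E}[\xi_1 \cdots \xi_n] = \frac{\partial^n M}{\partial t_1 \cdots \partial t_n}\bigg|_{t=0},
\]
which equals the coefficient of the monomial $t_1 t_2 \cdots t_n$ in the Taylor expansion of $M$ (each $t_i$ appearing to the first power).

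Next I would expand the exponential as a power series:
\[
M(t) = \sum_{m \ge 0} \frac{1}{m!\,2^m}\Bigl(\sum_{i,j=1}^n Q_{ij} t_i t_j\Bigr)^m.
\]
Each factor $\sum_{i,j} Q_{ij} t_i t_j$ has total degree $2$ in $t$, so only the term with $2m = n$ contributes to the coefficient of $t_1 t_2 \cdots t_n$. When $n$ is odd, this coefficient is $0$, matching the right-hand side of \eqref{Iss_formula} (which is an empty sum because no pairing of an odd set exists). When $n = 2m$, I would expand the $m$-th power and extract the terms in which each $t_1, \ldots, t_n$ occurs exactly once. Such a term corresponds to choosing, for each factor $\ell = 1, \ldots, m$, an ordered pair $(i_\ell, j_\ell)$ so that the multiset $\{i_1, j_1, \ldots, i_m, j_m\}$ equals $\{1, \ldots, n\}$.

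The key combinatorial step is to recognize that each unordered pairing $\{\{i_k, j_k\}\}_{k=1}^m$ of $\{1, \ldots, n\}$ arises from exactly $m! \cdot 2^m$ such ordered configurations (the $m!$ orderings of the pairs and the $2^m$ internal orderings within each pair), and that, by the symmetry $Q_{ij} = Q_{ji}$, all of them contribute the same product $\prod_{k} Q_{i_k j_k}$. Therefore
\[
[t_1 \cdots t_n]\,M \;=\; \frac{1}{m!\,2^m} \cdot m!\,2^m \sum_{\text{pairings}} \prod_{k} Q_{i_k j_k} \;=\; \sum_{\text{pairings}} \prod_k \mathbb{E}[\xi_{i_k} \xi_{j_k}],
\]
which combined with the previous identity yields \eqref{Iss_formula}.

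The main obstacle is the combinatorial bookkeeping in the last step: one must track carefully why $m!\,2^m$ ordered index assignments collapse to one unordered pairing, relying essentially on the symmetry of $Q$. The remaining ingredients (the closed form of $M$, the legitimacy of differentiating under the expectation, and the identification of moments with Taylor coefficients) are standard and follow from properties of Gaussian distributions already recalled earlier.
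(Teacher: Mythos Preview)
Your proof is correct and shares the paper's overall strategy: both start from the moment generating function $M(t)=\exp\bigl(\tfrac12\sum_{i,j}Q_{ij}t_it_j\bigr)$ and identify $\mathbb{E}[\xi_1\cdots\xi_n]$ with the mixed partial derivative $\partial_{t_1}\cdots\partial_{t_n}M\big|_{t=0}$. The difference lies in how that derivative is computed. The paper proceeds \emph{recursively}: it establishes (Lemma~\ref{Lemma_Iss}) the relation $\mathcal{F}_{1,2,\ldots,n}(0)=\sum_{j=2}^n c_{1,j}\,\mathcal{F}_{2,\ldots,\hat{j},\ldots,n}(0)$ by differentiating once and then iterating, and concludes by induction on $n$. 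You instead expand the exponential as a power series and extract the coefficient of $t_1\cdots t_n$ in one shot, reducing the problem to the combinatorial count that each unordered pairing corresponds to exactly $m!\,2^m$ ordered index assignments. The inductive route mirrors the natural recursive structure of pairings (fix the partner of the index $1$, then pair the remaining $n-2$ indices) and avoids any explicit overcounting argument; your direct route is shorter but requires the careful $m!\,2^m$ bookkeeping you flagged. Both are standard and equally rigorous.
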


\begin{remark}
Obviously, a partitions of $\{1,...,n\}$ into disjoint pairs exists if and only if $n=2m$ is even, in this case there are $(2m)!/(2^m m!)=(2m-1)!!$ such partitions. If $n$ is odd it does not exist a partition of $\{1,...,n\}$ into disjoint pairs and the expected value in \eqref{Iss_formula} is zero.
\end{remark}
The Isserlis formula basically states that the product moment $\mathbb{E} \left[ \xi_1 \cdot \cdot \cdot \xi_n\right] $ can be fully expressed using only the pairwise covariances $\mathbb{E}\left[ \xi_{i} \xi_{j}\right]$ for $i,j=1...n$. 
For example, for $n=4$ there are three partitions of $\{1,..,4\}$ into disjoint pairs: $\{(1,2),(3,4)\}$, $\{(1,3),(2,4)\}$, $\{(1,4),(2,3)\}$ and 
\begin{equation*}
\mathbb{E} \left[ \xi_1\xi_2\xi_3 \xi_4\right]
=
\mathbb{E}\left[ \xi_1 \xi_2\right]\mathbb{E}\left[ \xi_3 \xi_4\right]+\mathbb{E}\left[ \xi_1\xi_3\right]\mathbb{E}\left[ \xi_2 \xi_4\right]+ \mathbb{E}\left[ \xi_1 \xi_4\right]\mathbb{E}\left[ \xi_2 \xi_3\right].
\end{equation*}
For $n=5$ there are zero partition of $\{1,...,5\}$ into disjoint pairs and $\mathbb{E} \left[ \xi_1\xi_2\xi_3 \xi_4 \xi_5\right]=0$, for $n=6$ there are 15 partitions of $\{1,...,6\}$ into disjoint pairs and 
\begin{align*}
\mathbb{E} \left[ \xi_1\xi_2\xi_3 \xi_4 \xi_5 \xi_6\right]
&=\mathbb{E}\left[ \xi_1 \xi_2\right]\mathbb{E}\left[ \xi_3 \xi_4\right]\mathbb{E}\left[ \xi_5 \xi_6\right]
+\mathbb{E}\left[ \xi_1 \xi_2\right]\mathbb{E}\left[ \xi_3 \xi_5\right]\mathbb{E}\left[ \xi_2 \xi_6\right]
+\mathbb{E}\left[ \xi_1 \xi_2\right]\mathbb{E}\left[ \xi_3 \xi_6\right]\mathbb{E}\left[ \xi_2 \xi_4\right]
\\
&+\mathbb{E}\left[ \xi_1 \xi_3\right]\mathbb{E}\left[ \xi_2 \xi_4\right]\mathbb{E}\left[ \xi_5 \xi_6\right]
+\mathbb{E}\left[ \xi_1 \xi_3\right]\mathbb{E}\left[ \xi_2 \xi_5\right]\mathbb{E}\left[ \xi_3 \xi_6\right]
+\mathbb{E}\left[ \xi_1 \xi_3\right]\mathbb{E}\left[ \xi_2 \xi_6\right]\mathbb{E}\left[ \xi_3 \xi_4\right]
\\
&+\mathbb{E}\left[ \xi_1 \xi_4\right]\mathbb{E}\left[ \xi_2 \xi_3\right]\mathbb{E}\left[ \xi_5 \xi_6\right]
+\mathbb{E}\left[ \xi_1 \xi_4\right]\mathbb{E}\left[ \xi_2 \xi_5\right]\mathbb{E}\left[ \xi_3 \xi_6\right]
+\mathbb{E}\left[ \xi_1 \xi_4\right]\mathbb{E}\left[ \xi_2\xi_6\right]\mathbb{E}\left[ \xi_3 \xi_5\right]
\\
&+\mathbb{E}\left[ \xi_1 \xi_5\right]\mathbb{E}\left[ \xi_2 \xi_3\right]\mathbb{E}\left[ \xi_4 \xi_6\right]
+\mathbb{E}\left[ \xi_1 \xi_5\right]\mathbb{E}\left[ \xi_2 \xi_6\right]\mathbb{E}\left[ \xi_3 \xi_4\right]
+\mathbb{E}\left[ \xi_1 \xi_5\right]\mathbb{E}\left[ \xi_2 \xi_4\right]\mathbb{E}\left[ \xi_3 \xi_6\right]
\\
&+\mathbb{E}\left[ \xi_1 \xi_6\right]\mathbb{E}\left[ \xi_2 \xi_3\right]\mathbb{E}\left[ \xi_4 \xi_5\right]
+\mathbb{E}\left[ \xi_1 \xi_6\right]\mathbb{E}\left[ \xi_2 \xi_4\right]\mathbb{E}\left[ \xi_3 \xi_5\right]
+\mathbb{E}\left[ \xi_1 \xi_6\right]\mathbb{E}\left[ \xi_2 \xi_5\right]\mathbb{E}\left[ \xi_4 \xi_6\right].
\end{align*}

Notice that, if we take all $\xi_i$ equal in Theorem \ref{Iss_thm}, we obtain the formula for moments of a centered Gaussian variable, that is, if $\xi \sim \mathcal{N}(0, \sigma^2)$, then 
\begin{equation*}
\mathbb{E}\left[\xi^n\right]
=
\begin{cases}
(n-1)!!\sigma^n & n \ \text{even},
\\
0 & n \ \text{odd}.
\end{cases}
\end{equation*}

Theorem \ref{Iss_thm} will be proved using moment generating functions and will rely on elementary computations from multivariate calculus.
Let us fix some notation. Denote by
\begin{equation}
\label{Gamma_def}
C=\{c_{i,j}\}_{i, j}
\quad \text{with} \quad 
c_{i,j}=\mathbb{E}\left[\xi_i \xi_j\right], \quad  i, j=1,...,n
\end{equation}
the covariance matrix of the centered Gaussian vector $(\xi_1,...,\xi_n)$ and its elements, respectively. Notice that $c_{i,j}=c_{j,i}$ for all $i,j=1,...,n$.
\\
Define
$\mathcal{Q}:\mathbb{R}^n \rightarrow \mathbb{R}$ as the quadratic form
\begin{equation}
\label{Q_Iss_def}
\mathcal{Q}(\lambda):= \frac 12\sum_{i,j=1}^n c_{i,j}\lambda_i \lambda_j, \quad \lambda=(\lambda_1,..,\lambda_n) \in \mathbb{R}^n,
\end{equation}
and 
$\mathcal{F}: \mathbb{R}^n \rightarrow\mathbb{R}$ as
\begin{equation*}
\mathcal{F}(\lambda):=e^{\mathcal{Q}(\lambda)}.
\end{equation*}
We set
\begin{equation*}
q(\lambda):= \frac{\partial}{\partial \lambda_1}\mathcal{Q}(\lambda)=\sum_{j=1}^n c_{1,j}\lambda_j
\end{equation*}
and, for every $i=1,...,n$,
\begin{equation}
\label{Fi}
\mathcal{F}_i(\lambda):= \frac{\partial}{\partial \lambda_i}\mathcal{F}(\lambda) \qquad \text{and} \quad \mathcal{F}_{1,...,n}(\lambda):= \frac{\partial^n}{\partial \lambda_1\cdot \cdot \cdot \partial \lambda_n}\mathcal{F}(\lambda).
\end{equation}

\begin{lemma}
\label{Lemma_Iss}
For any $n \ge 1$, 
\begin{equation*}
\mathcal{F}_{1,2,3,\ldots,n}(0)
=
\begin{cases}
0 & \text{when} \ n \ \text{is odd}
\\
\sum\prod_k c_{i_{k}} c_{j_{k}} & \text{when} \ n \ \text{is even,}
\end{cases}
\end{equation*}
where the sum is over all the partitions of $\{1,...,n\}$ into disjoint pairs $\{i_k, j_k\}$.
\end{lemma}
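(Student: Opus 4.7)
The plan is to prove by induction on $n$ a slightly stronger statement describing $\mathcal{F}_{1,\ldots,n}(\lambda)$ for every $\lambda$, and then specialize to $\lambda=0$. The key observation is that, writing $g_i(\lambda):=\partial_{\lambda_i}\mathcal{Q}(\lambda)=\sum_{j=1}^n c_{i,j}\lambda_j$, the chain rule gives $\partial_{\lambda_i}\mathcal{F}(\lambda)=g_i(\lambda)\mathcal{F}(\lambda)$, so each differentiation either ``drops'' a new factor $g_k$ through $\mathcal{F}$ or differentiates one of the existing $g$-factors (using $\partial_{\lambda_k}g_i=c_{i,k}$).

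More precisely, I would prove the following auxiliary formula by induction on $m$: for any indices $i_1,\dots,i_m\in\{1,\ldots,n\}$,
\begin{equation*}
\partial_{\lambda_{i_1}}\cdots\partial_{\lambda_{i_m}}\mathcal{F}(\lambda)
=\Bigl(\sum_{\pi}\prod_{\{a,b\}\in\pi_2}c_{i_a,i_b}\prod_{k\in\pi_1}g_{i_k}(\lambda)\Bigr)\mathcal{F}(\lambda),
\end{equation*}
where the sum ranges over all partitions $\pi=(\pi_1,\pi_2)$ of $\{1,\ldots,m\}$ into singletons $\pi_1$ and unordered pairs $\pi_2$. The base case $m=1$ is just $\partial_{\lambda_{i_1}}\mathcal{F}=g_{i_1}\mathcal{F}$. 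For the inductive step, I apply $\partial_{\lambda_{i_{m+1}}}$ to the right-hand side; the Leibniz rule produces two kinds of new terms: differentiating one of the singleton factors $g_{i_k}$ turns it into $c_{i_k,i_{m+1}}$, which corresponds to pairing $m{+}1$ with the previously unpaired index $k$; differentiating $\mathcal{F}$ itself brings out $g_{i_{m+1}}$, which corresponds to appending $m{+}1$ as a new singleton. A short bijective check shows this exactly enumerates all pair-and-singleton partitions of $\{1,\ldots,m+1\}$, closing the induction.

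Specializing to $i_1=1,\ i_2=2,\ \ldots,\ i_n=n$ and evaluating at $\lambda=0$ is then immediate: $g_k(0)=0$ and $\mathcal{F}(0)=1$, so only the partitions with \emph{no} singletons ($\pi_1=\emptyset$) survive. If $n$ is odd, there are no such partitions and $\mathcal{F}_{1,\ldots,n}(0)=0$; if $n=2m$ is even, the surviving terms are indexed by the perfect matchings of $\{1,\ldots,n\}$, giving
\begin{equation*}
\mathcal{F}_{1,\ldots,n}(0)=\sum\prod_{k}c_{i_k,j_k},
\end{equation*}
the sum being over all partitions of $\{1,\ldots,n\}$ into disjoint pairs $\{i_k,j_k\}$, as claimed.

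The main obstacle is purely combinatorial bookkeeping in the inductive step: one must verify that the two sources of new terms in the Leibniz expansion produce each pair-and-singleton partition of $\{1,\ldots,m+1\}$ exactly once. Once this bijection is spelled out, the rest of the argument reduces to substituting $\lambda=0$ and reading off which partitions contribute, and no further analytic input is needed beyond the elementary identities $\partial_{\lambda_i}\mathcal{Q}=g_i$ and $\partial_{\lambda_k}g_i=c_{i,k}$.
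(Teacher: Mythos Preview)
Your proof is correct and follows essentially the same approach as the paper: both arguments are inductions built on the chain rule $\partial_{\lambda_i}\mathcal{F}=g_i\mathcal{F}$ and the identity $\partial_{\lambda_k}g_i=c_{i,k}$. The only difference is organizational: the paper singles out the first index to derive the recursion $\mathcal{F}_{1,\ldots,n}(\lambda)=\sum_{j=2}^n c_{1,j}\mathcal{F}_{2,\ldots,\hat{j},\ldots,n}(\lambda)+q(\lambda)\mathcal{F}_{2,\ldots,n}(\lambda)$, evaluates at $\lambda=0$, and then inducts; you instead prove a closed formula at arbitrary $\lambda$ as a sum over singleton--pair partitions and specialize to $\lambda=0$ at the end. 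Your packaging is slightly more general (it describes $\mathcal{F}_{1,\ldots,n}(\lambda)$ for all $\lambda$, not just at $0$) and makes the induction hypothesis more explicit, but the underlying combinatorics---tracking how each differentiation either creates a new unpaired factor or pairs off an existing one---is identical.
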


\begin{proof}
By dropping the dependence on $\lambda$ for simplicity, we can easily compute
\[
\F_{1}=q\;\F
\]
\[
\F_{1,2}=c_{1,2} \;\F+q\; \F_{2}
\]
\[
\F_{1,2,3}=c_{1,2} \F_{3}+ c_{1,3} \F_{2}+q\; \F_{2,3}
\]
\[
\F_{1,2,3,4}=c_{1,2}\F_{3,4}+ c_{1,3} \F_{2,4}+ c_{1,4} \F_{2,3}+q\;   \F_{2,3,4}
\]
\[
\F_{1,2,3,4,5}=c_{1,2}\F_{3,4,5}+ c_{1,3} \F_{2,4,5}+ c_{1,4} \F_{2,3,5}+ c_{1,5} \F_{2,3,4}+q\;   \F_{2,3,4,5}
\]
\[
\F_{1,2,3,4,5,6}=c_{1,2}\F_{3,4,5,6}+ c_{1,3} \F_{2,4,5,6}+ c_{1,4} \F_{2,3,5,6}+c_{1,5} \F_{2,3,4,6}+c_{1,6}\F_{2,3,4,5}+q\;   \F_{2,3,4,5,6}
\]
For a generic $n \ge 1$ we obtain 
\begin{equation*}
\F_{1,2,3,\ldots,n}(\lambda )=\sum_{j=2}^nc_{1,j}\F_{2,\ldots,\hat j,\ldots,n}(\lambda) + q(\lambda) \F_{2,3,\ldots,n}(\lambda),
\end{equation*}
where by $2,\ldots,\hat j,\ldots,n$ we denote the sequence $2,3,\ldots,n$ from which we have removed the term $j$.
\\
Since $q(0)=0$, we get
\begin{equation*}
\F_{1,2,3,\ldots,n}(0)=\sum_{j=2}^nc_{1,j}\F_{2,\ldots,\hat j,\ldots,n}(0).
\end{equation*}
By noticing that $\mathcal{F}_i(0)=0$ for any $i$, proceeding by induction, it is easy to prove that 
\begin{equation*}
\F_{1,2,3,\ldots,n}(0)=0
\end{equation*}
when $n$ is odd. When $n$ is even, by noticing that $\mathcal{F}(0)=1$ and using an induction argument, one instead gets 
\begin{equation*}
\F_{1,2,3,\ldots,n}(0)=\sum\prod_k c_{i_{k}} c_{j_{k}},
\end{equation*}
where the sum is over all the partitions of $\{1,...,n\}$ into disjoint pairs $\{i_k, j_k\}$.
This concludes the proof.
\end{proof}

We have now all the ingredients to prove Theorem \ref{Iss_thm}.

\begin{proof}
[Proof of Theorem \ref{Iss_thm}.]
The random variables $\xi_1,..., \xi_n$ are centered jointly Gaussian, thus we have that, for $\lambda \in \mathbb{R}^n$, the random variable $ \sum_{i=1}^n \lambda_i \xi_i \sim \mathcal{N}\left(0, \mathbb{E}\left[ \left|\sum_{i=1}^n \lambda_i \xi_i\right|^2\right]\right)$. Therefore 
\begin{equation*}
\mathbb{E}\left[e ^{\sum_{i=1}^n \lambda_i \xi_i} \right]
=e^{\frac 12 \sum_{i,j=1}^n\lambda_i \lambda_j \mathbb{E}\left[ \xi_i \xi_j \right]}
= e^{\mathcal{Q}(\lambda)},
\end{equation*}
where in the second equality we used the definitions \eqref{Gamma_def} and \eqref{Q_Iss_def}.
The dominated convergence Theorem yields:
\begin{equation*}
\mathbb{E}\left[ \xi_1\cdot \cdot \cdot \xi_n e ^{\sum_{i=1}^n \lambda_i \xi_i}\right] 
= \frac{\partial ^n}{\partial \lambda_1\cdot \cdot \cdot \partial \lambda_n}\mathbb{E}\left[e ^{\sum_{i=1}^n \lambda_i \xi_i} \right]
= \frac{\partial ^n}{\partial \lambda_1\cdot \cdot \cdot \partial \lambda_n}e^{\mathcal{Q}(\lambda)}=:\mathcal{F}_{1,...,n}(\lambda),
\end{equation*}
where we used the notation \eqref{Fi}.
Taking $\lambda =0$ in the above expression, from Lemma \ref{Lemma_Iss} we thus infer 
\begin{equation*}
\mathbb{E}\left[ \xi_1\cdot \cdot \cdot \xi_n\right] 
=\mathcal{F}_{1,...,n}(0)
=\begin{cases}
0 & \text{when} \ n \ \text{is odd}
\\
\sum\prod_k c_{i_{k}} c_{j_{k}} & \text{when} \ n \ \text{is even},
\end{cases}
\end{equation*}
and this concludes the proof.
\end{proof}

\section{The Wiener Chaos Decomposition}
\label{WCD_sec}


Every Gaussian Hilbert space induces an orthogonal decomposition, known as the Wiener Chaos Decomposition, of the corresponding $L^2$ space of square-integrable random variables that are measurable with respect to the $\sigma$-field generated by the Gaussian Hilbert space. \begin{footnote}{The fact that we work with the $\sigma$-algebra generated by the Gaussian Hilbert space $\mathcal{H}_1$ is central: all the objects
involved in Malliavin calculus are functionals of $\mathcal{H}_1$ and so we work in the universe generated by $\mathcal{H}_1$.}\end{footnote}
In this Section we will prove this result mainly following the approach contained in the book by Janson (see \cite[Chapter 2]{Janson}).
An important consequence of the Wiener Chaos Decomposition Theorem is the fact that the class of polynomials in elements of $\mathcal{H}_1$ is dense in $L^2$: this fact will be crucial  to introduce the Malliavin derivative operator.

Let $\mathcal{H}_1$ be a Gaussian Hilbert space defined on a probability space $(\Omega, \mathcal{F}, \mathbb{P})$. In view of Proposition \ref{L^p_Gauss}, random variables in $\mathcal{H}_1$ belong to $L^p(\Omega)$ for every finite $p$, thus by the H\"older inequality it is immediate to see that any finite product of random variables in $\mathcal{H}_1$ belongs to $L^2(\Omega)$ (actually, to $L^p(\Omega)$ for any $p<\infty$).

\begin{definition}
For $n \ge 0$ let $\overline{\mathcal{P}}_n(\mathcal{H}_1)$ be the closure in $L^2(\Omega, \mathcal{F}, \mathbb{P})$ of the linear space
\begin{equation*}
\mathcal{P}_n(\mathcal{H}_1):= \left\{p(\xi_1,...,\xi_m): \ p \ \text{is a polynomial of degree $\le n$}, \ \xi_1,..., \xi_m \in \mathcal{H}_1, \ m < \infty \right\}
\end{equation*}
and let 
\begin{equation*}
\mathcal{H}_n:= \overline{\mathcal{P}}_n(\mathcal{H}_1) \ominus  \overline{\mathcal{P}}_{n-1}(\mathcal{H}_1) =  \overline{\mathcal{P}}_n(\mathcal{H}_1) \cap  \overline{\mathcal{P}}_{n-1}(\mathcal{H}_1)^{\perp}.
\end{equation*}
The space $\mathcal{H}_n$ is called $n$-th Wiener Chaos (associated to $\mathcal{H}_1$).
\end{definition}

Plainly,  $\mathcal{H}_0= \overline{\mathcal{P}}_0(\mathcal{H}_1)=\mathbb{R}$ and $\mathcal{H}_1$ is the closure in 
$L^2(\Omega)$ of the set $$\left\{\sum_{i=1}^m c_i \xi_i, \ \xi_1,...,\xi_m  \in \mathcal{H}_1, \ c_1,...,c_m \in \mathbb{R}, \ m < \infty\right\}$$ and this  representation is consistent with Proposition \ref{lin_span_ex}.

\begin{remark}
\label{rem_complex}
In the above definition we may consider either real or complex spaces (we use the superscripts $\mathbb{R}$ and $\mathbb{C}$ to distinguish the two cases). The relation between real and complex cases is as follows (see \cite[Theorem 2.2]{Janson}. If $\mathcal{H}_1$ is a Gaussian Hilbert space, then $\mathcal{H}_0^\mathbb{R}=\mathbb{R}$, $\mathcal{H}_0^\mathbb{C}=\mathbb{C}$, $\mathcal{H}_1^\mathbb{R}=\mathcal{H}_1$, $\mathcal{H}_1^\mathbb{C}=\mathcal{H}_1+ \emph{i} \mathcal{H}_1$, and more generally, $\mathcal{H}_n^\mathbb{C}=\mathcal{H}_n+ \emph{i} \mathcal{H}_n$, the complexification of $\mathcal{H}_n^\mathbb{R}$ consisting of all complex-valued random variables whose real and imaginary parts belong to $\mathcal{H}_n^\mathbb{R}$.
\end{remark}

\begin{remark}
\label{ort_basis}
In the definition of $\mathcal{P}_n(\mathcal{H}_1)$ we may assume, without loss of generality, that the random variables $\xi_1,..., \xi_m$ are orthonormal.
In fact, if $\xi_1,..., \xi_m \in \mathcal{H}_1$, then there exists an orthonormal sequence $\zeta_1,..\zeta_n\in \mathcal{H}_1$ such that each $\xi_i$ is a linear combination of the random variables $\zeta_i$. Therefore, any polynomial of $\xi_1,...\xi_m$ can be written as a polynomial of at most the same degree in  $\zeta_1,..\zeta_n$.
\end{remark}

\begin{remark}
\label{rem_Janson}
If $\mathcal{H}_1$ is a finite-dimensional space, then $\mathcal{P}_n(\mathcal{H}_1)$ 
is also finite-dimensional. In particular, $\mathcal{P}_n(\mathcal{H}_1)$ is already closed, thus $\overline{\mathcal{P}}_n(\mathcal{H}_1)=\mathcal{P}_n(\mathcal{H}_1)$ and it is superfluous to take the closure in the definition of $\mathcal{H}_n$. On the other hand, if $\mathcal{H}_1$ has infinite dimension, taking the closure is essential. For example, as shown in \cite[Example 6.5]{Janson}, if $\{\xi_i \}_{i=1}^{\infty}$ is an orthonormal sequence in $\mathcal{H}_1$, then $\sum_{i=1}^{\infty}2^{-i}\xi_i^2$ belongs to $\overline{\mathcal{P}}_2(\mathcal{H}_1)$ but it is not a polynomial in any finite set of variables in $\mathcal{H}_1$.
\end{remark}

By construction, $\left\{ \overline{\mathcal{P}}_n(\mathcal{H}_1)\right\}_{n=0}^{\infty}$ is an increasing sequence of closed subspaces of $L^2(\Omega)$ and, if $n \ne m$, then $\mathcal{H}_n$ and $\mathcal{H}_m$ are orthogonal for the usual inner product of $L^2(\Omega)$. Moreover, 
\begin{equation*}
\overline{\mathcal{P}}_n(\mathcal{H}_1)= \bigoplus_{k=0}^n \mathcal{H}_k.
\end{equation*}
As a consequence,
\begin{equation}
\label{eq_big+}
\bigoplus_{n=0}^\infty \mathcal{H}_n=\overline{\bigcup_{n=0}^\infty \overline{\mathcal{P}}_n(\mathcal{H}_1)}.
\end{equation}
The next result, known as the Wiener chaotic decomposition of  $L^2(\Omega)$, shows that $\bigoplus_{n=0}^\infty \mathcal{H}_n$ coincides with $L^2(\Omega, \sigma(\mathcal{H}_1), \mathbb{P})$, where we recall that $\sigma(\mathcal{H}_1)$ is the $\sigma$-field generated by the random variables in $\mathcal{H}_1$.

\begin{theorem}
\label{WCD}
[\textbf{Wiener Chaos Decomposition}]
The spaces $\mathcal{H}_n$, $n \ge 0$, are mutually orthogonal, closed subspaces of $L^2(\Omega, \mathcal{F}, \mathbb{P})$ and 
\begin{equation*}
\bigoplus_{n=0}^\infty \mathcal{H}_n=L^2(\Omega, \sigma(\mathcal{H}_1), \mathbb{P}).
\end{equation*}
\end{theorem}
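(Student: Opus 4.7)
The orthogonality of $\mathcal{H}_n$ and $\mathcal{H}_m$ for $n\neq m$ and their closedness in $L^2(\Omega,\mathcal{F},\mathbb{P})$ are built into the definition $\mathcal{H}_n=\overline{\mathcal{P}}_n(\mathcal{H}_1)\cap\overline{\mathcal{P}}_{n-1}(\mathcal{H}_1)^{\perp}$, so the only substantive task is the identification $\bigoplus_{n\ge 0}\mathcal{H}_n=L^2(\Omega,\sigma(\mathcal{H}_1),\mathbb{P})$. The inclusion $\subseteq$ is immediate because every element of $\mathcal{P}_n(\mathcal{H}_1)$ is $\sigma(\mathcal{H}_1)$-measurable and this property is preserved under $L^2$-limits (after extracting almost-surely convergent subsequences). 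By \eqref{eq_big+}, the reverse inclusion is equivalent to the density in $L^2(\Omega,\sigma(\mathcal{H}_1),\mathbb{P})$ of the class of polynomial functions of finitely many elements of $\mathcal{H}_1$.

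I would argue this density by duality. Take $X\in L^2(\Omega,\sigma(\mathcal{H}_1),\mathbb{P})$ with $\mathbb{E}[XY]=0$ for every polynomial $Y\in\mathcal{P}_n(\mathcal{H}_1)$ and every $n\ge 0$; the goal is $X=0$ a.s. Fix $\xi_1,\ldots,\xi_m\in\mathcal{H}_1$, $t\in\mathbb{R}^m$, and set $\eta=t_1\xi_1+\cdots+t_m\xi_m\in\mathcal{H}_1$. The partial sum $S_N:=\sum_{n=0}^N (i\eta)^n/n!$ is a polynomial in $\xi_1,\ldots,\xi_m$, hence $\mathbb{E}[XS_N]=0$. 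Since $\eta$ is a centered Gaussian, $e^{|\eta|}$ has moments of every order; by Cauchy--Schwarz, $|XS_N|\le |X|\,e^{|\eta|}\in L^1$, so dominated convergence yields
\begin{equation*}
\mathbb{E}\!\left[X\,e^{i(t_1\xi_1+\cdots+t_m\xi_m)}\right]=0\qquad\text{for every }t\in\mathbb{R}^m.
\end{equation*}
This is the characteristic function at $t$ of the finite signed Borel measure on $\mathbb{R}^m$ obtained by pushing $X\,\mathrm{d}\mathbb{P}$ forward under $(\xi_1,\ldots,\xi_m)$; by uniqueness of the Fourier transform for finite signed measures, this push-forward vanishes, and therefore $\mathbb{E}[X\,g(\xi_1,\ldots,\xi_m)]=0$ for every bounded Borel $g:\mathbb{R}^m\to\mathbb{R}$.

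To conclude, I would consider $\mathcal{C}:=\{A\in\sigma(\mathcal{H}_1):\mathbb{E}[X\mathbf{1}_A]=0\}$. Standard dominated-convergence arguments show that $\mathcal{C}$ is a $\sigma$-algebra, and the previous step shows that $\mathcal{C}$ contains every cylinder set of the form $\{(\xi_1,\ldots,\xi_m)\in B\}$ with $\xi_i\in\mathcal{H}_1$ and $B\in\mathcal{B}(\mathbb{R}^m)$; by definition such sets generate $\sigma(\mathcal{H}_1)$, so $\mathcal{C}=\sigma(\mathcal{H}_1)$. Taking $A=\{X>0\}$ and $A=\{X<0\}$ gives $\mathbb{E}[X^+]=\mathbb{E}[X^-]=0$, i.e.\ $X=0$ a.s. The step I expect to be the most delicate is the passage from ``orthogonal to polynomials'' to ``orthogonal to complex exponentials'', since it relies crucially on the exponential integrability of Gaussian random variables to legitimate the interchange of $\lim$ and $\mathbb{E}$; once this Fourier identity is in hand, the remainder is essentially measure-theoretic bookkeeping.
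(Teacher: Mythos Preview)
Your proof is correct and follows essentially the same route as the paper: approximate $e^{i\eta}$ by its Taylor polynomials, justify the interchange of limit and expectation via the exponential integrability of Gaussians, invoke injectivity of the Fourier transform on finite signed measures, and finish with a $\sigma$-algebra/monotone-class argument. The only organisational difference is that the paper isolates the step ``$\mathbb{E}[Ye^{i\xi}]=0$ for all $\xi\in\mathcal{H}_1$ implies $Y=0$ a.s.'' as a separate lemma (Lemma~\ref{prel_WCD}) and then shows $e^{i\xi}$ lies in the $L^2$-closure of polynomials, whereas you run the whole argument inline; the mathematical content is the same.
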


The proof of Theorem \ref{WCD} relies on the following preliminary result.
\begin{lemma}
\label{prel_WCD}
If $Y \in L^1(\Omega, \sigma(\mathcal{H}_1), \mathbb{P})$ and $\mathbb{E}\left[ Y e^{i\xi}\right]=0$ for every $\xi \in \mathcal{H}_1$, then $Y=0$ $\mathbb{P}$-a.s.
\end{lemma}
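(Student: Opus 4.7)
The plan is to reduce the statement to the injectivity of the Fourier transform on finite complex Borel measures on $\mathbb{R}^n$, and then extend from finite-dimensional projections to all of $\sigma(\mathcal{H}_1)$ by a monotone class argument. Write $Y = Y^+ - Y^-$ and define the signed measure $\nu(A) := \mathbb{E}[Y \mathbf{1}_A]$ on $\sigma(\mathcal{H}_1)$; the goal is to show $\nu \equiv 0$.

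First I would exploit the linear structure of $\mathcal{H}_1$. For any finite collection $\xi_1, \dots, \xi_n \in \mathcal{H}_1$ and any $t = (t_1, \dots, t_n) \in \mathbb{R}^n$, the random variable $\sum_{j=1}^n t_j \xi_j$ again lies in $\mathcal{H}_1$, so the hypothesis gives
\begin{equation*}
\mathbb{E}\!\left[Y e^{i\sum_{j=1}^n t_j \xi_j}\right] = 0.
\end{equation*}
Let $\mu_n$ be the pushforward of $\nu$ under the map $(\xi_1, \dots, \xi_n) : \Omega \to \mathbb{R}^n$; this is a finite complex (signed) Borel measure on $\mathbb{R}^n$ with total variation bounded by $\mathbb{E}[|Y|] < \infty$. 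The displayed identity says exactly that the Fourier transform $\widehat{\mu}_n(t)$ vanishes for every $t \in \mathbb{R}^n$. By the uniqueness of Fourier transforms for finite complex measures on $\mathbb{R}^n$ (applied to the Jordan decomposition, or directly via the standard density argument in $C_0(\mathbb{R}^n)$), this forces $\mu_n \equiv 0$. Translating back, $\nu(A) = 0$ for every $A$ in the finite-dimensional cylinder $\sigma$-algebra $\sigma(\xi_1, \dots, \xi_n)$.

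Next I would piece the finite-dimensional conclusions together. Let $\mathcal{A}$ be the collection of sets of the form $\{(\xi_1, \dots, \xi_n) \in B\}$ with $n \ge 1$, $\xi_1, \dots, \xi_n \in \mathcal{H}_1$, $B \in \mathcal{B}(\mathbb{R}^n)$. This $\mathcal{A}$ is an algebra (closure under finite intersections and complements follows by enlarging the finite collection of $\xi_i$'s and using product Borel sets), it contains $\Omega$, and by definition it generates $\sigma(\mathcal{H}_1)$. The previous paragraph gives $\nu(A) = 0$ for every $A \in \mathcal{A}$. The main (and only real) obstacle is to extend this to all of $\sigma(\mathcal{H}_1)$: I would invoke the monotone class theorem (or Dynkin's $\pi$--$\lambda$ lemma), using that $\{A \in \sigma(\mathcal{H}_1) : \nu(A) = 0\}$ is a monotone class (limits of monotone sequences are handled by dominated/monotone convergence against the integrable $Y$), and the generating $\pi$-system $\mathcal{A}$ lies inside it.

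Consequently $\nu \equiv 0$ on $\sigma(\mathcal{H}_1)$. Since $Y$ is $\sigma(\mathcal{H}_1)$-measurable, choosing $A = \{Y > 0\}$ and $A = \{Y < 0\}$ yields $\mathbb{E}[Y^+] = \mathbb{E}[Y^-] = 0$, hence $Y = 0$ $\mathbb{P}$-a.s., completing the proof.
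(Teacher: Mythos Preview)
Your proof is correct and follows essentially the same strategy as the paper: push the signed measure $\nu(A)=\mathbb{E}[Y\mathbf{1}_A]$ forward through a finite tuple $(\xi_1,\dots,\xi_n)$, use the linearity of $\mathcal{H}_1$ to kill its Fourier transform on $\mathbb{R}^n$, and then extend from cylinder sets to all of $\sigma(\mathcal{H}_1)$ by a monotone-class/$\pi$--$\lambda$ argument. Your version is in fact a bit more streamlined than the paper's, which routes the same computation through $\mathbb{E}[Y\mid\sigma(\xi_1,\dots,\xi_n)]$ before arriving at the identical Fourier-uniqueness step.
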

\begin{proof}
Let $\{\xi_n\}_{n \in \mathbb{N}}$ be an orthonormal basis in $\mathcal{H}_1$. \begin{footnote}{We are considering here the case of a \emph{separable} Gaussian Hilbert space $\mathcal{H}_1$. The same proof (simpler) works when dim$\mathcal{H}_1< \infty$.}\end{footnote} Fix $n \ge 1$ and $\xi_1,..., \xi_n \in \mathcal{H}_1$. Let us denote by $\mu$ the distribution of the Gaussian vector $\xi:=(\xi_1,...,\xi_n)$ on $\mathbb{R}^n$ and by $\mathcal{G}^n= \sigma(\xi_1,..., \xi_n)$ the $\sigma$-field generated by $\xi_1,..., \xi_n$; obviously $\mathcal{G}^n \subset \sigma(\mathcal{H}_1)$. For a given 
complex-valued
$\varphi \in L^1(\mathbb{R}^n, \mu)$ we can write
\begin{align}
\label{est1lem}
\mathbb{E}\left[Y \varphi(\xi_1,...,\xi_n)\right]
= \mathbb{E}\left[\mathbb{E}\left[ Y\varphi(\xi_1,..,\xi_n)|\mathcal{G}^n\right] \right]
= \mathbb{E}\left[\varphi(\xi_1,..,\xi_n)\mathbb{E}\left[ Y|\mathcal{G}^n\right] \right],
\end{align}
where in the last equality we used the fact that $\varphi(\xi_1,..,\xi_n)$ is a $\mathcal{G}^n$-measurable random variable. Since $\mathbb{E}\left[ Y|\mathcal{G}^n\right]$ is $\mathcal{G}^n$-measurable by definition, there exists a measurable function $\psi: \mathbb{R}^n\rightarrow\mathbb{R}$ such that $\mathbb{E}\left[ Y|\mathcal{G}^n\right]=\psi(\xi_1,...,\xi_n)$.
Combined with \eqref{est1lem} this gives
\begin{equation}
\label{est2lem}
\mathbb{E}\left[Y \varphi(\xi_1,...,\xi_n)\right]
=\mathbb{E}\left[\varphi(\xi_1,..,\xi_n)\psi(\xi_1,..,\xi_n)\right]
=\int_{\mathbb{R}^n}\varphi(z_1,..,z_n)\psi(z_1,..,z_n)\,{\rm d}\mu(z).
\end{equation}
At this point we make two appropriate choices of the function $\varphi$.
As first choice we take $\varphi(\cdot)=\pmb{1}_B(\cdot)$ with $B \in \mathcal{B}(\mathbb{R}^n)$: \eqref{est2lem} yields
\begin{equation*}
\mathbb{E}\left[Y \pmb{1}_B(\xi)\right]
=\int_B\psi(z)\,{\rm d}\mu(z)
\end{equation*}
and we introduce the measure $\nu$ on $\mathbb{R}^n$ as 
\begin{equation}
\label{est3lem}
\nu (B):= \mathbb{E}\left[Y \pmb{1}_B(\xi)\right], \quad B \in \mathcal{B}(\mathbb{R}^n).
\end{equation}
As second choice, given $t=(t_1,...,t_n) \in \mathbb{R}^n$, we consider $\varphi(\cdot)=e^{i \langle t, \cdot\rangle_{\mathbb{R}^n}}
$. Since $\mathcal{H}_1$ is an Hilbert space and $\xi_1,...,\xi_n \in \mathcal{H}_1$, it immediately follows that $\langle t, \xi \rangle \in \mathcal{H}_1$ and thus, by assumption, we have that 
\begin{equation}
\label{est4lem}
\mathbb{E}\left[Ye^{i\langle t, \xi \rangle}\right]=0.
\end{equation}
On the other hand, \eqref{est2lem} and \eqref{est3lem} yield
\begin{equation}
\label{est5lem}
\mathbb{E}\left[Ye^{i\langle t, \xi \rangle}\right]
= \int_{\mathbb{R}^n} e^{i\langle t, z \rangle}\psi (z)\, {\rm d}\mu(z)
=\int_{\mathbb{R}^n} e^{i\langle t, z \rangle}\, {\rm d}\nu(z) =\widehat {\nu}(t).
\end{equation}
From \eqref{est4lem} and \eqref{est5lem} we thus conclude that $\widehat {\nu}=0$, that is the measure $\nu$ has Fourier transform that vanishes identically, which implies that $\nu \equiv 0$, by the injectivity of the Fourier tansform. Thus, by the definition of $\nu$ given in \eqref{est3lem}, 
\begin{equation}
\label{est6lem}
\mathbb{E}\left[Y\pmb{1}_{(\xi \in B)}\right]=0 \qquad \forall \ B \in \mathcal{B}(\mathbb{R}^n).
\end{equation}

Let us now call $\Omega \supset G^n_B:=\left\{ (\xi \in B) \subset \Omega: \ B \in \mathcal{B}(\mathbb{R}^n) \right\}
$; we have that $\sigma(G^n_B)=\mathcal{G}^n \subset \sigma(\mathcal{H}_1)$. Let us introduce the family $\mathcal{A}:=\left\{ \Gamma \subset \Omega \ : \ \mathbb{E}\left[Y \pmb{1}_\Gamma \right]=0\right\}$. It is easy to prove that $\mathcal{A}$ is a $\sigma$-field and it is non empty since $ G^n_B \in \mathcal{A}$ in virtue of \eqref{est6lem}. As a consequence, $\mathcal{A}$ contains $\mathcal{G}^n$. 

At this point let $n$ vary: for any $n \ge 1$ we have $\mathcal{G}^n \subset \mathcal{A}$, that is $\{\mathcal{G}_{n}\}_{n \in \mathbb{N}}\subset \mathcal{A}$. Since the family of $\sigma$-fields \{$\mathcal{G}^n\}_{n \in \mathbb{N}}$ generates the $\sigma$-field $\sigma(\mathcal{H}_1)$, it follows that 
$\sigma(\mathcal{H}_1)\subset \mathcal{A}$.
Therefore we obtain
\begin{equation*}
\mathbb{E}\left[Y\pmb{1}_{\mathcal{O}}\right]=0 \qquad \forall \ \mathcal{O} \in \sigma(\mathcal{H}_1),
\end{equation*}
and by approximation $\mathbb{E}\left[YX\right]=0$ for any $\sigma(\mathcal{H}_1)$-measurable random variable $X$. This implies $Y=0$ $\mathbb{P}$-a.s. which concludes the proof.
\end{proof}

Let us come to the proof of Theorem \ref{WCD}.
\begin{proof}[Proof of Theorem \ref{WCD}]
Let $K:=\overline{\bigcup_{n=0}^\infty \overline{\mathcal{P}}_n(\mathcal{H}_1)}$. It is evident that $\overline{\mathcal{P}}_n(\mathcal{H}_1)\subseteq L^2(\Omega, \sigma(\mathcal{H}_1), \mathbb{P})$ for any $n \ge 0$ and thus $K\subseteq L^2(\Omega, \sigma(\mathcal{H}_1), \mathbb{P})$. It only remains to prove that equality holds. In virtue of \eqref{eq_big+}, this is equivalent to show that, if $Y \in L^2(\Omega, \sigma(\mathcal{H}_1), \mathbb{P})$ is orthogonal to $K$, then $Y=0$.
It is simplest to show this first for the complex case, the real case then follows from Remark \ref{rem_complex}. Hence we assume for the rest of the proof that we are using complex scalars. 

We start by proving that, whenever $\xi \in \mathcal{H}_1$, $e^{i\xi} \in K$: let $\xi \in \mathcal{H}_1$, we have
\begin{equation}
\label{est_ei}
\left|e^{i\xi}-\sum_{k=0}^n\frac{(i\xi)^k}{k!}\right|  \le 1 + \sum_{k=0}^n\frac{|\xi|^k}{k!} \le 1+ e^{|\xi|} \le 1+ e^{\xi}+e^{-\xi}.
\end{equation}
Since the r.h.s. of \eqref{est_ei} belongs to $L^2(\Omega)$
and the l.h.s. tends to zero pointwise as $n \rightarrow \infty$, the Dominated Convergence Theorem yields $\sum_{k=0}^n \frac{(i\xi)^k}{k!} \rightarrow e^{i \xi}$ in $L^2$. Since $\xi^k \in \mathcal{P}_k(\mathcal{H}_1)\subset K$ for any $k \ge 0 $, we have that $e^{i\xi} \in K$ whenever $\xi \in \mathcal{H}_1$, as wanted.

Consequently, if $Y \in L^2(\Omega, \sigma(\mathcal{H}_1), \mathbb{P})$ is orthogonal to $K$, then in particular (since we have shown above that $e^{i\xi} \in K$ whenever $\xi \in \mathcal{H}_1$),
\begin{equation*}
\langle Y, e^{i\xi}\rangle_{L^2}=\mathbb{E}\left[ Y e^{i\xi}\right]=0, \quad \forall \ \xi \in \mathcal{H}_1.
\end{equation*}
In virtue of Lemma \ref{prel_WCD} we have that $Y=0$ $\mathbb{P}$-a.s. and this concludes the proof.
\end{proof}

\begin{corollary}
\label{corWCD}
If the Gaussian Hilbert space $\mathcal{H}_1$ generates the $\sigma$-field $\mathcal{F}$, then $L^2(\Omega, \mathcal{F}, \mathbb{P})$ has the orthogonal decomposition 
\begin{equation*}
L^2(\Omega, \mathcal{F}, \mathbb{P})=\bigoplus_{n=0}^\infty \mathcal{H}_n.
\end{equation*}
\end{corollary}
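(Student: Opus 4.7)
The plan is to reduce this to a direct application of Theorem \ref{WCD}. The hypothesis ``$\mathcal{H}_1$ generates the $\sigma$-field $\mathcal{F}$'' is to be read as the identity of $\sigma$-fields $\sigma(\mathcal{H}_1)=\mathcal{F}$, matching the notation introduced just before Theorem \ref{WCD}. Granting this, no further argument is required: the Wiener Chaos Decomposition Theorem states exactly that $\bigoplus_{n=0}^\infty \mathcal{H}_n=L^2(\Omega, \sigma(\mathcal{H}_1), \mathbb{P})$, and substituting $\sigma(\mathcal{H}_1)=\mathcal{F}$ on the right-hand side yields the claim.

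The only step that might warrant a line of justification is the orthogonality assertion, which in any case is already contained in Theorem \ref{WCD}: the spaces $\mathcal{H}_n$ are pairwise orthogonal closed subspaces of $L^2(\Omega,\mathcal{F},\mathbb{P})$ by construction (since $\mathcal{H}_n=\overline{\mathcal{P}}_n(\mathcal{H}_1)\cap\overline{\mathcal{P}}_{n-1}(\mathcal{H}_1)^{\perp}$, and the sequence $\{\overline{\mathcal{P}}_n(\mathcal{H}_1)\}_n$ is increasing), so the direct sum is automatically an \emph{orthogonal} decomposition.

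There is essentially no obstacle here; the result is a cosmetic rewording of Theorem \ref{WCD} in the special case where the ambient $\sigma$-field $\mathcal{F}$ coincides with the $\sigma$-field $\sigma(\mathcal{H}_1)$ generated by the Gaussian Hilbert space. The proof will consist of a single sentence invoking Theorem \ref{WCD} together with the hypothesis $\sigma(\mathcal{H}_1)=\mathcal{F}$.
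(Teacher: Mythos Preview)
Your proposal is correct and matches the paper's approach: the corollary is stated there without proof, as it is an immediate consequence of Theorem \ref{WCD} under the hypothesis $\sigma(\mathcal{H}_1)=\mathcal{F}$. Your reading of the hypothesis and the one-line reduction are exactly what is intended.
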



In view of Theorem \ref{WCD} every random variable $X$ in $L^2(\Omega)$ admits a unique expansion. For $n \ge 0$ let us denote by $J_n$ the orthogonal projection of $L^2(\Omega)$ onto $\mathcal{H}_n$; in particular, $J_0(X)=\mathbb{E}\left[X\right]$. Then, every random variable $X \in L^2(\Omega, \sigma(\mathcal{H}_1), \mathbb{P})$ admits the unique expansion of the type
\begin{equation*}
X=\mathbb{E}\left[X\right] + \sum_{n=1}^{\infty} J_n(X),
\end{equation*}
with the series converging in $L^2(\Omega)$.

Another consequence of Theorem \ref{WCD} is the fact that the set of polynomials in elements of $\mathcal{H}_1$ is dense in $L^2(\Omega)$. We define the space of polynomials in elements of $\mathcal{H}_1$ as 
\begin{equation*}
\mathcal{P}(\mathcal{H}_1):= \bigcup_{n=0}^\infty \mathcal{P}_n(\mathcal{H}_1).
\end{equation*}
We aso define 
\[
\bar {\mathcal{P}}(\mathcal{H}_1) := \bigcup_{n=0}^\infty \bar {\mathcal{P}}_n(\mathcal{H}_1)=\bigoplus_{n=0}^\infty \mathcal{H}_n,
\]
as the space of all elements in $L^2(\Omega, \sigma(\mathcal{H}_1), \mathbb{P})$ having finite chaos decomposition. Notice that, if $\mathcal{H}_1$ has finite dimension, then $\bar {\mathcal{P}}(\mathcal{H}_1)$ equals the space ${\mathcal{P}}(\mathcal{H}_1)$, but if $\mathcal{H}_1$ is infinite dimensional, then $\bar{\mathcal{P}}(\mathcal{H}_1)$ is strictly larger, see Remark \ref{rem_Janson}.

\begin{corollary}
\label{cor_den_pol}
The set of polynomial variables $\mathcal{P}(\mathcal{H}_1)$ is dense in $L^2(\Omega, \sigma(\mathcal{H}_1), \mathbb{P})$.
\end{corollary}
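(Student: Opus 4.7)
The plan is to read off the corollary directly from Theorem \ref{WCD}, the point being that the hard work (approximating exponentials by polynomial truncations of their Taylor series and applying Lemma \ref{prel_WCD}) has already been done in the proof of the chaos decomposition; the corollary only unwraps the relation between $\mathcal{P}_n(\mathcal{H}_1)$ and $\overline{\mathcal{P}}_n(\mathcal{H}_1)$.

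First I would rewrite Theorem \ref{WCD} by means of \eqref{eq_big+} as
\[
\overline{\bigcup_{n=0}^{\infty} \overline{\mathcal{P}}_n(\mathcal{H}_1)} \;=\; L^2(\Omega, \sigma(\mathcal{H}_1), \mathbb{P}),
\]
so the algebraic union $\bigcup_{n\ge 0} \overline{\mathcal{P}}_n(\mathcal{H}_1)$ is already dense in $L^2(\Omega, \sigma(\mathcal{H}_1), \mathbb{P})$. It therefore suffices to show the inclusion
\[
\bigcup_{n=0}^{\infty} \overline{\mathcal{P}}_n(\mathcal{H}_1) \;\subseteq\; \overline{\mathcal{P}(\mathcal{H}_1)},
\]
with closure taken in $L^2(\Omega)$; taking $L^2$-closures of both sides and using $\mathcal{P}(\mathcal{H}_1) \subseteq L^2(\Omega, \sigma(\mathcal{H}_1), \mathbb{P})$ will then squeeze $\overline{\mathcal{P}(\mathcal{H}_1)}$ between two copies of $L^2(\Omega, \sigma(\mathcal{H}_1), \mathbb{P})$.

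This inclusion is immediate from the very definitions: any $Y \in \overline{\mathcal{P}}_n(\mathcal{H}_1)$ is, by construction, an $L^2$-limit of a sequence in $\mathcal{P}_n(\mathcal{H}_1) \subseteq \mathcal{P}(\mathcal{H}_1)$, hence lies in $\overline{\mathcal{P}(\mathcal{H}_1)}$; letting $n$ vary gives the required containment. There is no real obstacle here — the corollary is essentially a repackaging of Theorem \ref{WCD}. If one prefers a self-contained diagonal construction over the inclusion argument, the same conclusion is reached by expanding $X = \sum_{n\ge 0} J_n(X)$ in $L^2$ via Theorem \ref{WCD}, observing that the partial sums $S_N := \sum_{n=0}^N J_n(X)$ lie in $\overline{\mathcal{P}}_N(\mathcal{H}_1)$, choosing $P_N \in \mathcal{P}_N(\mathcal{H}_1) \subseteq \mathcal{P}(\mathcal{H}_1)$ with $\|P_N - S_N\|_{L^2} \le 1/N$, and concluding $P_N \to X$ in $L^2$ by the triangle inequality.
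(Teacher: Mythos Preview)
Your proof is correct and follows essentially the same approach as the paper's: both use Theorem~\ref{WCD} together with \eqref{eq_big+} to reduce the statement to the elementary observation that $\overline{\bigcup_n \mathcal{P}_n(\mathcal{H}_1)} = \overline{\bigcup_n \overline{\mathcal{P}}_n(\mathcal{H}_1)}$. The paper simply asserts this equality of closures, while you spell out the inclusion $\bigcup_n \overline{\mathcal{P}}_n(\mathcal{H}_1) \subseteq \overline{\mathcal{P}(\mathcal{H}_1)}$ explicitly --- but the content is the same.
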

\begin{proof}
As an immediate consequence of Theorem \ref{WCD} we have that  $\bar {\mathcal{P}}(\mathcal{H}_1) 
$ is dense in $L^2(\Omega, \sigma(\mathcal{H}_1), \mathbb{P})$. Since $
\overline{\bigcup_{n=0}^\infty \mathcal{P}_n(\mathcal{H}_1)}=\overline{\bigcup_{n=0}^\infty \overline{\mathcal{P}}_n(\mathcal{H}_1)}$, the result immediately follows. 
\end{proof}

More generally, the following result holds true.
\begin{proposition}
\label{PdenseLp}
If $0 < p< \infty$, then the set of polynomial variables $\mathcal{P}(\mathcal{H}_1)$ is a dense subspace of $L^p(\Omega, \sigma(\mathcal{H}_1), \mathbb{P})$.
\end{proposition}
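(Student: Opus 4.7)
My plan is to extend Corollary \ref{cor_den_pol} from $p=2$ to every $0<p<\infty$ by mirroring the exponential-approximation strategy already used in the proof of Theorem \ref{WCD}, and then bootstrapping down to the sub-$L^1$ regime via the comparability of $L^p$-(semi)norms on a probability space. First note that $\mathcal{P}(\mathcal{H}_1)\subset L^p(\Omega,\mathcal{F},\mathbb{P})$ for every $p<\infty$: by Proposition \ref{L^p_Gauss} each $\xi\in\mathcal{H}_1$ lies in every $L^q$, and H\"older's inequality promotes this to finite products, hence to polynomials.

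For $1\le p<\infty$ the core step is to show that the complex linear span $V:=\operatorname{span}_\mathbb{C}\{e^{i\xi}:\xi\in\mathcal{H}_1\}$ is dense in $L^p_\mathbb{C}(\Omega,\sigma(\mathcal{H}_1),\mathbb{P})$. I would argue by duality: were the $L^p$-closure of $V$ a proper subspace, Hahn--Banach together with the identification $(L^p_\mathbb{C})^*\cong L^{p'}_\mathbb{C}$ would produce a non-zero $Y\in L^{p'}_\mathbb{C}$ with $\mathbb{E}[Y e^{i\xi}]=0$ for every $\xi\in\mathcal{H}_1$. Since $\mathbb{P}$ is a probability measure, $L^{p'}\subset L^1$; splitting $Y=Y_1+iY_2$ and using $\xi\mapsto-\xi$ to decouple the four real expectations $\mathbb{E}[Y_j\cos\xi]$ and $\mathbb{E}[Y_j\sin\xi]$, one gets $\mathbb{E}[Y_je^{i\xi}]=0$ for $j=1,2$ and Lemma \ref{prel_WCD} forces $Y_1=Y_2=0$, a contradiction. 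Next, each $e^{i\xi}$ is the $L^p$-limit of its Taylor polynomials $S_n(\xi):=\sum_{k=0}^n(i\xi)^k/k!$: exactly as in the proof of Theorem \ref{WCD}, the bound $|e^{i\xi}-S_n(\xi)|\le 1+e^{|\xi|}$ combined with $e^{|\xi|}\in L^q$ for every $q<\infty$ (because $\xi$ is Gaussian) and dominated convergence do the job. Combining the two facts, complex polynomials in elements of $\mathcal{H}_1$ are $L^p_\mathbb{C}$-dense; for real-valued $f$, replacing the complex approximant $g$ by $\Real(g)$ yields a real polynomial with $|f-\Real(g)|\le|f-g|$, settling $1\le p<\infty$.

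For $0<p<1$, Jensen's inequality applied to the concave map $t\mapsto t^p$ yields $\mathbb{E}|Z|^p\le(\mathbb{E}|Z|)^p$, so $L^1$-convergence implies $L^p$-convergence. Given $f\in L^p$, the truncations $f_M:=(-M)\vee f\wedge M\in L^\infty\subset L^1$ satisfy $\mathbb{E}|f_M-f|^p\to 0$ by dominated convergence; each $f_M$ is then approximable in $L^1$ (hence in $L^p$) by polynomials from the case $p=1$, and the $p$-metric triangle inequality $(a+b)^p\le a^p+b^p$ closes the argument.

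The technical heart of the proof is the duality step for $p\ge 1$: identifying $(L^p_\mathbb{C})^*$ with $L^{p'}_\mathbb{C}$ and upgrading Lemma \ref{prel_WCD} from a real to a complex test variable, the latter handled by the short real/imaginary-part decomposition above. All remaining ingredients are direct reuses of the Wiener chaos machinery already developed in the excerpt.
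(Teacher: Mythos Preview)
Your proposal is correct and follows essentially the same approach as the paper. The paper's proof is extremely terse---it simply says that for $1\le p<\infty$ one ``can proceed as in the proof of Theorem~\ref{WCD}'' and that the case $0<p<1$ follows since $L^2$ is dense in $L^p$---and your argument is precisely the natural fleshing out of that hint: the Hahn--Banach/duality step replaces the $L^2$-orthogonality used in Theorem~\ref{WCD}, the exponential Taylor approximation is reused verbatim, and your bootstrap to $0<p<1$ via truncation and the $L^1$ case is the same idea as the paper's (which routes through $L^2$ instead of $L^1$, an inconsequential difference).
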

\begin{proof}
The polynomial variables belong to $L^p(\Omega)$ thanks to the H\"older inequality. In order to show denseness for $1 \le p < \infty$ one can proceed as in the proof of Theorem \ref{WCD}. If $0< p<1$, the result follows from the $L^2$ case since $L^2(\Omega, \sigma(\mathcal{H}_1), \mathbb{P})$ is a dense subspace of $L^p(\Omega, \sigma(\mathcal{H}_1), \mathbb{P})$. \end{proof}

\subsection{Hermite Polynomials}
\label{Hermite_sec}
Explicit orthonormal bases of each Wiener Chaos $\mathcal{H}_n$ may be constructed; in view of Theorem \ref{WCD} their union gives an explicit orthonormal basis of $L^2(\Omega, \sigma(\mathcal{H}_1), \mathbb{P})$. To this aim, we introduce the notion of Hermite polynomials. For more details see e.g. \cite[Chapter 1.1.1]{Nualart} and \cite[Chapter 8.1]{Lunardi}.

\begin{definition}
The sequence of \emph{Hermite polynomials} in $\mathbb{R}$ is defined by
\begin{equation}
\label{H_n}
H_n(x):=\frac{(-1)^n}{n!}e^{\frac{x^2}{2}}\frac{d^n}{dx^n}e^{-\frac{x^2}{2}}, \quad n \ge 1, \ x \in \mathbb{R},
\end{equation}
and $H_0(x)=1$.
\end{definition}

Thus,
\begin{equation*}
H_0(x)=1, \quad H_1(x)=x, \quad H_2(x)=\frac{x^2-1}{2}, \quad H_3(x)=\frac{x^3-3x}{6},
\end{equation*}
\begin{equation*}
H_4(x)=\frac{x^4-6x^2+3}{4!}, \quad H_5(x)=\frac{x^5-10x^3 +15x}{5!}, \quad \text{etc.}
\end{equation*}

\begin{remark}
Beware that the definition given here may differ from the one given in other references  by a factor $n!$, see e.g. \cite{Malliavin}.
\end{remark}
These polynomials are the coefficients of the expansion in powers of $t$ of the function $F(x,t):= e^{tx-\frac{t^2}{2}}$. In fact, 
\begin{equation*}
F(x,t)= e^{\frac{x^2}{2}-\frac12(x-t)^2}
= e^{\frac{x^2}{2}}\sum_{n=0}^\infty \frac{t^n}{n!} \left(\frac{d^n}{dt^n}e^{-\frac{(x-t)^2}{2}} \right)|_{t=0}
=\sum_{n=0}^\infty t^n H_n(x).
\end{equation*}
Using this development it is not difficult to prove the following properties of Hermite polynomials, that hold for any $n \ge 1$,
\begin{equation*}
H_n'(x)=H_{n-1}(x),
\end{equation*}
\begin{equation*}
(n+1)H_{n+1}(x)=nH_n(x)-H_{n-1}(x), 
\end{equation*}
\begin{equation*}
H_n(-x)=(-1)^n H_n(x).
\end{equation*}
Next, we define the Hermite polynomials on $\mathbb{R}^d$.
\begin{definition}
If $\alpha =(\alpha_1,..., \alpha_d) \in (\mathbb{N} \cup \{0\})^d$ is a multi-index, we define the Hermite polynomials $H_\alpha$ by 
\begin{equation*}
H_\alpha(x)=\prod_{i=1}^d H_{\alpha_i}(x_i), \qquad x=(x_1,..., x_d) \in \mathbb{R}^d.
\end{equation*}
\end{definition}


Let us now see how to define Hermite polynomials in the infinite-dimensional case.
Let $\mathcal{H}$ be a separable Hilbert space. We introduce the set $\Lambda$ of multi-indices $\alpha \in (\mathbb{N} \cup \{0\})^{\mathbb{N}}$, $\alpha=(\alpha_i)_i$, such that all the terms, except a finite number of them, vanish. For $\alpha \in \Lambda$ we set $\alpha!:= \prod_{i=1}^\infty a_i!$ and $|\alpha|=\sum_{i=1}^\infty a_i$.
\begin{definition}
For any multi-index $\alpha \in \Lambda$ we define the \emph{generalized Hermite polynomial} $H_\alpha(x)$, 
 by
\begin{equation*}
H_\alpha(x)=\prod_{i=1}^\infty H_{\alpha_i}(x_i), \qquad \alpha \in \Lambda, \quad x \in \mathbb{R}^{\mathbb{N}}.
\end{equation*}
\end{definition}
Notice that the above product is well defined since $H_0=1$ and $\alpha_i \ne 0$ only for a finite number of indices.

Let us now clarify the connection between the spaces $\mathcal{H}_n$ and the Hermite polynomials.
On the probability space $(\Omega, \mathcal{F}, \mathbb{P})$ let us consider the separable Hilbert space $\mathcal{H}_1$. Let $\{\xi_i\}_{i \in \mathbb{N}}$ be an orthonormal basis in $\mathcal{H}_1$. For any $\alpha \in \Lambda$ let us define
\begin{equation*}
\Phi_\alpha := \sqrt{\alpha!} \prod_{i=1}^\infty H_{\alpha_i}(\xi_i).
\end{equation*}
\begin{proposition}
For any $n \ge 1$ the random variables 
\begin{equation*}
\{\Phi_\alpha, \alpha \in \Lambda, |\alpha|=n\}
\end{equation*}
form a complete orthonormal system in $\mathcal{H}_n$.
\end{proposition}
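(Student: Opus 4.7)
The plan is to establish three facts which together yield the proposition: (a) the family $\{\Phi_\alpha\}_{\alpha\in\Lambda}$ is orthonormal in $L^2(\Omega,\sigma(\mathcal{H}_1),\mathbb{P})$; (b) every $\Phi_\alpha$ with $|\alpha|=n$ lies in $\overline{\mathcal{P}}_n(\mathcal{H}_1)$; (c) the closed linear span of $\{\Phi_\alpha:\alpha\in\Lambda\}$ equals $L^2(\Omega,\sigma(\mathcal{H}_1),\mathbb{P})$. Grouping the orthonormal basis by the value of $|\alpha|$, setting $V_n:=\overline{\mathrm{span}}\{\Phi_\alpha:|\alpha|=n\}$, and combining with Theorem \ref{WCD} and the identity $\overline{\mathcal{P}}_n(\mathcal{H}_1)=\bigoplus_{k=0}^n\mathcal{H}_k$, an induction on $n$ then forces $V_n=\mathcal{H}_n$, which is the claim.

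For (a), the crucial one-dimensional input is
\[
\mathbb{E}\bigl[H_p(\xi)\,H_q(\xi)\bigr]=\frac{\delta_{pq}}{p!},\qquad \xi\sim\mathcal{N}(0,1),
\]
which I would derive by computing $\mathbb{E}[e^{(s+t)\xi-(s^2+t^2)/2}]=e^{st}$, expanding both sides using the generating function $e^{tx-t^2/2}=\sum_nt^nH_n(x)$, and matching coefficients of $s^pt^q$. Since $\{\xi_i\}_{i\in\mathbb{N}}$ is an orthonormal family inside a Gaussian Hilbert space, the $\xi_i$ are centered jointly Gaussian and pairwise uncorrelated, hence mutually independent (Proposition \ref{Gaus_joint_distr} together with the covariance-vanishes-iff-independent fact recalled earlier). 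Independence factorizes the expectation:
\[
\mathbb{E}[\Phi_\alpha\Phi_\beta]=\sqrt{\alpha!\,\beta!}\prod_{i\ge 1}\mathbb{E}\bigl[H_{\alpha_i}(\xi_i)H_{\beta_i}(\xi_i)\bigr]=\delta_{\alpha\beta}.
\]
For (b), $\Phi_\alpha$ is a polynomial of total degree exactly $|\alpha|$ in finitely many $\xi_i\in\mathcal{H}_1$, so $\Phi_\alpha\in\mathcal{P}_{|\alpha|}(\mathcal{H}_1)\subset\overline{\mathcal{P}}_{|\alpha|}(\mathcal{H}_1)$.

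For (c), two steps are required. First, since $\{H_0,\ldots,H_k\}$ spans the polynomials of degree $\le k$, any monomial $\xi_{i_1}^{k_1}\cdots\xi_{i_r}^{k_r}$ is a finite linear combination of Hermite products $\prod_jH_{\gamma_{i_j}}(\xi_{i_j})$, and hence the algebraic span of the $\Phi_\alpha$'s contains every polynomial in the basis elements $\xi_i$. Second, by Corollary \ref{cor_den_pol} the space $\mathcal{P}(\mathcal{H}_1)$ is dense in $L^2(\Omega,\sigma(\mathcal{H}_1),\mathbb{P})$, so it remains to approximate any polynomial $p(\eta_1,\ldots,\eta_m)$ in arbitrary $\eta_j\in\mathcal{H}_1$ by polynomials in the basis. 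Writing $\eta_j=\sum_ic_{ji}\xi_i$ and truncating to $\eta_j^{(N)}=\sum_{i\le N}c_{ji}\xi_i$, the main obstacle is the $L^2$-continuity of polynomial evaluation, which in general fails but is rescued by Gaussianity: by Proposition \ref{L^p_Gauss} the difference $\eta_j^{(N)}-\eta_j\in\mathcal{H}_1$ tends to $0$ in \emph{every} $L^q$-norm, and expanding $p(\eta_1^{(N)},\ldots,\eta_m^{(N)})-p(\eta_1,\ldots,\eta_m)$ multilinearly and applying H\"older's inequality with sufficiently large $L^q$ exponents delivers convergence in $L^2$. The two steps together show that $\overline{\mathrm{span}}\{\Phi_\alpha:\alpha\in\Lambda\}\supset\mathcal{P}(\mathcal{H}_1)$, and this establishes (c) and completes the proof.
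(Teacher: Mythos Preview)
Your argument is correct and is essentially the standard proof (the paper itself does not give a proof but refers to \cite[Proposition 1.1.1]{Nualart}, which proceeds along the same lines: orthonormality via the generating-function identity for Hermite polynomials together with independence of the $\xi_i$, and completeness via density of polynomials). Two minor remarks: your induction yields $V_n\subseteq\mathcal{H}_n$, and it is the combination with (c) and Theorem~\ref{WCD} that upgrades this to equality, so you might phrase that step a touch more precisely; and when you invoke Proposition~\ref{L^p_Gauss} for $L^q$-convergence, the real content you use is the explicit moment identity $\|\xi\|_{L^q}=c_q\|\xi\|_{L^2}$ for centered Gaussians, which is what gives equivalence of the $L^q$ and $L^2$ topologies on $\mathcal{H}_1$.
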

\begin{proof}
See \cite[Proposition 1.1.1]{Nualart}. 
\end{proof}
As an immediate consequence of Theorem \ref{WCD} we obtain the following result.
\begin{corollary}
The family $\{\Phi_\alpha, \alpha \in \Lambda\}$ is a complete orthonormal system in $L^2(\Omega, \sigma(\mathcal{H}_1), \mathbb{P})$.
\end{corollary}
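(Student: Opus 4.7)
The plan is to assemble the desired orthonormal system out of the chaos-by-chaos systems supplied by the preceding proposition, gluing them together using Theorem \ref{WCD}. First I would partition the index set as $\Lambda=\bigsqcup_{n=0}^{\infty}\Lambda_{n}$, where $\Lambda_{n}:=\{\alpha\in\Lambda:\ |\alpha|=n\}$. The preceding proposition states precisely that, for each $n\ge 1$, the family $\{\Phi_{\alpha}:\alpha\in\Lambda_{n}\}$ is a complete orthonormal system in the $n$-th Wiener chaos $\mathcal{H}_{n}$. The case $n=0$ is handled separately: $\Lambda_{0}$ consists of the single zero multi-index, and since $H_{0}=1$ one has $\Phi_{0}=1$, which is obviously a complete orthonormal system in $\mathcal{H}_{0}=\mathbb{R}$.

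Next I would verify orthonormality of the whole family $\{\Phi_{\alpha}:\alpha\in\Lambda\}$ in $L^{2}(\Omega,\sigma(\mathcal{H}_{1}),\mathbb{P})$. For two multi-indices with the same $|\alpha|=|\beta|=n$, orthonormality is the content of the preceding proposition. For $|\alpha|=n\ne m=|\beta|$, orthogonality follows from $\mathcal{H}_{n}\perp\mathcal{H}_{m}$, which is part of Theorem \ref{WCD}. Thus the family is orthonormal.

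Finally I would prove completeness. By Theorem \ref{WCD}, $L^{2}(\Omega,\sigma(\mathcal{H}_{1}),\mathbb{P})=\bigoplus_{n=0}^{\infty}\mathcal{H}_{n}$, so every $X\in L^{2}(\Omega,\sigma(\mathcal{H}_{1}),\mathbb{P})$ decomposes uniquely as $X=\sum_{n=0}^{\infty}J_{n}(X)$ with $J_{n}(X)\in\mathcal{H}_{n}$ and the series converging in $L^{2}$. For each $n$, because $\{\Phi_{\alpha}:\alpha\in\Lambda_{n}\}$ is a complete orthonormal system in $\mathcal{H}_{n}$, one can further expand $J_{n}(X)=\sum_{\alpha\in\Lambda_{n}}\langle X,\Phi_{\alpha}\rangle_{L^{2}}\Phi_{\alpha}$. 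Substituting back yields $X=\sum_{\alpha\in\Lambda}\langle X,\Phi_{\alpha}\rangle_{L^{2}}\Phi_{\alpha}$ in $L^{2}$, which is exactly the statement that $\{\Phi_{\alpha}:\alpha\in\Lambda\}$ is a complete orthonormal system.

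There is no real obstacle, since all the substantive work is carried by the WCD theorem and the chaos-by-chaos proposition; the only minor point worth stating clearly is that the cross-chaos orthogonality $\langle\Phi_{\alpha},\Phi_{\beta}\rangle_{L^{2}}=0$ for $|\alpha|\ne|\beta|$ is free from Theorem \ref{WCD} and need not be re-derived from the explicit formulas for the Hermite polynomials.
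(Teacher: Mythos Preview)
Your proposal is correct and takes essentially the same approach as the paper, which simply states the corollary as ``an immediate consequence of Theorem \ref{WCD}'' without spelling out the details. You have written out precisely the argument the paper leaves implicit: partition $\Lambda$ by degree, invoke the preceding proposition for each $\mathcal{H}_n$, and glue via the orthogonal decomposition $L^2=\bigoplus_n\mathcal{H}_n$.
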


\begin{example}
\label{Her_ex}
As in Example \ref{1DGauss} consider the probability space $(\mathbb{R}, \mathcal{B}(\mathbb{R}), \gamma)$, where $\gamma$ is a standard Gaussian measure, and the Gaussian Hilbert space $\mathcal{H}_1 = \left\{ \lambda \xi: \ \lambda \in \mathbb{R}\right\}$ defined on it. In this case the space $\overline{\mathcal{P}_n}(\mathcal{H}_1)=\mathcal{P}_n(\mathcal{H}_1)$ is the space of polynomials of degree at most $n$ in the variable $\xi$. The space $\overline{\mathcal{P}_n}(\mathcal{H}_1)$ has dimension $n+1$ and the space $\mathcal{H}_n$ is one-dimensional. 
Recalling \eqref{H_n}, for any $n\ge 0$, $\mathcal{H}_n=\emph{Span}(\sqrt{n!}H_n(x))$ and 
the family $\{\sqrt{n!}H_n(x)\}_{n \in \mathbb{N}}$ is an orthonormal basis in  $L^2(\mathbb{R}, \mathcal{B}(\mathbb{R}), \gamma)$.
\end{example}


\begin{remark}
Notice that, in contrast to the finite-dimensional case, when $\mathcal{H}_1$ is infinite dimensional, for any fixed $n \in \mathbb{N}$, there are infinitelely many Hermite polynomials $H_\alpha$, with $|\alpha|=n$, so that $\mathcal{H}_n$ is infinite dimensional.
\end{remark}

\section{The Ornstein-Uhlenbeck semigroup and its infinitesimal generator}
\label{OU_sec}


In this Section we introduce the Ornstein-Uhlenbeck semigroup and its infinitesimal generator. Their construction relies on the Wiener Chaos decomposition. See Appendix \ref{semigroup_sec} for basic notions and results on semigroups.

We assume that $\mathcal{H}_1$ is a Gaussian Hilbert space defined on a complete probability space $(\Omega, \mathcal{F}, \mathbb{P})$, and that $\mathcal{F}$ is the $\sigma$-field generated by $\mathcal{H}_1$. We recall that $J_n$ denotes the orthogonal projection on the $n$-th Wiener chaos.

\subsection{The semigroup of Ornstein-Uhlenbeck}

 \begin{proposition}
 \label{OUS_pro}
 The family of operators $\{T_t\}_{t \ge 0}$ defined as
 \begin{equation*}
 T_tF:= \sum_{n=0}^\infty e^{-nt}J_n F, \quad t \ge 0, \ F \in L^2(\Omega),
 \end{equation*}
 is a one-parameter $C_0$-semigroup of contraction operators in $L^2(\Omega)$.
 \end{proposition}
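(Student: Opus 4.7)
The plan is to verify the four defining properties of a $C_0$-semigroup of contractions directly from the Wiener Chaos Decomposition (Theorem \ref{WCD}), exploiting the orthogonality of the spaces $\mathcal{H}_n$ and the fact that $J_n$ are orthogonal projections satisfying $J_n J_m = \delta_{nm} J_n$. The key identity throughout is Parseval: for any $F \in L^2(\Omega, \sigma(\mathcal{H}_1), \mathbb{P})$, $\|F\|^2_{L^2} = \sum_{n=0}^\infty \|J_n F\|^2_{L^2}$.

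First, I would check that $T_t$ is well-defined and a contraction. Since $\{J_n F\}_{n \ge 0}$ is an orthogonal family and $|e^{-nt}| \le 1$ for $t \ge 0$, the partial sums $\sum_{n=0}^N e^{-nt} J_n F$ form a Cauchy sequence in $L^2(\Omega)$, and
\begin{equation*}
\|T_t F\|^2_{L^2} = \sum_{n=0}^\infty e^{-2nt} \|J_n F\|^2_{L^2} \le \sum_{n=0}^\infty \|J_n F\|^2_{L^2} = \|F\|^2_{L^2},
\end{equation*}
so $T_t$ is a bounded linear operator with operator norm at most $1$. Taking $t=0$ immediately gives $T_0 F = \sum_n J_n F = F$, i.e.\ $T_0 = I$.

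Next, I would establish the semigroup property $T_{t+s} = T_t T_s$ for $t,s \ge 0$. Using that $J_n$ is continuous and that $J_n J_m = \delta_{nm} J_n$ (since the chaoses are mutually orthogonal closed subspaces), I would compute
\begin{equation*}
T_t(T_s F) = \sum_{n=0}^\infty e^{-nt} J_n\!\left(\sum_{m=0}^\infty e^{-ms} J_m F\right) = \sum_{n=0}^\infty e^{-(t+s)n} J_n F = T_{t+s} F,
\end{equation*}
where exchanging $J_n$ with the $L^2$-convergent sum is justified by continuity of $J_n$.

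The part that requires a little care is strong continuity at $t=0$, from which strong continuity at all $t > 0$ follows by the semigroup property and contractivity. For fixed $F$,
\begin{equation*}
\|T_t F - F\|^2_{L^2} = \sum_{n=0}^\infty (e^{-nt}-1)^2 \|J_n F\|^2_{L^2}.
\end{equation*}
Given $\varepsilon > 0$, I would choose $N$ such that $\sum_{n > N} \|J_n F\|^2_{L^2} < \varepsilon/2$; since $(e^{-nt}-1)^2 \le 1$ this bounds the tail by $\varepsilon/2$ uniformly in $t \ge 0$. The finite head $\sum_{n=0}^{N}(e^{-nt}-1)^2 \|J_n F\|^2_{L^2}$ tends to $0$ as $t \to 0^+$ term by term, so it is $< \varepsilon/2$ for $t$ small enough. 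Hence $T_t F \to F$ in $L^2(\Omega)$, which completes the verification. No step presents a genuine obstacle; the main subtlety is simply the tail/head splitting for strong continuity, which is the standard way to pass from pointwise decay of the multipliers $e^{-nt}$ to norm convergence.
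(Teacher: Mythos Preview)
Your proof is correct and follows essentially the same approach as the paper: both verify well-definedness and contractivity via Parseval, check $T_0=I$ and the semigroup law using $J_nJ_m=\delta_{nm}J_n$, and prove strong continuity at $0$ by the same tail/head splitting of $\sum_n (e^{-nt}-1)^2\|J_nF\|^2$. The only cosmetic difference is that the paper phrases the last step with a $\limsup$ rather than an explicit $\varepsilon/2$ split.
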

 
\begin{definition}
\label{OU_def}
The semigroup introduced in Proposition \ref{OUS_pro} is called \emph{Ornstein-Uhlenbeck semigroup}.
 \end{definition}
\begin{proof}[Proof of Proposition \ref{OUS_pro}]
We start by noticing that the definition of $T_tF$ is well posed, that is the series converges in $L^2(\Omega)$. We have
\begin{align*}
\left \Vert \sum_{n=0}^\infty e^{-nt}J_nF\right\Vert^2_{L^2(\Omega)}
&=\mathbb{E}\left[\sum_{n=0}^\infty e^{-2nt}|J_nF|^2 \right]
\le \mathbb{E}\left[\sum_{n=0}^\infty|J_nF|^2 \right] =\|F\|_{L^2(\Omega)}^2< \infty.
\end{align*}
The above inequality also shows that $\|T_t\|_{\mathcal{L}(L^2(\Omega))} \le 1$, that is the family of operators $\{T_t\}_{t \ge 0}$ is contracting.
Let us now prove that $\{T_t\}_{t \ge 0}$ is a $C_0$-semigroup.
\begin{itemize}
\item [(S1)] \underline{$T_0=I$.} 
\\
For any $F \in L^2(\Omega)$,
\begin{equation*}
T_0F=\sum_{n=0}^\infty J_n F=F,
\end{equation*}
as an immediate consequence of Theorem \ref{WCD}.
\item [(S2)] \underline{$T_tT_s=T_{t+s}$.} 
\\
For any $s,t \ge0$, for any $F \in L^2(\Omega)$, using the fact that $J_n$ is an orthogonal projection (thus $J_n^2=J_n$), we get
\begin{equation*}
T_tT_sF=\sum_{n=0}^\infty e^{-nt}J_n \left( \sum_{m=0}^\infty e^{-ms}J_mF \right)
=\sum_{n=0}^\infty e^{-nt}J_n(e^{-ns}J_nF)=\sum_{n=0}^\infty e^{-(t+s)n}J_nF=T_{t+s}F.
\end{equation*}
\item [(S3)] \underline{$\{T_t\}_{t \ge 0}$ is a $C_0$-semigroup.} 
\\
Let $F \in L^2(\Omega)$, from Theorem \ref{WCD} we know that $\|F\|^2_{L^2(\Omega)}=\sum_{n=0}^\infty \mathbb{E}\left[|J_nF|^2\right]< \infty.$
Thus, for a fixed $\varepsilon>0$, there exists $M_{\varepsilon}\in \mathbb{N}$ such that 
\begin{equation}
\label{vare}
\sum_{n=M_{\varepsilon}+1}^\infty\mathbb{E}\left[ |J_nF|^2\right] < \varepsilon.
\end{equation}
Bearing in mind \eqref{vare} we compute, for $F \in L^2(\Omega)$
\begin{align*}
\limsup_{t \rightarrow \infty}& \|T_tF-F\|^2_{L^2(\Omega)} 
=\limsup_{t \rightarrow \infty}\mathbb{E}\left[ \left| \sum_{n=0}^\infty(e^{-nt}-1)J_nF\right|^2\right]
\\
&=\limsup_{t \rightarrow \infty}\left(\sum_{n=0}^{M_\varepsilon}(e^{-nt}-1)^2\mathbb{E}\left[|J_nF|^2\right]+ \sum_{n=M_\varepsilon+1}^\infty(e^{-nt}-1)^2\mathbb{E}\left[|J_nF|^2\right]\right)
\\
& \le \varepsilon + \sum_{n=0}^{M_\varepsilon}\limsup_{t \rightarrow \infty}(e^{-nt}-1)^2\mathbb{E}\left[|J_nF|^2\right]= \varepsilon.
\end{align*}
By the arbitrariness of $\varepsilon$ we conclude that
\begin{equation*}
\exists \ \lim_{t \rightarrow \infty} \|T_tF-F\|_{L^2(\Omega)}=0,
\end{equation*}
that is $\{T_t\}_{t \ge 0}$ is a $C_0$-semigroup on $L^2(\Omega)$.
\end{itemize}
\end{proof}

\subsection{The generator of the Ornstein-Uhlenbeck semigroup}

\begin{definition}
Let $F \in L^2(\Omega)$. We define the operator $L$ as follows:
\begin{equation}
\label{L_op}
LF:=\sum_{n=0}^\infty-nJ_nF,
\end{equation}
provided the series converges in $L^2(\Omega)$. The domain of the operator is the set
\begin{equation}
\label{Dom_L}
\text{\emph{Dom}}(L):=\{F \in L^2(\Omega): \ \sum_{n=0}^\infty n^2 \mathbb{E}\left[ |J_nF|^2\right] < \infty\}.
\end{equation}
\end{definition}

Notice that $L$ is an unbounded self-adjoint operator on $L^2(\Omega)$, i.e. $\mathbb{E}\left[FLG\right]=\mathbb{E}\left[GLF\right]$ for any $F,G \in$ Dom$(L)$, and hence it is closed.

The next result shows that $L$ coincides with the infinitesimal generator of the Ornstein-Uhlenbeck semigroup $\{T_t\}_{t \ge 0}$ introduced in Definition \ref{OU_def}.

\begin{proposition}
The operator $L$ coincides with the infinitesimal generator of the Ornstein-Uhlenbeck semigroup $\{T_t\}_{t \ge 0}$.
\end{proposition}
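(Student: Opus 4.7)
The plan is to exploit the fact that each Wiener chaos $\mathcal{H}_n$ is an eigenspace for $T_t$ with eigenvalue $e^{-nt}$, and then to compare $\frac{T_tF-F}{t}$ with $LF$ term by term in the chaos decomposition. Throughout, write $A$ for the infinitesimal generator of $\{T_t\}_{t\ge 0}$ with its natural domain $\operatorname{Dom}(A)$, so that our task is to prove $\operatorname{Dom}(L)=\operatorname{Dom}(A)$ and $LF=AF$ on this common domain.

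First, I would show $\operatorname{Dom}(L)\subseteq \operatorname{Dom}(A)$ together with the identity $AF=LF$ for $F\in\operatorname{Dom}(L)$. Fix such an $F$ and use Theorem \ref{WCD} and the orthogonality of the $\mathcal{H}_n$ to write
\begin{equation*}
\left\Vert \frac{T_tF-F}{t}-LF\right\Vert_{L^2(\Omega)}^{2}
=\sum_{n=0}^{\infty}\left(\frac{e^{-nt}-1}{t}+n\right)^{2}\mathbb{E}\bigl[|J_nF|^{2}\bigr].
\end{equation*}
Each summand tends to $0$ as $t\downarrow 0$ since $\frac{e^{-nt}-1}{t}\to -n$. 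For $t\in(0,1]$ and any $n\ge 0$ the elementary bound $\bigl|\frac{e^{-nt}-1}{t}\bigr|\le n$ gives $\bigl(\frac{e^{-nt}-1}{t}+n\bigr)^{2}\le 4n^{2}$, so each term is dominated by $4n^{2}\mathbb{E}[|J_nF|^{2}]$, which is summable by the defining condition \eqref{Dom_L} of $\operatorname{Dom}(L)$. The dominated convergence theorem applied to the counting measure on $\mathbb{N}$ then shows the series tends to $0$, which is exactly $\lim_{t\downarrow 0}\frac{T_tF-F}{t}=LF$ in $L^{2}(\Omega)$.

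For the reverse inclusion, suppose $F\in\operatorname{Dom}(A)$, so that $G:=\lim_{t\downarrow 0}\frac{T_tF-F}{t}$ exists in $L^{2}(\Omega)$. Apply the bounded operator $J_n$ to both sides; since $T_t$ leaves $\mathcal{H}_n$ invariant and acts as multiplication by $e^{-nt}$ there, one has $J_n\frac{T_tF-F}{t}=\frac{e^{-nt}-1}{t}\,J_nF$, and the left-hand side converges to $J_nG$ in $L^{2}(\Omega)$. Letting $t\downarrow 0$ yields $J_nG=-n\,J_nF$ for every $n\ge 0$. Using once more the Wiener chaos decomposition of $G\in L^{2}(\Omega,\sigma(\mathcal{H}_1),\mathbb{P})$, Parseval's identity gives
\begin{equation*}
\sum_{n=0}^{\infty}n^{2}\,\mathbb{E}\bigl[|J_nF|^{2}\bigr]
=\sum_{n=0}^{\infty}\mathbb{E}\bigl[|J_nG|^{2}\bigr]=\|G\|_{L^{2}(\Omega)}^{2}<\infty,
\end{equation*}
so $F\in\operatorname{Dom}(L)$ and $LF=\sum_{n}(-n)J_nF=\sum_n J_nG=G=AF$.

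The two inclusions together prove the proposition. The only mildly delicate point is the uniform-in-$t$ domination in the first step; the bound $|(e^{-nt}-1)/t|\le n$ makes this routine, so I do not anticipate a genuine obstacle — the argument is essentially a direct verification on the spectral decomposition provided by the Wiener chaos.
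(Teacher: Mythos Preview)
Your proof is correct and follows essentially the same approach as the paper: both directions exploit the chaos decomposition together with the elementary bound $|(e^{-nt}-1)/t|\le n$, and for the converse both identify $J_nG=-nJ_nF$ via the action of $J_n$ on the difference quotient. Your presentation is in fact slightly cleaner---you invoke dominated convergence on the counting measure rather than an explicit $\varepsilon$-tail splitting, and you pass the $L^2$-limit through the bounded operator $J_n$ directly rather than via an a.s.-convergent subsequence---but these are cosmetic differences, not a different route.
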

\begin{proof}
We have to show that $F\in L^2(\Omega)$ belongs to the domain of $L$ if and only the limit $\lim_{t \rightarrow 0}\frac{T_tF-F}{t}$ exists in $L^2(\Omega)$ and, in this case, the limit is equal to $LF$.

Let us first assume that $F \in$ Dom$(L)$. Then, for a fixed $\varepsilon>0$ there exists $M_\varepsilon \in \mathbb{N}$ such that $\sum_{n=M_\varepsilon+1}^\infty n^2 \mathbb{E}\left[|J_nF|^2\right] < \varepsilon$. We compute
\begin{align*}
\limsup_{t \rightarrow 0} &\left\Vert \frac{T_tF-F}{t}-LF\right\Vert^2_{L^2(\Omega)}
=\limsup_{t \rightarrow 0} \mathbb{E}\left[\left|\sum_{n=0}^\infty \left( \frac{e^{-nt}-1}{t}+n \right) J_nF\right|^2 \right]
\\
&=\limsup_{t \rightarrow 0}\left(\sum_{n=0}^{M_\varepsilon}\left(\frac{e^{-nt}-1}{t}+n \right)^2\mathbb{E}\left[|J_nF|^2 \right] + \sum_{n=M_\varepsilon+1}^\infty\left(\frac{e^{-nt}-1}{t}+n \right)^2\mathbb{E}\left[|J_nF|^2 \right] \right)
\\
&< \sum_{n=0}^{M_\varepsilon} \limsup_{t \rightarrow 0}\left(\frac{e^{-nt}-1}{t}+n \right)^2\mathbb{E}\left[|J_nF|^2 \right] + 4\varepsilon = 4\varepsilon,
\end{align*}
where in the last inequality above we used the estimate 
\begin{align*}
\left|\frac{e^{-nt}-1}{t}\right| & = \frac 1t \left|\int_0^t-ne^{-ns}\, {\rm d}s\right| 
\le \frac{n}{t} \int_0^t e^{-ns}\, {\rm d}s \le n 
\\
& \Longrightarrow \ 
 \sum_{n=M_\varepsilon+1}^\infty\left(\frac{e^{-nt}-1}{t}+n \right)^2\mathbb{E}\left[|J_nF|^2 \right] \le 4 \sum_{n=M_\varepsilon+1}^\infty n^2 \mathbb{E}\left[|J_nF|^2\right] < 4 \varepsilon.
\end{align*}
By the arbitrariness of $\varepsilon$ we conclude that the limit $\lim_{t \rightarrow 0}\frac{T_tF-F}{t}$ exists in $L^2(\Omega)$ and it is qual to $LF$.

Conversely, let us suppose that $F \in L^2(\Omega)$ is such that there exists $\lim_{t \rightarrow 0} \frac{T_tF-F}{t}=AF=:G$ in $L^2(\Omega)$. 
For any $n \in \mathbb{N}$, from the continuity of $J_n$ we infer the existence of a subsequence $\{t_k\}_{k \in \mathbb{N}}$ such that $J_n\left( \frac{T_{t_k}F-F}{t_k}\right) \rightarrow J_nG$ $\mathbb{P}$-a.s. as $t_k \rightarrow \infty$. On the other hand, $J_n\left( \frac{T_tF-F}{t}\right) = \left(\frac{e^{-nt}-1}{t} \right)J_nF\rightarrow -nJ_nF$ $\mathbb{P}$-a.s. as $t \rightarrow \infty$. Thus we infer $J_nG=-nJ_nF$ for any $n \in \mathbb{N}$ which yields $F \in$ Dom$(L)$ and $LF=G$.
This concludes the proof.
\end{proof}

\begin{remark}
The Ornstein-Uhlenbeck operator admits the spectral decomposition \eqref{L_op}, where $J_n$ is the orthogonal projection of $L^2(\Omega)$ onto the $n^{th}$ Wiener Chaos $\mathcal{H}_n$. The space $\mathcal{H}_n$ is thus the eigenspace of $L$ with eigenvaule $-n$, see e.g. \cite[Proposition 14.12]{Lunardi}. We recall (see Section \ref{Hermite_sec}) that the space $\mathcal{H}_n$ is generated by the (normalized) generalized Hermite polynomials $\Phi_\alpha$ with $\alpha \in \Lambda$, $|\alpha|=n$. The Hermite polynomials $\Phi_\alpha$ with $\alpha \in \Lambda$, $|\alpha|=n$ are thus the eigenfunctions of $L$ with eigenvalue $-|\alpha|$, that is (see e.g. \cite[Proposition 14.1.1]{Lunardi})
\begin{equation*}
\forall \ \alpha \in \Lambda \quad \Phi_\alpha \in \emph{Dom}(L) \ \text{and} \ L\Phi_\alpha =-|\alpha|\Phi_\alpha.
\end{equation*}
\end{remark}
\begin{example}
In the framework of Example \ref{Her_ex}, we have that $\sqrt{n}H_n$ is an eigenfunction of the one-dimensional Ornstein-Uhlenbeck operator.
\end{example}




\section{The Malliavin derivative}
\label{Malliavin_sec}
In this section we introduce the notion of Malliavin derivative.
Let $(\Omega, \mathcal{F}, \mathbb{P})$ be a complete probability space and let $\mathcal{H}_1$ be a (infinite dimensional) separable Gaussian Hilbert space
. From now on we will assume $\mathcal{F}$ to be the $\sigma$-field generated by $\mathcal{H}_1$.
We aim to introduce the derivative $DF$ of a square integrable random variable $F:(\Omega, \sigma(\mathcal{H}_1), \mathbb{P}) \rightarrow \mathbb{R}$, namely we want to differentiate $F$ with respect to $\omega \in \Omega$. Notice that, any random variable $F$ measurable w.r.t. the $\sigma$-algebra generated by $\mathcal{H}_1$ can be viewed as a function of the elements in $\mathcal{H}_1$
. Usually, in the concrete situations, $\Omega$ is a topological vector space and the Malliavin derivative operator can be introduced as a differential operator (see Section \ref{top_sec} for more details). Nevertheless, as done for instance in \cite{Nualart} (see also \cite{NP}), it is possible to introduce a notion of Malliavin derivative without assuming any topological structure on the probability space $\Omega$. This approach proves particularly flexible and useful in many applications. Moreover, it is general enough to admit as special cases the definitions of Malliavin derivative given in probability spaces with a topological structure (see Section \ref{top_sec}).
\\
We will deal at first with the notion of Malliavin derivative of real-valued random variables, then we will show how to extend the definition to Hilbert-valued random variables.

\subsection{The Malliavin derivative of real-valued random variables}

In order to introduce the Malliavin derivative operator we will need to characterize the Gaussian Hilbert space $\mathcal{H}_1$ as the range of a given separable Hilbert space $\mathcal{H}$ through a unitary operator $W$ (see Proposition \ref{Gaussian_isometries}). Let thus $\mathcal{H}$ be an (infinite dimensional) real separable Hilbert space with scalar product denoted by $\langle \cdot, \cdot\rangle_{\mathcal{H}}$ (the norm of an element $h \in \mathcal{H}$ will be denoted by $\|h\|_{\mathcal{H}}$) and let 
\begin{equation*}
W:\mathcal{H} \rightarrow \mathcal{H}_1\subset L^2(\Omega, \sigma(\mathcal{H}_1), \mathbb{P})
\end{equation*}
be a unitary operator i.e. a 
 linear and surjective operator that preserves the scalar product, that is 
\begin{equation}
\label{sca_pro_preserved}\mathbb{E}\left[W(h)W(k)\right]=\langle h,k\rangle_{\mathcal{H}}, \qquad \forall \ h, k \in \mathcal{H}.
\end{equation}
The existence of $W$ is ensured by Proposition \ref{Gaussian_isometries}. From now on we will characterize the elements in $\mathcal{H}_1$ as $W(h)$ for $h \in \mathcal{H}$. 

\begin{definition}
\label{def_Mal1}
We define the derivative of an element $W(h) \in \mathcal{H}_1$, $h \in \mathcal{H}$ as 
\begin{equation}
\label{defDWh}
DW(h):=h.
\end{equation}
\end{definition}
The above definition can be equivalently reformulated by saying that $D:=W^{-1}$ on $\mathcal{H}_1$. Since $W$ is an unitary operator we also have that $D=W^*$ on $\mathcal{H}_1$, where $W^*$ denotes the adjoint operator of $W$.

\begin{example}
Consider the Gaussian Hilbert space of Wiener integrals 
$$
\mathcal{H}_1= \left\{ \int_0^\infty f(s)\, {\rm d}B(s): \ f \in L^2(0, \infty)\right\}
$$ 
examined in Example \ref{ex_cov}(1).
In this case identity \eqref{defDWh} pointwise reads as
\begin{equation*}
D_t\left(\int_0^\infty f(s)\, {\rm d}B_s\right)=f(t).
\end{equation*}
Here $D_tF$ stands for $(DF)(t)$. In particular, $D_t(B_s)=\pmb{1}_{[0,s]}(t)$.
Roughly speaking, the derivative operator can be interpreted here as the inverse operator of the Wiener integral.
\end{example}

Definition \ref{def_Mal1} can be naturally extended to random variables that are polynomials in elements of $\mathcal{H}_1$. We introduce the following shorthand notation: for any (smooth) $\varphi:\mathbb{R}^n\rightarrow \mathbb{R}$ we set $\partial_i \varphi(x):= \frac{\partial }{\partial x_i}\varphi(x)$ for $i=1,..,n$.

\begin{definition}
\label{defDpol}The derivative of a random variable $F \in \mathcal{P}(\mathcal{H}_1)$ of the form $F=p(W(h_1),...,W(h_m))$ is the $\mathcal{H}$-valued random variable
\begin{equation*}
DF=\sum_{i=1}^m\partial_i p(W(h_1),...,W(h_m)) h_i.
\end{equation*}
If $p$ is a polynomial of degree zero then $DF=0$.
\end{definition}

\begin{remark}
So far we have 
\[
D:\mathcal{P}(\mathcal{H}_1) \subset L^2(\Omega) \rightarrow L^2(\Omega; \mathcal{H}).
\]
It is instructive to compare this to the following well-known situation in Sobolev-analysis. Take $f \in L^2(\mathcal{O})$, for some domain $\mathcal{O} \subset \mathbb{R}^n$. The \emph{gradient} operator $\nabla$ maps an appropriate subset of $L^2(\mathcal{O})$ into $L^2(\mathcal{O};\mathbb{R}^n)$. The space $\mathbb{R}^n$ comes into play since it is (isomorphic to) the tangent space at any point of $\mathcal{O}$. In our setting, the space $\mathcal{H}$ plays the role of the tangent space.
\end{remark}

In order to extend the class of differentiable random variables to a larger class than $\mathcal{P}(\mathcal{H}_1)$ the following integration by parts formula will play a crucial role.

\begin{proposition}
\label{i.b.p._first}
Let $F \in \mathcal{P}(\mathcal{H}_1)$ and let $h\in \mathcal{H}$, then
\begin{equation}
\label{ibp_0}
\mathbb{E}\left[ \langle DF, h \rangle_{\mathcal{H}}\right]=\mathbb{E}\left [W(h)F\right].
\end{equation}
\end{proposition}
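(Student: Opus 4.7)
The strategy is to reduce the identity to the classical one-dimensional Gaussian integration-by-parts formula
$\int_{\mathbb{R}} g'(x)\, e^{-x^2/2}\,\mathrm{d}x = \int_{\mathbb{R}} x\, g(x)\, e^{-x^2/2}\,\mathrm{d}x$,
obtained from $\tfrac{\mathrm{d}}{\mathrm{d}x}e^{-x^2/2}=-x e^{-x^2/2}$.

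First I would use Remark \ref{ort_basis} to assume, without loss of generality, that $F=p(W(h_1),\dots,W(h_m))$ for some polynomial $p$ and an orthonormal system $h_1,\dots,h_m\in\mathcal{H}$. By the isometry property \eqref{sca_pro_preserved}, the random variables $\xi_i:=W(h_i)$ are then i.i.d.\ standard Gaussian. Next, given $h\in\mathcal{H}$, I would decompose $h=\sum_{i=1}^m \langle h,h_i\rangle_{\mathcal{H}} h_i + h^\perp$, with $h^\perp$ orthogonal to $\mathrm{span}(h_1,\dots,h_m)$. On the left-hand side of \eqref{ibp_0}, Definition \ref{defDpol} gives $\langle DF,h\rangle_{\mathcal{H}}=\sum_i\partial_i p(\xi_1,\dots,\xi_m)\langle h,h_i\rangle_{\mathcal{H}}$, so the component $h^\perp$ contributes nothing. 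On the right-hand side, $W(h^\perp)$ is centered Gaussian and uncorrelated with each $\xi_i$; by joint normality (Proposition \ref{Gaus_joint_distr}) it is therefore independent of $(\xi_1,\dots,\xi_m)$, hence of $F$, so $\mathbb{E}[W(h^\perp)F]=\mathbb{E}[W(h^\perp)]\,\mathbb{E}[F]=0$. Thus both sides of \eqref{ibp_0} are linear functions of $h$ that vanish on the orthogonal complement, and by linearity it suffices to verify the identity for $h=h_j$, $j=1,\dots,m$.

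In that case the claim reduces to
\[
\mathbb{E}\bigl[\partial_j p(\xi_1,\dots,\xi_m)\bigr]=\mathbb{E}\bigl[\xi_j\, p(\xi_1,\dots,\xi_m)\bigr],
\]
which I would prove by integrating against the product Gaussian density $(2\pi)^{-m/2}e^{-|x|^2/2}$ on $\mathbb{R}^m$: apply Fubini, then perform a one-variable integration by parts in $x_j$, using $\partial_{x_j}e^{-x_j^2/2}=-x_j e^{-x_j^2/2}$. Since $p$ is a polynomial, $p$ and its derivatives grow polynomially and the Gaussian factor kills all boundary terms at $\pm\infty$, so the integration by parts is fully justified.

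No step is a genuine obstacle here; the only care needed is in handling the component $h^\perp$ correctly (where joint normality is what upgrades uncorrelatedness to independence) and in justifying that the boundary terms at infinity vanish when doing the one-dimensional integration by parts against the Gaussian weight.
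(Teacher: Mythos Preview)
Your proof is correct, but it follows a different route from the paper. The paper argues by linearity that it suffices to treat monomials $F=\prod_{i=1}^n W(h_i)$, computes $\mathbb{E}[\langle DF,h_{n+1}\rangle_{\mathcal{H}}]$ directly from Definition~\ref{defDpol} and the isometry~\eqref{sca_pro_preserved}, and then invokes the Isserlis formula (Theorem~\ref{Iss_thm}) twice to recognize the resulting sum of products of covariances as $\mathbb{E}\bigl[\prod_{i=1}^{n+1}W(h_i)\bigr]=\mathbb{E}[FW(h_{n+1})]$. Your approach instead orthonormalizes via Remark~\ref{ort_basis}, disposes of the orthogonal component of $h$ using independence, and reduces everything to the elementary one-variable identity $\int g'(x)e^{-x^2/2}\,\mathrm{d}x=\int x\,g(x)e^{-x^2/2}\,\mathrm{d}x$. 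Your argument is arguably more elementary---it bypasses Isserlis entirely and makes transparent that \eqref{ibp_0} is nothing but the classical Gaussian integration by parts---while the paper's proof has the virtue of staying within the combinatorial machinery already developed and not requiring the orthonormalization step or any explicit density on $\mathbb{R}^m$.
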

\begin{proof}
By linearity it is sufficient to prove the result for a random variable $F \in  \mathcal{P}(\mathcal{H}_1)$ of the form $F=\prod_{i=1}^n W(h_i)$, $n \ge 1$ and $h_i \in \mathcal{H}_1$ for any $i=1,...,n$. Let $h_{n+1} \in \mathcal{H}$, using \eqref{sca_pro_preserved}, Definition \ref{defDpol} and the Isserlis formula (in equalities ($\star$) and ($\star \star$) below), we get

\begin{align*}
\mathbb{E}\left[\langle DF, h_{n+1}\rangle_{\mathcal{H}} \right]
&=
\mathbb{E}\left[\langle D\left( \prod_{i=1}^nW(h_i)\right), h_{n+1}\rangle_{\mathcal{H}} \right]
=\mathbb{E}\left[\langle \sum_{j=1}^n \left( h_j\prod_{i \ne j}W(h_i)\right), h_{n+1} \rangle_{\mathcal{H}} \right]
\\
&= \sum_{j=1}^n \left( \langle h_j, h_{n+1}\rangle_{\mathcal{H}} \mathbb{E}\left[\prod_{i \ne j}W(h_i) \right]\right)
= \sum_{j=1}^n \left( \mathbb{E}\left[ W(h_j)W(h_{n+1})\right] \mathbb{E}\left[\prod_{i \ne j}W(h_i) \right]\right)
\\
&\overset{(\star)}{=}  \sum_{j=1}^n \left( \mathbb{E}\left[ W(h_j)W(h_{n+1})\right] \sum \prod_k\mathbb{E}\left[W(h_{j_k}) W(h_{i_k})\right]\right)
\\
&=\sum \prod_k\mathbb{E}\left[W(h_{j_k}) W(h_{i_k})\right]
\overset{(\star \star)}{=} \mathbb{E}\left[ \prod_{i=1}^{n+1}W(h_i)\right]
= \mathbb{E}\left[FW(h_{n+1})\right],
\end{align*}
where the second sum in the second to last line of the above expression is taken over all the partitions of $\{1,...,j-1, j+1,...,n\}$ into disjoint pairs $\{j_k, i_k\}$ and the sum in the last line  is taken over all the partitions of $\{1,...,n+1\}$ into disjoint pairs $\{j_k, i_k\}$.
\end{proof}

We will need the following corollary of the i.b.p. formula.
\begin{corollary}
\label{coribp}
Let $F, G \in \mathcal{P}(\mathcal{H}_1)$ and let $h \in \mathcal{H}$, then 
\begin{equation}
\label{ibp_cor}
\mathbb{E}\left[\langle DF, h\rangle_{\mathcal{H}}G \right]= \mathbb{E}\left[ FGW(h)\right] -\mathbb{E}\left[F\langle DG, h\rangle_{\mathcal{H}}\right].
\end{equation}
\end{corollary}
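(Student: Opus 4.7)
The plan is to reduce this to Proposition \ref{i.b.p._first} applied to the product $FG$, combined with a Leibniz rule for $D$ on polynomial variables.

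First I would observe that $\mathcal{P}(\mathcal{H}_1)$ is an algebra: if $F,G\in\mathcal{P}(\mathcal{H}_1)$, then, by enlarging the list of arguments if necessary, we may write $F=p(W(h_1),\dots,W(h_m))$ and $G=q(W(h_1),\dots,W(h_m))$ with the same $h_1,\dots,h_m\in\mathcal{H}$, and then $FG=(pq)(W(h_1),\dots,W(h_m))$ is again a polynomial in elements of $\mathcal{H}_1$. Consequently Proposition \ref{i.b.p._first} applies to $FG$, giving
\begin{equation*}
\mathbb{E}\!\left[\langle D(FG),h\rangle_{\mathcal{H}}\right]=\mathbb{E}\!\left[W(h)\,FG\right].
\end{equation*}

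Next I would establish the product rule $D(FG)=F\,DG+G\,DF$ on $\mathcal{P}(\mathcal{H}_1)$. With the common representation above, Definition \ref{defDpol} gives
\begin{equation*}
D(FG)=\sum_{i=1}^{m}\partial_i(pq)(W(h_1),\dots,W(h_m))\,h_i,
\end{equation*}
and the ordinary Leibniz rule $\partial_i(pq)=p\,\partial_i q+q\,\partial_i p$ immediately yields $D(FG)=F\,DG+G\,DF$. Taking the $\mathcal{H}$-inner product with $h$ gives $\langle D(FG),h\rangle_{\mathcal{H}}=F\langle DG,h\rangle_{\mathcal{H}}+G\langle DF,h\rangle_{\mathcal{H}}$.

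Plugging this into the identity from Proposition \ref{i.b.p._first} and rearranging yields exactly \eqref{ibp_cor}. I do not foresee a real obstacle: the only non-trivial ingredient is the Leibniz rule, and that is essentially the classical product rule for partial derivatives applied inside the definition of $D$; the rest is linear-algebraic bookkeeping to make sure $F$ and $G$ are expressed using a common set of generators so that the definition of $D$ can be unambiguously applied to $FG$.
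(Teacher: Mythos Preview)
Your proposal is correct and follows exactly the paper's approach: the paper's proof simply notes that $FG\in\mathcal{P}(\mathcal{H}_1)$ and invokes Proposition \ref{i.b.p._first} together with Definition \ref{defDpol}, which is precisely what you do (with the Leibniz rule spelled out explicitly).
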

\begin{proof}
The product $FG$ belongs to $\mathcal{P}(\mathcal{H}_1)$. The result is thus an immediate consequence of Proposition \ref{i.b.p._first} and Definition \ref{defDpol}.
\end{proof}

As a consequence of the above result we can show the closability (see Section \ref{closable_op_sec}) of the operator $D$.

\begin{proposition}
\label{closableProp}
For any $p \ge 1$ the operator $D: \mathcal{P}(\mathcal{H}_1) \rightarrow L^p(\Omega; \mathcal{H})$ is closable as an operator from $L^p(\Omega)$ to $L^p(\Omega; \mathcal{H})$ .
\end{proposition}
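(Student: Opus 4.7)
The plan is to use the integration by parts formula of Corollary~\ref{coribp} to transfer the derivative from $F_{n}$ onto a test random variable, and then pass to the limit in $n$. Concretely, assume $\{F_n\}\subset \mathcal{P}(\mathcal{H}_1)$ with $F_n\to 0$ in $L^p(\Omega)$ and $DF_n \to \eta$ in $L^p(\Omega;\mathcal{H})$; the task is to show $\eta=0$.

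Suppose first that $p>1$ and let $q=p/(p-1)$ be the conjugate exponent. For any $h\in\mathcal{H}$ and any $G\in\mathcal{P}(\mathcal{H}_1)$, Corollary~\ref{coribp} gives
\begin{equation*}
\mathbb{E}\bigl[\langle DF_n,h\rangle_{\mathcal{H}}\,G\bigr]
= \mathbb{E}\bigl[F_n\bigl(GW(h)-\langle DG,h\rangle_{\mathcal{H}}\bigr)\bigr].
\end{equation*}
The key observation is that $GW(h)$ and $\langle DG,h\rangle_{\mathcal{H}}$ are themselves polynomials in finitely many elements of $\mathcal{H}_1$ (as seen from Definition~\ref{defDpol}), so they belong to $L^r(\Omega)$ for every $r<\infty$, in particular to $L^q$. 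Hölder's inequality, together with $F_n\to 0$ in $L^p$, yields that the right-hand side tends to $0$, while the left-hand side tends to $\mathbb{E}[\langle\eta,h\rangle_{\mathcal{H}}\,G]$ by applying Hölder to $DF_n-\eta$ against $G\in L^q$. Hence $\mathbb{E}[\langle\eta,h\rangle_{\mathcal{H}}\,G]=0$ for all $G\in\mathcal{P}(\mathcal{H}_1)$. Density of $\mathcal{P}(\mathcal{H}_1)$ in $L^q(\Omega,\sigma(\mathcal{H}_1),\mathbb{P})$ (Proposition~\ref{PdenseLp}) forces $\langle\eta,h\rangle_{\mathcal{H}}=0$ $\mathbb{P}$-a.s.\ for every $h\in\mathcal{H}$. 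By separability of $\mathcal{H}$, fix a countable dense set $\{h_j\}\subset\mathcal{H}$: on the intersection of the corresponding null sets one has $\langle\eta,h_j\rangle_{\mathcal{H}}=0$ for all $j$, and continuity of $h\mapsto\langle\eta(\omega),h\rangle_{\mathcal{H}}$ upgrades this to $\eta=0$ a.s., concluding the case $p>1$.

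The main obstacle is the endpoint $p=1$, where $q=\infty$ and the naive choice $G\in\mathcal{P}(\mathcal{H}_1)$ fails because polynomials in Gaussian variables are unbounded. The cure is to test against smooth cylindrical random variables $G=\varphi(W(h_1),\dots,W(h_m))$ with $\varphi\in C_c^\infty(\mathbb{R}^m)$, for which one first establishes the extended integration by parts identity
\begin{equation*}
\mathbb{E}\bigl[\langle DF_n,h\rangle_{\mathcal{H}}\,G\bigr]
= \mathbb{E}\bigl[F_n\bigl(GW(h)-\langle DG,h\rangle_{\mathcal{H}}\bigr)\bigr].
\end{equation*}
This can be proved by expressing $F_n$ as a polynomial in an orthonormal system $\xi_1,\dots,\xi_N$ of $\mathcal{H}_1$ containing $h/\|h\|_{\mathcal{H}}$ and the $h_i$'s (Remark~\ref{ort_basis}), reducing both sides to integrals against the standard Gaussian density on $\mathbb{R}^N$, and performing classical Euclidean integration by parts there; alternatively, by polynomial approximation of $\varphi$ on compacts controlled by Gaussian tail bounds. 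With $\varphi$ compactly supported, the factors $GW(h)$ and $\langle DG,h\rangle_{\mathcal{H}}$ are bounded, so $L^1$-$L^\infty$ duality gives $\mathbb{E}[\langle\eta,h\rangle_{\mathcal{H}}\,G]=0$ for every such cylindrical $G$. Since this family of $G$'s is dense in $L^\infty$ in a sense sufficient to conclude that $\langle\eta,h\rangle_{\mathcal{H}}=0$ a.s.\ (for instance, it separates points in $L^1(\Omega,\sigma(\mathcal{H}_1),\mathbb{P})$ and generates $\sigma(\mathcal{H}_1)$), the same separability argument as above finishes the proof.
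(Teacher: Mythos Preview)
For $p>1$ your argument is essentially identical to the paper's: both invoke Corollary~\ref{coribp}, use H\"older to pass to the limit on each side, and conclude via density of $\mathcal{P}(\mathcal{H}_1)$ in $L^q$ (Proposition~\ref{PdenseLp}). Your explicit separability step at the end is a mild sharpening of the paper's ``by arbitrariness of $h$''.

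For $p=1$ the paper does not give a proof at all, only remarking that this case ``requires a specific argument due to the duality $L^1(\Omega)/L^\infty(\Omega)$''. Your sketch using cylindrical $G=\varphi(W(h_1),\dots,W(h_m))$ with $\varphi\in C_c^\infty(\mathbb{R}^m)$ is the standard remedy and the outline is right, but there is a genuine gap: your claim that $GW(h)$ is bounded is false as stated. If $h\notin\mathrm{span}(h_1,\dots,h_m)$, then $W(h)$ has a Gaussian component independent of $(W(h_1),\dots,W(h_m))$, and that component remains unbounded on the event $\{(W(h_1),\dots,W(h_m))\in\mathrm{supp}\,\varphi\}$; hence $GW(h)\notin L^\infty$ and the $L^1$--$L^\infty$ pairing you invoke does not apply. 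The fix is easy and is implicit in your own reduction via Remark~\ref{ort_basis}: for each fixed $h$, restrict to test functions whose coordinate list $h_1,\dots,h_m$ already contains $h/\|h\|_{\mathcal{H}}$. Then $W(h)$ is a linear combination of $W(h_1),\dots,W(h_m)$ and $GW(h)$ is genuinely bounded. This smaller family of test functions still separates points in $L^1(\Omega,\sigma(\mathcal{H}_1),\mathbb{P})$ (it contains, for every finite orthonormal set including $h/\|h\|_{\mathcal{H}}$, all $C_c^\infty$ cylindrical functions in those coordinates, which is enough for the monotone class argument), so the remainder of your argument then goes through.
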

\begin{proof}
We only consider the case $p>1$; the proof for $p=1$ 
 requires a specific argument due to the duality $L^1(\Omega)/L^\infty(\Omega)$.
\\
Let $1<p<\infty$. Let $\{F_n\}_n$ be a sequence in $\mathcal{P}(\mathcal{H}_1)$ such that \begin{itemize}
\item [i)] $F_n \rightarrow 0$ in $L^p(\Omega)$,
\item [ii)] $DF_n \rightarrow \eta$ in $L^p(\Omega, \mathcal{H})$,
\end{itemize}
as $n \rightarrow \infty$. According to Proposition \ref{def_closable} we have to prove that $\eta=0$ $\mathbb{P}$-a.s.
\\
For any $G \in \mathcal{P}(\mathcal{H}_1)$ and $h \in \mathcal{H}$ we have
\begin{equation*}
\lim_{n \rightarrow \infty} \mathbb{E}\left[ G\langle DF_n, h\rangle_{\mathcal{H}}\right]=\mathbb{E}\left[ G\langle \eta, h\rangle_{\mathcal{H}}\right],
\end{equation*}
since the H\"older inequality yields 
\begin{equation*}
\mathbb{E}\left[\left|\langle DF_n-\eta,h\rangle_{\mathcal{H}}G \right| \right] \le \|h\|_{\mathcal{H}}\|DF_n-\eta\|_{L^p(\Omega;\mathcal{H})}\|G\|_{L^{\frac{p}{p-1}}(\Omega)}
\end{equation*}
and the r.h.s. in the above expression converges to zero as $n \rightarrow \infty$ in view of  (ii) and the fact that $\|G\|_{L^{\frac{p}{p-1}}(\Omega)}$ is bounded since $\mathcal{P}(\mathcal{H}_1) \subset L^q(\Omega)$ for any $0\le q <\infty$ thanks to Proposition \ref{PdenseLp}.
On the other hand, Corollary \ref{coribp} yields 
\begin{equation*}
\mathbb{E}\left[ G\langle DF_n, h\rangle_{\mathcal{H}}\right] = \mathbb{E}\left[ W(h)F_nG\right]-\mathbb{E}\left[F_n \langle DG,h\rangle_{\mathcal{H}}\right].
\end{equation*}
Reasoning similarly as above we have
\begin{equation*}
\lim_{n \rightarrow \infty}\mathbb{E}\left[ W(h)F_nG\right]=0,
\end{equation*}
since by the H\"older inequality 
\begin{equation*}
\mathbb{E}\left[ \left|F_nW(h)G\right|\right]\le \|F_n\|_{L^p(\Omega)}\|W(h)G\|_{L^{\frac{p}{p-1}}(\Omega)} 
\end{equation*}
and the r.h.s. converges to zero as $n \rightarrow \infty$ in view of (i) and the fact that $GW(h) \in \mathcal{P}(\mathcal{H}_1) \subset L^q(\Omega)$ for any $0\le q <\infty$. Moreover,
\begin{equation*}
\lim_{n \rightarrow \infty}\mathbb{E}\left[ F_n\langle DG, h\rangle_{\mathcal{H}}\right]=0,
\end{equation*}
since 
\begin{equation*}
\mathbb{E}\left[ \left|F_n\langle DG, h\rangle_{\mathcal{H}}\right|\right] 
\le  \|F_n\|_{L^p(\Omega)}\|DG\|_{L^{\frac{p}{p-1}}(\Omega; \mathcal{H})}\|h\|_{\mathcal{H}}
\end{equation*}
and the r.h.s. converges to zero as $n \rightarrow \infty$ in view of (i) and the fact that $DG\in L^q(\Omega;\mathcal{H})$ for any $0\le q <\infty$.
Collecting the above estimates we thus obtain 
\begin{align*}
\mathbb{E}\left[ G\langle \eta, h\rangle_{\mathcal{H}}\right]
=\lim_{n \rightarrow \infty} \mathbb{E}\left[ G\langle DF_n, h\rangle_{\mathcal{H}}\right]
&=\lim_{n \rightarrow \infty}\left( \mathbb{E}\left[ W(h)F_nG\right]-\mathbb{E}\left[F_n \langle DG,h\rangle_{\mathcal{H}}\right]\right)=0.
\end{align*}
Thus we get $\langle \mathbb{E}\left[G\eta\right], h\rangle_{\mathcal{H}}=0$. By the arbitrariness of $h \in \mathcal{H}$ it follows that $\mathbb{E}\left[G\eta\right]=0$. The arbitrariness of $G \in \mathcal{P}(\mathcal{H}_1)$ and the fact that $\mathcal{P}(\mathcal{H}_1)$ is a dense subset of $L^p(\Omega, \sigma(\mathcal{H}_1), \mathbb{P})$, in view of Proposition \ref{PdenseLp}, yields $\eta=0$ $\mathbb{P}$-a.s. which concludes the proof.
\end{proof}
For any $p \ge 1$ let $\mathbb{D}^{1,p}$ denote the closure of $\mathcal{P}(\mathcal{H}_1)$ with respect to the norm
\begin{equation}
\label{norm_D^1p}
\|F\|_{\mathbb{D}^{1,p}} =\left(\mathbb{E}\left[|F|^p\right]+\mathbb{E}\left[\|DF\|^p_{\mathcal{H}}\right]\right)^{\frac1p}.
\end{equation} 
In view of Proposition \ref{closableProp}, recalling Definition \ref{def_closure}, the operator $D$ can consistently be extended to the whole space $\mathbb{D}^{1,p}$.
This closed extension (still denoted by $D$) with domain $\mathbb{D}^{1,p}$ is called  
\textit{Malliavin derivative} and the space $\mathbb{D}^{1,p}$ is called \textit{domain of $D$} in $L^p(\Omega, \sigma(\mathcal{H}_1), \mathbb{P})$.
For any $p \ge 1$ the space $\mathbb{D}^{1,p}$ endowed with the norm \eqref{norm_D^1p} is a Banach space, for $p=2$ the space $\mathbb{D}^{1,2}$ is a Hilbert space with the scalar product 
\begin{equation*}
\langle F, G\rangle_{\mathbb{D}^{1,2}}= \mathbb{E}\left[FG\right] + \mathbb{E}\left[ \langle DF, DG\rangle_{\mathcal{H}}\right].
\end{equation*}


It is not difficult to prove, 
by a density argument, that the integration by parts formula \eqref{ibp_0} extends to elements in $\mathbb{D}^{1,2}$.
\begin{proposition}
\label{i.b.p.}
Let $F \in \mathbb{D}^{1,2}$ and let $h\in \mathcal{H}$, then
\begin{equation}
\label{ibp_1}
\mathbb{E}\left[ \langle DF, h \rangle_{\mathcal{H}}\right]=\mathbb{E}\left [W(h)F\right].
\end{equation}
\end{proposition}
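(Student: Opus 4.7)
The plan is a straightforward density argument, using that $\mathbb{D}^{1,2}$ is by definition the closure of $\mathcal{P}(\mathcal{H}_1)$ under the norm \eqref{norm_D^1p} with $p=2$, together with the already established identity \eqref{ibp_0} on polynomial random variables.

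First, given $F \in \mathbb{D}^{1,2}$, I would pick a sequence $\{F_n\}_n \subset \mathcal{P}(\mathcal{H}_1)$ with $\|F_n - F\|_{\mathbb{D}^{1,2}} \to 0$, which unpacks to $F_n \to F$ in $L^2(\Omega)$ and $DF_n \to DF$ in $L^2(\Omega;\mathcal{H})$ (the latter making sense because $D$ has been extended continuously to $\mathbb{D}^{1,2}$ via Proposition \ref{closableProp}). By Proposition \ref{i.b.p._first} applied to each $F_n$,
\begin{equation*}
\mathbb{E}\left[\langle DF_n, h\rangle_{\mathcal{H}}\right] = \mathbb{E}\left[W(h) F_n\right], \qquad \forall\, n \ge 1.
\end{equation*}
It then suffices to pass to the limit on both sides.

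For the left-hand side, Cauchy--Schwarz gives
\begin{equation*}
\left|\mathbb{E}\left[\langle DF_n - DF, h\rangle_{\mathcal{H}}\right]\right| \le \|h\|_{\mathcal{H}}\, \|DF_n - DF\|_{L^2(\Omega;\mathcal{H})} \xrightarrow[n\to\infty]{} 0.
\end{equation*}
For the right-hand side, the unitarity property \eqref{sca_pro_preserved} ensures $W(h) \in L^2(\Omega)$ with $\|W(h)\|_{L^2(\Omega)} = \|h\|_{\mathcal{H}}$, so another application of Cauchy--Schwarz yields
\begin{equation*}
\left|\mathbb{E}\left[W(h)(F_n - F)\right]\right| \le \|h\|_{\mathcal{H}}\, \|F_n - F\|_{L^2(\Omega)} \xrightarrow[n\to\infty]{} 0.
\end{equation*}
Combining these two limits gives $\mathbb{E}[\langle DF, h\rangle_{\mathcal{H}}] = \mathbb{E}[W(h)F]$, as claimed.

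There is no real obstacle here: the only small point to notice is that the availability of Proposition \ref{closableProp} (with $p=2$) is what legitimately lets us extract the approximating sequence $\{F_n\}$ with the required simultaneous convergence of $F_n$ and $DF_n$. Once that is in hand, the two Cauchy--Schwarz bounds do all the work.
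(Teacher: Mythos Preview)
Your proof is correct and is precisely the density argument the paper alludes to (the paper does not spell out a proof, merely noting that \eqref{ibp_0} extends to $\mathbb{D}^{1,2}$ ``by a density argument''). The two Cauchy--Schwarz estimates you give are exactly what is needed to pass to the limit on each side.
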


The following result is a chain rule for the Malliavin derivative; it is useful in  applications.
\begin{proposition}
\label{chain_rule}
Let $\varphi: \mathbb{R}^m \rightarrow \mathbb{R}$ be a continuously differentiable function with bounded partial derivatives
and fix $p \ge 1$. Suppose that $F:(F^1,...,F^m)$ is a random vector whose components belong to the space $\mathbb{D}^{1,p}$. Then $\varphi(F) \in \mathbb{D}^{1,p}$ and 
\begin{equation*}
D(\varphi(F))=\sum_{i=1}^m \partial_i \varphi(F)DF^i.
\end{equation*}
\end{proposition}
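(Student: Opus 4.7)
The plan is to combine a double approximation with the closability of $D$ established in Proposition \ref{closableProp}: approximate each $F^i$ by polynomial random variables in $\mathcal{P}(\mathcal{H}_1)$, then, for each such polynomial approximant, approximate $\varphi$ by polynomials so that Definition \ref{defDpol} applies and the chain rule holds by the usual multivariate calculus. Closability then pushes the formula through both limits.

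First I would choose, by definition of $\mathbb{D}^{1,p}$, sequences $F^i_n \in \mathcal{P}(\mathcal{H}_1)$ with $F^i_n \to F^i$ in $L^p(\Omega)$ and $DF^i_n \to DF^i$ in $L^p(\Omega;\mathcal{H})$. By Gram--Schmidt and Remark \ref{ort_basis}, for each fixed $n$ I may realize all of $F^1_n,\ldots,F^m_n$ as polynomials in a common orthonormal family $W(e^{(n)}_1),\ldots,W(e^{(n)}_{N_n})$, so that $F^i_n = p^n_i(W(e^{(n)}_1),\ldots,W(e^{(n)}_{N_n}))$ for polynomials $p^n_i$ on $\mathbb{R}^{N_n}$, with $(W(e^{(n)}_1),\ldots,W(e^{(n)}_{N_n}))$ a standard Gaussian vector.

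Second, for fixed $n$ I would establish that $\varphi(F_n) \in \mathbb{D}^{1,p}$ with $D(\varphi(F_n)) = \sum_i \partial_i \varphi(F_n)\,DF^i_n$. Setting $\psi := \varphi \circ (p^n_1,\ldots,p^n_m) \in C^1(\mathbb{R}^{N_n})$, whose partial derivatives have polynomial growth since $\nabla \varphi$ is bounded and the $p^n_i$ are polynomials, I would approximate $\psi$ by polynomials $\psi_k$ with $\psi_k \to \psi$ and $\partial_j \psi_k \to \partial_j \psi$ in $L^p$ of the standard Gaussian measure $\gamma_{N_n}$. By Definition \ref{defDpol} the random variables $\psi_k(W(e^{(n)}_\cdot)) \in \mathcal{P}(\mathcal{H}_1)$ have Malliavin derivative $\sum_j \partial_j \psi_k(W(e^{(n)}_\cdot))\,e^{(n)}_j$; both sequences converge in $L^p(\Omega)$ and $L^p(\Omega;\mathcal{H})$ respectively. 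Closability promotes the limit to $\varphi(F_n) \in \mathbb{D}^{1,p}$, and the classical identity $\partial_j \psi(x) = \sum_i \partial_i \varphi(p^n(x))\,\partial_j p^n_i(x)$ together with $DF^i_n = \sum_j \partial_j p^n_i(W(e^{(n)}_\cdot))\,e^{(n)}_j$ rearranges the result into the claimed form.

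Third, I would pass to the limit $n \to \infty$. Since $\nabla \varphi$ is bounded, $\varphi$ is Lipschitz with constant $L = \max_i \|\partial_i \varphi\|_\infty$, hence $\|\varphi(F_n) - \varphi(F)\|_{L^p(\Omega)} \le L\|F_n - F\|_{L^p(\Omega;\mathbb{R}^m)} \to 0$. Extracting a subsequence along which $F_n \to F$ a.s.\ (and $DF^i_n \to DF^i$ in $L^p(\Omega;\mathcal{H})$), the triangle inequality splits $\partial_i \varphi(F_n)DF^i_n - \partial_i \varphi(F)DF^i$ into $\partial_i \varphi(F_n)(DF^i_n - DF^i)$, controlled in $L^p$ by $L\|DF^i_n - DF^i\|_{\mathcal{H}}$, and $(\partial_i \varphi(F_n) - \partial_i \varphi(F))DF^i$, which tends to zero in $L^p$ by continuity of $\partial_i \varphi$, the uniform bound $2L$, and dominated convergence against the $L^p$-function $\|DF^i\|_{\mathcal{H}}$. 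Closability of $D$ then yields $\varphi(F) \in \mathbb{D}^{1,p}$ with the claimed derivative.

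The main obstacle is the Sobolev-type polynomial approximation inside the second step: Proposition \ref{PdenseLp} gives density of polynomials in $L^p$ of a Gaussian measure, but here one needs \emph{simultaneous} $L^p(\gamma_{N_n})$-convergence of $\psi_k$ and $\nabla \psi_k$. This is a classical finite-dimensional fact (completeness of Hermite polynomials in the Gaussian Sobolev space $W^{1,p}(\mathbb{R}^{N_n},\gamma_{N_n})$), but proving it cleanly requires either a mollification-plus-Weierstrass argument on expanding balls or an explicit Hermite expansion analysis exploiting polynomial growth of $\nabla \psi$ against the Gaussian weight; this is the technical heart of the proposition.
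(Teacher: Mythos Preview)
The paper states this proposition without proof, so there is no argument to compare against directly. Your scheme is correct and is the standard one (cf.\ \cite[Proposition 1.2.3]{Nualart}), adapted to the fact that the paper builds $\mathbb{D}^{1,p}$ from the polynomial core $\mathcal{P}(\mathcal{H}_1)$ rather than from smooth cylinder functions. The double approximation and the appeal to closedness of $D$ are exactly right, and you have correctly isolated the one nontrivial technical ingredient: density of polynomials in the finite-dimensional Gaussian Sobolev space $W^{1,p}(\mathbb{R}^N,\gamma_N)$. Had the core been smooth bounded cylinder functions, one could simply mollify $\varphi$ to $\varphi_\epsilon \in C^\infty_b$ and read the chain rule for $\varphi_\epsilon(F_n)$ straight off the definition, bypassing this step; with polynomials as the core your route through $\psi = \varphi\circ p^n$ and its polynomial approximation in $W^{1,p}(\gamma_{N_n})$ is the natural one, and the Hermite-expansion or mollify-then-Weierstrass arguments you sketch both work.
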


One can also introduce the Malliavin derivative $D^k$ of order $k>1$. Proceeding similarly as what did above for the case $k=1$, one can define the iteration of the operator $D$ in such a way that for a random variable $F \in \mathcal{P}(\mathcal{H}_1)$, the iterated derivative $D^kF$ is a random variable with values in $\mathcal{H}^{\otimes k}$. Then one shows that the operator $D^k$ is closable from $\mathcal{P}(\mathcal{H}_1)\subset L^{p}(\Omega)$ into $L^p(\Omega; \mathcal{H}^{\otimes k})$ for all $p \ge 1$. The domain of $D^k$ in $L^p(\Omega)$ is denoted as $\mathbb{D}^{k,p}$: this is the completion of $\mathcal{P}(\mathcal{H}_1)$ with respect to the (full Sobolev) norm 
\begin{equation}
\label{Dkp_norm}
\|F\|_{\mathbb{D}^{k,p}} =\left(\mathbb{E}\left[|F|^p\right]+\sum_{j=1}^k\mathbb{E}\left[\|D^jF\|^p_{\mathcal{H}^{\otimes j}}\right]\right)^{\frac1p}.
\end{equation} 
The Malliavin derivative of order $k$ is the operator $D^k:\mathbb{D}^{k,p}\subset L^{p}(\Omega)\rightarrow L^p(\Omega; \mathcal{H}^{\otimes k})$.

We conclude this Section with the following result which characterizes the domain $\mathbb{D}^{1,2}$ of the Malliavin derivative operator in terms of the Wiener chaos expansion, for a proof see e.g. \cite[Proposition 1.2.2]{Nualart}.
\begin{proposition}
\label{charD12}
Let $F \in L^2(\Omega, \sigma(\mathcal{H}_1), \mathbb{P})$ with Wiener chaos expansion $F=\sum_{n=0}^\infty J_nF$. Then $F \in \mathbb{D}^{1,2}$ if and only if 
\begin{equation*}
\sum_{n=1}^\infty n\|J_nF\|_{L^2(\Omega)}^2 <\infty.
\end{equation*}
In this case,
\begin{equation*}
\mathbb{E}\left[ \|DF\|^2_{\mathcal{H}}\right] = \sum_{n=1}^\infty n\|J_nF\|_{L^2(\Omega)}^2.
\end{equation*}
\end{proposition}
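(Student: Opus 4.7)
The plan is to base everything on an explicit chaos-by-chaos computation for the generalized Hermite polynomial basis, and then lift it to all of $L^2(\Omega,\sigma(\mathcal{H}_1),\mathbb{P})$ by a density/closedness argument. Fix an orthonormal basis $\{e_i\}_{i\in\mathbb{N}}$ of $\mathcal{H}$ and set $\xi_i:=W(e_i)$, so that $\{\xi_i\}$ is an orthonormal basis of $\mathcal{H}_1$ and $\{\Phi_\alpha\}_{\alpha\in\Lambda}$ is a complete orthonormal system of $L^2(\Omega,\sigma(\mathcal{H}_1),\mathbb{P})$.

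The first main step is the explicit computation: using Definition \ref{defDpol}, the product rule, and the identity $H'_k=H_{k-1}$, I would derive
\[
D\Phi_\alpha \;=\; \sum_{j:\,\alpha_j\ge 1}\sqrt{\alpha_j}\;\Phi_{\alpha^{(j)}}\,e_j,
\]
where $\alpha^{(j)}$ denotes $\alpha$ with its $j$-th entry lowered by one; the prefactor $\sqrt{\alpha_j}$ comes from the combinatorial identity $\alpha!/\alpha^{(j)}!=\alpha_j$. Combining this formula with the orthonormality of $\{\Phi_\gamma\}$ gives
\[
\mathbb{E}\bigl[\langle D\Phi_\alpha,D\Phi_\beta\rangle_{\mathcal{H}}\bigr]
\;=\;\sum_{j}\sqrt{\alpha_j\beta_j}\,\mathbb{E}\bigl[\Phi_{\alpha^{(j)}}\Phi_{\beta^{(j)}}\bigr]
\;=\;|\alpha|\,\delta_{\alpha,\beta},
\]
since a non-vanishing summand would force $\alpha^{(j)}=\beta^{(j)}$ with $\alpha_j,\beta_j\ge 1$, hence $\alpha=\beta$. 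In particular, $\{D\Phi_\alpha\}_{\alpha\in\Lambda}$ is orthogonal in $L^2(\Omega;\mathcal{H})$ and $\mathbb{E}[\|D\Phi_\alpha\|_{\mathcal{H}}^2]=|\alpha|$; moreover derivatives of elements from distinct chaoses are orthogonal.

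Given this, each $F\in\mathcal{H}_n$ with expansion $F=\sum_{|\alpha|=n}c_\alpha\Phi_\alpha$ is approximated by its polynomial partial sums, whose Malliavin derivatives form a Cauchy sequence in $L^2(\Omega;\mathcal{H})$ by the orthogonality above; the closedness of $D$ (Proposition \ref{closableProp}) gives $F\in\mathbb{D}^{1,2}$ with $\mathbb{E}[\|DF\|_{\mathcal{H}}^2]=n\|F\|_{L^2}^2$. This step is where the infinite-dimensionality of $\mathcal{H}_n$ matters, since a typical element of $\mathcal{H}_n$ is \emph{not} a polynomial (cf.\ Remark \ref{rem_Janson}), and it is the subtlest point of the proof.

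For the two implications, if $\sum_n n\|J_nF\|_{L^2}^2<\infty$ I set $F_N:=\sum_{n\le N}J_nF$; each $J_nF$ lies in $\mathbb{D}^{1,2}$ by the previous step, and the cross-chaos orthogonality gives $\mathbb{E}[\|DF_N\|_{\mathcal{H}}^2]=\sum_{n\le N}n\|J_nF\|_{L^2}^2$, so $\{F_N\}$ is Cauchy in the norm \eqref{norm_D^1p} and therefore $F\in\mathbb{D}^{1,2}$ with the stated identity. Conversely, if $F\in\mathbb{D}^{1,2}$ choose polynomial approximants $F_k$ with $F_k\to F$ in $L^2$ and $DF_k\to DF$ in $L^2(\Omega;\mathcal{H})$; continuity of $J_n$ on $L^2$ yields $J_nF_k\to J_nF$, and Fatou applied to $\mathbb{E}[\|DF_k\|_{\mathcal{H}}^2]=\sum_n n\|J_nF_k\|_{L^2}^2$ gives $\sum_n n\|J_nF\|_{L^2}^2\le\mathbb{E}[\|DF\|_{\mathcal{H}}^2]<\infty$, closing the argument.
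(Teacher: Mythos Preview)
Your argument is correct. The paper does not actually supply its own proof of this proposition; it simply points to \cite[Proposition~1.2.2]{Nualart}. Nualart's argument goes through the multiple Wiener--It\^o integral representation $J_nF=I_n(f_n)$ and the identity $DI_n(f_n)=nI_{n-1}(f_n)$, which yields $\mathbb{E}[\|D J_nF\|_{\mathcal H}^2]=n\|J_nF\|_{L^2}^2$ directly and then proceeds essentially as in your Steps~5--6. Your route via the generalized Hermite basis $\{\Phi_\alpha\}$ and the relation $H_k'=H_{k-1}$ is a genuine alternative: it replaces the multiple-integral machinery (which the present notes do not develop) with the orthogonality computation $\mathbb{E}[\langle D\Phi_\alpha,D\Phi_\beta\rangle_{\mathcal H}]=|\alpha|\,\delta_{\alpha,\beta}$, and is therefore better adapted to the paper's own toolkit (Section~\ref{Hermite_sec}). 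The price is the extra care you rightly flag in Step~4, namely that a generic element of $\mathcal H_n$ is not a polynomial when $\dim\mathcal H_1=\infty$, so one must pass through partial sums and the closedness of $D$; Nualart's integral formula sidesteps this because $I_n$ is already defined on all of $\mathcal H^{\odot n}$. One small remark on Step~6: Fatou gives only the inequality $\sum_n n\|J_nF\|_{L^2}^2\le\mathbb{E}[\|DF\|_{\mathcal H}^2]$, but once finiteness is established the equality then follows from the forward implication, so the argument indeed closes.
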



By iteration we have the following result.
\begin{proposition}  
\label{charDk2}
Let $F \in L^2(\Omega, \sigma(\mathcal{H}_1), \mathbb{P})$ with Wiener chaos expansion $F=\sum_{n=0}^\infty J_nF$. Then, for $k \in \mathbb{N}$, $F \in \mathbb{D}^{k,2}$ if and only if 
\begin{equation*}
\sum_{n=1}^\infty n^k\|J_nF\|_{L^2(\Omega)}^2 <\infty.
\end{equation*}
In this case 
\begin{equation*}
\mathbb{E}\left[ \|D^kF\|^2_{\mathcal{H}^{\otimes k}}\right] = \sum_{n=k}^\infty \frac{n!}{(n-k)!}\|J_nF\|_{L^2(\Omega)}^2.
\end{equation*}
\end{proposition}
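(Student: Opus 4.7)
The plan is to work in the Hermite basis $\{\Phi_\alpha\}_{\alpha \in \Lambda}$, which forms an orthonormal basis of $L^2(\Omega, \sigma(\mathcal{H}_1), \mathbb{P})$ by the results of Section \ref{Hermite_sec}. For $F = \sum_\alpha c_\alpha \Phi_\alpha$ one has $J_n F = \sum_{|\alpha| = n} c_\alpha \Phi_\alpha$, and $\mathbb{D}^{k,2}$ is, by definition, the completion of $\mathcal{P}(\mathcal{H}_1)$ in the norm \eqref{Dkp_norm}. The whole proof reduces to establishing, for $\alpha, \alpha' \in \Lambda$,
\begin{equation*}
\mathbb{E}\bigl[\|D^k\Phi_\alpha\|^2_{\mathcal{H}^{\otimes k}}\bigr] = \frac{|\alpha|!}{(|\alpha|-k)!} \quad \text{and} \quad \langle D^k\Phi_\alpha, D^k\Phi_{\alpha'}\rangle_{L^2(\Omega; \mathcal{H}^{\otimes k})} = 0 \text{ for } \alpha \neq \alpha'.
\end{equation*}

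To obtain the norm identity, I would first compute $D^k\Phi_\alpha$ by iteratively applying Proposition \ref{chain_rule} together with the identity $H_m' = H_{m-1}$: each derivation differentiates one factor $H_{\alpha_j}(W(e_j))$ to $H_{\alpha_j-1}(W(e_j))$ and appends a tensor factor $e_j$. Expanding the squared $\mathcal{H}^{\otimes k}$-norm, using orthonormality of $\{e_{i_1}\otimes\cdots\otimes e_{i_k}\}$ together with the independence of $\{W(e_j)\}_j$ and $\mathbb{E}[H_m(\xi)^2] = 1/m!$ (valid for standard Gaussian $\xi$ by Example \ref{Her_ex}), then grouping ordered sequences $(i_1,\ldots,i_k)$ by the multi-index $\beta\in\Lambda$ they realize (with multinomial factor $k!/\beta!$), leads to
\begin{equation*}
\mathbb{E}\bigl[\|D^k\Phi_\alpha\|^2_{\mathcal{H}^{\otimes k}}\bigr] = k!\sum_{|\beta|=k,\,\beta \le \alpha}\binom{\alpha}{\beta} = k!\binom{|\alpha|}{k} = \frac{|\alpha|!}{(|\alpha|-k)!},
\end{equation*}
where the penultimate step uses the multi-index Vandermonde identity $\sum_{|\beta|=k,\,\beta\le\alpha}\binom{\alpha}{\beta} = \binom{|\alpha|}{k}$. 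The orthogonality statement is obtained by the same scheme applied to $\langle D^k\Phi_\alpha, D^k\Phi_{\alpha'}\rangle$: the one-dimensional Hermite orthogonality $\mathbb{E}[H_m H_{m'}]=\delta_{m,m'}/m!$ forces $\alpha_j = \alpha'_j$ for every $j$.

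Combined with the analogous identities for $D^j$ with $1 \le j \le k-1$, the above imply that on each fixed chaos space $\mathcal{H}_n$ the $\mathbb{D}^{k,2}$-norm is just a fixed multiple of the $L^2$-norm (ratio $1 + \sum_{j=1}^k \tfrac{n!}{(n-j)!}$), whence any $L^2$-approximation of $J_nF \in \mathcal{H}_n$ by elements of $\mathcal{P}_n(\mathcal{H}_1)$ also converges in $\|\cdot\|_{\mathbb{D}^{k,2}}$, so that $\mathcal{H}_n \subset \mathbb{D}^{k,2}$ and therefore every partial sum $F^N = \sum_{n\le N}J_nF$ lies in $\mathbb{D}^{k,2}$. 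Since $1+\sum_{j=1}^k n!/(n-j)!$ and $n^k$ are comparable as $n\to\infty$, orthogonality of distinct chaos components yields $\|F^{N'}-F^N\|^2_{\mathbb{D}^{k,2}} = \sum_{n=N+1}^{N'}\bigl(1+\sum_{j=1}^k\tfrac{n!}{(n-j)!}\bigr)\|J_nF\|^2_{L^2}$, which is Cauchy iff $\sum_n n^k\|J_nF\|^2_{L^2}<\infty$; this gives both the characterization and the formula $\mathbb{E}[\|D^kF\|^2_{\mathcal{H}^{\otimes k}}] = \sum_{n\ge k}\tfrac{n!}{(n-k)!}\|J_nF\|^2_{L^2}$. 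The main obstacle is the combinatorial identification of $D^k\Phi_\alpha$ and the multi-index Vandermonde identity; once those are in hand, everything reduces to bookkeeping on orthogonal sums.
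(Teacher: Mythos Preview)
Your approach is correct and, in fact, supplies considerably more detail than the paper does: the paper's ``proof'' of this proposition consists solely of the sentence ``By iteration we have the following result,'' meaning one is meant to iterate Proposition~\ref{charD12} (whose proof is itself deferred to \cite{Nualart}). So rather than inducting on $k$, you compute $\mathbb{E}\bigl[\|D^k\Phi_\alpha\|^2_{\mathcal{H}^{\otimes k}}\bigr]$ directly via Hermite combinatorics and the multi-index Vandermonde identity $\sum_{|\beta|=k,\ \beta\le\alpha}\binom{\alpha}{\beta}=\binom{|\alpha|}{k}$. This is a genuinely different and self-contained route; it has the advantage of yielding the exact constant $n!/(n-k)!$ in one shot rather than tracking it through an induction.

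Two small points. First, Proposition~\ref{chain_rule} as stated requires bounded partial derivatives, which Hermite polynomials do not have; you should instead invoke Definition~\ref{defDpol} (and its iterated version alluded to just before \eqref{Dkp_norm}), which applies to all of $\mathcal{P}(\mathcal{H}_1)$ directly. Second, your final paragraph cleanly establishes the ``if'' direction (finite sum $\Rightarrow F\in\mathbb{D}^{k,2}$) but glosses over the converse. To close it: for any $G\in\mathcal{P}(\mathcal{H}_1)$ your orthogonality computation gives $\mathbb{E}\bigl[\|D^kG\|^2_{\mathcal{H}^{\otimes k}}\bigr]=\sum_n\tfrac{n!}{(n-k)!}\|J_nG\|^2_{L^2}$; if $F\in\mathbb{D}^{k,2}$, take $G_m\in\mathcal{P}(\mathcal{H}_1)$ with $G_m\to F$ in $\mathbb{D}^{k,2}$, use $\|J_nG_m\|_{L^2}\to\|J_nF\|_{L^2}$ for each $n$, and apply Fatou's lemma to conclude $\sum_n\tfrac{n!}{(n-k)!}\|J_nF\|^2_{L^2}\le\|D^kF\|^2_{L^2(\Omega;\mathcal{H}^{\otimes k})}<\infty$.
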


\vspace{1.2cm}

We emphasize that the definition of the Malliavin derivative $D$ depends on the choice of the Hilbert space $\mathcal{H}$ and the unitary operator $W$. In fact, once we have fixed the reference probability space $(\Omega, \mathcal{F}, \mathbb{P})$ and the Gaussian Hilbert spaces $\mathcal{H}_1$, there are infinitely many choices of the Hilbert space $\mathcal{H}$ and the unitary operator $W$ by which to characterize $\mathcal{H}_1$ as in Proposition \ref{Gaussian_isometries}.
Different choices of Hilbert spaces and unitary operators correspond to different (infinitely many!) Malliavin derivatives. This is clear from the way we have constructed the Malliavin derivative operator, starting from Definition \ref{def_Mal1}. Nevertheless, Proposition \ref{charD12} ensures that the domain of all these Malliavin derivatives is the same, since the characterization of $\mathbb{D}^{1,2}$ is given in terms of the Wiener chaos decomposition that relies only on the Gaussian Hilbert space $\mathcal{H}_1$ (and not on the choice of $\mathcal{H}$ and $W$). We can summarize the above discussion in the following diagram.

\bigskip
\begin{equation}
\label{abstract_diagram}
\xymatrix{
\mathcal{H} \ar^W[r] \ar_U[ddd] & \mathcal{H}_1
\ar@/ ^1pc/^{DW(h)=h}[drr] \ar@/ ^8pc/^{\widetilde{D}\widetilde{W}(\widetilde{h})=\widetilde{h}}[dddrr]  
\\
&\mathbb{D}^{1,2}\ar@{} [u] |{\cap} \ar^D[rr] \ar^{\widetilde D}[ddrr] && \mathcal{H} \ar^{U}[dd]
\\
 &L^2(\Omega, \sigma(\mathcal{H}_1),\mathbb{P})\ar@{} [u] |{\cap}
  \\
\widetilde{\mathcal{H}} \ar^{\widetilde{W}}[uuur] &&& \widetilde{\mathcal{H}} 
} 
\end{equation}

\bigskip
Here, $\mathcal{H}_1$ represents the reference Gaussian Hilbert space, $\mathcal{H}$ and $\widetilde{\mathcal{H}}$ are separable Hilbert spaces in correspondence with $\mathcal{H}_1$ through the unitary operators $W$ and $\widetilde{W}$, respectively; $U$ is a unitary operator between $\mathcal{H}$ and $\widetilde{\mathcal{H}}$. Notice that the only requirement on the Hilbert spaces $\mathcal{H}$ and $\tilde{\mathcal{H}}$ is that they need to have the same dimension as $\mathcal{H}_1$.
In this framework, we construct two Malliavin derivatives $D$ and $\widetilde{D}$ that are different but that have the same domain:
\begin{equation}
\label{abstract_rel}
\widetilde{D}=UD, \qquad \text{Dom}(D)=\text{Dom}(\widetilde{D})=\mathbb{D}^{1,2} \subset L^2(\Omega, \sigma(\mathcal{H}_1),\mathbb{P}).
\end{equation}
In particular, it holds 
\begin{equation}
\label{abstract_relation_bis}
DW(h)=h, \ \forall h \in \mathcal{H} \quad \text{and} \quad \widetilde{D}\widetilde{W}(\widetilde{h})=\widetilde{h}, \ \forall \widetilde h \in \mathcal{\widetilde H}.
\end{equation}

\subsection{The Malliavin derivative of Hilbert-valued random variables}
\label{sec_MDH}

The definition of Mallivin derivative can be extended to Hilbert-valued random variables. Let $\mathcal{V}$ be a separable Hilbert space and consider the family of $\mathcal{V}$-valued polynomial random variables in elements in $\mathcal{H}_1$ 
\begin{equation}
\label{P_V_H_1}
\mathcal{P}_{\mathcal{V}}(\mathcal{H}_1):=\left\{ F=\sum_{j=1}^n F_jv_j,\ \  v_j \in \mathcal{V}, \ \ F_j \in \mathcal{P}(\mathcal{H}_1)\right\}.
\end{equation}
For $F \in \mathcal{P}_{\mathcal{V}}(\mathcal{H}_1)$ we define
\begin{equation}
\label{def_DF_V}
DF:=\sum_{j=1}^n DF_j \otimes v_j.
\end{equation}
$DF$ is a square integrable random variable with values in the Hilbert space $\mathcal{H} \otimes \mathcal{V}$ (see Appendix \ref{app_ten_pro} for the definition of tensor product spaces) that we identify with the space of Hilbert-Schmidt operators from $\mathcal{H}$ to $\mathcal{V}$ (see Example \ref{ex_TP_HS}). If the real-valued random variables $F_j \in \mathcal{P}(\mathcal{H}_1)$ have the form $F_j=p_j(W(h_1),...,W(h_m))$, then Proposition \ref{defDpol} and Example \ref{ex_TP_HS} yields the explicit form for \eqref{def_DF_V}
\begin{equation}
\label{DF_HS}
DF
=\sum_{j=1}^n\sum_{i=1}^m \partial_i p_j(W(h_1),...,W(h_m)) h_i \otimes v_j
=\sum_{j=1}^n\sum_{i=1}^m \partial_i p_j(W(h_1),...,W(h_m))T_{h_i, v_j},
\end{equation}
where $T_{h_i,v_j} \in \mathcal{L}_2(\mathcal{H};\mathcal{V})$ is defined according to \eqref{def_TP_HS}, i.e. 
\begin{equation}
\label{def_TP_HS_bis}
T_{h_i,v_j}(k)=\langle h_i, k\rangle_{\mathcal{H}} v_j,
\end{equation}
for any $k \in \mathcal{H}$. 

It can be shown that $D$ is a closable operator from $\mathcal{P}_{\mathcal{V}}(\mathcal{H}_1) \subset L^2(\Omega;\mathcal{V})$ into $L^p(\Omega; \mathcal{H}\otimes \mathcal{V})$, for any $p \ge 1$. Its closure, still denoted by $D$, is the \textit{Malliavin derivative} of $\mathcal{V}$-valued random variables, $D : \mathbb{D}^{1,p}(\mathcal{V}) \subset L^p(\Omega;\mathcal{V}) \rightarrow L^p(\Omega; \mathcal{H}\otimes \mathcal{V})$, where the domain $\mathbb{D}^{1,p}(\mathcal{V})$ is the completion of $\mathcal{P}_{\mathcal{V}}(\mathcal{H}_1)$ with respect to the norm 
\begin{equation*}
\|F\|_{\mathbb{D}^{1,p}(\mathcal{V})} =\left(\mathbb{E}\left[\|F\|^p_{\mathcal{V}}\right]+\mathbb{E}\left[\|DF\|^p_{\mathcal{H} \otimes \mathcal{V}}\right]\right)^{\frac1p}.\end{equation*}

\begin{remark}
Similarly, one can introduce the Malliavin derivative $D^k:\mathbb{D}^{k,p}(\mathcal{V}) \subset L^p(\Omega;\mathcal{V}) \rightarrow L^p(\Omega; \mathcal{H}^{\otimes k}\otimes \mathcal{V})$ of order $k>1$. The domain $\mathbb{D}^{k,p}(\mathcal{V})$ is the completition of $\mathcal{P}_{\mathcal{V}}(\mathcal{H}_1)$ with respect to the seminorm
\begin{equation*}
\|F\|_{\mathbb{D}^{k,p}} =\left(\mathbb{E}\left[\|F\|_{\mathcal{V}}^p\right]+\sum_{j=1}^k\mathbb{E}\left[\|D^jF\|^p_{\mathcal{H}^{\otimes j}\otimes \mathcal{V}}\right]\right)^{\frac1p}.
\end{equation*} 
\end{remark}

\subsection{The directional Malliavin derivative}
\label{dir_der_sec}
It will be useful in the following to have an ad hoc notation to indicate the Malliavin derivative of a random variable in the direction $h \in \mathcal{H}$.
\\
Let us fix an element $h \in \mathcal{H}$. We define the operator $D^h$ on the set of real-valued polynomial random variables $\mathcal{P}(\mathcal{H}_1)$ as
\begin{equation*}
D^hF:= \langle DF, h\rangle_{\mathcal{H}}, \qquad F \in \mathcal{P}(\mathcal{H}_1).
\end{equation*} 
Reasoning as done for the operator $D$ one can prove that $D^h$ is a closable operator from $L^p(\Omega)$ into $L^p(\Omega)$, for any $p \ge 1$, and it has a domain that we will denote by $\mathbb{D}_h^{1,p}$.  
\\
The next lemma will be used in many instances in the following.
\begin{lemma}
\label{lem_tec_0}
Let $F \in \mathcal{P}(\mathcal{H}_1)$, for any $h, k \in \mathcal{H}$ it holds 
\begin{equation*}
D^hD^kF=D^kD^hF.
\end{equation*}
\end{lemma}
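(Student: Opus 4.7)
The plan is to reduce the equality $D^h D^k F = D^k D^h F$ to the symmetry of mixed partial derivatives of the underlying polynomial, via a direct computation from Definition \ref{defDpol}. Since $F \in \mathcal{P}(\mathcal{H}_1)$, we may write $F = p(W(h_1),\ldots,W(h_m))$ for some polynomial $p$ and some $h_1,\ldots,h_m \in \mathcal{H}$. Everything that follows is an explicit finite-dimensional calculation, with no approximation or closability arguments needed, because the derivative of any element of $\mathcal{P}(\mathcal{H}_1)$ is again (a linear combination of elements of) $\mathcal{P}(\mathcal{H}_1)$.

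First I would compute $D^k F$. By Definition \ref{defDpol},
\begin{equation*}
DF = \sum_{i=1}^m \partial_i p(W(h_1),\ldots,W(h_m))\, h_i,
\end{equation*}
so that
\begin{equation*}
D^k F = \langle DF, k\rangle_{\mathcal{H}} = \sum_{i=1}^m \langle h_i,k\rangle_{\mathcal{H}}\, \partial_i p(W(h_1),\ldots,W(h_m)).
\end{equation*}
Crucially, this is again a polynomial random variable of the form $q_k(W(h_1),\ldots,W(h_m))$, where $q_k(x_1,\ldots,x_m) = \sum_i \langle h_i,k\rangle_{\mathcal{H}}\, \partial_i p(x_1,\ldots,x_m)$, so Definition \ref{defDpol} applies again.

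Next I would apply $D^h$ to this polynomial. Using Definition \ref{defDpol} once more and taking the inner product with $h$, I obtain
\begin{equation*}
D^h D^k F = \sum_{i,j=1}^m \langle h_i,k\rangle_{\mathcal{H}} \langle h_j,h\rangle_{\mathcal{H}}\, \partial_j\partial_i p(W(h_1),\ldots,W(h_m)).
\end{equation*}
An entirely analogous calculation gives
\begin{equation*}
D^k D^h F = \sum_{i,j=1}^m \langle h_i,h\rangle_{\mathcal{H}} \langle h_j,k\rangle_{\mathcal{H}}\, \partial_j\partial_i p(W(h_1),\ldots,W(h_m)).
\end{equation*}
Since $p$ is a polynomial and therefore $C^\infty$, Schwarz's theorem yields $\partial_j\partial_i p = \partial_i \partial_j p$; swapping the summation indices $i \leftrightarrow j$ in the second expression shows that the two sums coincide. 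This concludes the proof.

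There is really no substantive obstacle here: the only point requiring a moment of care is verifying that $D^k F$ lies again in $\mathcal{P}(\mathcal{H}_1)$ so that Definition \ref{defDpol} can legitimately be applied a second time, and that the chosen representation of $F$ as a polynomial in a finite collection $W(h_1),\ldots,W(h_m)$ can be reused for $D^k F$ with the same $h_1,\ldots,h_m$. Both are immediate from the formula for $D^k F$ above.
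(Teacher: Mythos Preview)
Your proof is correct and follows essentially the same approach as the paper: both write $F=p(W(h_1),\ldots,W(h_m))$, compute the iterated directional derivatives as a double sum $\sum_{i,j}\langle h_i,k\rangle_{\mathcal H}\langle h_j,h\rangle_{\mathcal H}\,\partial_j\partial_i p(W(h_1),\ldots,W(h_m))$, and conclude by the symmetry of mixed partials of the polynomial $p$ together with a relabelling of the summation indices. The paper introduces the shorthand $G_j=\partial_j p(W(h_1),\ldots,W(h_m))$ and $G_{j,\ell}=\partial_\ell G_j$, but the content is identical.
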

\begin{proof}
Let $h, k \in \mathcal{H}$ and $F \in \mathcal{P}(\mathcal{H}_1)$ of the form 
$F=p(W(h_1),...,W(h_m))$, with $p$ a polynomial function. For any $j=1,...,m$, let us set $G_j:= \partial_j p(W(h_1),...,W(h_m))$ and for any $\ell \in 1,...,m$ let us set $G_{j, \ell}:=  \frac{\partial G_j}{\partial x_\ell}$.
 We have
 \begin{equation*}
 DF=\sum_{j=1}^m G_j \,h_j \qquad \text{and} \quad 
DG_j=\sum_{\ell=1}^m G_{j,\ell}\,h_\ell, \ \forall j=1,...,m
\end{equation*}
and thus
\begin{align*}
D^k(D^hF)
&=
\sum_{j=1}^m  \langle D(G_j ),k\rangle_{\mathcal{H}}\,\langle h_j,h\rangle_{\mathcal{H}}=\sum_{j=1}^m \sum_{\ell=1}^mG_{j,\ell} \langle h_\ell,k\rangle_{\mathcal{H}}\, \langle h_j, h\rangle_{\mathcal{H}}
\\
&=\sum_{j=1}^m \sum_{\ell=1}^mG_{j,\ell} \langle h_\ell,h\rangle_{\mathcal{H}}\, \langle h_j, k\rangle_{\mathcal{H}}=
\sum_{j=1}^m\langle D(G_j),h\rangle_{\mathcal{H}}\, \langle h_j, k\rangle_{\mathcal{H}}=D^h(D^kF).
\end{align*}
\end{proof}

For $\mathcal{V}$-valued random variables, the notation $D^h$ should be understood as explained below. The operator $D^h$ acts on the set of polynomial $\mathcal{V}$-valued random variables $\mathcal{P}_{\mathcal{V}}(\mathcal{H}_1)$ as follows: given an element $F \in \mathcal{P}_{\mathcal{V}}(\mathcal{H}_1)$ of the form $F=\sum_{j=1}^n F_jv_j$ where $v_j \in \mathcal{V}$ and $F_j \in \mathcal{P}(\mathcal{H}_1)$, 
\begin{equation*}
D^hF:= \sum_{j=1}^n v_j D^hF_j=\sum_{j=1}^nv_j\langle DF_j, h\rangle_{\mathcal{H}}.
\end{equation*} 
In this case one can prove that $D^h$ is a closable operator from $L^p(\Omega; \mathcal{V})$ into $L^p(\Omega;\mathcal{V})$, for any $p \ge 1$, and it has a domain that we will denote by $\mathbb{D}_h^{1,p}(\mathcal{V})$.
\\
The next result will be useful in the following.
\begin{lemma}
\label{tech_lem_der}
Let $F \in \mathcal{P}_{\mathcal{V}}(\mathcal{H}_1)$, then for any $h \in \mathcal{H}$, $k \in \mathcal{V}$ it holds
\begin{equation*}
\langle D^hF, k\rangle_{\mathcal{V}}=D^h\langle F, k\rangle_{\mathcal{V}}.
\end{equation*} 
\end{lemma}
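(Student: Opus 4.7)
The plan is to unwind both sides of the claimed identity using the definition of $D^h$ on polynomial $\mathcal{V}$-valued random variables and exploit the linearity of $D^h$ (as an operator on real-valued polynomial random variables) with respect to deterministic scalar coefficients.

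First I would fix an arbitrary $F \in \mathcal{P}_{\mathcal{V}}(\mathcal{H}_1)$ and write it, according to \eqref{P_V_H_1}, in the form $F=\sum_{j=1}^n F_j v_j$, with $F_j \in \mathcal{P}(\mathcal{H}_1)$ and $v_j \in \mathcal{V}$. Then, fixing also $h \in \mathcal{H}$ and $k \in \mathcal{V}$, I would expand the left-hand side by the very definition of $D^h$ on $\mathcal{P}_{\mathcal{V}}(\mathcal{H}_1)$:
\begin{equation*}
\langle D^h F, k\rangle_{\mathcal{V}}
=\left\langle \sum_{j=1}^n v_j\, D^h F_j,\, k\right\rangle_{\mathcal{V}}
=\sum_{j=1}^n \langle v_j, k\rangle_{\mathcal{V}}\, D^h F_j,
\end{equation*}
where I used the bilinearity of $\langle \cdot, \cdot\rangle_{\mathcal{V}}$ and the fact that $D^h F_j$ is a real-valued random variable.

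For the right-hand side, I would observe that
\begin{equation*}
\langle F, k\rangle_{\mathcal{V}}
=\sum_{j=1}^n \langle v_j, k\rangle_{\mathcal{V}}\, F_j,
\end{equation*}
which belongs to $\mathcal{P}(\mathcal{H}_1)$ since each $F_j$ does and $\langle v_j, k\rangle_{\mathcal{V}}$ is a deterministic real scalar. By $\mathbb{R}$-linearity of the real-valued directional derivative $D^h$ on $\mathcal{P}(\mathcal{H}_1)$ (immediate from Definition \ref{defDpol} and the definition of $D^h$), one obtains
\begin{equation*}
D^h\langle F, k\rangle_{\mathcal{V}}
=\sum_{j=1}^n \langle v_j, k\rangle_{\mathcal{V}}\, D^h F_j,
\end{equation*}
which matches the previous display, concluding the proof.

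There is no real obstacle here: the statement reduces to the observation that the scalar coefficients $\langle v_j,k\rangle_{\mathcal{V}}$ can be pulled outside of both $D^h$ and $\langle \cdot, k\rangle_{\mathcal{V}}$. The only point worth being slightly careful about is ensuring one works at the level of polynomial random variables, where the definition of $D^h$ is explicit; the extension to $\mathbb{D}_h^{1,p}(\mathcal{V})$ (if needed elsewhere) would then follow by the usual density/closability argument, but for the statement as given it is not required.
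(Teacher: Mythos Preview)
Your proof is correct and follows essentially the same approach as the paper: both write $F=\sum_j F_j v_j$, use the definition of $D^h$ on $\mathcal{P}_{\mathcal{V}}(\mathcal{H}_1)$, and reduce the identity to the fact that the deterministic scalars $\langle v_j,k\rangle_{\mathcal{V}}$ commute with the real-valued directional derivative $D^h$. The only cosmetic difference is that the paper runs a single chain of equalities from the left-hand side to the right-hand side, whereas you compute both sides separately and compare.
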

\begin{proof}
Let $F \in \mathcal{P}_{\mathcal{V}}(\mathcal{H}_1)$ of the form $F=\sum_{j=1}^n F_jv_j$ where $v_j \in \mathcal{V}$ and $F_j \in \mathcal{P}(\mathcal{H}_1)$, we have 
\begin{align*}
\langle D^hF, k\rangle_{\mathcal{V}}
&:= \langle \sum_{j=1}^n v_j\langle DF_j, h \rangle_{\mathcal{H}}, k\rangle_{\mathcal{V}}
= \sum_{j=1}^n\langle DF_j, h \rangle_{\mathcal{H}} \langle v_j,k\rangle_{\mathcal{V}}
\\
&=\sum_{j=1}^n\langle D\left( F_j \langle v_j,k\rangle_{\mathcal{V}}\right), h \rangle_{\mathcal{H}}
=\langle D\left( \langle \sum_{j=1}^n F_jv_j,k\rangle_{\mathcal{V}}\right), h \rangle_{\mathcal{H}}
\\
&=\langle D\left( \langle F,k\rangle_{\mathcal{V}}\right), h \rangle_{\mathcal{H}}
=D^h\langle F, k\rangle_{\mathcal{V}}.
\end{align*}
\end{proof}

\section{The divergence operator}
\label{div_sec}

In this Section we introduce the divergence operator, defined as the adjoint operator (see Section \ref{HS_sec}) of the Malliavin derivative, and we derive some of its properties. 

The framework is the same as in Section \ref{Malliavin_sec} that is: we are given a complete probability space  $(\Omega, \mathcal{F}, \mathbb{P})$,  a Gaussian Hilbert space $\mathcal{H}_1$ 
and we assume $\mathcal{F}$ to be the $\sigma$-field generated by $\mathcal{H}_1$. Moreover, we fix a separable Hilbert space $\mathcal{H}$ and a unitary operator $W:\mathcal{H} \rightarrow \mathcal{H}_1$ so that elements in $\mathcal{H}_1$ are characterized as in Proposition \ref{Gaussian_isometries}.

We recall that, by construction, the Malliavin derivative operator $D$ is a closed an unbounded operator with values in $L^2(\Omega, \mathcal{H})$. Its domain $\mathbb{D}^{1,2}$ is a dense subset of $L^2(\Omega)$, being $\mathcal{P}(\mathcal{H}_1) \subset \mathbb{D}^{1,2}$ and $\mathcal{P}(\mathcal{H}_1)$ dense in $L^2(\Omega)$ in virtue of Corollary \ref{cor_den_pol}.

\begin{definition}
We call \emph{divergence operator}, denoted by $\delta$, the adjoint of the Malliavin derivative operator $D$. That is, $\delta$
 is an unbounded operator on $L^2(\Omega;\mathcal{H})$ with values in $L^2(\Omega)$ such that:
\begin{itemize}
\item the domain of $\delta$, denoted by $\emph{Dom}(\delta)$, is the set of $\mathcal{H}$-valued square-integrable random variables $u \in L^2(\Omega;\mathcal{H})$ such that 
\begin{equation}
\label{cond_u_domd}
\left|\mathbb{E}\left[\langle DF, u\rangle_{\mathcal{H}} \right] \right| \le C\|F\|_{L^2(\Omega)}, \quad \forall F \in \mathbb{D}^{1,2},
\end{equation}
where $C$ is some positive constant depending on $u$.
\item If $u \in \emph{Dom}(\delta)$, then $\delta(u)$ is characterized as the element in $L^2(\Omega)$ such that 
\begin{equation}
\label{duality_rel}
\mathbb{E}\left[ F\delta(u)\right]=\mathbb{E}\left[\langle DF, u \rangle_{\mathcal{H}} \right], \quad \forall F \in \mathbb{D}^{1,2}.
\end{equation}
\end{itemize}
\end{definition}

\begin{remark}
Notice that the duality relation \eqref{duality_rel} is an extension of the integration by parts formula \eqref{ibp_1} to random elements $u\in \emph{Dom}(\delta) \subset L^2(\Omega;\mathcal{H})$.
Equivalently said, $\mathcal{H}\subset \emph{Dom}(\delta)$ and $\delta \equiv W$ on $\mathcal{H}$.
\end{remark}

\begin{remark}
Let us consider $(\mathbb{R}^n, \mathcal{B}(\mathbb{R}^n))$ and on it the Lebesgue measure $\lambda_n$. Given the vector field $u:\mathbb{R}^n \rightarrow \mathbb{R}^n$ and the scalar field $F:\mathbb{R}^n \rightarrow \mathbb{R}$, it holds 
\begin{equation*}
\int_{\mathbb{R}^n}\langle \nabla F, u\rangle_{\mathbb{R}^n}\, {\rm d}\lambda_n=\int_{\mathbb{R}^n} F (-\nabla \cdot u)\, {\rm d}\lambda_n.
\end{equation*}
This explains (up to a minus-sign) why the operator $\delta$ is called \emph{divergence}.
\end{remark}

\subsection{Properties of the divergence operator}
We derive here some properties of the divergence operator.

\begin{proposition}
\begin{itemize}
\item [(i)] If $u \in \emph{Dom}(\delta)$, then $\mathbb{E}\left[\delta(u)\right]=0$.
\item [(ii)] $\delta$ is a linear and closed operator.
\end{itemize}
\end{proposition}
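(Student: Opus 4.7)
The plan is to reduce every assertion to the defining duality relation \eqref{duality_rel} together with the fact that $\mathcal{P}(\mathcal{H}_1) \subset \mathbb{D}^{1,2}$ is dense in $L^2(\Omega, \sigma(\mathcal{H}_1), \mathbb{P})$ (Corollary \ref{cor_den_pol}).

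For (i) I would simply test the duality relation against the constant random variable $F \equiv 1$. Since constants are polynomials of degree zero in elements of $\mathcal{H}_1$, we have $1 \in \mathcal{P}(\mathcal{H}_1) \subset \mathbb{D}^{1,2}$, and by Definition \ref{defDpol} (derivative of a polynomial of degree zero) $D1 = 0$. Substituting into \eqref{duality_rel} yields
\begin{equation*}
\mathbb{E}[\delta(u)] \;=\; \mathbb{E}\!\left[1 \cdot \delta(u)\right] \;=\; \mathbb{E}\!\left[\langle D1,\,u\rangle_{\mathcal{H}}\right] \;=\; 0.
\end{equation*}

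For (ii), linearity of $\delta$ follows at once from linearity of $D$ and of $\mathbb{E}$. Given $u,v \in \mathrm{Dom}(\delta)$ and $a,b \in \mathbb{R}$, the bound \eqref{cond_u_domd} transfers to $au+bv$ with constant $|a|C_u + |b|C_v$, so $au+bv \in \mathrm{Dom}(\delta)$. Applying \eqref{duality_rel} three times shows that for every $F \in \mathbb{D}^{1,2}$,
\begin{equation*}
\mathbb{E}\!\left[F\,\delta(au+bv)\right]
\;=\; \mathbb{E}\!\left[\langle DF,\,au+bv\rangle_{\mathcal{H}}\right]
\;=\; \mathbb{E}\!\left[F\,(a\delta(u)+b\delta(v))\right].
\end{equation*}
Since this holds on the dense subspace $\mathcal{P}(\mathcal{H}_1) \subset \mathbb{D}^{1,2}$ of $L^2(\Omega)$, one concludes $\delta(au+bv) = a\delta(u) + b\delta(v)$.

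For closedness one can appeal to the abstract principle that the adjoint of a densely defined operator between Hilbert spaces is automatically closed; since $D$ is densely defined and $\delta = D^*$ by construction, the claim follows. If a direct argument is preferred, take a sequence $u_n \to u$ in $L^2(\Omega;\mathcal{H})$ with $\delta(u_n) \to g$ in $L^2(\Omega)$, and pass to the limit on both sides of
\begin{equation*}
\mathbb{E}\!\left[F\,\delta(u_n)\right] \;=\; \mathbb{E}\!\left[\langle DF,\,u_n\rangle_{\mathcal{H}}\right], \qquad F \in \mathbb{D}^{1,2},
\end{equation*}
using Cauchy--Schwarz in $L^2(\Omega)$ and in $L^2(\Omega;\mathcal{H})$ respectively. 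This gives $\mathbb{E}[Fg] = \mathbb{E}[\langle DF, u\rangle_{\mathcal{H}}]$, which both verifies the bound \eqref{cond_u_domd} for $u$ (with constant $\|g\|_{L^2(\Omega)}$) and identifies $\delta(u) = g$. There is no real obstacle here; the only thing worth handling carefully is the density step in the linearity argument, but $\mathcal{P}(\mathcal{H}_1)$ dense in $L^2$ makes it immediate.
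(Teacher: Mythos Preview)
Your proof is correct and follows essentially the same approach as the paper: for (i) you test the duality relation against $F=1$, and for (ii) you invoke the general fact that the adjoint of a densely defined operator is closed. The paper's proof is extremely terse (two lines), whereas you spell out the linearity argument and also supply a direct verification of closedness; these additions are fine but not required.
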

\begin{proof}
\begin{itemize}
Item (i) is an immediate consequence of \eqref{duality_rel} by taking $F=1$; item (ii) steams from the fact that the adjoint of a linear densely defined operator is closed (see Section \ref{HS_sec}).
\end{itemize}
\end{proof}
Let us recall that by $\mathcal{P}_{\mathcal{H}}(\mathcal{H}_1)$ we denote the family of $\mathcal{H}$-valued polynomial random variables in elements in $\mathcal{H}_1$ (see \eqref{P_V_H_1}). The next result shows that elements $u$ in $\mathcal{P}_{\mathcal{H}}(\mathcal{H}_1)$ belong to the domain of $\delta$ and it provides an explicit expression for $\delta(u)$.
\begin{lemma}
\label{preli_lem_0}
Let $u \in \mathcal{P}_{\mathcal{H}}(\mathcal{H}_1)$ be of the form 
\begin{equation}
\label{u_lem}
u= \sum_{j=1}^n F_j h_j
\quad \text{with} \ \ F_j\in\mathcal{P}(\mathcal{H}_1), \ h_j\in \mathcal{H}, \ \forall j=1..n.
\end{equation}
Then $u \in \emph{Dom}(\delta)$ and 
\begin{equation}
\label{deltau_lem}
\delta(u)= \sum_{j=1}^nF_jW(h_j)-\sum_{j=1}^n \langle DF_j, h_j\rangle_{\mathcal{H}}.
\end{equation}
\end{lemma}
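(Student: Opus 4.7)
The plan is to verify the two defining conditions of the divergence operator directly, by using the integration by parts formula for polynomials (Corollary \ref{coribp}) as the key computational tool, and then using linearity and density to extend to all test variables $F \in \mathbb{D}^{1,2}$.

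By linearity of $D$ and of the expected value, it suffices to work with a single summand, i.e.\ to show the identity for $u = F_j h_j$ (with a generic polynomial $F_j$ and $h_j \in \mathcal{H}$). First I would take an arbitrary $F \in \mathcal{P}(\mathcal{H}_1)$ and write
\begin{equation*}
\mathbb{E}\left[\langle DF, F_j h_j\rangle_{\mathcal{H}}\right] = \mathbb{E}\left[F_j \langle DF, h_j\rangle_{\mathcal{H}}\right].
\end{equation*}
Applying Corollary \ref{coribp} with the roles of $F$ and $G$ being $F$ and $F_j$, both of which are polynomial random variables, this expression equals
\begin{equation*}
\mathbb{E}\left[F F_j W(h_j)\right] - \mathbb{E}\left[F \langle DF_j, h_j\rangle_{\mathcal{H}}\right] = \mathbb{E}\left[F\bigl(F_j W(h_j) - \langle DF_j, h_j\rangle_{\mathcal{H}}\bigr)\right].
\end{equation*}
Summing over $j = 1,\dots,n$ yields
\begin{equation*}
\mathbb{E}\left[\langle DF, u\rangle_{\mathcal{H}}\right] = \mathbb{E}\left[F\,Z\right], \qquad Z := \sum_{j=1}^n F_j W(h_j) - \sum_{j=1}^n \langle DF_j, h_j\rangle_{\mathcal{H}}.
\end{equation*}

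The next step is to check that $Z \in L^2(\Omega)$: this is automatic since each $F_j W(h_j)$ and each $\langle DF_j, h_j\rangle_{\mathcal{H}}$ is a polynomial in elements of $\mathcal{H}_1$ (using Definition \ref{defDpol} to see that $\langle DF_j, h_j\rangle_{\mathcal{H}}$ is polynomial), and $\mathcal{P}(\mathcal{H}_1) \subset L^p(\Omega)$ for every $p<\infty$ by Proposition \ref{PdenseLp}. Then the Cauchy--Schwarz inequality gives
\begin{equation*}
\bigl|\mathbb{E}\left[\langle DF, u\rangle_{\mathcal{H}}\right]\bigr| = \bigl|\mathbb{E}[FZ]\bigr| \le \|Z\|_{L^2(\Omega)}\,\|F\|_{L^2(\Omega)}
\end{equation*}
for every $F \in \mathcal{P}(\mathcal{H}_1)$, and by density of $\mathcal{P}(\mathcal{H}_1)$ in $\mathbb{D}^{1,2}$ (together with continuity of $D$ on $\mathbb{D}^{1,2}$), the identity $\mathbb{E}[\langle DF, u\rangle_{\mathcal{H}}] = \mathbb{E}[FZ]$ extends to all $F \in \mathbb{D}^{1,2}$. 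Hence $u$ satisfies the bound \eqref{cond_u_domd}, so $u \in \mathrm{Dom}(\delta)$, and the duality relation \eqref{duality_rel} forces $\delta(u) = Z$, which is exactly the stated formula.

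No serious obstacle is expected: the only subtle point is checking that $\langle DF_j, h_j\rangle_{\mathcal{H}}$ really is a polynomial in elements of $\mathcal{H}_1$ (so that $Z \in L^2$), which follows directly from Definition \ref{defDpol}, and that the identity extends from $F \in \mathcal{P}(\mathcal{H}_1)$ to $F \in \mathbb{D}^{1,2}$, which is a standard density argument using the continuity of the bilinear pairing $(F,u)\mapsto \mathbb{E}[\langle DF,u\rangle_{\mathcal{H}}]$ in $F$ with respect to the $\mathbb{D}^{1,2}$-norm.
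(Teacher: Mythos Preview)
Your proposal is correct and follows essentially the same route as the paper: both arguments apply Corollary~\ref{coribp} to rewrite $\mathbb{E}[\langle DF, u\rangle_{\mathcal{H}}]$ as $\mathbb{E}[FZ]$ with $Z$ the claimed expression, then bound this by $\|F\|_{L^2}$ to verify \eqref{cond_u_domd} and identify $\delta(u)=Z$ via the duality relation. The only minor difference is presentational: you work first with polynomial test functions and pass to $\mathbb{D}^{1,2}$ by density, whereas the paper applies the integration by parts directly for $G\in\mathbb{D}^{1,2}$ (implicitly relying on the same density extension of Corollary~\ref{coribp}).
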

\begin{proof}
Let us start by verifying  \eqref{cond_u_domd}. Let $G \in \mathbb{D}^{1,2}$ and let $u \in \mathcal{P}_\mathcal{H}(\mathcal{H}_1)$ be of the form \eqref{u_lem}. By means of the integration by parts formula \eqref{ibp_cor} and the H\"older inequality we infer
\begin{align*}
\left|\mathbb{E} \left[ \langle DG, u\rangle_{\mathcal{H}}\right] \right|
&= \left|\sum_{j=1}^n \mathbb{E}\left[F_j \langle DG, h_j\rangle_{\mathcal{H}}\right] \right|
\le \sum_{j=1}^n \left|\mathbb{E}\left[G \langle DF_j, h_j \rangle_{\mathcal{H}} \right]\right|+ \sum_{j=1}^n \left|\mathbb{E}\left[G F_j W(h_j) \right]\right|
\le C \|G\|_{L^2(\Omega)},
\end{align*}
with $C$ a positive constant depending on $u$. This proves \eqref{cond_u_domd} and shows that $u \in \text{Dom}(\delta)$. 
\\
Exploiting \eqref{duality_rel} and \eqref{ibp_cor}, for any $G \in \mathbb{D}^{1,2}$ we get 
\begin{align*}
\mathbb{E}\left[ G \delta(u)\right] 
&=\mathbb{E}\left[ \langle DG ,u\rangle_{\mathcal{H}}\right] 
= \mathbb{E} \left[\sum_{j=1}^n F_j\langle DG,  h_j \rangle_{\mathcal{H}}\right]
\\
&= \mathbb{E} \left[\sum_{j=1}^n F_jG W(h_j) \right]- \mathbb{E} \left[\sum_{j=1}^n G\langle DF_j,  h_j \rangle_{\mathcal{H}}\right]
= \mathbb{E} \left[G\sum_{j=1}^n \left(F_j W(h_j)-\langle DF_j,  h_j \rangle_{\mathcal{H}}\right)\right]
\end{align*}
and \eqref{deltau_lem} immediately follows from the arbitrariness of $G\in \mathbb{D}^{1,2}$.
\end{proof}
 The following result provides a larger class of $\mathcal{H}$-valued random variables in the domain of the divergence operator.
 \begin{proposition}
 \label{D^12Hindomd}
 The space $\mathbb{D}^{1,2}(\mathcal{H})$ is included in $\emph{Dom}(\delta)$. If $u, v \in \mathbb{D}^{1,2}(\mathcal{H})$, then 
\begin{equation}
\label{dudv-for}
\mathbb{E}\left[ \delta (u)\delta(v)\right]= \mathbb{E}\left[ \langle u,v\rangle_{\mathcal{H}}\right] + \mathbb{E}\left[ \emph{Tr}(DuDv)\right].
\end{equation}
\end{proposition}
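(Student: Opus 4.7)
The plan is a two-stage argument: first verify the formula on the polynomial subspace $\mathcal{P}_{\mathcal{H}}(\mathcal{H}_1)$, which Lemma \ref{preli_lem_0} already places inside $\text{Dom}(\delta)$, and then extend to $\mathbb{D}^{1,2}(\mathcal{H})$ by density and the closedness of $\delta$. For the polynomial step, write $u = \sum_{j} F_j h_j$ and $v = \sum_{i} G_i k_i$ with $F_j, G_i \in \mathcal{P}(\mathcal{H}_1)$ and $h_j, k_i \in \mathcal{H}$, so that by Lemma \ref{preli_lem_0} both $\delta(u)$ and $\delta(v)$ are explicit polynomials in elements of $\mathcal{H}_1$ and hence lie in $\mathbb{D}^{1,2}$.

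The heart of the argument is to apply the duality relation \eqref{duality_rel} with test function $\delta(v)$, yielding
\[
\mathbb{E}\left[\delta(u)\delta(v)\right] = \mathbb{E}\left[\langle D\delta(v), u\rangle_{\mathcal{H}}\right].
\]
The Leibniz rule from Proposition \ref{chain_rule} applied to the explicit form of $\delta(v)$, together with $DW(k_i) = k_i$, gives
\[
D\delta(v) = v + \sum_{i} W(k_i)\, DG_i - \sum_{i} D\langle DG_i, k_i\rangle_{\mathcal{H}},
\]
so pairing with $u$ and taking expectation produces $\mathbb{E}[\langle u, v\rangle_{\mathcal{H}}]$ plus two cross terms. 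I then apply the duality of Corollary \ref{coribp} a second time to each $\mathbb{E}[F_j W(k_i)\langle DG_i, h_j\rangle_{\mathcal{H}}]$, which produces a term $\mathbb{E}[\langle DF_j, k_i\rangle_{\mathcal{H}}\langle DG_i, h_j\rangle_{\mathcal{H}}]$ together with a second-derivative term $\mathbb{E}[F_j\,D^{k_i}D^{h_j}G_i]$. By Lemma \ref{lem_tec_0} this second-derivative term equals $\mathbb{E}[F_j\,D^{h_j}D^{k_i}G_i]$, and it cancels exactly against the remaining contribution $\mathbb{E}[F_j\,D^{h_j}\langle DG_i, k_i\rangle_{\mathcal{H}}]$ coming from the last piece of $D\delta(v)$.

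After the cancellation,
\[
\mathbb{E}\left[\delta(u)\delta(v)\right] = \mathbb{E}\left[\langle u, v\rangle_{\mathcal{H}}\right] + \sum_{i,j}\mathbb{E}\left[\langle DF_j, k_i\rangle_{\mathcal{H}}\,\langle DG_i, h_j\rangle_{\mathcal{H}}\right].
\]
To match the right-hand side of \eqref{dudv-for}, I use the Hilbert--Schmidt description of the Malliavin derivative from \eqref{DF_HS}--\eqref{def_TP_HS_bis}, namely $Du = \sum_{j} T_{DF_j, h_j}$ and $Dv = \sum_{i} T_{DG_i, k_i}$ as operators on $\mathcal{H}$. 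Expanding $\text{Tr}(Du\circ Dv)$ in any orthonormal basis of $\mathcal{H}$ and applying Parseval's identity gives exactly the double sum above, completing the polynomial case.

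For the density extension, given $u \in \mathbb{D}^{1,2}(\mathcal{H})$ choose $u_n \in \mathcal{P}_{\mathcal{H}}(\mathcal{H}_1)$ with $u_n \to u$ in $\mathbb{D}^{1,2}(\mathcal{H})$. Applying the polynomial identity to $u_n - u_m$ and using the bound $|\text{Tr}(AB)| \le \|A\|_{\mathcal{L}_2(\mathcal{H})}\,\|B\|_{\mathcal{L}_2(\mathcal{H})}$ yields
\[
\mathbb{E}\left[|\delta(u_n) - \delta(u_m)|^2\right] \le \|u_n - u_m\|^2_{L^2(\Omega;\mathcal{H})} + \|D(u_n - u_m)\|^2_{L^2(\Omega;\mathcal{H}\otimes\mathcal{H})} \to 0,
\]
so $\{\delta(u_n)\}$ is Cauchy in $L^2(\Omega)$; by closedness of $\delta$, $u \in \text{Dom}(\delta)$ with $\delta(u_n) \to \delta(u)$. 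Passing to the limit in \eqref{dudv-for} for $(u_n, v_n)$ gives the general case. The main obstacle is the bookkeeping in the middle paragraph: the formula only emerges after a \emph{second} application of integration by parts together with the symmetry of second directional derivatives (Lemma \ref{lem_tec_0}), and it is precisely this cancellation that produces the Hilbert--Schmidt trace structure on the right-hand side.
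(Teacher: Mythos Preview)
Your proof is correct and follows essentially the same two-stage strategy as the paper: establish \eqref{dudv-for} on $\mathcal{P}_{\mathcal{H}}(\mathcal{H}_1)$ via duality, integration by parts, and the symmetry of second directional derivatives (Lemma \ref{lem_tec_0}), then extend by density using closedness of $\delta$ and the Hilbert--Schmidt bound on the trace. The only organizational difference is that the paper first isolates the commutation relation $D^h\delta(v)=\langle v,h\rangle_{\mathcal{H}}+\delta(D^hv)$ as a separate lemma (Lemma \ref{lemma_duality}) and then expands in an orthonormal basis of $\mathcal{H}$, whereas you compute $D\delta(v)$ directly from the explicit formula \eqref{deltau_lem} and work throughout with the finite representations $u=\sum_j F_jh_j$, $v=\sum_i G_ik_i$; your inline computation of $D\delta(v)$ is precisely the content of Lemma \ref{lemma_duality}, and the resulting trace identification is the same as the paper's.
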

We recall that (see Section \ref{sec_MDH} for the details) if $u \in \mathbb{D}^{1,2}(\mathcal{H})$, then its derivative $Du$ is a random variable with values in the tensor product Hilbert space $\mathcal{H}\otimes \mathcal{H}$ that we identify with the space of Hilbert-Schmidt operators form $\mathcal{H}$ into itself. The term  $\mathbb{E}\left[\text{Tr}(DuDv)\right]$ that appears in \eqref{dudv-for} is thus well defined in virtue of Definition \ref{trace_def} and Proposition \ref{proposition-HS}(iii).
\\
The proof of Proposition \ref{D^12Hindomd} relies on the following two lemmata. Lemma \ref{lemma_duality} provides a commutativity relation between the derivative operator $D^h$ and the divergence operator. Lemma \ref{lem6.7} proves equality \eqref{dudv-for} for elements in $\mathcal{P}_{\mathcal{H}}(\mathcal{H}_1)$.

\begin{lemma}
\label{lemma_duality}
Let $u \in \mathcal{P}_{\mathcal{H}}(\mathcal{H}_1)$ and $h \in \mathcal{H}$, then 
\begin{equation}
\label{com_rel}
D^h\delta(u)=\langle u, h\rangle_{\mathcal{H}}+\delta(D^hu).
\end{equation}
\end{lemma}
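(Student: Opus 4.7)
The plan is to reduce the identity to a computation on the explicit representatives given by Lemma \ref{preli_lem_0}. Write $u \in \mathcal{P}_{\mathcal{H}}(\mathcal{H}_1)$ in the form $u = \sum_{j=1}^n F_j h_j$ with $F_j \in \mathcal{P}(\mathcal{H}_1)$ and $h_j \in \mathcal{H}$. Lemma \ref{preli_lem_0} gives
\[
\delta(u) = \sum_{j=1}^n F_j W(h_j) - \sum_{j=1}^n \langle DF_j, h_j\rangle_{\mathcal{H}}.
\]
Since each summand lies in $\mathcal{P}(\mathcal{H}_1)$ (using that $W(h_j)$ and $\langle DF_j,h_j\rangle_{\mathcal{H}}$ are polynomials in elements of $\mathcal{H}_1$), $\delta(u)$ is itself a polynomial random variable, so $D^h\delta(u)$ is well defined on $\mathcal{P}(\mathcal{H}_1)$.

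Next I would apply $D^h$ term by term. For the first sum, the chain rule (Proposition \ref{chain_rule}) together with $DW(h_j)=h_j$ yields
\[
D^h\bigl(F_j W(h_j)\bigr) = (D^h F_j)\,W(h_j) + F_j\,\langle h_j,h\rangle_{\mathcal{H}}.
\]
For the second sum, noting that $\langle DF_j, h_j\rangle_{\mathcal{H}} = D^{h_j} F_j$, I use the commutation of directional derivatives proved in Lemma \ref{lem_tec_0} to get
\[
D^h\bigl(\langle DF_j, h_j\rangle_{\mathcal{H}}\bigr) = D^h D^{h_j} F_j = D^{h_j} D^h F_j = \langle D(D^h F_j), h_j\rangle_{\mathcal{H}}.
\]
Summing these contributions and collecting the $\langle h_j,h\rangle_{\mathcal{H}}$ terms into $\langle u,h\rangle_{\mathcal{H}} = \sum_j F_j \langle h_j,h\rangle_{\mathcal{H}}$ gives
\[
D^h\delta(u) = \langle u,h\rangle_{\mathcal{H}} + \sum_{j=1}^n (D^hF_j)\,W(h_j) - \sum_{j=1}^n \langle D(D^hF_j), h_j\rangle_{\mathcal{H}}.
\]

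Finally, I would identify the remaining two sums with $\delta(D^h u)$. Since $D^h u = \sum_{j=1}^n (D^hF_j)\,h_j$ is again of the form \eqref{u_lem} (the scalar coefficients $D^h F_j$ lie in $\mathcal{P}(\mathcal{H}_1)$ and the $h_j$ are deterministic vectors in $\mathcal{H}$), a second application of Lemma \ref{preli_lem_0} to $D^h u$ gives exactly
\[
\delta(D^h u) = \sum_{j=1}^n (D^h F_j)\,W(h_j) - \sum_{j=1}^n \langle D(D^hF_j), h_j\rangle_{\mathcal{H}},
\]
which closes the identity. The only non-mechanical step is the commutation $D^h D^{h_j} = D^{h_j} D^h$ used above; the rest is the product/chain rule and a reapplication of Lemma \ref{preli_lem_0}, so I expect that exchange of directional derivatives to be the only real point worth highlighting.
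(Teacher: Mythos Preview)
Your proof is correct and follows essentially the same route as the paper: write $u=\sum_j F_j h_j$, apply Lemma~\ref{preli_lem_0} to compute $\delta(u)$ and $\delta(D^h u)$ explicitly, differentiate term by term, and use the commutation $D^hD^{h_j}=D^{h_j}D^h$ from Lemma~\ref{lem_tec_0} to match the two sides. The only cosmetic difference is that the paper invokes the commutation at the very end after computing both sides separately, whereas you use it midstream; also, the Leibniz rule you need for $D^h(F_jW(h_j))$ follows directly from Definition~\ref{defDpol} rather than from Proposition~\ref{chain_rule}.
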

\begin{proof}
Let $u \in \mathcal{P}_{\mathcal{H}}(\mathcal{H}_1)$ be of the form 
\begin{equation*}
u= \sum_{j=1}^n F_j h_j
\quad \text{with} \ \ F_j\in\mathcal{P}(\mathcal{H}_1), \ h_j\in \mathcal{H}, \ \forall j=1..n.
\end{equation*}
From Lemma \ref{preli_lem_0} we know that $u \in \text{Dom}(\delta)$ and \eqref{deltau_lem} holds. Keeping in mind the notation introduced in Section \ref{dir_der_sec} we compute
\begin{align}
\label{pezzo1}
D^h(\delta(u))
&=D^h \left(\sum_{j=1}^n F_jW(h_j)\right)-D^h \left(\sum_{j=1}^n \langle DF_j,h_j\rangle_{\mathcal{H}}\right)
\notag\\
&=\sum_{j=1}^n F_j\langle DW(h_j),h\rangle_{\mathcal{H}}+ \sum_{j=1}^nW(h_j) \langle DF_j, h\rangle_{\mathcal{H}} - \sum_{j=1}^n \langle D\left(\langle DF_j,h_j\rangle_{\mathcal{H}}\right),h\rangle_{\mathcal{H}}
\notag \\
&=\sum_{j=1}^n F_j\langle h_j,h\rangle_{\mathcal{H}}+ \sum_{j=1}^n D^hF_jW(h_j)-\sum_{j=1}^n \langle D(D^{h_j}F_j),h\rangle_{\mathcal{H}}
\notag \\
&=\langle u,h\rangle_{\mathcal{H}} + \sum_{j=1}^n D^hF_jW(h_j) 
-\sum_{j=1}^n D^h(D^{h_j}F_j).
\end{align}
Let us now compute
\begin{align*}
\delta(D^hu)
&=\delta \left(\sum_{j=1}^n \langle DF_j, h\rangle_{\mathcal{H}}h_j\right)
=\sum_{j=1}^n \langle DF_j, h\rangle_{\mathcal{H}}W(h_j)-\sum_{j=1}^n \langle D\left(\langle DF_j,h\rangle_{\mathcal{H}}\right),h_j \rangle_{\mathcal{H}}
\\
&=\sum_{j=1}^n D^hF_jW(h_j)-\sum_{j=1}^n \langle D(D^hF_j),h_j \rangle_{\mathcal{H}}
= \sum_{j=1}^n D^hF_jW(h_j)-\sum_{j=1}^n D^{h_j}(D^hF_j),
\end{align*}
where in the second equality we used the formula \eqref{deltau_lem} since $\sum_{j=1}^n \langle DF_j, h\rangle_{\mathcal{H}}h_j \in \mathcal{P}_{\mathcal{H}}(\mathcal{H}_1)$.
From the chain of equalities above we thus obtain 
\begin{equation}
\label{pezzo2}
\sum_{j=1}^n D^hF_jW(h_j)=\delta(D^hu)+ \sum_{j=1}^n D^{h_j}(D^hF_j).
\end{equation}
Substituting \eqref{pezzo2} in \eqref{pezzo1} we get
\begin{equation*}
D^h(\delta(u))
=\langle u,h\rangle_{\mathcal{H}} + \delta(D^hu)+ \sum_{j=1}^n D^{h_j}(D^hF_j)
-\sum_{j=1}^n D^h(D^{h_j}F)=\langle u,h\rangle_{\mathcal{H}} + \delta(D^hu),
\end{equation*}
where the last equality follows from Lemma \ref{lem_tec_0}.
This concludes the proof.
\end{proof}

\begin{remark}
\label{remark_duality}
The above result can be extended as follows (for a proof see e.g \cite[Proposition 1.3.2]{Nualart}). Suppose that $u \in \mathbb{D}^{1,2}(\mathcal{H})$, and $D^hu$ belongs to the domain of the divergence. Then $\delta(u) \in \mathbb{D}^{1,2}_{h}$,and the commutation relation \eqref{com_rel} holds.
\end{remark}

 \begin{lemma}
 \label{lem6.7}
If $u, v \in \mathcal{P}_{\mathcal{H}}(\mathcal{H}_1)$, then 
\begin{equation}
\label{dudv_for2}
\mathbb{E}\left[ \delta (u)\delta(v)\right]= \mathbb{E}\left[ \langle u,v\rangle_{\mathcal{H}}\right] + \mathbb{E}\left[ \emph{Tr}(DuDv)\right].
\end{equation}
\end{lemma}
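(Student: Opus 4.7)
The strategy is to strip away the two $\delta$'s by applying the duality relation \eqref{duality_rel} twice, using Lemma \ref{lemma_duality} in between to convert the commutator between $D$ and $\delta$ into the diagonal contribution $\langle u,v\rangle_{\mathcal H}$, and using Lemma \ref{tech_lem_der} at the end to recognise what remains as a trace.

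\textbf{Step 1 (first duality).} The explicit formula of Lemma \ref{preli_lem_0} shows that $\delta(u)$ is a finite linear combination of products of polynomials in elements of $\mathcal H_1$ with random variables of the form $\langle DF_j,h_j\rangle_{\mathcal H}$, hence $\delta(u)\in\mathcal P(\mathcal H_1)\subset\mathbb D^{1,2}$. The same lemma ensures $v\in\mathrm{Dom}(\delta)$. Taking $F=\delta(u)$ as test function in \eqref{duality_rel} gives
\[
\mathbb E[\delta(u)\delta(v)]\;=\;\mathbb E[\langle D\delta(u),v\rangle_{\mathcal H}].
\]

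\textbf{Step 2 (expand in a basis and commute).} Fix an orthonormal basis $\{e_i\}_i$ of $\mathcal H$; since $Du$ and $Dv$ are finite rank (the polynomial structure makes the sums over $i$ essentially finite), no convergence issue arises. Writing the inner product coordinatewise and applying Lemma \ref{lemma_duality} to each term, $D^{e_i}\delta(u)=\langle u,e_i\rangle_{\mathcal H}+\delta(D^{e_i}u)$, so
\[
\langle D\delta(u),v\rangle_{\mathcal H}
=\sum_i\langle u,e_i\rangle_{\mathcal H}\langle v,e_i\rangle_{\mathcal H}
+\sum_i \delta(D^{e_i}u)\,\langle v,e_i\rangle_{\mathcal H}.
\]
Parseval's identity collapses the first sum to $\langle u,v\rangle_{\mathcal H}$, producing the desired first term on the right-hand side of \eqref{dudv_for2}.

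\textbf{Step 3 (second duality and identification of the trace).} For each $i$, $D^{e_i}u\in\mathcal P_{\mathcal H}(\mathcal H_1)\subset\mathrm{Dom}(\delta)$ and $\langle v,e_i\rangle_{\mathcal H}\in\mathcal P(\mathcal H_1)\subset\mathbb D^{1,2}$, so a second application of \eqref{duality_rel} gives
\[
\mathbb E[\delta(D^{e_i}u)\langle v,e_i\rangle_{\mathcal H}]
=\mathbb E[\langle D\langle v,e_i\rangle_{\mathcal H},D^{e_i}u\rangle_{\mathcal H}].
\]
Under the Hilbert--Schmidt identification of \eqref{DF_HS}--\eqref{def_TP_HS_bis}, $Du,Dv\colon\mathcal H\to\mathcal H$ with $Du(h)=D^hu$ and $Dv(h)=D^hv$. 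Lemma \ref{tech_lem_der} yields $D^h\langle v,e_i\rangle_{\mathcal H}=\langle D^hv,e_i\rangle_{\mathcal H}=\langle Dv(h),e_i\rangle_{\mathcal H}$, so that $D\langle v,e_i\rangle_{\mathcal H}=(Dv)^{\ast}(e_i)$; similarly $D^{e_i}u=Du(e_i)$. Summing over $i$,
\[
\sum_i\langle D\langle v,e_i\rangle_{\mathcal H},D^{e_i}u\rangle_{\mathcal H}
=\sum_i\langle Du(e_i),(Dv)^{\ast}(e_i)\rangle_{\mathcal H}
=\operatorname{Tr}(Dv\,Du)=\operatorname{Tr}(Du\,Dv),
\]
by the cyclic property of the trace for products of Hilbert--Schmidt operators. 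Taking expectations and combining with Step 2 yields \eqref{dudv_for2}.

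\textbf{Main obstacle.} The routine part is the two integration-by-parts manoeuvres; the delicate step is Step 3, namely correctly identifying the residual sum as $\operatorname{Tr}(Du\,Dv)$. This requires interpreting $Du,Dv$ as Hilbert--Schmidt operators, recognising that $D\langle v,e_i\rangle_{\mathcal H}=(Dv)^{\ast}(e_i)$ via Lemma \ref{tech_lem_der}, and invoking the cyclicity of the trace. The finite-rank nature of $Du$ and $Dv$ on polynomial random variables makes all the basis sums legitimate.
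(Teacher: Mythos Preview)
Your proof is correct and follows essentially the same route as the paper: apply the duality relation \eqref{duality_rel} once, expand in an orthonormal basis and invoke Lemma \ref{lemma_duality} to split off the $\langle u,v\rangle_{\mathcal H}$ term, then apply duality a second time and identify the remainder as $\operatorname{Tr}(Du\,Dv)$. The only difference is in the final identification: the paper expands one step further to $\sum_{i,j}D^{e_j}\langle v,e_i\rangle_{\mathcal H}\,D^{e_i}\langle u,e_j\rangle_{\mathcal H}$ and then verifies the equality with the trace by an explicit computation on simple tensors $u=Fh$, $v=Gk$, whereas you recognise $D\langle v,e_i\rangle_{\mathcal H}=(Dv)^{\ast}(e_i)$ directly and invoke cyclicity of the trace. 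Your operator-theoretic shortcut is a bit cleaner; the paper's explicit computation has the advantage of making the identification completely concrete without appealing to adjoints or trace cyclicity.
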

\begin{proof}
Let $u, v \in \mathcal{P}_{\mathcal{H}}(\mathcal{H}_1)$ and let $\{e_i\}_{i=1}^\infty$ be a complete orthonormal system in $\mathcal{H}$.
Using the duality relationship \eqref{duality_rel} and Lemma \ref{lemma_duality} we get
\begin{align*}
\mathbb{E}\left[\delta(u)\delta(v) \right]
&=\mathbb{E} \left[ \langle D\delta(u), v \rangle_{\mathcal{H}} \right]
= \mathbb{E} \left[\sum_{i=1}^\infty  D^{e_i}\delta(u)\langle v, e_i\rangle_{\mathcal{H}} \right]
= \mathbb{E}\left[\sum_{i=1}^\infty\langle v, e_i\rangle_{\mathcal{H}}\left(\langle u, e_i\rangle_{\mathcal{H}}+ \delta(D^{e_i}u)\right) \right]
\\
&= \mathbb{E} \left[ \langle u,v\rangle_{\mathcal{H}}\right] +\sum_{i=1}^\infty \mathbb{E}\left[\langle v, e_i\rangle_{\mathcal{H}}\delta(D^{e_i}u) \right].
\end{align*}
Exploiting again \eqref{duality_rel} and Lemma \ref{tech_lem_der} we now rewrite the term $\mathbb{E}\left[\langle v, e_i\rangle_{\mathcal{H}}\delta(D^{e_i}u) \right]
$, that appears in the last line of the above equality, as follows
\begin{align*}
\mathbb{E}\left[\langle v, e_i\rangle_{\mathcal{H}}\delta(D^{e_i}u) \right]
&=\mathbb{E}\left[\langle D\left(\langle v, e_i\rangle_{\mathcal{H}}\right), D^{e_i}u\rangle \right]
=\sum_{j=1}^{\infty} \mathbb{E}\left[\langle D\left(\langle v, e_i\rangle_{\mathcal{H}}\right), e_j\rangle_{\mathcal{H}}\langle D^{e_i}u, e_j\rangle \right]
\\
&=\sum_{j=1}^{\infty} \mathbb{E}\left[ D^{e_j}\langle v, e_i\rangle_{\mathcal{H}}D^{e_i}\langle u, e_j\rangle \right].
\end{align*}
Thus we get 
\begin{equation*}
\mathbb{E}\left[\delta(u)\delta(v) \right]=  \mathbb{E} \left[ \langle u,v\rangle_{\mathcal{H}}\right] + 
\sum_{i=1}^\infty\sum_{j=1}^{\infty} \mathbb{E}\left[D^{e_j}\langle v, e_i\rangle_{\mathcal{H}}D^{e_i}\langle u, e_j\rangle_{\mathcal{H}} \right].
 \end{equation*}
Let us now show that 
$\mathbb{E}\left[ \text{Tr}(DuDv)\right] = \sum_{i=1}^\infty\sum_{j=1}^{\infty} \mathbb{E}\left[ D^{e_j}\langle v, e_i\rangle_{\mathcal{H}}D^{e_i}\langle u, e_j\rangle \right]$
which will conclude the proof. Without loss of generality and for the sake of simplicity we suppose $u$ and $v$ to be of the form $u=Fh$ and $v=Gk$ with $h, k  \in \mathcal{H}$, $F=\varphi(W(h_1),...,W(h_n))$, $G=\psi(W(k_1),...,W(k_m))$ with $\varphi, \psi \in \mathcal{P}(\mathcal{H}_1)$. 
Recalling \eqref{DF_HS} we have 
\begin{equation*}
Du=\sum_{j=1}^n\partial_j \varphi(W(h_1),...,W(h_n))T_{h_j,h}, \quad Dv=\sum_{\ell=1}^m\partial_\ell \psi(W(k_1),...,W(k_m))T_{k_\ell,k}.\end{equation*}
Therefore we compute (using a more compact notation)
\begin{align}
\label{trace_compu}
\text{Tr}(DuDv)
&=\sum_{j=1}^n\sum_{\ell=1}^m\partial_j \varphi \partial_\ell \psi\text{Tr}\left(T_{h_j,h}T_{k_\ell,k}\right)
= \sum_{j=1}^n\sum_{\ell=1}^m\partial_j \varphi \partial_\ell \psi\langle k, h_j \rangle_{\mathcal{H}}\langle h, k_\ell\rangle_{\mathcal{H}}
\notag\\
&=\sum_{j=1}^n\partial_j\varphi \langle k, h_j\rangle_{\mathcal{H}}\sum_{\ell=1}^m\partial_\ell\psi \langle h, k_\ell\rangle_{\mathcal{H}}
=D^kFD^hG
\notag\\
&=\sum_{i=1}^\infty \sum_{j=1}^\infty\langle DF, e_i\rangle_{\mathcal{H}}\langle k, e_i\rangle_{\mathcal{H}}\langle DG, e_j\rangle_{\mathcal{H}}\langle h, e_j\rangle_{\mathcal{H}}
\notag\\
&=\sum_{i=1}^\infty\sum_{j=1}^{\infty}  D^{e_j}\langle v, e_i\rangle_{\mathcal{H}}D^{e_i}\langle u, e_j\rangle,
\end{align}
where in the second equality in \eqref{trace_compu} we have used the definition of trace \ref{trace_def} and \eqref{def_TP_HS_bis} to compute 
\begin{align*}
\text{Tr}\left(T_{h_j,h}T_{k_\ell,k}\right)
&:=\sum_{i=1}^\infty\langle T_{h_j,h}T_{k_\ell,k}e_i,e_i\rangle_{\mathcal{H}}
= \sum_{i=1}^\infty \langle T_{h_j,h}\left(k_\ell\langle k,e_i\rangle_{\mathcal{H}} \right), e_i\rangle_{\mathcal{H}}
\\
& = \sum_{i=1}^\infty \langle h_j\langle h, k_\ell\rangle_{\mathcal{H}}\langle k, e_i\rangle_{\mathcal{H}},e_i\rangle_{\mathcal{H}}
= \sum_{i=1}^\infty \langle h, k_\ell\rangle_{\mathcal{H}}\langle k, e_i\rangle_{\mathcal{H}}\langle h_j,e_i\rangle_{\mathcal{H}}
= \langle k, h_j\rangle_{\mathcal{H}} \langle h, k_\ell\rangle_{\mathcal{H}},\end{align*}
and in the last equality in \eqref{trace_compu} we have used the fact that 
\begin{equation*}
\langle DG, e_j\rangle_{\mathcal{H}}\langle k, e_i\rangle_{\mathcal{H}}
= D^{e_j}G\langle k, e_i\rangle_{\mathcal{H}}=D^{e_j}\langle Gk, e_i\rangle_{\mathcal{H}}=D^{e_j}\langle v, e_i\rangle_{\mathcal{H}}
\end{equation*}
and analogsly
\begin{equation*}
\langle DF, e_i\rangle_{\mathcal{H}}\langle h, e_j \rangle_{\mathcal{H}}=D^{e_i}\langle u, e_j\rangle_{\mathcal{H}}.
\end{equation*}
This concludes the proof.
\end{proof}

We have now all the ingredients to prove Proposition \ref{D^12Hindomd}.
\begin{proof}[Proof of Proposition \ref{D^12Hindomd}.]
Let us start by proving that $\mathbb{D}^{1,2}(\mathcal{H}) \subset \text{Dom}(\delta)$. Let $u \in \mathbb{D}^{1,2}(\mathcal{H})$, then there exists a sequence $u_n \in \mathcal{P}_{\mathcal{H}}(\mathcal{H}_1)$ such that $u_n \rightarrow u$ in $L^2(\Omega, \mathcal{H})$ and $Du_n \rightarrow Du$  in $L^2(\Omega, \mathcal{H}\otimes \mathcal{H})=L^2(\Omega, \mathcal{L}_2(\mathcal{H}))$. On the other hand, Lemma \ref{lem6.7} and Proposition \ref{proposition-HS}(iii) yield 
\begin{equation}
\label{dudv_for_0}
\mathbb{E}\left[ \delta (u_n)^2\right]= \mathbb{E}\left[\|u_n\|^2_{\mathcal{H}}\right] + \mathbb{E}\left[ \text{Tr}(Du_nDu_n)\right]
\le \mathbb{E}\left[\|u_n\|^2_{\mathcal{H}}\right] + \mathbb{E}\left[ \|Du_n\|^2_{\mathcal{H} \otimes \mathcal{H}}\right].
\end{equation}
From \eqref{dudv_for_0} we infer that $\{\delta(u_n)\}_{n \in \mathbb{N}}$ is a Cauchy sequence in $L^2(\Omega)$, thus convergent in $L^2(\Omega)$. Therefore, there exists $w \in L^2(\Omega)$ such that $\delta(u_n) \rightarrow w$ in  $L^2(\Omega)$ and, since $\delta$ is a closed operator, $u \in \text{Dom}(\delta)$ and $w=\delta(u)$. Moreover, it is immediate to see that \eqref{dudv-for} holds true.
\end{proof}

The following result (compare with \cite[Theorem 10.2.7]{Lunardi})  reminds us that the divergence operator has to be understood as a \emph{differential} operator: we can explicitly write the divergence of an element belonging to $\mathbb{D}^{1,2}(\mathcal{H})$ in terms of the directional Malliavin derivatives.
\begin{proposition}
\label{inf_dim_div}
Let $u \in \mathbb{D}^{1,2}(\mathcal{H})$. Fixing an orthonormal basis $\{h_n\}_{n \in \mathbb{N}}$ of $\mathcal{H}$, it holds 
\begin{equation}
\label{delta_inf}
\delta (u)=\sum_{j=1}^\infty \langle u, h_j\rangle_{\mathcal{H}}W(h_j)-\sum_{j=1}^\infty \langle D\langle u, h_j\rangle_{\mathcal{H}},h_j  \rangle_{\mathcal{H}}.
\end{equation}
\end{proposition}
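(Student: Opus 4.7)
The plan is to combine Lemma \ref{preli_lem_0} with a density argument built on the expansion of $u$ along the orthonormal basis $\{h_n\}$. First I would establish the elementary case: for $F \in \mathbb{D}^{1,2}$ and $h \in \mathcal{H}$, the random element $Fh$ lies in $\mathbb{D}^{1,2}(\mathcal{H}) \subset \mathrm{Dom}(\delta)$ and
\begin{equation*}
\delta(Fh) = FW(h) - \langle DF, h\rangle_{\mathcal{H}}.
\end{equation*}
For $F \in \mathcal{P}(\mathcal{H}_1)$ this is Lemma \ref{preli_lem_0}. The extension to $F \in \mathbb{D}^{1,2}$ is obtained by approximating $F$ by polynomials $F_n \to F$ in $\mathbb{D}^{1,2}$: the left-hand side passes to its $L^2(\Omega)$-limit by continuity of $\delta$ on $\mathbb{D}^{1,2}(\mathcal{H})$ (Proposition \ref{D^12Hindomd}), while the right-hand side converges by H\"older's inequality together with $W(h) \in L^p(\Omega)$ for every $p<\infty$ (Proposition \ref{L^p_Gauss}).

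Next, for a general $u \in \mathbb{D}^{1,2}(\mathcal{H})$, set $U_k := \langle u, h_k\rangle_{\mathcal{H}}$. Since $v \mapsto \langle v, h_k\rangle_{\mathcal{H}}$ is a continuous linear functional on $\mathcal{H}$, one verifies, first for polynomial $u$ and then by closability of $D$, that $U_k \in \mathbb{D}^{1,2}$ and that the truncations $u^N := \sum_{k=1}^N U_k h_k \in \mathbb{D}^{1,2}(\mathcal{H})$ converge to $u$ in the $\mathbb{D}^{1,2}(\mathcal{H})$-norm. The $L^2(\Omega;\mathcal{H})$-convergence is immediate from Parseval,
\begin{equation*}
\mathbb{E}\|u - u^N\|^2_{\mathcal{H}} = \mathbb{E}\sum_{k>N} U_k^2 \to 0.
\end{equation*}
For the derivative, one uses the orthonormal basis $\{h_i \otimes h_j\}_{i,j}$ of $\mathcal{H}\otimes \mathcal{H}$ and the identity $\langle Du, h_i \otimes h_j\rangle_{\mathcal{H}\otimes\mathcal{H}} = D^{h_i} U_j$ (immediate for polynomial $u$ from Definition \ref{def_DF_V} and Lemma \ref{tech_lem_der}, then transferred by closability) to obtain the Parseval-type identity
\begin{equation*}
\mathbb{E}\|Du\|^2_{\mathcal{H}\otimes\mathcal{H}} = \sum_{j=1}^\infty \mathbb{E}\|DU_j\|^2_{\mathcal{H}},
\end{equation*}
which forces $Du^N = \sum_{k=1}^N DU_k \otimes h_k \to Du$ in $L^2(\Omega;\mathcal{H}\otimes \mathcal{H})$.

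Applying the elementary case to each summand $U_k h_k$ and invoking the linearity of $\delta$,
\begin{equation*}
\delta(u^N) = \sum_{k=1}^N U_k W(h_k) - \sum_{k=1}^N \langle DU_k, h_k\rangle_{\mathcal{H}}.
\end{equation*}
By continuity of $\delta:\mathbb{D}^{1,2}(\mathcal{H}) \to L^2(\Omega)$ (again Proposition \ref{D^12Hindomd}), $\delta(u^N) \to \delta(u)$ in $L^2(\Omega)$; hence the right-hand side of \eqref{delta_inf} converges in $L^2(\Omega)$ with sum $\delta(u)$, which is the claim. The main obstacle I anticipate is the Parseval-type identity $\mathbb{E}\|Du\|^2 = \sum_j \mathbb{E}\|DU_j\|^2$ (equivalently, the convergence $u^N \to u$ in $\mathbb{D}^{1,2}(\mathcal{H})$), which requires carefully unpacking the Hilbert--Schmidt/tensor-product description of $Du$ and invoking the closability of $D$; once this is in place, the rest is just linearity and continuity.
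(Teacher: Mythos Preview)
Your proposal is correct and follows essentially the same route as the paper: truncate $u$ along the orthonormal basis as $u^N=\sum_{k\le N}\langle u,h_k\rangle_{\mathcal H}\,h_k$, apply the explicit formula for $\delta$ on finite sums (an extension of Lemma~\ref{preli_lem_0}), and pass to the limit using the bound from Proposition~\ref{D^12Hindomd} together with $u^N\to u$ in $\mathbb D^{1,2}(\mathcal H)$. The only cosmetic difference is that the paper phrases the last step via the closedness of $\delta$ after showing $\{\delta(u^N)\}$ is Cauchy, whereas you invoke the equivalent continuity of $\delta:\mathbb D^{1,2}(\mathcal H)\to L^2(\Omega)$; also, the paper simply asserts the convergence $u^N\to u$ in $\mathbb D^{1,2}(\mathcal H)$, while you spell out the Parseval-type argument that justifies it.
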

\begin{proof}
Let $u \in \mathbb{D}^{1,2}(\mathcal{H})$. We approximate it by the sequence $\{u_n\}_{n \in \mathbb{N}}$ in $\mathbb{D}^{1,2}(\mathcal{H})$ of the form 
\begin{equation*}
u_n=\sum_{j=1}^n \langle u, h_j\rangle_{\mathcal{H}}h_j, \qquad n \in \mathbb{N}.
\end{equation*}
Exploiting the density of $\mathcal{P}_{\mathcal{H}}(\mathcal{H}_1)$ in $\mathbb{D}^{1,2}(\mathcal{H})$ and keeping in mind  \eqref{deltau_lem}, we infer that, for any $n \in \mathbb{N}$, 
\begin{equation}
\label{intermed}
\delta(u_n)= \sum_{j=1}^n\langle u, h_j\rangle_{\mathcal{H}}W(h_j)-\sum_{j=1}^n \langle D\langle u, h_j\rangle_{\mathcal{H}}, h_j\rangle_{\mathcal{H}}.
\end{equation}
Thanks to Propositions \ref{D^12Hindomd} and \ref{proposition-HS}(iii) it holds
\begin{equation}
\label{dudv_for}
\mathbb{E}\left[ \delta (u_n)^2\right]= \mathbb{E}\left[\|u_n\|^2_{\mathcal{H}}\right] + \mathbb{E}\left[ \text{Tr}(Du_nDu_n)\right]
\le \mathbb{E}\left[\|u_n\|^2_{\mathcal{H}}\right] + \mathbb{E}\left[ \|Du_n\|^2_{\mathcal{H} \otimes \mathcal{H}}\right].
\end{equation}
Bearing in mind that $u_n \rightarrow u$ in $\mathbb{D}^{1,2}(\mathcal{H})$, as $n \rightarrow \infty$, from \eqref{dudv_for} we infer that $\{\delta(u_n)\}_{n \in \mathbb{N}}$ is a Cauchy sequence in $L^2(\Omega)$, thus convergent in $L^2(\Omega)$. If follows that there exists $w \in L^2(\Omega)$ such that $\delta(u_n) \rightarrow w$ in  $L^2(\Omega)$ and, since $\delta$ is a closed operator, $u \in \text{Dom}(\delta)$ and $w=\delta(u)$. 
Therefore, passing to the limit in equality \eqref{intermed}, as $n \rightarrow \infty$,  we get the thesis.
\end{proof}
\begin{remark}
Let $(\mathbb{R}^n,\mathcal{B}(\mathbb{R}^n))$ and let us consider the Lebesgue measure on it. Given the vector field $F:\mathbb{R}^n \rightarrow \mathbb{R}^n$, and denoted by $\{e_j\}_{j=1}^n$ the canonical 
 basis of $\mathbb{R}^n$, one has the following expression for the divergence of $F$:
\begin{equation}
\label{delta_fin}
\nabla \cdot F(x)= \sum_{j=1}^n \frac{\partial F_j(x)}{\partial x_j} :=
\sum_{j=1}^n \langle \nabla \langle F(x), e_j\rangle_{\mathbb{R}^n}, e_j\rangle_{\mathbb{R}^n}.
\end{equation}
Comparing  \eqref{delta_inf} with \eqref{delta_fin}, we can interpret the term $\sum_{j=1}^\infty \langle D\langle v, h_j\rangle_{\mathcal{H}},h_j  \rangle_{\mathcal{H}}$ as an infinite-dimensional generalization of $\sum_{i=j}^n \langle \nabla \langle F, e_j\rangle_{\mathbb{R}^n}, e_j\rangle_{\mathbb{R}^n}$. The additional term $\sum_{j=1}^\infty \langle v, h_j\rangle_{\mathcal{H}}W(h_j)$ that appears in \eqref{delta_inf} is due to the Gaussian framework: we have a similar term also in the finite dimensional case. In fact, let $(\mathbb{R}^n,\mathcal{B}(\mathbb{R}^n))$ and  consider on it  the standard Gaussian measure. In this case it is easy to verify that the divergence of $F$ is equal to
\begin{equation*}
\delta(F)=\sum_{j=1}^n F_j(x)x_j- \sum_{j=1}^n \frac{\partial F_j(x)}{\partial x_j}= \langle F, x\rangle_{\mathbb{R}^n}- \nabla \cdot F(x).
\end{equation*}
\end{remark}

\subsection{Skorokhod integral}
In \cite{GT} is proved that, in a particular setting,  the adjoint of the Malliavin derivative
is equivalent to the definition of the stochastic integral with respect to a generalized Gaussian process, as introduced in \cite{Sko}.

Let $\{B_t\}_{t \in [0, 1]}$ be a standard Brownian motion on the probability space $(\Omega,\mathcal{F}, \mathbb{P})$. As in Example \ref{BM_ex} (i), we now consider the Gaussian Hilbert space $\mathcal{H}_1$ given by the closure of $Span \{B_t\}_{t\in [0,1]}$ in $L^2(\Omega)$.  We assume $\mathcal{F}$ to be the $\sigma$-field generated by $\mathcal{H}_1$.  As separable Hilbert space $\mathcal{H}$ we take $L^2(0,1)$ and we consider the unitary operator $W:L^2(0,1) \rightarrow \mathcal{H}_1$ given in Example \ref{ex_cov}-1, that is the Wiener integral. Thus elements in $\mathcal{H}_1$ are characterized as 
\begin{equation*}
W(h)=\int_0^1 h(s)\, {\rm d}B_s, \quad h \in L^2(0,1).
\end{equation*}
In particular, $B_t=W(\pmb{1}_{[0,t]})$, for all $t \in [0,1]$. We denote by $\{\mathcal{F}_t\}_{t \in [0,1]}$ the $\sigma$-algebra generated by the Brownian motion $\{B_t\}_{t \in [0, 1]}$.
\begin{footnote}{Equivalently, $\{\mathcal{F}_t\}_{t \in [0,1]}$ is the $\sigma$-algebra generated by the elements $W(h)$ with supp$(h) \subset [0,t]$.}
\end{footnote}

In this case the elements of Dom$(\delta)\subset L^2(\Omega; L^2(0,1))\simeq L^2(\Omega \times (0,1))$ are square integrable processes, and the divergence operator $\delta(u)$ is called \emph{Skorokhod stochastic integral} of the process $u$
. Usually the following notation is used
\begin{equation*}
\delta(u)=\int_0^1 u(s)\,{\rm d}B_s, \quad u \in \text{Dom}(\delta).
\end{equation*}
In this framework, Proposition \ref{D^12Hindomd} reads as follows.
\begin{proposition}
\label{Sko_pro}
The space $\mathbb{D}^{1,2}(L^2(0,1))$ is included in \emph{Dom}$(\delta)$. If $u,v \in \mathbb{D}^{1,2}(L^2(0,1))$, then 
\begin{equation}
\label{extension_Ito_isometry}
\mathbb{E} \left[\left(\int_0^1 u(s)\, {\rm d}B_s\right) \left(\int_0^1 v(s)\, {\rm d}B_s \right)\right]
=\mathbb{E}\left[\int_0^1 u(s)v(s)\, {\rm d}s \right]
+ \mathbb{E} \left[\int_0^1\int_0^1 D_su(t)D_tv(s)\, {\rm d}s\, {\rm d}t \right].
\end{equation}
\end{proposition}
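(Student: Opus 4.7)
The plan is to apply Proposition~\ref{D^12Hindomd} directly in the concrete setting $\mathcal{H}=L^2(0,1)$, with the unitary operator $W$ given by the It\^o-Wiener integral, and simply translate the abstract identity into the Skorokhod-integral language. The inclusion $\mathbb{D}^{1,2}(L^2(0,1))\subset \mathrm{Dom}(\delta)$ is then an immediate restatement of $\mathbb{D}^{1,2}(\mathcal{H})\subset \mathrm{Dom}(\delta)$; likewise the first summand $\mathbb{E}[\langle u,v\rangle_{\mathcal{H}}]$ in \eqref{dudv-for} rewrites verbatim as $\mathbb{E}[\int_0^1 u(s)v(s)\,{\rm d}s]$. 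The whole argument therefore reduces to identifying the trace term $\mathbb{E}[\mathrm{Tr}(DuDv)]$ with the iterated integral $\mathbb{E}[\int_0^1\!\int_0^1 D_s u(t) D_t v(s)\,{\rm d}s\,{\rm d}t]$.

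To do this, I would first fix the identification $\mathcal{H}\otimes\mathcal{H} = L^2(0,1)\otimes L^2(0,1)\simeq L^2((0,1)^2)$, under which an element $a\in L^2((0,1)^2)$ corresponds to the Hilbert-Schmidt operator $(Tf)(s)=\int_0^1 a(s,t)f(t)\,{\rm d}t$. For a simple random variable $u\in\mathcal{P}_{\mathcal{H}}(\mathcal{H}_1)$ of the form $u(s)=\sum_j F_j v_j(s)$ with $F_j\in\mathcal{P}(\mathcal{H}_1)$ and $v_j\in L^2(0,1)$, the definition \eqref{def_DF_V} gives $Du=\sum_j DF_j\otimes v_j$, and the explicit form \eqref{def_TP_HS_bis} shows that this corresponds to the Hilbert-Schmidt operator with kernel $(s,t)\mapsto \sum_j v_j(s)(DF_j)(t) = D_t u(s)$, using the standard notation $D_t F_j = (DF_j)(t)$.

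Next, I would invoke the classical composition formula for Hilbert-Schmidt kernels: if $A$ has kernel $a(s,t)$ and $B$ has kernel $b(s,t)$, then $AB$ has kernel $(s,t)\mapsto\int_0^1 a(s,r)b(r,t)\,{\rm d}r$, so
\[
\mathrm{Tr}(AB)=\int_0^1\!\int_0^1 a(s,t)b(t,s)\,{\rm d}t\,{\rm d}s.
\]
Applying this with $a(s,t)=D_t u(s)$ and $b(s,t)=D_t v(s)$ gives
\[
\mathrm{Tr}(DuDv)=\int_0^1\!\int_0^1 D_t u(s) D_s v(t)\,{\rm d}t\,{\rm d}s = \int_0^1\!\int_0^1 D_s u(t) D_t v(s)\,{\rm d}s\,{\rm d}t
\]
after renaming the dummy variables. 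By the continuity of the Hilbert-Schmidt norm and density of $\mathcal{P}_{\mathcal{H}}(\mathcal{H}_1)$ in $\mathbb{D}^{1,2}(L^2(0,1))$, the kernel identification passes to the limit, so the identity holds for arbitrary $u,v\in\mathbb{D}^{1,2}(L^2(0,1))$; substituting into \eqref{dudv-for} then yields \eqref{extension_Ito_isometry}.

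The main obstacle will be purely bookkeeping: keeping track of which variable plays the role of the differentiation direction (the $\mathcal{H}$ factor) and which indexes the $\mathcal{V}$-values of $u$, since in this specific setting the two factors coincide and both are copies of $L^2(0,1)$. Once the conventions are fixed unambiguously, no new analytic ingredient is needed beyond Proposition~\ref{D^12Hindomd} and the elementary trace-of-a-product formula for integral operators.
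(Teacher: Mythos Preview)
Your proposal is correct and follows essentially the same approach as the paper: both establish the identity first for simple elements of $\mathcal{P}_{L^2(0,1)}(\mathcal{H}_1)$ by identifying the abstract cross term with the concrete double integral, and then extend by density. The only organizational difference is that you invoke Proposition~\ref{D^12Hindomd} as a black box and translate $\mathrm{Tr}(DuDv)$ via the kernel representation of Hilbert--Schmidt operators on $L^2(0,1)$, whereas the paper re-runs the argument of that proposition in the concrete setting, arriving at the intermediate expression $D^kF\,D^hG$ (for $u=Fh$, $v=Gk$) before expanding it pointwise as the iterated integral; the underlying computation is the same.
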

\begin{proof}
The proof follows the lines of the proof of Proposition \ref{D^12Hindomd}. Let us first suppose $u, v \in \mathcal{P}_{L^2(0,1)}(\mathcal{H}_1)$ to be of the form $u=Fh$ and $v=Gk$ with $h, k  \in L^2(0,1)$, $F=\varphi(W(h_1),...,W(h_n))$, $G=\psi(W(k_1),...,W(k_m))$ with $\varphi, \psi \in \mathcal{P}(\mathcal{H}_1)$. Denoted by $\{e_i\}_{i=1}^\infty$ a complete orthonormal system in $L^2(0,1)$,  reasoning as in the proof of Proposition \ref{D^12Hindomd}, we obtain 
\begin{align*}
\mathbb{E}\left[\delta(u)\delta(v) \right]
&=\mathbb{E} \left[ \langle u,v\rangle_{L^2(0,1)}\right] + 
\sum_{i=1}^\infty\sum_{j=1}^{\infty} \mathbb{E}\left[D^{e_j}\langle v, e_i\rangle_{L^2(0,1)}D^{e_i}\langle u, e_j\rangle_{L^2(0,1)} \right]
\\
&=\mathbb{E} \left[\int_0^1u(s)v(s)\, {\rm d}s\right]+ \mathbb{E}\left[D^kFD^hG\right].
\end{align*}
Using the pointwise notation $D_tG=(DG)(t)$, we explicitly write 
\begin{align*}
D^kFD^hG
&= \langle DF,k\rangle_{L^2(0,1)} \langle DG,h\rangle_{L^2(0,1)}
=\int_0^1 (D_sF)k(s)\, {\rm d}s \int_0^1 (D_tG)h(t)\, {\rm d}t
\\
&=\int_0^1 \int_0^1 (D_s Fh(t))(D_t Gk(s))\, {\rm d}s \, {\rm d}t
=\int_0^1 \int_0^1 D_s u(t) D_tv(s)\, {\rm d}s \, {\rm d}t.
\end{align*}
Using the fact that $\mathcal{P}_{L^2(0,1)}(\mathcal{H}_1)$ is dense in $\mathbb{D}^{1,2}(L^2(0,1))$, we then conclude as in the proof of Proposition \ref{D^12Hindomd}.
\end{proof}

\begin{remark}
Formula \eqref{extension_Ito_isometry} can be considered an extension of the It\^o isometry. If, in addition to the Assumptions of Proposition \ref{Sko_pro}, we also assume the processes $u$ and $v$ to be adapted to the filtration $\{\mathcal{F}_t\}_{t \in [0,1]}$, generated by the Brownian motion, we have that $D_s u(t)=0$ for a.e. $s, t \in [0,1]$ with $s>t$ (see e.g. \cite[Corollary 1.2.1]{Nualart}). Consequently, the second summand in the r.h.s of \eqref{extension_Ito_isometry} equals zero and we recover the usual isometry property of the It\^o integral.
 \end{remark}

The commutativity relation between $D^h$ and $\delta$ (see Lemma \ref{lemma_duality} and Remark \ref{remark_duality}) reads as follows.
\begin{proposition}
\label{pro_der_int}
Let $u \in \mathbb{D}^{1,2}(L^2(0,1))$. Assume that for a.e. $t \in [0,1]$ the process $\{D_t u(s)\}_{s \in [0,1]}$ is Skorokhod-integrable, and there exists a version of the process $\left\{\int_0^1 D_tu(s)\, {\rm d}B_s\right\}_{t \in [0,1]}$ which is in $L^2(\Omega \times (0,1))$. Then $\delta (u) \in \mathbb{D}^{1,2}$ and 
\begin{equation}
\label{der_int}
D_t(\delta (u))=u(t)+\int_0^1 D_t u(s)\, {\rm d}B_s.
\end{equation}
\end{proposition}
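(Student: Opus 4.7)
The result is a concrete incarnation of the abstract commutation identity from Lemma \ref{lemma_duality} and its extension Remark \ref{remark_duality}, specialized to $\mathcal{H}=L^2(0,1)$ and $W(h)=\int_0^1 h(s)\,\mathrm{d}B_s$. My strategy is to apply the abstract commutation in every direction $h\in L^2(0,1)$, translate the resulting identities into pointwise notation, and then recognize that the collection of scalar identities amounts to the claimed pointwise formula.

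First, fix an orthonormal basis $\{h_n\}_{n\in\mathbb{N}}$ of $L^2(0,1)$. For each $n$, I would verify that $D^{h_n}u\in\operatorname{Dom}(\delta)$: indeed, $(D^{h_n}u)(s)=\int_0^1 D_tu(s)\,h_n(t)\,\mathrm{d}t$, and since by hypothesis $s\mapsto D_tu(s)$ is Skorokhod-integrable for a.e.\ $t$ with $\int_0^1 D_t u(s)\,\mathrm{d}B_s\in L^2(\Omega\times(0,1))$, the closedness and linearity of $\delta$ (combined with a stochastic Fubini applied to suitable Riemann-type approximations in $t$) yield
\begin{equation*}
\delta(D^{h_n}u)=\int_0^1 h_n(t)\,\delta(D_t u(\cdot))\,\mathrm{d}t=\int_0^1 h_n(t)\left(\int_0^1 D_tu(s)\,\mathrm{d}B_s\right)\mathrm{d}t.
\end{equation*}
Applying Remark \ref{remark_duality} with $h=h_n$ then gives $\delta(u)\in\mathbb{D}^{1,2}_{h_n}$ and
\begin{equation*}
D^{h_n}\delta(u)=\langle u,h_n\rangle_{L^2(0,1)}+\delta(D^{h_n}u)=\int_0^1 u(t)\,h_n(t)\,\mathrm{d}t+\int_0^1 h_n(t)\left(\int_0^1 D_t u(s)\,\mathrm{d}B_s\right)\mathrm{d}t.
\end{equation*}

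Second, I would assemble these identities. Define $\Phi(t):=u(t)+\int_0^1 D_tu(s)\,\mathrm{d}B_s$, which lies in $L^2(\Omega\times(0,1))$ by the assumption. The previous display rewrites as $D^{h_n}\delta(u)=\langle\Phi,h_n\rangle_{L^2(0,1)}$ for every $n$. Hence $\sum_n (D^{h_n}\delta(u))^2=\|\Phi\|_{L^2(0,1)}^2$ is in $L^1(\Omega)$, so by the characterization of $\mathbb{D}^{1,2}$ in terms of directional derivatives against an orthonormal basis (equivalently, via Proposition \ref{charD12} and the identification of $D\delta(u)$ from its Fourier coefficients) one gets $\delta(u)\in\mathbb{D}^{1,2}$ with $D\delta(u)=\Phi$ in $L^2(\Omega; L^2(0,1))$. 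Writing this coordinatewise yields $D_t\delta(u)=u(t)+\int_0^1 D_tu(s)\,\mathrm{d}B_s$, which is precisely \eqref{der_int}.

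\textbf{Main obstacle.} The delicate point is the stochastic Fubini step used to show $D^{h_n}u\in\operatorname{Dom}(\delta)$ with $\delta(D^{h_n}u)=\int_0^1 h_n(t)\,\delta(D_tu(\cdot))\,\mathrm{d}t$. The cleanest way is to approximate $u$ by a sequence $u^N\in\mathcal{P}_{L^2(0,1)}(\mathcal{H}_1)$ converging to $u$ in $\mathbb{D}^{1,2}(L^2(0,1))$, for which Lemma \ref{lemma_duality} is available directly and Fubini is trivial because $Du^N$ is a finite sum of tensors $h_j\otimes h_j'$; then one passes to the limit using the closedness of both $D$ and $\delta$, the isometry-type estimate \eqref{extension_Ito_isometry} to control $\delta(D^{h_n}u^N)-\delta(D^{h_n}u)$, and the additional hypothesis on $\{\int_0^1 D_tu(s)\,\mathrm{d}B_s\}_{t\in[0,1]}$ to ensure $L^2$-boundedness of the limiting object. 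Everything else is bookkeeping.
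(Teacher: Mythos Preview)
The paper does not give a detailed proof of this proposition; it introduces it with the sentence ``The commutativity relation between $D^h$ and $\delta$ (see Lemma \ref{lemma_duality} and Remark \ref{remark_duality}) reads as follows'' and then simply states the result. Your approach---specializing the abstract commutation identity to $\mathcal{H}=L^2(0,1)$, applying it along an orthonormal basis, and then reassembling the directional information into the full gradient---is precisely what the paper's framing indicates, and you have filled in the mechanics the paper leaves implicit.

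One step deserves a cleaner justification: the passage from $\delta(u)\in\bigcap_n\mathbb{D}^{1,2}_{h_n}$ with $\sum_n(D^{h_n}\delta(u))^2\in L^1(\Omega)$ to $\delta(u)\in\mathbb{D}^{1,2}$. This is true, but it is not Proposition~\ref{charD12} (which is the chaos-expansion criterion, not a directional-derivative criterion). You need either a separate closability argument showing that square-summable directional derivatives along a basis imply membership in $\mathbb{D}^{1,2}$, or---more in line with your own ``main obstacle'' paragraph---you should run the whole argument at the level of the approximants $u^N\in\mathcal{P}_{L^2(0,1)}(\mathcal{H}_1)$, where Lemma~\ref{lemma_duality} gives $D\delta(u^N)$ directly in $L^2(\Omega;L^2(0,1))$, and then pass to the limit using the closedness of $D$ and $\delta$ together with the $L^2(\Omega\times(0,1))$ hypothesis on $t\mapsto\int_0^1 D_tu(s)\,\mathrm{d}B_s$. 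That route avoids invoking an unproved reconstruction lemma.
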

The above formula is useful in applications: it allows to compute the Malliavin derivative of random variables that are given as a stochastic integral.

\bigskip
To conclude this part, we show that the Skorokhod integral is an extension of the It\^o integral.
Let us denote by $L^2_a$ the closed subspace of $L^2(\Omega \times (0,1))$ formed by adapted processes.
\begin{proposition}
The space $L^2_a$ is contained in \emph{Dom}$(\delta)$ and the operator $\delta$ restricted to $L^2_a$ coincides with the It\^o integral.
\end{proposition}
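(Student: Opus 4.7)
The plan is to verify the identity first on a dense class of simple adapted processes built from polynomial random variables, and then extend by closedness of $\delta$ combined with the It\^o isometry.

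First I would fix a partition $0=t_0<t_1<\cdots<t_n=1$ and consider simple adapted processes of the form
\begin{equation*}
u=\sum_{i=0}^{n-1}F_i\,\mathbf{1}_{(t_i,t_{i+1}]},
\end{equation*}
where each coefficient $F_i$ is a polynomial random variable $\varphi_i(W(h_1^{(i)}),\ldots,W(h_{k_i}^{(i)}))$ with $\mathrm{supp}(h_j^{(i)})\subset[0,t_i]$ for every $j$. Such $F_i$ are manifestly $\mathcal{F}_{t_i}$-measurable. By Definition \ref{defDpol}, their Malliavin derivative
\begin{equation*}
DF_i=\sum_{j}\partial_j\varphi_i(W(h_1^{(i)}),\ldots,W(h_{k_i}^{(i)}))\,h_j^{(i)}
\end{equation*}
is an $L^2(0,1)$-valued random variable supported in $[0,t_i]$, so
\begin{equation*}
\langle DF_i,\mathbf{1}_{(t_i,t_{i+1}]}\rangle_{L^2(0,1)}=\int_{t_i}^{t_{i+1}}D_sF_i\,\mathrm{d}s=0.
\end{equation*}
Applying Lemma \ref{preli_lem_0} with $h_i=\mathbf{1}_{(t_i,t_{i+1}]}$, it follows that $u\in\mathrm{Dom}(\delta)$ and
\begin{equation*}
\delta(u)=\sum_{i=0}^{n-1}F_i\,W(\mathbf{1}_{(t_i,t_{i+1}]})-\sum_{i=0}^{n-1}\langle DF_i,\mathbf{1}_{(t_i,t_{i+1}]}\rangle_{L^2(0,1)}=\sum_{i=0}^{n-1}F_i(B_{t_{i+1}}-B_{t_i}),
\end{equation*}
which is exactly the It\^o integral of $u$ (for this elementary process, the two notions of integral are defined identically).

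Next I would argue density. Let $\mathcal{S}$ denote the class of processes of the form above. By the standard argument for It\^o's integral, the simple adapted processes with arbitrary $L^2(\Omega,\mathcal{F}_{t_i},\mathbb{P})$ coefficients are dense in $L^2_a$; within each time-slab the coefficient $F_i$ can in turn be approximated in $L^2(\Omega,\mathcal{F}_{t_i},\mathbb{P})$ by polynomials in the $W(h)$'s supported in $[0,t_i]$, since Proposition \ref{PdenseLp} applied to the Gaussian Hilbert subspace $\{W(h):\mathrm{supp}(h)\subset[0,t_i]\}$ gives density of such polynomials in $L^2(\Omega,\mathcal{F}_{t_i},\mathbb{P})$. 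Hence $\mathcal{S}$ is dense in $L^2_a$.

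Finally, given $u\in L^2_a$, pick $u_n\in\mathcal{S}$ with $u_n\to u$ in $L^2(\Omega\times(0,1))$. By the It\^o isometry, the It\^o integrals $\int_0^1 u_n\,\mathrm{d}B_s$ converge in $L^2(\Omega)$ to the It\^o integral of $u$. By Step~1 these coincide with $\delta(u_n)$, so $\delta(u_n)$ converges in $L^2(\Omega)$; since $\delta$ is closed, $u\in\mathrm{Dom}(\delta)$ and $\delta(u)$ equals the It\^o integral of $u$, concluding the proof. The main subtlety I expect is the joint density statement in Step 2: one must make sure the polynomial approximation of $F_i$ preserves $\mathcal{F}_{t_i}$-measurability (which is why we restrict to polynomials in $W(h)$ with $\mathrm{supp}(h)\subset[0,t_i]$), and this is precisely what the Wiener chaos decomposition applied to the Gaussian Hilbert subspace generated by $\{B_s\}_{s\le t_i}$ provides.
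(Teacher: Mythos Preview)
Your proof is correct and follows essentially the same strategy as the paper: compute $\delta$ explicitly on simple adapted processes with polynomial coefficients, observe that the correction term $\langle DF_i,\mathbf{1}_{(t_i,t_{i+1}]}\rangle$ vanishes by the support condition coming from adaptedness, and then extend by density. The only minor technical difference is in the passage to the limit: you invoke the closedness of $\delta$ together with the It\^o isometry, whereas the paper passes to the limit directly in the duality relation $\mathbb{E}[\langle u,DX\rangle]=\mathbb{E}[X\int_0^1 u\,\mathrm{d}B]$ for fixed test polynomials $X$; both arguments are equally valid and yield the same conclusion.
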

\begin{proof}
Let $u$ be an elementary adapted process of the form 
\begin{equation}
\label{u_proof}
u=\sum_{i=1}^N Y_k \pmb{1}_{[s_k, t_k)}, 
\end{equation}
for some $N \ge 1$, some times $0\le s_k <t_k \le 1$ and some square-integrable $\mathcal{F}_{s_k}$-adapted random variables $Y_k \in \mathcal{P}(\mathcal{H}_1)$. Without loss of generality we suppose any $Y_k$ to be of the form 
\begin{equation*}
Y_k=\varphi(W(h_1),...,W(h_m)),
\end{equation*}
for some polynomial function $\varphi :\mathbb{R}^n \rightarrow \mathbb{R}$ and each $h_i \in L^2(0,1)$ satisfying supp$(h_i) \subset [0, s_k)$.
Exploiting \eqref{ibp_cor}, for a given $X \in \mathcal{P}(\mathcal{H}_1)$, we infer
\begin{align}
\label{sko_ito_ext}
\mathbb{E}\left[\langle u, DX\rangle\right] 
=\sum_{k=1}^N\mathbb{E}\left[Y_k \langle \pmb{1}_{[s_k,t_k)},DX\rangle\right]
= \sum_{k=1}^N\left(\mathbb{E}\left[Y_k X W(\pmb{1}_{[s_k,t_k)})\right]- \mathbb{E}\left[X \langle DY_k, \pmb{1}_{[s_k,t_k)}\rangle\right]\right).
\end{align}
Now, since $DY_k$ 
has the form 
\begin{equation*}
DY_k= \sum_{i=1}^m \partial_i \varphi(W(h_1),...,W(h_m))h_i,
\end{equation*}
for any $k$ one has  
\begin{equation*}
\langle DY_k, \pmb{1}_{[s_k,t_k)}\rangle=  \sum_{i=1}^m \partial_i \varphi(W(h_1),...,W(h_m))\langle h_i, \pmb{1}_{[s_k,t_k)}\rangle=0.
\end{equation*}
In fact $\langle h_i, \pmb{1}_{[s_k,t_k)}\rangle_{L^2(0,1)}=0$, since each $h_i \in L^2(0,1)$ is such that supp$(h_i) \subset [0, s_k)$.
Combining this with identity \eqref{sko_ito_ext}, and recalling that for an elementary adapted process of the type \eqref{u_proof}, its It\^o integral is given by
\begin{equation*}
\int_0^1 u(t)\, {\rm d}B_t
:= \sum_{k=1}^N Y_k\left(B_{t_k}-B_{s_k}\right)
=\sum_{k=1}^N Y_kW(\pmb{1}_{[s_k,t_k)}),
\end{equation*}
we conclude that 
\begin{equation*}
\mathbb{E}\left[\langle u, DX\rangle\right] = \mathbb{E}\left[ X \int_0^1 u(t)\, {\rm d}B_t\right].
\end{equation*}
Since the set of elementary adapted processes is dense in $L^2_a$, taking limits on both sides of the above identity, we conclude that it holds for every $u \in L^2_a$ and this concludes the proof.
\end{proof}

\begin{remark}
Notice that formula \eqref{der_int} still applies to processes in $L_a^2$, provided they also satisfy the Assumptions of Proposition \ref{pro_der_int}. In this case, 
\begin{equation*}
D_t\left(\int_0^1u(s)\, {\rm d}B_s\right)=u(t)+\int_t^1 D_t u(s)\, {\rm d}B_s,
\end{equation*}
since $D_t u(s)=0$ for a.e. $s, t \in [0,1]$ with $t>s$ (see \cite[Corollary 1.2.1]{Nualart}).
\end{remark}

\subsection{Existence and smoothness of densities}

\subsubsection{An application of the duality relation. }
Using the duality \eqref{duality_rel} between the operators $D$ and $\delta$ it is possible to prove that the image law of a random variable $F: (\Omega, \sigma(\mathcal{H}_1), \mathbb{P})\rightarrow \mathbb{R}$
is absolutely continuous with respect to the Lebesgue measure on $\mathbb{R}$ and to  derive an explicit formula for its density. 
\begin{proposition}
Let $F$ be a random variable in the space $\mathbb{D}^{1,2}$ such that $\|DF\|^2_{\mathcal{H}}>0$ a.s.. Suppose that $\frac{DF}{\|DF\|^2_{\mathcal{H}}}$ belongs to the domain of the divergence operator $\delta$ in $L^2(\Omega)$. Then the law of $F$ has a continuous and bounded density given by
\begin{equation*}
p(x)=\mathbb{E}\left[\pmb{1}_{(F>x)}\delta \left(\frac{DF}{\|DF\|^2_{\mathcal{H}}}\right) \right].
\end{equation*}
\end{proposition}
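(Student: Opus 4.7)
The plan is to identify the density via an integration-by-parts argument: smooth test functions $\psi \in C_c^\infty(\mathbb{R})$ will be rewritten as derivatives of bounded antiderivatives, and the Malliavin chain rule together with the duality between $D$ and $\delta$ will push the derivative off $F$ and onto a well-chosen $\mathcal{H}$-valued random variable. The quantity $DF/\|DF\|^2_{\mathcal{H}}$ will play the role of that random variable, since pairing it with $DF$ inside $\langle\cdot,\cdot\rangle_{\mathcal{H}}$ yields $1$; the assumption that it lies in $\text{Dom}(\delta)$ is precisely what makes the integration-by-parts legitimate.

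More concretely, I would first fix $\psi \in C_c^\infty(\mathbb{R})$ and set $\varphi(x) := \int_{-\infty}^x \psi(y)\,{\rm d}y$, which is $C^\infty$, bounded, and has bounded derivative $\varphi' = \psi$. By the chain rule (Proposition \ref{chain_rule}), $\varphi(F) \in \mathbb{D}^{1,2}$ with $D(\varphi(F)) = \psi(F)\,DF$. Dividing by the a.s.\ positive scalar $\|DF\|^2_{\mathcal{H}}$ gives $\psi(F) = \langle D(\varphi(F)), DF/\|DF\|^2_{\mathcal{H}}\rangle_{\mathcal{H}}$ a.s., and the duality relation \eqref{duality_rel} applied to $u = DF/\|DF\|^2_{\mathcal{H}} \in \text{Dom}(\delta)$ yields
\begin{equation*}
\mathbb{E}[\psi(F)] \;=\; \mathbb{E}\!\left[\varphi(F)\,\delta\!\left(\tfrac{DF}{\|DF\|^2_{\mathcal{H}}}\right)\right].
\end{equation*}
Writing $\varphi(F) = \int_{\mathbb{R}} \psi(x)\,\pmb{1}_{(F>x)}\,{\rm d}x$ and applying Fubini (justified since $\psi$ has compact support and $\delta(DF/\|DF\|^2_{\mathcal{H}}) \in L^2(\Omega) \subset L^1(\Omega)$), I would obtain $\mathbb{E}[\psi(F)] = \int_{\mathbb{R}} \psi(x)\,p(x)\,{\rm d}x$ with $p$ as in the statement; since this holds for all $\psi \in C_c^\infty(\mathbb{R})$, the law of $F$ has $p$ as density.

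It remains to verify that $p$ is continuous and bounded. Boundedness is immediate: $|p(x)| \le \|\delta(DF/\|DF\|^2_{\mathcal{H}})\|_{L^1(\Omega)} \le \|\delta(DF/\|DF\|^2_{\mathcal{H}})\|_{L^2(\Omega)} < \infty$. For continuity, I would use dominated convergence: once we know a density exists, $\mathbb{P}(F = x) = 0$ for every $x$, hence $\pmb{1}_{(F > x_n)} \to \pmb{1}_{(F > x)}$ pointwise a.s.\ whenever $x_n \to x$, and the integrand is dominated by $|\delta(DF/\|DF\|^2_{\mathcal{H}})| \in L^1(\Omega)$. The main technical step — and the only place where genuine work is hidden — is verifying that $\varphi(F)$ truly lies in $\mathbb{D}^{1,2}$ with the expected Malliavin derivative; this is exactly what the hypotheses on $\varphi$ (bounded, $C^1$, bounded derivative) guarantee via Proposition \ref{chain_rule}, so the argument goes through cleanly.
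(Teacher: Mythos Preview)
Your proof is correct and follows essentially the same route as the paper: chain rule to write $\psi(F)$ as $\langle D\varphi(F), DF\rangle_{\mathcal{H}}/\|DF\|^2_{\mathcal{H}}$, then the duality relation \eqref{duality_rel}, then Fubini. The only cosmetic difference is that the paper passes from smooth $\psi$ to the indicator $\pmb{1}_{[a,b]}$ by approximation in order to compute $\mathbb{P}(a\le F\le b)$ directly, whereas you stay with $\psi\in C_c^\infty(\mathbb{R})$ and identify $p$ as a density via duality; your version has the mild advantage of explicitly verifying the continuity and boundedness of $p$, which the paper's proof asserts in the statement but does not spell out.
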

\begin{proof}
Let $\psi$ be a nonnegative smooth function with compact support and set $\varphi(y)_= \int_{-\infty}^y\psi(z)\, {\rm d}z$. Proposition \ref{chain_rule} ensures that $\varphi(F) \in \mathbb{D}^{1,2}$ and $D\varphi(F)=\psi(F)DF$. Thus we write
\begin{equation*}
\langle D\varphi(F), DF\rangle_{\mathcal{H}}=\langle \psi(F)DF,DF\rangle_{\mathcal{H}}=\psi(F)\|DF\|^2_{\mathcal{H}}.
\end{equation*}
Using the duality relation \eqref{duality_rel} between $D$ and $\delta$ we obtain
\begin{equation}
\label{density_1}
\mathbb{E}\left[\psi(F)\right] 
= \mathbb{E}\left[\biggl\langle D\varphi(F), \frac{DF}{\|DF\|^2_{\mathcal{H}}} \biggr\rangle_{\mathcal{H}}\right] 
= \mathbb{E} \left[ \varphi(F)\delta \left( \frac{DF}{\|DF\|^2_{\mathcal{H}}}\right)\right].
\end{equation}
By an approximation argument, \eqref{density_1} holds for $\psi(y)=\pmb{1}_{[a,b]}(y)$ with $a<b$. Consequently, Fubini Theorem yields
\begin{align*}
\mathbb{P}(a \le F \le b)
&=\mathbb{E}\left[\left(\int_{-\infty}^F \pmb{1}_{[a,b]}(z)\, {\rm d}z \right)\ \delta\left(\frac{DF}{\|DF\|^2_{\mathcal{H}}} \right) \right]
\\
&=\mathbb{E}\left[\left(\int_{\mathbb{R}} \pmb{1}_{(-\infty,F)}(z)\pmb{1}_{[a,b]}(z)\, {\rm d}z \right)\ \delta\left(\frac{DF}{\|DF\|^2_{\mathcal{H}}} \right) \right]
=\int_a^b \mathbb{E}\left[ \pmb{1}_{(F>z)} \delta\left(\frac{DF}{\|DF\|^2_{\mathcal{H}}}\right)\right]\, {\rm d}z,
\end{align*}
which concludes the proof.
\end{proof}

\subsubsection{General criteria for the existence and smoothness of densities}
In general, the techniques of the Malliavin calculus can be applied to study
the regularity of the probability law of random vectors defined on the probability space $(\Omega, \sigma(\mathcal{H}_1), \mathbb{P})$. General criteria for the absolute continuity and regularity of the density of random vectors are analyzed in \cite[Chapter 2]{Nualart}. These general
criteria can be applied to infer the existence and regularity of the density for the solutions of stochastic differential equations
and stochastic partial differential equations driven by a Gaussian noise, see e.g.  \cite{BoZa2,BoZa1,FerZan, Bally1998, Cardon-Weber2001, Marinelli2013, Marquez-Carreras2001, Mueller2008, Morien1999, Sard_Nualart, Pardoux1993, Sanz_Sardanyons, NualartZaidi1997}
 and the therein references.
We briefly recall some of these criteria.
\begin{definition}
Let $F: (\Omega, \sigma(\mathcal{H}_1), \mathbb{P})\rightarrow \mathbb{R}$ be a random vector with components $F^i \in \mathbb{D}^{1,2}$, $i=1,...,n$. The \emph{Malliavin matrix} of $F$ is the $n \times n$ matrix, denoted by $\Gamma$, whose entries are the random variables $\Gamma_{i,j}=\langle DF^i, DF^j\rangle_{\mathcal{H}}$, $i,j=1,...,n$.
\end{definition}

Suppose that the random vector $F: (\Omega, \sigma(\mathcal{H}_1), \mathbb{P})\rightarrow \mathbb{R}$ is of the form $F=(W(h_1),...,W(h_n))$, for $h_i \in \mathcal{H}$, $i=1,...,n$. The entries of its Malliavin matrix are given by $\Gamma_{i,j}=\langle h_i, h_j\rangle_{\mathcal{H}}$, that is, the Malliavin matrix coincides with the covariance matrix of the centered Gaussian vector $F$. Recall now that we want to obtain an absolute continuity criterium (w.r.t. the Lebesgue measure on $\mathbb{R}^n$) for the law of $F$. If $\Gamma$ is not invertible then $F$ lives, in the sense of the support of the law, in some subspace of $\mathbb{R}^n$ of dimension $m<n$ and thus may not be absolutely continuous w.r.t. the Lebesgue measure on $\mathbb{R}^n$. In fact, the law of a $\mathbb{R}^n$-valued Gaussian random variable is absolutely continuous w.r.t. the Lebesgue measure on $\mathbb{R}^n$ iff its covariance matrix is non degenerate.
\newline
The following criterium, due to Bouleau and Hirsch, see \cite{BH}, appears as a quite natural extension of the particular example described above.
\begin{theorem}
Let $F: (\Omega, \sigma(\mathcal{H}_1), \mathbb{P})\rightarrow \mathbb{R}$ be a random vector satisfying the following conditions:
\begin{itemize}
\item [i)] $F^i \in \mathbb{D}^{1,2}$, for any $i=1,...,n$,
\item [ii)] the Malliavin matrix is invertible $\mathbb{P}$-a.s.
\end{itemize}
Then the law of $F$ has a density w.r.t. the Lebesgue measure on $\mathbb{R}^n$.
\end{theorem}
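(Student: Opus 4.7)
The target is to show that the push-forward measure $\mu_F := \mathbb{P}\circ F^{-1}$ on $\mathbb{R}^n$ is absolutely continuous with respect to Lebesgue measure $\lambda_n$, i.e.\ that $\mathbb{P}(F\in B)=0$ for every Borel $B\subset\mathbb{R}^n$ with $\lambda_n(B)=0$. My plan is to combine the chain rule for the Malliavin derivative (Proposition \ref{chain_rule}) with the duality between $D$ and $\delta$, using a regularization of the Malliavin matrix $\Gamma$ to circumvent the lack of integrability of $\Gamma^{-1}$.

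The core identity comes from applying the chain rule to $\varphi\in C^1_b(\mathbb{R}^n)$: one has $D(\varphi(F))=\sum_{j=1}^n \partial_j\varphi(F)\,DF^j$, so that taking the $\mathcal{H}$-inner product against $DF^i$ yields the linear system
\begin{equation*}
\langle D(\varphi(F)),DF^i\rangle_{\mathcal{H}}=\sum_{j=1}^n \Gamma_{ij}\,\partial_j\varphi(F),\qquad i=1,\dots,n.
\end{equation*}
Under a.s.\ invertibility of $\Gamma$, one formally solves this system for $\partial_j\varphi(F)$ and applies the duality relation $\mathbb{E}[G\,\delta(u)]=\mathbb{E}[\langle DG,u\rangle_{\mathcal{H}}]$ with $G=\varphi(F)$ and $u=\sum_i(\Gamma^{-1})_{ji}\,DF^i$, obtaining for each $j$
\begin{equation*}
\mathbb{E}[\partial_j\varphi(F)]=\mathbb{E}[\varphi(F)\,H_j],\qquad H_j:=\delta\Bigl(\sum_i(\Gamma^{-1})_{ji}\,DF^i\Bigr).
\end{equation*}
Such an identity, read as saying that $\partial_j\mu_F$ is a finite signed measure, implies by a routine argument (take $\varphi$ to be a translated mollifier and invoke Fubini) that $\mu_F$ admits a density in $L^1(\mathbb{R}^n)$.

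The principal obstacle is that the hypothesis gives only $\mathbb{P}(\det\Gamma>0)=1$, so the entries $(\Gamma^{-1})_{ji}$ need not be integrable and $\sum_i(\Gamma^{-1})_{ji}\,DF^i$ need not lie in $\mathrm{Dom}(\delta)$. My remedy is to regularize: for $\varepsilon>0$ set $\Gamma_\varepsilon:=\Gamma+\varepsilon I_n$, whose inverse has operator norm bounded by $1/\varepsilon$. Running the argument above with $\Gamma_\varepsilon^{-1}$ in place of $\Gamma^{-1}$ produces, after careful bookkeeping of an error term that comes from $\Gamma_\varepsilon^{-1}\Gamma-I_n=-\varepsilon\,\Gamma_\varepsilon^{-1}$, an integration-by-parts identity at level $\varepsilon$. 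One then passes to the limit $\varepsilon\downarrow 0$ using dominated convergence on the sets $\{\det\Gamma\ge \eta\}$ (whose probabilities tend to $1$ as $\eta\downarrow 0$) together with the a.s.\ convergence $\Gamma_\varepsilon^{-1}\to\Gamma^{-1}$ on $\{\det\Gamma>0\}$, to conclude the absolute continuity of $\mu_F$.

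This is the classical Bouleau--Hirsch theorem; a more abstract alternative is to invoke the general theory of local Dirichlet forms, observing that $\mathcal{E}(F,G):=\mathbb{E}[\langle DF,DG\rangle_{\mathcal{H}}]$ is a local Dirichlet form on $L^2(\Omega,\sigma(\mathcal{H}_1),\mathbb{P})$ whose square-field operator is precisely the Malliavin matrix, whence non-degeneracy of the latter yields absolute continuity of the image measure. The Malliavin-calculus route sketched above is, however, the most natural in the framework of these notes.
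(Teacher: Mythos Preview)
The paper does not supply a proof of this statement; it attributes the result to Bouleau and Hirsch and cites \cite{BH}, then moves on to the smoothness criterion. So there is no proof in the paper to compare against, and your sketch has to be assessed on its own terms.

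Your primary approach --- regularize the Malliavin matrix, integrate by parts via the duality with $\delta$, then let $\varepsilon\downarrow 0$ --- has two genuine gaps under the stated hypotheses. First, to even write the regularized identity you need $u_j^\varepsilon=\sum_i(\Gamma_\varepsilon^{-1})_{ji}\,DF^i\in\mathrm{Dom}(\delta)$. The entries $(\Gamma_\varepsilon^{-1})_{ji}$ are bounded, but they are smooth functions of $\Gamma_{kl}=\langle DF^k,DF^l\rangle_{\mathcal H}$, which under $F^i\in\mathbb{D}^{1,2}$ alone lie merely in $L^1(\Omega)$ and carry no Malliavin regularity; nor does $DF^i$ itself lie in $\mathrm{Dom}(\delta)$ without a second derivative on $F^i$. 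There is thus no reason for $u_j^\varepsilon$ to belong to $\mathrm{Dom}(\delta)$. This is exactly why the preceding proposition in the paper (the one-dimensional density formula) imposes $DF/\|DF\|^2_{\mathcal H}\in\mathrm{Dom}(\delta)$ as a \emph{separate} hypothesis. Second, even granting the regularized identity, the limit $\varepsilon\downarrow 0$ does not yield absolute continuity: the weights $H_j^\varepsilon$ are not bounded in $L^1$ as $\varepsilon\downarrow 0$ (you have no integrability of $\Gamma^{-1}$), and localizing to $\{\det\Gamma\ge\eta\}$ is incompatible with the global duality relation --- inserting a cutoff into $\mathbb{E}[\langle DG,u\rangle]=\mathbb{E}[G\,\delta(u)]$ requires that cutoff to be Malliavin-differentiable, which again demands regularity of $\Gamma$ that you do not have.

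The integration-by-parts route is the right one for the \emph{smoothness} theorem stated next in the paper, where the hypotheses $F^i\in\mathbb{D}^\infty$ and $(\det\Gamma)^{-1}\in\bigcap_p L^p$ are precisely what make $u_j\in\mathrm{Dom}(\delta)$ and the bound $|\mathbb{E}[\partial_j\varphi(F)]|\le C\|\varphi\|_\infty$ legitimate. Under the minimal hypotheses here, the actual Bouleau--Hirsch argument proceeds along the alternative you mention at the end: either via the general theory of local Dirichlet forms admitting a carr\'e du champ, or (as in \cite{Nualart}, Section~2.1) via a reduction to finitely many Gaussian coordinates where a classical co-area argument applies, followed by a limiting procedure. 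Both routes avoid the divergence operator entirely, which is the essential point: the $\delta$-based method needs integrability of $\Gamma^{-1}$, and the hypotheses do not provide it.
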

Under stronger regularity assumptions one can also infer the \emph{smoothness} of the density of the random vector. For the proof of the following result see e.g. \cite[Theorem 4.2]{Sanz}. 

\begin{theorem}
Let $F: (\Omega, \sigma(\mathcal{H}_1), \mathbb{P})\rightarrow \mathbb{R}$ be a random vector satisfying the following conditions:
\begin{itemize}
\item [i)] $F^i \in \mathbb{D}^\infty:= \bigcap_{p \ge 1}\bigcap_{k \ge 1}\mathbb{D}^{k,p}$, for any $i=1,...,n$,
\item [ii)] the Malliavin matrix is invertible $\mathbb{P}$-a.s. and 
\begin{equation*}
\emph{det}( \Gamma^{-1} ) \in \bigcap_{p \ge 1}L^p(\Omega).
\end{equation*}
\end{itemize}
Then the law of $F$ has an infinitely differentiable density w.r.t. the Lebesgue measure on $\mathbb{R}^n$.
\end{theorem}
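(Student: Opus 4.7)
The plan is to exploit the integration by parts formula machinery together with a standard Fourier-analytic criterion: a finite Borel measure $\mu$ on $\mathbb{R}^n$ has a $C^\infty$ density iff for every multi-index $\alpha \in (\mathbb{N}\cup\{0\})^n$ there exists a constant $c_\alpha$ such that $|\int_{\mathbb{R}^n} \partial^\alpha \varphi\,\df\mu| \le c_\alpha \|\varphi\|_\infty$ for all $\varphi \in C_c^\infty(\mathbb{R}^n)$; one then recovers smoothness of the density from the decay $|\xi^\alpha \hat\mu(\xi)| \le c_\alpha$ and Fourier inversion. My task reduces to producing, for each $\alpha$, a random variable $H_\alpha(F) \in \bigcap_p L^p(\Omega)$ such that
\[
\E[\partial^\alpha \varphi(F)] = \E[\varphi(F)\, H_\alpha(F)] \qquad \forall\, \varphi \in C_c^\infty(\mathbb{R}^n),
\]
which by taking absolute values immediately yields the criterion with $c_\alpha = \|H_\alpha(F)\|_{L^1}$.

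The first step is to show that each entry of the inverse Malliavin matrix $\Gamma^{-1}$ belongs to $\mathbb{D}^\infty$. Since $\Gamma_{ij} = \langle DF^i, DF^j\rangle_\mathcal{H}$ and the $F^i$ lie in $\mathbb{D}^\infty$, repeated application of the chain rule (Proposition \ref{chain_rule}) shows that $\Gamma_{ij} \in \mathbb{D}^\infty$; the cofactor formula then gives $(\Gamma^{-1})_{ij} = (\det\Gamma)^{-1}\cdot \mathrm{cof}(\Gamma)_{ji}$, and combining the $\mathbb{D}^\infty$ regularity of the cofactors and of $\det\Gamma$ with the assumed $L^p$-bounds on $\det(\Gamma^{-1})$, a direct computation via the chain rule applied to $x \mapsto 1/x$ (truncated and passed to the limit) places $(\Gamma^{-1})_{ij}$ in $\mathbb{D}^\infty$. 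This is the technically delicate step and, I expect, the main obstacle, because the function $1/x$ is not globally smooth and one has to approximate it carefully and use the $L^p$ integrability of $\det(\Gamma^{-1})$ to control remainder terms when passing to the limit in $\mathbb{D}^{k,p}$-norms.

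Next I establish the order-one integration by parts formula. From the chain rule, $D(\varphi(F)) = \sum_{i=1}^n \partial_i \varphi(F)\, DF^i$; taking the $\mathcal{H}$-inner product with $DF^j$ gives $\langle D(\varphi(F)), DF^j\rangle_\mathcal{H} = \sum_i \partial_i\varphi(F)\,\Gamma_{ij}$. Multiplying by $(\Gamma^{-1})_{jk}$, summing over $j$, and taking expectation, the duality \eqref{duality_rel} between $D$ and $\delta$ yields
\[
\E[\partial_k\varphi(F)] = \sum_{j=1}^n \E\bigl[\langle D(\varphi(F)), (\Gamma^{-1})_{jk} DF^j\rangle_\mathcal{H}\bigr] = \E[\varphi(F)\, H_k(F)],
\]
where $H_k(F) := \sum_{j=1}^n \delta\bigl((\Gamma^{-1})_{jk} DF^j\bigr)$. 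One must check $(\Gamma^{-1})_{jk}DF^j \in \mathrm{Dom}(\delta)$, which follows from Proposition \ref{D^12Hindomd} since the product lies in $\mathbb{D}^{1,2}(\mathcal{H})$ (in fact in $\mathbb{D}^\infty(\mathcal{H})$) by the previous step.

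The final step is to iterate. By induction on $|\alpha|$, applying the order-one formula inside $\E[\partial^\alpha\varphi(F)] = \E[\partial^{\alpha'}\bigl(\partial_k\varphi\bigr)(F)]$ and absorbing the resulting weight $H_k(F)$ into $\varphi$ (using that $\varphi\cdot H_k$ is not a test function, so one really iterates on the random-variable side rather than the test-function side: more precisely, one writes $\E[\partial_k \psi(F)] = \E[\psi(F)H_k(F)]$ with $\psi = \partial^{\alpha'}\varphi$ and then differentiates further), produces a finite-combinatorial expression $H_\alpha(F)$ built from iterated applications of $\delta$ and multiplications by entries of $\Gamma^{-1}$ and Malliavin derivatives of $F^i$. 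Since $\delta$ maps $\mathbb{D}^{k+1,p}(\mathcal{H})$ into $\mathbb{D}^{k,p}$ (an extension of Lemma \ref{preli_lem_0} and Proposition \ref{D^12Hindomd}), and all ingredients lie in $\mathbb{D}^\infty$, we obtain $H_\alpha(F) \in \bigcap_p L^p(\Omega) \subset L^1(\Omega)$. The Fourier criterion above then delivers the $C^\infty$ density, completing the proof.
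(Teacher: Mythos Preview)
The paper does not actually prove this theorem; it states ``For the proof of the following result see e.g.\ \cite[Theorem 4.2]{Sanz}'' and moves on. Your outline is precisely the standard argument one finds in that reference (and in \cite[Section 2.1]{Nualart}): the Fourier/distributional criterion for $C^\infty$ density, the regularity $(\Gamma^{-1})_{ij}\in\mathbb{D}^\infty$ obtained from the cofactor formula and integrability of $\det(\Gamma^{-1})$, the order-one integration-by-parts $\E[\partial_k\varphi(F)]=\E[\varphi(F)\,\delta(\sum_j(\Gamma^{-1})_{jk}DF^j)]$, and the iteration. So there is nothing to compare against in the paper itself, and your sketch matches the canonical proof.

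One small wording issue worth tightening: your description of the induction step is slightly garbled. The clean formulation is the \emph{weighted} first-order formula
\[
\E[G\,\partial_k\varphi(F)]=\E\bigl[\varphi(F)\,H_k(F;G)\bigr],\qquad H_k(F;G):=\sum_{j}\delta\bigl(G\,(\Gamma^{-1})_{jk}\,DF^j\bigr),
\]
valid for $G\in\mathbb{D}^\infty$, which one proves exactly as in your order-one computation after multiplying through by $G$. The induction then reads: if $\E[\partial^\beta\varphi(F)]=\E[\varphi(F)H_\beta]$ with $H_\beta\in\mathbb{D}^\infty$, then for $\alpha=\beta+e_k$ apply the weighted formula with $G=H_\beta$ and $\varphi$ replaced by $\partial_k\varphi$ to get $H_\alpha=H_k(F;H_\beta)$. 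This avoids the awkward phrasing about ``absorbing $H_k(F)$ into $\varphi$'' and makes the stability $H_\alpha\in\mathbb{D}^\infty$ transparent, since $\delta:\mathbb{D}^{k+1,p}(\mathcal{H})\to\mathbb{D}^{k,p}$ and products in $\mathbb{D}^\infty$ stay in $\mathbb{D}^\infty$.
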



\section{The relation between $D$, $\delta$ and $L$}
\label{relation_sec}


In this Section we clarify the relation between the Malliavin derivative $D$, the divergence operator $\delta$ and the Ornstein-Uhlenbeck operator $L$.

\begin{proposition}
\label{rela_prop}
Let $F\in L^2(\Omega)$. $F \in \text{\emph{Dom}}(L)$ if and only if $F \in \mathbb{D}^{1,2}$ and $DF \in \text{\emph{Dom}}(\delta)$. In this case $\delta DF=-LF$.
\end{proposition}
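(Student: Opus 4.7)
The strategy is to exploit the Wiener chaos decomposition, translating each of the three conditions into a summability condition on the chaos components of $F$. Throughout, write $F=\sum_{n=0}^\infty J_nF$ with $F\in L^2(\Omega,\sigma(\mathcal{H}_1),\mathbb{P})$.

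\medskip
\textbf{Step 1 (Chaos formula for the Dirichlet form).}
First I would establish the polarized version of Proposition \ref{charD12}: for every $F,G\in\mathbb{D}^{1,2}$,
\begin{equation*}
\mathbb{E}\bigl[\langle DF,DG\rangle_{\mathcal{H}}\bigr]=\sum_{n=1}^\infty n\,\mathbb{E}[J_nF\,J_nG].
\end{equation*}
This follows at once from Proposition \ref{charD12} applied to $D(F\pm G)$, together with the parallelogram identity
$\mathbb{E}[\langle DF,DG\rangle_{\mathcal{H}}]=\tfrac14\bigl(\mathbb{E}\|D(F+G)\|_{\mathcal{H}}^2-\mathbb{E}\|D(F-G)\|_{\mathcal{H}}^2\bigr)$
and the mutual $L^2$-orthogonality of the chaoses $\mathcal{H}_n$.

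\medskip
\textbf{Step 2 (Forward implication).}
Assume $F\in\text{Dom}(L)$, i.e.\ $\sum_n n^2\|J_nF\|_{L^2}^2<\infty$. Since $n\le n^2$ for $n\ge 1$, Proposition \ref{charD12} gives $F\in\mathbb{D}^{1,2}$. For any $G\in\mathbb{D}^{1,2}$, Step 1 together with the chaos representation \eqref{L_op} yields
\begin{equation*}
\mathbb{E}\bigl[\langle DG,DF\rangle_{\mathcal{H}}\bigr]
=\sum_{n=1}^\infty n\,\mathbb{E}[J_nG\,J_nF]
=\mathbb{E}\!\left[G\sum_{n=1}^\infty n\,J_nF\right]
=\mathbb{E}[G(-LF)].
\end{equation*}
Hence $|\mathbb{E}[\langle DG,DF\rangle_{\mathcal{H}}]|\le\|G\|_{L^2}\|LF\|_{L^2}$, the bound \eqref{cond_u_domd} holds with $u=DF$, so $DF\in\text{Dom}(\delta)$. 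The same identity, combined with the duality \eqref{duality_rel}, gives $\mathbb{E}[G\delta(DF)]=\mathbb{E}[G(-LF)]$ for every $G\in\mathbb{D}^{1,2}$; density of $\mathcal{P}(\mathcal{H}_1)\subset\mathbb{D}^{1,2}$ in $L^2$ (Corollary \ref{cor_den_pol}) then forces $\delta(DF)=-LF$.

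\medskip
\textbf{Step 3 (Converse implication).}
Assume $F\in\mathbb{D}^{1,2}$ and $DF\in\text{Dom}(\delta)$, so that $\delta(DF)\in L^2$. Fix $k\ge 0$ and an arbitrary $G\in\mathcal{H}_k$; as noted above, any element of $\mathcal{H}_k$ lies in $\mathbb{D}^{1,2}$. Combining the duality \eqref{duality_rel} with Step 1 and the orthogonality of chaoses,
\begin{equation*}
\mathbb{E}[G\,J_k(\delta DF)]=\mathbb{E}[G\,\delta(DF)]=\mathbb{E}[\langle DG,DF\rangle_{\mathcal{H}}]=k\,\mathbb{E}[G\,J_kF].
\end{equation*}
Since $G\in\mathcal{H}_k$ is arbitrary, this forces $J_k(\delta DF)=k\,J_kF$ for every $k\ge 0$. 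By Parseval in the Wiener chaos decomposition,
\begin{equation*}
\|\delta DF\|_{L^2}^2=\sum_{k=0}^\infty\|J_k(\delta DF)\|_{L^2}^2=\sum_{k=0}^\infty k^2\|J_kF\|_{L^2}^2<\infty,
\end{equation*}
which is exactly the condition \eqref{Dom_L} defining $\text{Dom}(L)$, and the identity $\delta DF=-LF$ is read off from $J_k(\delta DF)=k\,J_kF=-J_k(LF)$ for every $k$.

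\medskip
No single step looks hard; the only point requiring a little care is Step 1 (verifying that the polarization of Proposition \ref{charD12} really gives the claimed chaos inner product formula and that all manipulated series converge absolutely), since everything else is then a direct unwinding of definitions via duality and Parseval.
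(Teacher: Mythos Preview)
Your proof is correct and rests on the same core identity the paper uses (its display \eqref{stima_rela}): $\mathbb{E}[\langle DG,DF\rangle_{\mathcal{H}}]=\sum_n n\,\mathbb{E}[J_nF\,J_nG]$, obtained by polarizing Proposition~\ref{charD12}. The execution differs in two places. For the forward implication, the paper first invokes the structural fact $\text{Dom}(L)=\mathbb{D}^{2,2}$ and then the inclusion $\mathbb{D}^{1,2}(\mathcal{H})\subset\text{Dom}(\delta)$ (Proposition~\ref{D^12Hindomd}) to place $DF$ in $\text{Dom}(\delta)$, whereas you bypass both results and verify the defining bound \eqref{cond_u_domd} directly from the Dirichlet-form identity; your route is more self-contained. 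For the converse, the paper simply points back to \eqref{stima_rela}, which is a bit elliptic since the last equality there presupposes $F\in\text{Dom}(L)$; your Step~3, testing against $G\in\mathcal{H}_k$ to extract $J_k(\delta DF)=kJ_kF$ and then summing via Parseval, makes this step rigorous and is the natural way to fill in what the paper leaves implicit.
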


\begin{proof}
Let us assume first that $F \in$ Dom$(L)$. We know that Dom$(L) = \mathbb{D}^{2,2}$ which implies $F \in \mathbb{D}^{1,2}$ and $DF \in \mathbb{D}^{1,2}(\mathcal{H}) \subset$ Dom$(\delta)$. Let $G \in \mathcal{P}(\mathcal{H}_1)$, using the duality relation and Proposition \ref{charD12}, we have
\begin{align}
\label{stima_rela}
\mathbb{E}\left[G\delta(DF) \right]
& =\mathbb{E}\left[\langle DG, DF\rangle_{\mathcal{H}} \right] = \sum_{n=0}^\infty n \mathbb{E}\left[J_nFJ_nG \right]
=\mathbb{E}\left[\sum_{n=0}^\infty nJ_nF\sum_{m=0}^\infty J_mG \right]=\mathbb{E}\left[G(-LF)\right]
\end{align}
and by density we infer that $\delta (DF)=-LF$. For the converse, since $F\in \mathbb{D}^{1,2}$ and $DF \in$ Dom$(\delta)$, we can apply directly \eqref{stima_rela} and hence $F \in$ Dom$(L)$ and $\delta DF=-LF$.

\end{proof}

From \eqref{Dom_L} and Proposition \ref{charDk2} we immediately see that Dom$(L)=\mathbb{D}^{2,2}$. 
On Dom$(L)$ we can introduce the norm
\begin{equation*}
\|F\|_L:=\left(\mathbb{E}\left[|F|^2\right]+\mathbb{E}\left[|LF|^2\right] \right)^{\frac 12}.
\end{equation*}
The norms $\|\cdot\|_L$ and $\|\cdot\|_{\mathbb{D}^{2,2}}$ coincide; in fact, bearing in mind \eqref{L_op} and Proposition \ref{charDk2}, we infer
\begin{align*}
\mathbb{E}\left[|F|^2\right]+\mathbb{E}\left[|LF|^2\right]
= \sum_{n=0}^\infty (1+n^2)\|J_nF\|^2_{L^2(\Omega)}
=\mathbb{E}\left[|F|^2\right]+ \mathbb{E}\left[\|DF\|^2_{\mathcal{H}}\right]+ \mathbb{E}\left[ \|D^2F\|^2_{\mathcal{H} \otimes \mathcal{H}}\right].
\end{align*}
Let us now introduce the operator $C:=-\sqrt{-L}$ as
\begin{equation*}
CF=\sum_{n=0}^\infty -\sqrt{n}J_nF,
\end{equation*}
\begin{equation*}
\text{{Dom}}(C):=\{F \in L^2(\Omega): \ \sum_{n=0}^\infty n \mathbb{E}\left[ |J_nF|^2\right] < \infty\}.
\end{equation*}
As in the case of the operator $L$, one can show that $C$ is the infinitesimal generator of a semigroup of operators (Cauchy semigroup) given by 
\begin{equation*}
P_t F=\sum_{n=0}^\infty e^{-\sqrt{n}t}J_nF.
\end{equation*}
Reasoning as above, we see that Dom$(C)=\mathbb{D}^{1,2}$. On Dom$(C)$ we  introduce the norm
\begin{equation*}
\|F\|_C:=\left(\mathbb{E}\left[|F|^2\right]+\mathbb{E}\left[|CF|^2\right] \right)^{\frac 12},
\end{equation*}
and one has that the norms $\|\cdot\|_C$ and $\|\cdot\|_{\mathbb{D}^{1,2}}$ coincide. In fact 
\begin{align*}
\mathbb{E}\left[|F|^2\right]+\mathbb{E}\left[|CF|^2\right]
= \sum_{n=0}^\infty (1+n)\|J_nF\|^2_{L^2(\Omega)}
=\mathbb{E}\left[|F|^2\right]+ \mathbb{E}\left[\|DF\|^2_{\mathcal{H}}\right].
\end{align*}
\begin{remark}
For $1<p<\infty$, $p\ne 2$, there is no equality of the norms $\|CF\|_{L^p(\Omega)}$ and $\|DF\|_{L^p(\Omega;\mathcal{H})}$, but one can prove that the norms are equivalent. The result is a consequences of the \emph{Meyer inequalities}, see e.g. \cite[Section 1.5]{Nualart}.
\end{remark}


We conclude this Section with the following result that shows the behavior of $L$ as a second order differential operator.

\begin{proposition}
Let $F \in\emph{Dom}(L)$. Fixing an orthonormal basis $\{h_n\}_{n \in \mathbb{N}}$ of $\mathcal{H}$, it holds 
\begin{equation}
\label{OU_inf}
LF=\sum_{j=1}^\infty \langle D\langle DF, h_j\rangle_{\mathcal{H}},h_j  \rangle_{\mathcal{H}}-\sum_{j=1}^\infty \langle DF, h_j\rangle_{\mathcal{H}}W(h_j).
\end{equation}
\end{proposition}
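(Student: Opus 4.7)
The plan is to combine Proposition \ref{rela_prop}, which identifies $LF$ with $-\delta(DF)$, with the explicit representation of the divergence on $\mathbb{D}^{1,2}(\mathcal{H})$ given in Proposition \ref{inf_dim_div}, applied to the element $u = DF$.

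First I would check that Proposition \ref{inf_dim_div} is applicable to $u := DF$, i.e.\ that $DF$ lies in $\mathbb{D}^{1,2}(\mathcal{H})$. This is where the hypothesis $F \in \operatorname{Dom}(L)$ intervenes: as observed right after the statement of Proposition \ref{rela_prop} (and as a consequence of Proposition \ref{charDk2} combined with \eqref{Dom_L}), one has the identification $\operatorname{Dom}(L) = \mathbb{D}^{2,2}$, and membership in $\mathbb{D}^{2,2}$ is precisely what ensures that the Malliavin derivative $DF$ belongs to $\mathbb{D}^{1,2}(\mathcal{H})$. Hence all the quantities $\langle DF, h_j\rangle_{\mathcal{H}}$ and $\langle D\langle DF, h_j\rangle_{\mathcal{H}}, h_j\rangle_{\mathcal{H}}$ are well defined and the series in \eqref{delta_inf} (with $u = DF$) converges in $L^2(\Omega)$.

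Next I would invoke Proposition \ref{inf_dim_div} with $u = DF$ to obtain
\begin{equation*}
\delta(DF) = \sum_{j=1}^\infty \langle DF, h_j\rangle_{\mathcal{H}} W(h_j) - \sum_{j=1}^\infty \langle D\langle DF, h_j\rangle_{\mathcal{H}}, h_j\rangle_{\mathcal{H}}.
\end{equation*}
Finally, by Proposition \ref{rela_prop}, since $F \in \operatorname{Dom}(L)$ one has $\delta(DF) = -LF$, so multiplying the above identity by $-1$ yields exactly \eqref{OU_inf}.

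There is no real obstacle: both building blocks have already been established. The only subtle point worth verifying carefully is the identification $\operatorname{Dom}(L) = \mathbb{D}^{2,2}$, which justifies that $DF$ sits in the domain where the concrete divergence formula of Proposition \ref{inf_dim_div} is valid. Once this is invoked, the proof is a one-line chain of equalities.
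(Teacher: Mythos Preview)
Your proposal is correct and follows exactly the same approach as the paper: use $\operatorname{Dom}(L)=\mathbb{D}^{2,2}$ to place $DF$ in $\mathbb{D}^{1,2}(\mathcal{H})$, apply Proposition~\ref{inf_dim_div} to $u=DF$, and then invoke $\delta DF=-LF$ from Proposition~\ref{rela_prop}.
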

\begin{proof}
Let $F \in \text{Dom}(L)$. Since Dom$(L)=\mathbb{D}^{2,2}$, $DF \in \mathbb{D}^{1,2}(\mathcal{H})$. 
Applying Proposition \ref{inf_dim_div} to $DF$, we thus infer 
\begin{equation*}
\delta (DF)=\sum_{j=1}^\infty \langle DF, h_j\rangle_{\mathcal{H}}W(h_j)-\sum_{j=1}^\infty \langle D\langle DF, h_j\rangle_{\mathcal{H}},h_j  \rangle_{\mathcal{H}}.
\end{equation*}
Bering in mind the ralation $\delta DF=-LF$, see Proposition \ref{rela_prop}, we immediately infer the thesis.
\end{proof}

\begin{remark}
In the finite dimensional case, the Ornstein-Uhlenbeck operator and the Ornstein-Uhlenbeck semigroup play the role of the Laplacian and the heat semigroup, respectively, if the Lebesgue measure is replaced by the standard Gaussian measure.
\\
Let $(\mathbb{R}^n,\mathcal{B}(\mathbb{R}^n))$ and  consider on it  the standard Gaussian measure. Given the scalar field $F:\mathbb{R}^n \rightarrow \mathbb{R}$, and denoted by $\{e_j\}_{j=1}^n$ the canonical  basis of $\mathbb{R}^n$, the Ornstein-Uhlenbeck operator is related to the Laplace operator by the relation
 \begin{equation}
 \label{Lap_fin}
L F(x)=
\Delta F(x)- \langle x, \nabla F(x)\rangle_{\mathbb{R}^n}
=\sum_{j=1}^n \langle \nabla \langle \nabla \langle F(x), e_j\rangle_{\mathbb{R}^n}, e_j\rangle_{\mathbb{R}^n}- \sum_{j=1}^n \langle \nabla F(x), e_j\rangle_{\mathbb{R}^n}\langle x, e_j\rangle_{\mathbb{R}^n}.
\end{equation}
Comparing the infinite dimensional case, that is \eqref{OU_inf}, with the final dimensional case, that is \eqref{Lap_fin}, we can interpret the term $\sum_{j=1}^\infty \langle D\langle DF, h_j\rangle_{\mathcal{H}},h_j  \rangle_{\mathcal{H}}$ as an infinite-dimensional generalization of $\Delta F
$. The additional term $\sum_{j=1}^\infty \langle DF, h_j\rangle_{\mathcal{H}}W(h_j)$ that appears in \eqref{OU_inf} is due to the Gaussian framework and it has to be compared with $\langle x, \nabla F(x)\rangle_{\mathbb{R}^n}$.
\end{remark}

\section{Malliavin calculus on probability spaces with a linear topological structure: some results in the literature}
\label{top_sec}

The problem of differentiability along subspaces arises in a natural way in the study of differential equations with values in linear topological spaces such as Hilbert or Banach spaces. Many authors have introduced many definitions for differentiability along subspaces, the most widely used are two. One introduced by L. Gross in \cite{GRO1} and systematically presented by V. I. Bogachev in \cite{Bogachev}, the other one introduced by P. Cannarsa and G. Da Prato in \cite{CDP96}. Exploiting the topological linear structure of the space (usually a Hilbert or a Banach space), the authors introduce the notion of gradient along subspaces for sufficiently smooth functions. These gradient operators are introduced as limits of suitable incremental ratios; for more details see \cite{BFFZ}.
Malliavin derivative operators naturally appear in the setting considered by Gross and Cannarsa--Da Prato when a Gaussian framework comes into play. 
In fact, in the specific case where the subspace along which to differentiate is the Cameron--Martin space associated to a given Gaussian measure, it is possible to extend such gradient operators to spaces of less regular functions, i.e., Sobolev spaces with respect to the reference Gaussian measure. The Malliavin derivative that appears in the framework considered by Gross is systematically studied in the book \cite{Bogachev}, whereas the Malliavin derivative that appears in the framework considered by Cannarsa and Da Prato is analyzed in the book \cite{DaPrato}. In this Section we briefly recall the construction of these two Malliavin derivatives, referred from here on as the Malliavin derivatives in the sense of Bogachev and in the sense of Da Prato, respectively, and we  show that these Malliavin derivatives are two \textit{different} operators (although linked by a relationship that we will clarify) that still have the \textit{same} Sobolev space as their  domain.

In order to make a rigorous comparison between the Malliavin derivatives in the sense of Bogachev and Da Prato, we are helped by the abstract approach to Malliavin calculus of the previous sections, see in particular Section \ref{Malliavin_sec}. Referring back to Section \ref{Malliavin_sec}, in order to make a comparison, we will need to identify the Gaussian Hilbert space $\mathcal{H}_1$ and to characterize it in terms of a suitable choice of a separable Hilbert space $\mathcal{H}$ and a unitary operator $W$. What we will show is that the Malliavin derivatives in the sense of Bogachev and Da Prato can be interpreted as two particular examples of the general notion of Malliavin derivative given in Section \ref{Malliavin_sec}. This will make it possible, in a rather simple way, to also understand the relationship between the two derivatives. 

At first glance it might seem strange to refer to Malliavin derivatives that are different, since one usually speaks of \textit{the} Malliavin derivative. We point out that, in fact, it would be more appropriate to speak of \textit{a} (choice of) Malliavin derivative rather than \textit{the} Malliavin derivative. In fact, as explained in details in Section \ref{Malliavin_sec}, one can construct infinitely many different Malliavin derivative operators. 
On the other hand, it turns out that all these Malliavin derivatives have the same domain when the Gaussian Hilbert space $\mathcal{H}_1$ is the same. 
In a sense, the results of the present Section can be considered as an example of this general fact in a concrete situation: we deal with two particular Malliavin derivatives that naturally appear in the literature for the study of different problems. However, these Malliavin derivatives are just two possible choices among the infinite possible ones that would be possible to consider in that specific Gaussian framework.


In details, in Subsections \ref{Gaussian_Banach_sec} and \ref{Gaussian_Hilbert_sec} we recall the construction of  the Malliavin derivatives in the sense of Bogachev (see \cite{Bogachev}) and in the sense of Da Prato (see \cite{DaPrato}), respectively. In Subsection \ref{comparison_sec} we make a comparison of the two operators.

\bigskip

\subsection{Malliavin derivative in the sense of Bogachev}
\label{Gaussian_Banach_sec}
For this part we mainly refer to \cite{Bogachev} and \cite{Lunardi}, see also \cite{GRO1}, \cite{Bax} and \cite{vtc}.
\\
Throughout this Subsection we denote by $X$ an infinite dimensional separable real Banach space. By $\|\cdot\|_X$ we denote the corresponding norm. Recall that a space is separable if it contains a countable dense subset.  We denote by $X^*$, with norm $\|\cdot\|_{X^*}$, the topological dual of $X$ consisting of all linear continuous functions $f:X \rightarrow \mathbb{R}$; we denote by $X'$ the algebraic dual of $X$ consisting of all linear functions $f:X \rightarrow \mathbb{R}$. The open (resp. closed) ball with centre $x \in X$ and radius $r>0$ are denoted by $B(x,r)$ (resp. $\bar B(r, x)$). By $\mathcal{B}(X)$ we denote the Borel $\sigma$-algebra on $X$. We recall that in this setting the $\sigma$-algebra $\mathcal{E}(X)$ generated by cylindrical sets and the Borel $\sigma$-algebra $\mathcal{B}(X)$ coincides (see e.g. \cite[Theorem 2.1.1]{Lunardi}).

We start by emphasizing that measure theory in infinite dimensional spaces is not a trivial issue since there is no equivalent of the Lebesgue measure, that is, there is not a nontrivial measure which is invariant by translation, not even in a Hilbert space.

\begin{proposition}
Let $X$ be an infinite dimensional separable Hilbert space. If $\mu:\mathcal{B}(X) \rightarrow [0, \infty]$ is a $\sigma$-additive set function such that:
\begin{itemize}
\item [(i)] $\mu(x+B)=\mu(B)$, for every $x \in X$, $B \in \mathcal{B}(X)$,
\item [(ii)] $\mu(B(0,r))>0$ for every $r>0$,
\end{itemize}
then $\mu(A)=\infty$ for every open set $A$.
\end{proposition}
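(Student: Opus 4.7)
The plan is to exploit the infinite dimensionality of $X$ to exhibit, inside an arbitrary ball $B(0,R)$, infinitely many pairwise disjoint translates of a fixed smaller ball, all having the same positive $\mu$-measure by translation invariance. Then $\sigma$-additivity will force $\mu(B(0,R))=\infty$, and another use of translation invariance will give $\mu(A)=\infty$ for every nonempty open set.

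More concretely, I would fix $R>0$ and set $r:=R/2$. Since $X$ is an infinite dimensional (separable) Hilbert space, I can pick a countable orthonormal system $\{e_n\}_{n\in\mathbb{N}}\subset X$. Consider the points $x_n:=r\,e_n$ and the balls $B_n:=B(x_n,r/2)$. For $i\neq j$ one has $\|x_i-x_j\|_X=r\sqrt{2}>r=\tfrac{r}{2}+\tfrac{r}{2}$, so the balls $B_n$ are pairwise disjoint. Moreover, each $B_n$ is contained in $B(0,R)$, since for $y\in B_n$,
\[
\|y\|_X\le \|y-x_n\|_X+\|x_n\|_X<\tfrac{r}{2}+r=\tfrac{3r}{2}<R.
\]
By translation invariance (i), $\mu(B_n)=\mu(B(0,r/2))$ for every $n$, and this common value is strictly positive by (ii). Hence $\sigma$-additivity gives
\[
\mu(B(0,R))\ge \sum_{n=1}^{\infty}\mu(B_n)=\sum_{n=1}^{\infty}\mu(B(0,r/2))=+\infty.
\]

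Finally, since $R>0$ was arbitrary, every open ball centered at the origin has infinite $\mu$-measure; by translation invariance the same holds for every open ball in $X$. Any nonempty open set $A$ contains some open ball, so by monotonicity $\mu(A)=\infty$, as claimed.

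The only non-routine ingredient is the geometric observation that, in an infinite dimensional Hilbert space, a bounded region accommodates infinitely many disjoint balls of a common positive radius; this is the place where the Hilbert-space (inner product) structure is truly used, via the identity $\|e_i-e_j\|_X=\sqrt{2}$ for distinct elements of an orthonormal system. In a general Banach space the same conclusion still holds, but producing the disjoint balls requires a more delicate argument (e.g.\ Riesz's lemma). Once the disjoint family is in hand, the rest is a direct application of hypotheses (i) and (ii) together with $\sigma$-additivity.
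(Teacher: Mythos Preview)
Your proof is correct and follows essentially the same approach as the paper: both arguments pick an orthonormal system $\{e_n\}$ and use that the mutual distances $\|e_i-e_j\|=\sqrt{2}$ allow one to pack infinitely many disjoint translates of a fixed small ball inside a given ball, whence translation invariance and $\sigma$-additivity force the measure of every ball (hence every open set) to be infinite. The only cosmetic difference is in the choice of constants (the paper uses centers $\tfrac34 r\,e_k$ and radius $r/4$ inside $B(0,r)$, while you use centers $r\,e_n$ and radius $r/2$ inside $B(0,2r)$).
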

\begin{proof}
Assume that $\mu$ satisfies (i) and (ii). Let $\{e_k\}_{k \in \mathbb{N}}$ be an orthonormal system in $X$. For any $k \in \mathbb{N}$ consider the balls $B_k$ with center $\frac34 r e_k$ and radius $\frac r4$: they are pairwise disjoint and, by assumption, have the same measure, say $\mu(B_k)=m>0$ for all $k \in \mathbb{N}$. Thus 
\begin{equation*}
B(0,r)\supset \bigcup_{k \in \mathbb{N}}B_k \quad \Longrightarrow  \mu(B(0,r)) \ge \sum_{k \in \mathbb{N}}\mu(B_k)= \sum_{k \in \mathbb{N}}m=\infty,
\end{equation*}
which in turns implies $\mu(A)= \infty$ for every open set $A$.
\end{proof}
In infinite dimensional frameworks Gaussian probability measures usually play the role of the main reference measures. As we will see, this is related to the fact that Gaussian measures are ‘quasi-invariant’ by translations along the directions of the Cameron-Martin space.

\subsubsection{Gaussian measures in Banach spaces}
 \label{Gaus_meas_Ban}

\begin{definition}
\emph{(Gaussian measures on $X$).}  A probability measure $\gamma$ on $(X, \mathcal{B}(X))$ is said to be \emph{Gaussian} if $\gamma \circ f^{-1}$ is a Gaussian measure in $\mathbb{R}$ for every $f \in X^*$. The Gaussian measure is called \emph{centered} if all the measures $\gamma \circ f^{-1}$ are centered and it is called \emph{nondegenerate} if for any $f \in X^*\setminus\{0\}$, the measure $\gamma \circ f^{-1}$ is nondegenerate.
\end{definition}
If $f \in X^*$ then $f \in L^p(X,\gamma)$ for any $p \ge 1$. In fact, 
\begin{equation*}
\int_X |f(x)|^p\,\gamma({\rm d}x) = \int_{\mathbb{R}} |s|^p(\gamma \circ f^{-1})({\rm d}s)
\end{equation*}
and the integral in the r.h.s. of the above equality is finite since $\gamma \circ f^{-1}$ is a Gaussian measure in $\mathbb{R}$. This ensures that the following definition is well posed.
\begin{definition}
We define the mean $a_\gamma$ and the covariance $K_\gamma$ of $\gamma$ as 
\begin{equation*}
a_\gamma(f):=\int_Xf(x)\, \gamma({\rm d}x),  \quad f \in X^*
\end{equation*}
\begin{equation*}
K_\gamma(f,g):=\int_X [f(x)-a_\gamma(f)][g(x)-a_\gamma(g)]\, \gamma({\rm d}x),\quad f, g \in X^*.
\end{equation*}
\end{definition}

\begin{remark}
When $X = \mathbb{R}^d$, $K$ is the covariance matrix, provided that we perform the usual identification of $\mathbb{R}^d$ with its dual.
\end{remark}
Notice that $a_\gamma:X^*\rightarrow \mathbb{R}$, $\mapsto K_\gamma :X^*\times X^* \rightarrow \mathbb{R}$ are a linear and a bilinear map respectively and $K_\gamma(f,f)=\|f-a_\gamma(f)\|^2_{L^2(X, \gamma)}\ge 0$ for every $f \in X^*$. Moreover, it can be proved that $a_\gamma$, $K_\gamma$ are continuous (see \cite[Proposition 2.3.5]{Lunardi}) as a consequence of the Fernique Theorem (see \cite[Theorem 2.3.1]{Lunardi}).

A pair $(a, K)$ determines the measure $\gamma$. In fact we have a characterization of $\gamma$ in terms of characteristic functions analogous to the one that one has in $\mathbb{R}^d$.
We recall that given a measure $\mu$ on the space $(X, \mathcal{B}(X))$, its characteristic function (or Fourier transform) is defined by
\begin{equation}
\label{car_fun_def}
\hat{\mu}(f):= \int_X e^{ i f(x)}\, \mu({\rm d} x), \quad f \in X^*.
\end{equation}
Moreover, we recall that characteristic functions determine measures, that is given two probability measures $\mu_1, \mu_2$ on $(X, \mathcal{B}(X))$, if $\widehat{\mu_1}=\widehat{\mu_2}$ then $\mu_1=\mu_2$ (see \cite[Proposition 2.12]{Lunardi}).
\begin{proposition}
\label{CF_gamma_prop}
If $\gamma$ is a Gaussian measure on $X$, then
\begin{equation}
\label{CF_gamma}
\hat{\gamma}(f)=e^{ia_\gamma(f)-\frac 12 K_\gamma(f,f)}, \quad f \in X^*.
\end{equation}
Conversely,  if a Borel probability measure $\gamma$ is such that 
\begin{equation}
\label{CF_gamma_bis}
\hat{\gamma}(f)=e^{ia(f)-\frac 12 K(f,f)}, \quad f \in X^*,
\end{equation}
for some linear $a: X^*\rightarrow \mathbb{R}$ and for some bilinear symmetric nonnegative $K:X^*\times X^* \rightarrow \mathbb{R}$, then $\gamma$ is a Gaussian measure with mean $a$ and covariance $K$.
\end{proposition}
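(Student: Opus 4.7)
The plan is to reduce both implications to the one-dimensional characterisation of Gaussian measures recalled in \eqref{car_fun_Gau_1D}, namely that a probability measure $\nu$ on $\mathbb{R}$ is Gaussian $\mathcal{N}(\mu,\sigma^2)$ if and only if $\widehat{\nu}(t) = e^{i\mu t - \sigma^2 t^2/2}$, together with the fact that mean and variance uniquely identify a one-dimensional Gaussian.

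For the forward direction, fix $f \in X^*$ and consider the pushforward $\nu_f := \gamma \circ f^{-1}$, which is Gaussian on $\mathbb{R}$ by assumption. Direct change of variables gives
\[
\int_{\mathbb{R}} s\, \nu_f({\rm d}s) = a_\gamma(f), \qquad \int_{\mathbb{R}} (s - a_\gamma(f))^2\, \nu_f({\rm d}s) = K_\gamma(f,f),
\]
so $\nu_f = \mathcal{N}(a_\gamma(f), K_\gamma(f,f))$. Since
\[
\widehat{\gamma}(f) = \int_X e^{if(x)}\, \gamma({\rm d}x) = \int_{\mathbb{R}} e^{is}\, \nu_f({\rm d}s) = \widehat{\nu_f}(1),
\]
formula \eqref{car_fun_Gau_1D} evaluated at $t=1$ gives precisely \eqref{CF_gamma}.

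For the converse, assume $\widehat{\gamma}$ has the form \eqref{CF_gamma_bis}. For every $f \in X^*$ and every $t \in \mathbb{R}$, using linearity of $a$ and bilinearity of $K$,
\[
\widehat{\nu_f}(t) = \int_X e^{itf(x)}\, \gamma({\rm d}x) = \widehat{\gamma}(tf) = e^{it\, a(f) - \frac{t^2}{2} K(f,f)},
\]
which is the characteristic function of $\mathcal{N}(a(f), K(f,f))$. By the one-dimensional characterisation, $\nu_f$ is therefore Gaussian on $\mathbb{R}$, and since $f$ was arbitrary, $\gamma$ is Gaussian in the sense of the previous definition. Applying the forward direction just proved, $\widehat{\gamma}(f) = e^{i a_\gamma(f) - \frac12 K_\gamma(f,f)}$; comparing with \eqref{CF_gamma_bis} and using that a one-dimensional Gaussian is uniquely determined by its mean and variance (applied to $\nu_f$), we conclude $a(f) = a_\gamma(f)$ and $K(f,f) = K_\gamma(f,f)$ for every $f \in X^*$. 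The equality $K \equiv K_\gamma$ on all of $X^* \times X^*$ then follows by polarisation of the symmetric bilinear forms.

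The only delicate point is this last identification step: one should not invoke uniqueness of characteristic functions on $X$ to match $a$ with $a_\gamma$ and $K$ with $K_\gamma$ directly, since both appear only inside the exponential and produce the same $\widehat{\gamma}$. The argument instead applies uniqueness on $\mathbb{R}$ to each one-dimensional pushforward $\nu_f$, which pins down the two scalars $a(f)$ and $K(f,f)$, and polarisation of the symmetric bilinear form $K$ handles the off-diagonal entries.
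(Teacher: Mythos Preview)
Your proof is correct and follows essentially the same route as the paper: both directions reduce to the one-dimensional characterisation via the pushforward $\nu_f=\gamma\circ f^{-1}$, computing $\widehat{\gamma}(f)=\widehat{\nu_f}(1)$ for the forward implication and $\widehat{\nu_f}(t)=\widehat{\gamma}(tf)$ for the converse. You are in fact slightly more thorough than the paper in the final identification $a=a_\gamma$, $K=K_\gamma$ (the paper simply reads these off from $\gamma\circ f^{-1}=\mathcal{N}(a(f),K(f,f))$ and the definitions of $a_\gamma$, $K_\gamma$, leaving the polarisation for the off-diagonal of $K$ implicit).
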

\begin{proof}
Assume that $\gamma$ is a Gaussian measure. In view of \eqref{car_fun_def} and \eqref{car_fun_Gau_1D} we get 
\begin{align*}
\hat \gamma(f)=\int_X e^{ i f(x)}\, \gamma({\rm d} x)= \int_{\mathbb{R}}e^{ i \xi}\, (\gamma \circ f^{-1})({\rm d} \xi)=e^{i\mu-\frac {\sigma^2}{2}},
\end{align*}
where $\mu$ and $\sigma^2$ are the mean and the variance of $\gamma \circ f^{-1}$, given by
\begin{equation*}
\mu= \int_\mathbb{R}\xi\, (\gamma \circ f^{-1})({\rm d}\xi)=\int_X f(x)\, \gamma({\rm d}x)=a_\gamma (f), 
\end{equation*}
and 
\begin{equation*}
\sigma^2= \int_{\mathbb{R}} (\xi - \mu)^2\,(\gamma \circ f^{-1})({\rm d}\xi)=\int_X \left(f(x)-a_\gamma(f)\right)^2\, \gamma({\rm d}x)=K_\gamma (f,f).
\end{equation*}
This proves \eqref{CF_gamma}.
\\
Conversely, let $\gamma$ be a Borel probability measure on $X$ such that \eqref{CF_gamma_bis} holds for some linear $a$ and bilinear $K$ and for every $f \in X^*$. For $f \in X^*$ we compute 
\begin{align*}
\widehat{\gamma \circ f^{-1}}(\tau)= \int_{\mathbb{R}}e^{i\tau \xi} \, (\gamma \circ f^{-1})({\rm d}\xi) = \int_X e^{i\tau f(x)}\, \gamma({\rm d}x)= e^{i \tau a(f)-\frac 12 \tau^2 K(f,f)}.
\end{align*}
 This proves that, for any $f \in X^*$, $\gamma \circ f^{-1} = \mathcal{N}(a(f), K(f,f))$ is Gaussian which proves the statement.
\end{proof}

\subsubsection{The Gaussian Hilbert space $X_\gamma^*$}

The space $X^*$ is included in $L^2(X, \gamma)$. The inclusion map $j:X^* \rightarrow L^2(X, \gamma)$,
\begin{equation*}
j(f)=f-a_\gamma(f), \quad f \in X^*,
\end{equation*}
is continuous since $\|j(f)\|_{L^2(X, \gamma)} \le (\sqrt{c_1}+c_0)\|f\|_{X^*}$, with $c_0=\int_X \|x\|\, \gamma({\rm d}x)$ and $c_1=\int_X\|x\|^2\, \gamma({\rm d}x)$ finite constants, as a consequence of the Fernique Theorem.
Moreover, if $\gamma$ is nondegenerate, $j$ is one to one. 
In fact, let $f\in X^*$ be such that $j(f)=0$. Then $\hat \gamma(f)=\widehat{\gamma \circ f^{-1}}=e^{i a_\gamma(f)}$, which implies by Theorem \ref{CF_gamma_prop}, that $\gamma \circ f^{-1}=\delta_{a_\gamma(f)}$ is degenerate. Since $\gamma$ is nondegenerate this yields $f=0$.

\begin{definition}
\label{Xstargammadef}
We define the space 
\begin{equation*}
X_\gamma^*:= \text{the closure of $j(X^*)$ in $L^2(X, \gamma)$},
\end{equation*}
i.e. $X_\gamma^*$ consists of all limits in $L^2(X, \gamma)$ of sequence of functions $j(f_h)=f_h-a_\gamma(f_h)$ with $(f_h) \subset X^*$.
\end{definition}
So far we have defined the functions $a_\gamma$, $\hat{\gamma}$ in $X^*$ and the function $K_\gamma$ in $X^* \times X^*$.
The extension of $a_\gamma$ to $X^*_\gamma$ is trivial: since the mean value of every element of $X^*_\gamma$ is zero, we define $a_\gamma$ on $X^*_\gamma$ by setting $a_\gamma=0$. The extension of $K_\gamma$ to $X^*_\gamma \times X^*_\gamma$ is obviously continuous ($X^*_\gamma \times X^*_\gamma$ is endowed with the $L^2(X, \gamma) \times L^2(X, \gamma)$ norm) and since $a_\gamma=0$ on $X^*_\gamma$, we define the bilinear form $K_\gamma$ on $X^*_\gamma$ by setting
\begin{equation*}
K_\gamma(f,g)=\int_Xf(x)g(x)\, \gamma({\rm d} x)=\langle f, g\rangle_{L^2(X, \gamma)}, \quad f, g \in X^*_\gamma.
\end{equation*}
The extension of $\widehat \gamma$ to $X_\gamma^*$ is given by setting 
\begin{equation*}
\widehat \gamma(f)=\int_X e^{if(x)}\, \gamma({\rm d}x), \quad f \in X_\gamma^*
\end{equation*}
and 
we have the following result.
\begin{proposition}
\label{lem_Gaus_X^*_gamma}
Let $\gamma$ be a Gaussian measure on a separable Banach space $(X, \mathcal{B}(X))$. Then \begin{equation}
\label{CF_gamma_X^*_gamma}
\widehat{\gamma}(f)=e^{-\frac 12 \|f\|^2_{L^2(X, \gamma)}}, \quad \forall \ f \in X_\gamma^*.
\end{equation}
\end{proposition}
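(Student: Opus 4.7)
The plan is to establish the identity first for the dense subspace $j(X^*)$, where the formula will follow immediately from Proposition \ref{CF_gamma_prop}, and then extend it to all of $X_\gamma^*$ by an $L^2$-continuity argument.

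Concretely, I would first fix $g \in X^*$ and consider $f = j(g) = g - a_\gamma(g) \in j(X^*)$. A direct computation gives
\begin{equation*}
\widehat{\gamma}(f) = \int_X e^{i(g(x) - a_\gamma(g))}\,\gamma({\rm d}x) = e^{-ia_\gamma(g)}\,\widehat{\gamma}(g) = e^{-ia_\gamma(g)} \cdot e^{ia_\gamma(g) - \frac{1}{2}K_\gamma(g,g)} = e^{-\frac{1}{2}K_\gamma(g,g)},
\end{equation*}
using Proposition \ref{CF_gamma_prop} for the second-to-last equality. On the other hand, by the very definition of $K_\gamma$,
\begin{equation*}
\|f\|^2_{L^2(X,\gamma)} = \int_X |g(x) - a_\gamma(g)|^2\,\gamma({\rm d}x) = K_\gamma(g,g),
\end{equation*}
so the desired identity \eqref{CF_gamma_X^*_gamma} holds on $j(X^*)$.

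Next, for an arbitrary $f \in X_\gamma^*$, I would pick a sequence $\{f_n\}_n \subset j(X^*)$ with $f_n \to f$ in $L^2(X,\gamma)$. The key observation is the elementary inequality $|e^{i\alpha} - e^{i\beta}| \le |\alpha - \beta|$ for real $\alpha,\beta$, which together with the Cauchy--Schwarz inequality yields
\begin{equation*}
|\widehat{\gamma}(f_n) - \widehat{\gamma}(f)| \le \int_X |e^{if_n(x)} - e^{if(x)}|\,\gamma({\rm d}x) \le \int_X |f_n(x) - f(x)|\,\gamma({\rm d}x) \le \|f_n - f\|_{L^2(X,\gamma)} \to 0.
\end{equation*}
Simultaneously, the continuity of the $L^2$-norm gives $\|f_n\|_{L^2(X,\gamma)} \to \|f\|_{L^2(X,\gamma)}$, and hence $e^{-\frac12 \|f_n\|^2_{L^2(X,\gamma)}} \to e^{-\frac12 \|f\|^2_{L^2(X,\gamma)}}$. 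Passing to the limit in the identity established on $j(X^*)$ then yields \eqref{CF_gamma_X^*_gamma} for every $f \in X_\gamma^*$.

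There is no real obstacle in this proof: once one notices that the Fourier-transform map $f \mapsto \widehat{\gamma}(f)$ is in fact Lipschitz continuous from $L^1(X,\gamma)$ (and hence from $L^2(X,\gamma)$) into $\mathbb{C}$ via the pointwise bound $|e^{i\alpha}-e^{i\beta}|\le|\alpha-\beta|$, the extension from $j(X^*)$ to its $L^2$-closure is automatic. The only subtlety worth stressing is that for $f\in X_\gamma^*$ the integral $\int_X e^{if(x)}\,\gamma({\rm d}x)$ is well defined in the Lebesgue sense (since $|e^{if}|\le 1$) even though $f$ need no longer be a continuous linear functional on $X$, so one should be explicit that \eqref{car_fun_def} is being used with this extended definition.
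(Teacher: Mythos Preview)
Your proof is correct and follows the same overall strategy as the paper: verify the identity on the dense subspace $j(X^*)$ and extend by continuity to all of $X_\gamma^*$. The only difference lies in how the limit $\widehat{\gamma}(f_n)\to\widehat{\gamma}(f)$ is justified. The paper argues that $L^2$-convergence implies convergence in distribution and then invokes L\'evy's continuity theorem to conclude convergence of the characteristic functions. You instead use the elementary pointwise bound $|e^{i\alpha}-e^{i\beta}|\le|\alpha-\beta|$ together with Cauchy--Schwarz to get the convergence directly. Your route is more self-contained and avoids appealing to L\'evy's theorem; the paper's route is slightly more conceptual (it implicitly identifies $\widehat{\gamma}(f)$ evaluated at $t=1$ with the characteristic function of the real random variable $f$). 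Either way the argument goes through without difficulty.
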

\begin{proof}
Let $f \in X^*_{\gamma}$ and let $g_n:=j(f_n)$, $f_n \in X^*$, be a sequence of functions converging to $f$ in $L^2(X, \gamma)$. Then $a_\gamma(g_n)=0$ for any $n \in \mathbb{N}$. Convergence in $L^2$ implies convergence in distribution, whence by the L\'evy Theorem,
\begin{equation*}
\hat \gamma (f)= \lim_{n \rightarrow \infty}\hat \gamma (g_n)=\lim_{n \rightarrow \infty} e^{-\frac 12 K_\gamma (g_n,g_n)} = e^{-\frac 12 \|f\|^2_{L^2(X, \gamma)}}.
\end{equation*}
\end{proof}
\begin{corollary}
\label{X^* Gauss}
Let $\gamma$ be a Gaussian measure on a separable Banach space $(X, \mathcal{B}(X))$. Then $X_\gamma^*$, endowed with the scalar product $\langle \cdot, \cdot\rangle_{L^2(X, \gamma)}$ is a Gaussian Hilbert space. In particular, every element $f \in X_\gamma^*$ is a centered Gaussian random variable with variance $\|f\|_{L^2(X, \gamma)}^2$.
\end{corollary}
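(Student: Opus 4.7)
The plan is to verify the two defining conditions of a Gaussian Hilbert space (Definition \ref{Gaussian space}(ii)) for $X_\gamma^*$ equipped with the $L^2(X,\gamma)$ inner product: (a) that $X_\gamma^*$ is a closed linear subspace of $L^2(X, \mathcal{B}(X), \gamma)$, and (b) that every element is a centered Gaussian random variable. The whole argument will rest on Proposition \ref{lem_Gaus_X^*_gamma}, which already identifies the characteristic function of elements of $X_\gamma^*$.

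First I would address (a). Since the inclusion map $j:X^*\to L^2(X,\gamma)$ is linear, its image $j(X^*)$ is a linear subspace of $L^2(X,\gamma)$. Its $L^2$-closure $X_\gamma^*$, by definition, is therefore a closed linear subspace of $L^2(X,\gamma)$, and inherits from the ambient space the Hilbert-space structure given by $\langle\cdot,\cdot\rangle_{L^2(X,\gamma)}$.

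For (b), fix $f\in X_\gamma^*$ and view $f$ as a real random variable on the probability space $(X,\mathcal{B}(X),\gamma)$. The key observation is that, since $X_\gamma^*$ is a linear subspace, $tf\in X_\gamma^*$ for every $t\in\mathbb{R}$; hence Proposition \ref{lem_Gaus_X^*_gamma} applied to $tf$ gives
\begin{equation*}
\mathbb{E}\bigl[e^{itf}\bigr]=\widehat{\gamma}(tf)=e^{-\frac12\|tf\|^2_{L^2(X,\gamma)}}=e^{-\frac12 t^2\|f\|^2_{L^2(X,\gamma)}},\qquad t\in\mathbb{R}.
\end{equation*}
By the characterization of Gaussian measures on $\mathbb{R}$ in terms of their characteristic function (cf.\ \eqref{car_fun_Gau_1D}), this identifies the law of $f$ as $\mathcal{N}(0,\|f\|^2_{L^2(X,\gamma)})$. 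In particular $f$ is centered Gaussian with variance $\|f\|^2_{L^2(X,\gamma)}$.

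Combining (a) and (b), $X_\gamma^*$ is a closed subspace of $L^2(X,\gamma)$ all of whose elements are centered Gaussian random variables, which is exactly the definition of a Gaussian Hilbert space. There are no real obstacles: once Proposition \ref{lem_Gaus_X^*_gamma} is in hand the corollary reduces to observing that scalar multiples of an element of $X_\gamma^*$ remain in $X_\gamma^*$, so the single-parameter family $\{tf\}_{t\in\mathbb{R}}$ is available to recover the full characteristic function from $\widehat\gamma(f)$ alone.
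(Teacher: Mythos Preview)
Your proof is correct and follows essentially the same approach as the paper: both arguments check that $X_\gamma^*$ is closed in $L^2(X,\gamma)$ by construction and then invoke Proposition~\ref{lem_Gaus_X^*_gamma} to identify the law of each $f\in X_\gamma^*$ as $\mathcal{N}(0,\|f\|_{L^2(X,\gamma)}^2)$. Your version is a touch more explicit in spelling out that one applies the proposition to $tf$ for all $t\in\mathbb{R}$ in order to recover the full characteristic function of $f$, a step the paper leaves implicit.
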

\begin{proof}
$X_\gamma^*$ is an Hilbert space since, by definition, it is a closed subset of the Hilbert space $L^2(X, \gamma)$. 
It remains to prove that every element in $X_\gamma^*$ has centered normal distribution. This is an immediate consequence of Proposition \ref{lem_Gaus_X^*_gamma}. In fact \eqref{CF_gamma_X^*_gamma} yields that, for any $f \in X_\gamma^*$, the law of $f$ in $\mathbb{R}$ is $\gamma \circ f^{-1}=\mathcal{N}(0, \|f\|^2_{L^2(X, \gamma)})$, that is $f$ is a Gaussian random variable with zero mean and variance $\|f\|^2_{L^2(X, \gamma)}$.
\end{proof}

We conclude this part by introducing the operator $R_\gamma: X_\gamma^* \rightarrow (X^*)'$ defined as
\begin{equation}
\label{R_gamma_def}
R_\gamma f(g):=\int_X f(x)[g(x)-a_\gamma(g)]\, \gamma({\rm d}x)=\langle f, g-a_\gamma(g)\rangle_{L^2(X, \gamma)}, \quad f \in X^*_\gamma, \ g \in X^*.
\end{equation}
It is important to notice that $R_\gamma$ maps $X_\gamma^*$ into $X$ (see \cite[Proposition 2.3.6]{Lunardi}). 
Thanks to this fact we can identify $R_\gamma f$ with the element $y \in X$
representing it, i.e. we shall write
\begin{equation}
\label{equality_R_gamma}
R_\gamma f(g)=g(R_\gamma(f)), \quad \forall g \in X^*.
\end{equation}

\subsubsection{The Cameron-Martin space}
\label{CM_sec}
Given a Gaussian measure $\gamma$ on a separable Banach space $X$, it is possible to associate to it a Hilbert space $H \hookrightarrow X$ in a canonical way. $H$ is called the Cameron-Martin space of $\gamma$. 
For $h \in X$ define the shifted measure $\gamma_h$ as 
\begin{equation}
\label{shift_gamma}
\gamma_h(B):=\gamma(B-h), \qquad B \in \mathcal{B}(X).
\end{equation}
The main importance of the Cameron-Martin space is that it consists of the elements $h \in X$ such that the measure $\gamma_h$ is absolutely continuous
\begin{footnote}
{
We recall that a measure $\mu$ is absolutely continuous with respect to a measure $\nu$ (we write $\mu \ll \nu$), if $\nu(B)=0$ yields $\mu(B)=0$. Equivalently, $\mu \ll \nu$ if there exists a density $\rho=\frac{{\rm d}\mu}{{\rm d}\nu} \in L^1(X, \nu)$ such that $\mu(B)=\int_B \rho\, {\rm d}\nu.$ $\mu$ and $\nu$ are said to be equivalent (we write $\mu \sim \nu$) if $\mu \ll \nu$ and $\nu \ll \mu$.
}
\end{footnote}
 with respect to $\gamma$. In other words, $H$ characterizes exactly those directions in $X$ in which translations leave the measure $\gamma$ ‘quasi-invariant’. The space $H$ is strictly smaller than $X$ and in the infinite dimensional case one has $\gamma(H)=0$. Things are thus completely different from the finite-dimensional case where the Lebesgue measure is invariant under translations in \textit{any} direction. 

\begin{definition}
\emph{(Cameron-Martin space).} For every $h \in X$ set
\begin{equation}
\label{def_CM_norm}
\|h\|_H:=\sup\{f(h): f \in X^*, \|j(f)\|_{L^2(X, \gamma)}\le 1\}.
\end{equation}
The Cameron-Martin space is defined as 
\begin{equation*}
H:=\{h \in X: \|h\|_H < \infty\}.
\end{equation*}
\end{definition}

The Cameron-Martin space $H$ is continuously embedded in $X$, in fact
\begin{equation*}
\|h\|_X=\sup\{f(h):\|f\|_{X^*} \le 1\} \le \sup\{f(h): \|j(f)\|_{L^2(X,\gamma)} \le c\}=c\|h\|_H,
\end{equation*}
where $c$ is the norm of $j:X^* \rightarrow L^2(X, \gamma)$. As proved in  \cite[Theorem 3.1.9]{Lunardi}, the embedding $H \hookrightarrow X$ is also compact. Moreover, if $X_\gamma^*$ is infinite-dimensional, then $\gamma(H)=0$.

\bigskip
\textbf{A characterization of the Cameron-Martin space.}

\smallskip
The Cameron-Martin space inherits a natural Hilbert space structure from the space $X_\gamma^*$ through the $L^2(X,\gamma)$ Hilbert structure.
\begin{proposition}
\label{CM_space}
The Cameron-Martin space $H$ is the range of $R_\gamma$, namely an element $h \in X$ belongs to the Cameron-Martin space if and only if there exists an element $\hat h \in X_\gamma^*$ such that $h=R_\gamma \hat h$. In this case
\begin{equation*}
\|h\|_H=\|\hat h\|_{L^2(X, \gamma)}.
\end{equation*}
Therefore, $R_\gamma:X_\gamma^*\rightarrow H$ is an onto isometry and $H$ is a Hilbert space with the inner product
\begin{equation*}
\langle h,k\rangle _H:=\langle \hat h, \hat k\rangle_{L^2(X, \gamma)},
\end{equation*}
whenever $h=R_\gamma \hat h$, $k=R_\gamma \hat k$.
\end{proposition}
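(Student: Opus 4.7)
The plan is to show that $R_\gamma$ is a bijective isometry from the Hilbert space $(X^*_\gamma,\langle\cdot,\cdot\rangle_{L^2(X,\gamma)})$ onto $(H,\|\cdot\|_H)$; the Hilbert structure of $H$ and the formula for $\langle\cdot,\cdot\rangle_H$ then follow by transport of structure. I would split the argument into two directions.

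First, I would verify $R_\gamma(X^*_\gamma)\subset H$ together with the inequality $\|R_\gamma\hat h\|_H\le\|\hat h\|_{L^2(X,\gamma)}$. Fix $\hat h\in X^*_\gamma$ and set $h:=R_\gamma\hat h\in X$ (recall that by the preceding discussion $R_\gamma$ does map into $X$, so that \eqref{equality_R_gamma} lets us write $g(h)=R_\gamma\hat h(g)=\langle\hat h,j(g)\rangle_{L^2(X,\gamma)}$ for every $g\in X^*$). Cauchy--Schwarz gives
\[
g(h)\le\|\hat h\|_{L^2(X,\gamma)}\,\|j(g)\|_{L^2(X,\gamma)},
\]
so taking the supremum over $g\in X^*$ with $\|j(g)\|_{L^2(X,\gamma)}\le 1$ yields $\|h\|_H\le\|\hat h\|_{L^2(X,\gamma)}<\infty$, hence $h\in H$. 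For the reverse inequality, I would exploit the density of $j(X^*)$ in $X^*_\gamma$: pick $g_n\in X^*$ with $j(g_n)\to\hat h$ in $L^2(X,\gamma)$, and observe that $g_n(h)=\langle\hat h,j(g_n)\rangle\to\|\hat h\|_{L^2(X,\gamma)}^2$, while $\|j(g_n)\|_{L^2(X,\gamma)}\to\|\hat h\|_{L^2(X,\gamma)}$. Testing the definition \eqref{def_CM_norm} against $g_n/\|j(g_n)\|_{L^2(X,\gamma)}$ (and discarding the case $\hat h=0$) gives $\|h\|_H\ge\|\hat h\|_{L^2(X,\gamma)}$.

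Second, I would show every $h\in H$ arises this way. Define the linear map
\[
L\colon j(X^*)\to\mathbb{R},\qquad L(j(f)):=f(h).
\]
The delicate point is that this is well-posed: if $j(f_1)=j(f_2)$, then $f_1-f_2$ is a continuous linear functional which is $\gamma$-a.s.\ equal to the constant $a_\gamma(f_1-f_2)$, hence equal to this constant on the affine support of $\gamma$; a short argument (using that the Cameron--Martin space sits inside the closed affine span of $\mathrm{supp}\,\gamma$, since in the centered case the support is a closed linear subspace containing $H$) then forces $(f_1-f_2)(h)=0$. Once well-definedness is settled, the very definition of $\|h\|_H$ gives $|L(j(f))|\le\|h\|_H\,\|j(f)\|_{L^2(X,\gamma)}$, so $L$ extends uniquely to a continuous linear functional on $X^*_\gamma$ with norm at most $\|h\|_H$. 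By the Riesz representation theorem in the Hilbert space $X^*_\gamma$, there exists $\hat h\in X^*_\gamma$ with $\|\hat h\|_{L^2(X,\gamma)}\le\|h\|_H$ and $L(u)=\langle\hat h,u\rangle_{L^2(X,\gamma)}$ for all $u\in X^*_\gamma$. Specializing to $u=j(g)$ with $g\in X^*$ gives
\[
g(h)=L(j(g))=\langle\hat h,j(g)\rangle_{L^2(X,\gamma)}=R_\gamma\hat h(g)=g(R_\gamma\hat h)\quad\forall\,g\in X^*,
\]
and the Hahn--Banach theorem yields $h=R_\gamma\hat h$.

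Combining the two steps, $R_\gamma:X^*_\gamma\to H$ is a well-defined surjective isometry, so $\|h\|_H=\|\hat h\|_{L^2(X,\gamma)}$ whenever $h=R_\gamma\hat h$, and $H$ inherits the Hilbert space structure with inner product $\langle h,k\rangle_H=\langle\hat h,\hat k\rangle_{L^2(X,\gamma)}$. I expect the main obstacle to be the well-definedness of $L$ in the possibly degenerate setting (equivalently, the injectivity of $j$ modulo constants on the support of $\gamma$); the rest is a standard Riesz representation plus Hahn--Banach argument, and the matching of norms follows cleanly from the density of $j(X^*)$ in $X^*_\gamma$.
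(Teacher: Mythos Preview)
Your proposal is correct and follows essentially the same Riesz representation strategy as the paper (with the two implications in the opposite order). One point where you overcomplicate matters: the well-definedness of $L\colon j(X^*)\to\mathbb{R}$, $L(j(f)):=f(h)$, does not require any support-of-$\gamma$ argument. The very inequality you already invoke, $|f(h)|\le\|h\|_H\,\|j(f)\|_{L^2(X,\gamma)}$ (which is immediate from the definition \eqref{def_CM_norm} of $\|h\|_H$), shows that $j(f)=0$ forces $f(h)=0$, so $L$ is well defined on the nose; this is exactly how the paper handles it. Conversely, your explicit density argument for the reverse inequality $\|h\|_H\ge\|\hat h\|_{L^2(X,\gamma)}$ is a bit more transparent than the paper's terse claim of equality.
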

\begin{proof}
Let us start by proving that if $h \in H$, then there exists $\hat h \in X^*_\gamma$ such that $h=R_\gamma(\hat h)$.
Given any $h \in X$ with $\|h\|_H< \infty$, from \eqref{def_CM_norm} it follows that for any $f \in X^*$
\begin{equation}
\label{sti1CMproof}
|f(h)|\le \|j(f)\|_{L^2(X, \gamma)}\|h\|_H.
\end{equation} 
As a consequence, the linear map
\begin{equation*}
L_h: j(X^*)\rightarrow \mathbb{R}, \quad L_h(j(f)):=f(h), \quad f \in X^*,
\end{equation*}
is well defined and continuous w.r.t. the $L^2$ topology. Thus $L_h$ can be continuously extended to $X^*_\gamma$ and by the Riesz representation theorem there is a unique $\hat h\in X_\gamma^*$ such that the extension (still denoted by $L_h$) is given by 
\begin{equation*}
L_h(g)=\int_X g(x)\hat h(x)\, \gamma({\rm d}x), \quad g \in X_\gamma^*.
\end{equation*}
In particular, for any $f \in X^*$, 
\begin{equation}
\label{sti2CMproof}
f(h)=L_h(j(f))=\int_Xj(f)(x)\hat h(x)\, \gamma({\rm d}x)=f(R_\gamma\hat h),
\end{equation}
where the last equality comes from \eqref{R_gamma_def} and \eqref{equality_R_gamma}. This yields $R_\gamma \hat h=h$ and  
$
\|h\|_H=\|\hat h\|_{L^2(X, \gamma)}.
$

Let us now prove the converse: if $h =R_\gamma(\hat h)$ for some $\hat h \in R_\gamma^*$, then $h \in H$. Let $h=R_\gamma(\hat h)$, then for ant $f \in X^*$, \eqref{sti2CMproof} yields 
\begin{equation*}
f(h)=\int_X j(f)(x)\hat h(x)\, \gamma({\rm d}x) \le \|\hat h\|_{L^2(X, \gamma)}\|j(f)\|_{L^2(X, \gamma)},
\end{equation*}
whence $\|h\|_H <\infty$ by the definition of the $H$-norm.
\end{proof}


\begin{remark}
The space $L^2(X,\gamma)$ (hence its subspace $X_\gamma^*$) is separable being $X$ separable. Thus $H$ is separable since it is isometric to a separable space.
\end{remark}

As an immediate consequence of Proposition \ref{CM_space} 
we infer the following result, that can be seen as a concrete example of Proposition \ref{Gaussian_isometries}.
\begin{corollary}
\label{char_X^*_gam-H}
The map $\hat \cdot:=R_\gamma^{-1}: H \rightarrow X^*_\gamma$ is a unitary operator and 
\begin{equation*}
X_\gamma^*=\{\hat h: h \in H\},
\end{equation*}
where every $\hat h\in X_\gamma^*$ is a centered Gaussian random variable with variance $\|\hat h\|^2_{L^2(X, \gamma)}=\|h\|^2_H$.
\end{corollary}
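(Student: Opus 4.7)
The plan is to observe that this corollary is essentially a repackaging of earlier results, specifically Proposition \ref{CM_space} and Corollary \ref{X^* Gauss}, combined via the definition of $\hat\cdot := R_\gamma^{-1}$. So the proof will consist of three short steps, each citing a prior result.

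First, I would invoke Proposition \ref{CM_space}, which tells us that $R_\gamma : X_\gamma^* \to H$ is a well-defined, linear, onto isometry between Hilbert spaces (with the inner product on $H$ defined precisely so that this holds). Since it is a bijective isometry, its set-theoretic inverse $R_\gamma^{-1} : H \to X_\gamma^*$ exists, is linear, is onto, and also preserves the inner product — that is, $R_\gamma^{-1}$ is a unitary operator in the sense of Proposition \ref{Gaussian_isometries}. This immediately identifies $\hat{\cdot}$ as unitary.

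Second, the identity $X_\gamma^* = \{\hat h : h \in H\}$ follows because $\hat{\cdot} = R_\gamma^{-1}$ is surjective onto $X_\gamma^*$: every element of $X_\gamma^*$ is the image under $R_\gamma^{-1}$ of its image under $R_\gamma$ in $H$.

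Third, for the Gaussian-ness and variance, I would apply Corollary \ref{X^* Gauss}: every element $f \in X_\gamma^*$ is a centered Gaussian random variable under $\gamma$ with variance $\|f\|^2_{L^2(X,\gamma)}$. Applying this with $f = \hat h$ and using the isometry $\|\hat h\|_{L^2(X,\gamma)} = \|R_\gamma^{-1}(h)\|_{L^2(X,\gamma)} = \|h\|_H$ from Proposition \ref{CM_space} yields the claim. There is no real obstacle here — the whole content is already contained in the previously established isometry and the Gaussian-space structure of $X_\gamma^*$; the corollary just reorients the map so that $H$ plays the role of the abstract Hilbert space $\mathcal{H}$ in Proposition \ref{Gaussian_isometries} and $X_\gamma^*$ plays the role of $\mathcal{H}_1$.
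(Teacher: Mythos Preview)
Your proposal is correct and matches the paper's approach: the paper presents this corollary as an immediate consequence of Proposition~\ref{CM_space} (viewed as a concrete instance of Proposition~\ref{Gaussian_isometries}) and gives no further proof, which is exactly what you have unpacked by combining the onto isometry $R_\gamma$ from Proposition~\ref{CM_space} with the Gaussian structure of $X_\gamma^*$ from Corollary~\ref{X^* Gauss}.
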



\bigskip
\textbf{The Cameron-Martin Theorem.}
\smallskip

The Cameron-Martin Theorem is a  fundamental theorem stating that the Cameron-Martin space characterises precisely those directions in $X$ in which translations leave the measure $\gamma$  ‘quasi-invariant'.

Let us start by observing what happens in finite dimension, that is when $X=\mathbb{R}^d$. Let $\gamma$ be a Gaussian measure on $\mathbb{R}^d$ with mean $a$ and covariance matrix $Q$. If  $\gamma$ is nondegenerate, namely $Q$ is invertible, the measures $\gamma_h$, with $h \in Q(\mathbb{R}^d)$, are all equivalent to $\gamma$, meaning that $\gamma_h \ll \gamma$ and $\gamma \ll \gamma_h$. In fact, a simple computation shows that $\gamma_h=\rho_h \gamma$, where
\begin{equation*}
\rho_h(x)=e^{\langle Q^{-1}h, x\rangle_{\mathbb{R}^d}-\frac 12 \|h\|^2_{Q(\mathbb{R}^d)}},
\end{equation*}
where $\|h\|^2_{Q(\mathbb{R}^d)}= \langle Q^{-1}h, h\rangle_{\mathbb{R}^d}$. In this case the Cameron-Martin space is the range of $Q$, i.e. $H=Q(\mathbb{R}^d)$. 
This is an immediate consequence of Proposition \ref{CM_space}, since $X_\gamma^*=\mathbb{R}^d$ and $R_\gamma$ is nothing but $Q$. According to the notation introduced in Proposition \ref{CM_space}, $h =R_\gamma \hat h$ iff $\hat h(x)=\langle Q^{-1}h,x\rangle_{\mathbb{R}^d}$, so we can write the density $\rho_h$ as 
\begin{equation}
\label{density}
\rho_h(x)=e^{\hat h(x)-\frac 12 \|h\|^2_H}.
\end{equation}
Similarly, in the infinite dimensional case the admissible shifts are the one belonging to the Cameron-Martin space and the density is of the form \eqref{density}.

\begin{theorem}
\label{CM_thm}
(Cameron-Martin Theorem).
If $h \in H$ the measure $\gamma_h$ defined in \eqref{shift_gamma} is equivalent to $\gamma$ and $\gamma_h=\rho_h \gamma$ with 
\begin{equation}
\rho_h(x)=e^{\hat h(x)-\frac 12 \|h\|^2_{H}},
\end{equation}
where $\hat h=R_\gamma^{-1}h$. If $h \notin H$, then $\gamma_h \perp h$. Hence $\gamma_h$ and $\gamma$ are equivalent if and only if $h \in H$.
\end{theorem}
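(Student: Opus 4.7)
The approach splits naturally into the two assertions, and I would treat them by completely different methods.

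For the first assertion ($h\in H$), the plan is to identify $\gamma_h$ and the candidate measure $\rho_h\gamma$ through their characteristic functions. First I would verify that $\rho_h\gamma$ is a probability measure: positivity is trivial, and $\int_X \rho_h\,\df\gamma = e^{-\frac12\|h\|_H^2}\int_X e^{\hat h(x)}\,\gamma(\df x) = 1$, since by Corollary \ref{X^* Gauss}, $\hat h\sim \mathcal{N}(0,\|h\|_H^2)$ under $\gamma$ and hence its moment-generating function at $1$ equals $e^{\frac12\|h\|_H^2}$. The key computation is then the following. For $f\in X^*$ the pair $(j(f),\hat h)$ lives in the Gaussian Hilbert space $X_\gamma^*$, so by Proposition \ref{Gaus_joint_distr} it is jointly centered Gaussian. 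Using the identification \eqref{R_gamma_def}--\eqref{equality_R_gamma}, the cross-covariance is
\begin{equation*}
\int_X j(f)(x)\,\hat h(x)\,\gamma(\df x) \;=\; R_\gamma(\hat h)(f) \;=\; f(h).
\end{equation*}
Hence the joint moment-generating / characteristic function at $(i,1)$ gives
\begin{equation*}
\int_X e^{ij(f)(x)+\hat h(x)}\,\gamma(\df x)
\;=\;
e^{-\frac12 K_\gamma(f,f)+if(h)+\frac12\|h\|_H^2}.
\end{equation*}
Multiplying by $e^{-\frac12\|h\|_H^2}e^{ia_\gamma(f)}$ recovers
$\widehat{\rho_h\gamma}(f)=e^{if(h)+ia_\gamma(f)-\frac12 K_\gamma(f,f)}$, which coincides with $\widehat{\gamma_h}(f)=e^{if(h)}\widehat{\gamma}(f)$ obtained by a direct translation change of variables. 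Since characteristic functions on $(X,\mathcal{B}(X))$ determine measures, we conclude $\gamma_h=\rho_h\gamma$, and the strict positivity of $\rho_h$ yields the equivalence $\gamma_h\sim\gamma$.

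For the second assertion ($h\notin H$), the plan is to manufacture, by hand, a set separating $\gamma$ and $\gamma_h$. Since $\|h\|_H=+\infty$, the definition \eqref{def_CM_norm} supplies a sequence $f_n\in X^*$ with $\|j(f_n)\|_{L^2(X,\gamma)}\le 1$ and $f_n(h)\to+\infty$; after rescaling we may assume $\|j(f_n)\|_{L^2(X,\gamma)}=1$. Set $\xi_n:=j(f_n)=f_n-a_\gamma(f_n)$. Under $\gamma$, $\xi_n\sim\mathcal{N}(0,1)$; under $\gamma_h$, performing the translation $x\mapsto x+h$ inside $\xi_n$, the distribution is $\mathcal{N}(f_n(h),1)$. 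Consider the events $A_n=\{|\xi_n|\le \tfrac12 f_n(h)\}$. Then $\gamma(A_n)\to 1$ while $\gamma_h(A_n)\to 0$ as $n\to\infty$, because the $\mathcal{N}(f_n(h),1)$-mass of $A_n$ is the tail $\mathbb{P}(|\mathcal{N}(0,1)|\ge \tfrac12 f_n(h))$. Passing to a subsequence with $\sum_n\gamma(A_n^c)<\infty$ and $\sum_n\gamma_h(A_n)<\infty$, the Borel--Cantelli lemma shows that $\Omega_\infty:=\liminf_n A_n$ has $\gamma(\Omega_\infty)=1$ and $\gamma_h(\Omega_\infty)=0$, hence $\gamma\perp\gamma_h$.

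The main obstacle is the singularity direction: the absolute continuity part is essentially a Gaussian moment-generating-function exercise once one recognises the joint Gaussianity of $(j(f),\hat h)$, whereas the singularity part requires extracting \emph{quantitative} information from the divergence $\|h\|_H=\infty$. The crucial step is converting the supremum in \eqref{def_CM_norm} into a sequence that blows up at a usable rate, so that the one-dimensional Gaussian tail estimates are strong enough for Borel--Cantelli to apply after extraction. The choice $\|j(f_n)\|_{L^2(X,\gamma)}=1$ is made precisely to keep the variances normalised so that only the drift $f_n(h)$ matters, turning the problem into a transparent comparison of translated one-dimensional standard normals.
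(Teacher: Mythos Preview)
The paper does not give its own proof of this theorem; it simply refers to \cite[Theorem~2.4.5]{Bogachev}. Your proposal therefore has nothing to be compared against in the paper itself, so let me evaluate it on its own terms.

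Your argument for $h\in H$ is correct and is in fact the standard one: identifying $\gamma_h$ and $\rho_h\gamma$ via their Fourier transforms, exploiting the joint Gaussianity of $(j(f),\hat h)$ in $X_\gamma^*$ and the key identity $\langle j(f),\hat h\rangle_{L^2(X,\gamma)}=f(h)$, which follows from \eqref{R_gamma_def}--\eqref{equality_R_gamma} and Proposition~\ref{CM_space}. The moment-generating computation is clean and the conclusion via injectivity of the Fourier transform is exactly right.

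Your argument for $h\notin H$ via Borel--Cantelli is also correct, and is a nice elementary alternative to the Kakutani-type dichotomy arguments often found in references. One small caveat: the rescaling ``we may assume $\|j(f_n)\|_{L^2(X,\gamma)}=1$'' tacitly excludes the possibility that $\|j(f_n)\|_{L^2(X,\gamma)}=0$ for some (or all) $n$. This is harmless --- if $\|j(f_n)\|_{L^2}>0$ divide by it (which only increases $f_n(h)$), while if $\|j(f_n)\|_{L^2}=0$ then under $\gamma$ the random variable $\xi_n$ is a.s.\ $0$ and under $\gamma_h$ it is a.s.\ $f_n(h)\neq 0$, so the set $\{\xi_n=0\}$ already separates the measures --- but it would be worth a sentence in a polished write-up.
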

\begin{proof}
See \cite[Theorem 2.4.5]{Bogachev}.
\end{proof}

\subsubsection{Differentiable functions and Sobolev spaces}
\label{diff_Banach_sec}

\begin{definition}
\label{gradient}
Let $\bar x \in X$ and let $\mathcal{O}$ be a neighborhood of $\bar x$. A function $f:\mathcal{O} \rightarrow \mathbb{R}$ is called (Frech\'et) differentiable at $\bar x$ if there exists $l \in X^*$
 such that 
\begin{equation*}
\lim_{\|h\|_X \rightarrow 0}\frac{|f(\bar x+ h)- f(\bar x)-l(h)|}{\|h\|_X}= 0.
\end{equation*}
In this case, $l$ is unique, we set $Df(\bar x):=l$ and call it Frechet derivative of $f$ at $\bar x$.
\\
A function that is differentiable at any point in $X$ is said to be $C^1$ if the function $Df:X \rightarrow X^*$, $\bar x \mapsto Df(\bar x)$ is continuous. 
We denote by $C^1_b(X)$ the set of all bounded continuously differentiable functions.
\end{definition}
Severeal properties of differentiable functions may be proved as in the case $X=\mathbb{R}^d$. If $f$ is differentiable at $\bar x$, then it is continuous at $\bar x$. Moreover, for every $v \in X$, the derivative along $v$, given by
\begin{equation*}
\frac{\partial f}{\partial v}(\bar x):= \lim_{t \rightarrow 0} \frac{f(\bar x +tv)-f(\bar x)}{t},
\end{equation*}
exists and is equal to $Df(\bar x)(v)$.

\begin{definition}
\label{H_gradient}
A function $f:X \rightarrow \mathbb{R}$ is called $H$-differentiable at $\bar x\in X$ if there exists $l_0 \in H^*$ such that 
\begin{equation*}
\lim_{\|h\|_H \rightarrow 0}\frac{|f(\bar x+ h)- f(\bar x)-l_0(h)|}{\|h\|_H}= 0.
\end{equation*}
$l_0$ is called $H$-derivative of $f$ at $\bar x$. 
\end{definition}
There exists a unique $y \in H$ such that $l_0(h)=\langle h,y\rangle_H$ for every $h \in H$. We set
\begin{equation*}
\nabla_H f(\bar x):=y.
\end{equation*} 
If $f$ is $H$-differentiable at $\bar x$, the directional derivative exists for any $h \in H$ and it is given by 
\[
\frac{\partial f(\bar x)}{\partial h}=\langle \nabla_H f(\bar x), h\rangle_H.
\]
\\
Notice the difference between Definitions \ref{gradient} and \ref{H_gradient}: in the first case the increments are taken in $X$, in the second case in $H$. The relation between the two notions of differentiability is provided in the next result.
\begin{proposition}
\label{relation}
If $f$ is differentiable at $\bar x$, then it is $H$-differentiable at $\bar x$, with $H$-derivative given by $h \mapsto Df(\bar x)(h)$ for every $h \in H$. Moreover, we have 
\begin{equation}
\label{nablaRgamma}
\nabla_H f(\bar x)=R_\gamma Df(\bar x).
\end{equation}
\end{proposition}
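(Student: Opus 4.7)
The plan has two parts: first establish $H$-differentiability, then identify the representative $\nabla_H f(\bar x) \in H$ through the isometry $R_\gamma : X^*_\gamma \to H$ of Proposition \ref{CM_space}.

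For the first part, I would exploit the continuous embedding $H \hookrightarrow X$, which gives a constant $c>0$ with $\|h\|_X \le c \|h\|_H$ for all $h\in H$. Setting $l := Df(\bar x) \in X^*$, the restriction $l_0 := l|_H$ automatically lies in $H^*$ since $|l_0(h)| \le \|l\|_{X^*}\|h\|_X \le c\|l\|_{X^*}\|h\|_H$. The key estimate is
\begin{equation*}
\frac{|f(\bar x + h)-f(\bar x)-l_0(h)|}{\|h\|_H} \le c\,\frac{|f(\bar x + h)-f(\bar x)-l(h)|}{\|h\|_X},
\end{equation*}
and since $\|h\|_H\to 0$ forces $\|h\|_X\to 0$, the right-hand side tends to zero by Fréchet differentiability. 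Uniqueness of the $H$-derivative then identifies $l_0$ with $h\mapsto Df(\bar x)(h)$.

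For the second part, I would explicitly compute the Riesz representative of $l_0$ in $H$ using the unitary operator $R_\gamma$. By Proposition \ref{CM_space}, every $h\in H$ is of the form $h=R_\gamma \hat h$ with $\hat h\in X^*_\gamma$ and $\langle h,k\rangle_H=\langle \hat h,\hat k\rangle_{L^2(X,\gamma)}$. If $y=\nabla_H f(\bar x)$ and we set $\hat y := R_\gamma^{-1}y\in X^*_\gamma$, then for every $h=R_\gamma\hat h\in H$
\begin{equation*}
\langle y,h\rangle_H = \langle \hat y,\hat h\rangle_{L^2(X,\gamma)}.
\end{equation*}
Independently, using the identification \eqref{equality_R_gamma} and the definition \eqref{R_gamma_def} of $R_\gamma$,
\begin{equation*}
l_0(h) = Df(\bar x)(R_\gamma\hat h) = R_\gamma\hat h\bigl(Df(\bar x)\bigr) = \int_X \hat h(x)\bigl[Df(\bar x)(x)-a_\gamma(Df(\bar x))\bigr]\,\gamma(\mathrm d x) = \langle \hat h,\, j(Df(\bar x))\rangle_{L^2(X,\gamma)}.
\end{equation*}

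Comparing the two expressions for $l_0(h)=\langle y,h\rangle_H$ and using the arbitrariness of $\hat h$ in the dense subspace $j(X^*)\subset X^*_\gamma$ (followed by a density argument on all of $X^*_\gamma$), I conclude $\hat y = j(Df(\bar x))$, whence $\nabla_H f(\bar x)=y=R_\gamma \hat y = R_\gamma Df(\bar x)$, where the final notation is understood via the canonical identification of $Df(\bar x)\in X^*$ with its image $j(Df(\bar x))\in X^*_\gamma$. The only delicate point is precisely this notational convention — making sure the reader sees that $R_\gamma$, strictly defined on $X^*_\gamma$, is being applied to $Df(\bar x)\in X^*$ through the embedding $j$; beyond that, the argument is a direct unwinding of the definitions and should not present any real obstacle.
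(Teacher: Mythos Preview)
Your proof is correct and follows essentially the same approach as the paper: both use the continuous embedding $H\hookrightarrow X$ for the first part, and both unwind the definition \eqref{R_gamma_def} together with the identification \eqref{equality_R_gamma} to show $Df(\bar x)(h)=\langle R_\gamma Df(\bar x),h\rangle_H$ for the second. One small remark: your density argument is unnecessary, since as $h$ ranges over all of $H$ the element $\hat h=R_\gamma^{-1}h$ already ranges over all of $X^*_\gamma$ (by Proposition~\ref{CM_space}), so the identity $\langle \hat y,\hat h\rangle_{L^2}=\langle j(Df(\bar x)),\hat h\rangle_{L^2}$ holds for every $\hat h\in X^*_\gamma$ directly.
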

\begin{proof}
Set $l:= Df(\bar x)$. We have 
\begin{equation*}
\lim_{\|h\|\rightarrow 0} \frac{|f(\bar x+h)-f(\bar x)-l(h)|}{\|h\|_H}
=\lim_{\|h\|\rightarrow 0} \frac{|f(\bar x+h)-f(\bar x)-l(h)|}{\|h\|_X}\frac{\|h\|_X}{\|h\|_H}
=0,
\end{equation*}
since the ratio $\frac{\|h\|_X}{\|h\|_H}$ is bounded by a constant independent of $h$, being $H$ is continuously embedded in $X$. Let us now prove \eqref{nablaRgamma}. Bearing in mind \eqref{R_gamma_def}, \eqref{equality_R_gamma} and Proposition \ref{CM_space}, for every $\varphi \in X^*$ and $h \in H$, we have 
\begin{equation*}
\varphi (h)= \varphi(R_\gamma \hat h)=R_\gamma \hat h(\varphi)=\langle \varphi, \hat h\rangle_{L^2(X, \gamma)}=\langle R_\gamma\varphi,h\rangle_H.
\end{equation*}
In particular, taking $\varphi =Df(\bar x)$ we obtain $\langle \nabla_Hf(\bar x), h\rangle_H=Df(\bar x)(h)=\langle R_\gamma Df(\bar x),h\rangle_H$ for any $h \in H$, thus $\nabla_Hf(\bar x)=R_\gamma D f(\bar x)$.
\end{proof}

The following result is an integration by parts formula for $C_b^1(X)$ functions. 
\begin{proposition}
For every $f \in C_b^1(X)$ and $h \in H$ we have
\begin{equation}
\label{i.b.p.Banach}
\int_X \langle \nabla_H f(x), h\rangle_H \, \gamma({\rm d}x)= \int_X f(x) \hat h(x) \, \gamma ({\rm d}x).
\end{equation}
\end{proposition}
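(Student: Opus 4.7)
The natural route is to exploit the Cameron--Martin Theorem \ref{CM_thm} together with the relation $\nabla_H f = R_\gamma Df$ from Proposition \ref{relation}, and read the identity off the derivative at $t=0$ of the function
\[
\Phi(t):=\int_X f(x+th)\,\gamma(\df x),\qquad t\in\mathbb{R}.
\]
Since $h\in H$, Theorem \ref{CM_thm} gives $\gamma_{th}=\rho_{th}\,\gamma$ with density $\rho_{th}(x)=\exp\bigl(t\hat h(x)-\tfrac{t^2}{2}\|h\|_H^2\bigr)$. The change of variable $y=x+th$ together with the definition \eqref{shift_gamma} of $\gamma_{th}$ yields the two representations
\[
\Phi(t)=\int_X f(y)\,\gamma_{th}(\df y)=\int_X f(x)\,\rho_{th}(x)\,\gamma(\df x).
\]
Differentiating the \emph{first} representation in $t$ at $t=0$ produces the left-hand side of \eqref{i.b.p.Banach}, while differentiating the \emph{second} produces the right-hand side; equating the two gives the formula.

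\textbf{Key steps.} First, I would check that $\Phi$ is differentiable at $0$ in the first form with derivative $\int_X \langle \nabla_H f(x),h\rangle_H\,\gamma(\df x)$. For fixed $x$ the Fr\'echet differentiability of $f$ yields
\[
\frac{f(x+th)-f(x)}{t}\xrightarrow[t\to 0]{} Df(x)(h)=\langle \nabla_H f(x),h\rangle_H,
\]
where the last equality uses Proposition \ref{relation}. Since $f\in C^1_b(X)$ its Fr\'echet derivative is bounded on $X$, so the difference quotient is dominated by the constant $\|h\|_X\sup_{x\in X}\|Df(x)\|_{X^*}$, and dominated convergence applies. Second, I would differentiate the second representation. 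A direct computation shows
\[
\frac{\partial}{\partial t}\rho_{th}(x)\Big|_{t=0}=\hat h(x),
\]
so formally $\Phi'(0)=\int_X f(x)\hat h(x)\,\gamma(\df x)$. Equating the two expressions for $\Phi'(0)$ gives \eqref{i.b.p.Banach}.

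\textbf{Main obstacle.} The only non-routine point is justifying the differentiation under the integral sign on the right, because the density $\rho_{th}$ is unbounded in $x$. I would use the elementary estimate $|e^a-1|\le |a|e^{|a|}$ to obtain, for $|t|\le 1$,
\[
\left|\frac{\rho_{th}(x)-1}{t}\right|\le \Bigl(|\hat h(x)|+\tfrac{1}{2}\|h\|_H^2\Bigr)\exp\!\Bigl(|\hat h(x)|+\tfrac{1}{2}\|h\|_H^2\Bigr).
\]
Multiplying by $|f(x)|\le \|f\|_\infty$ yields a dominating function of the form $C(1+|\hat h(x)|)e^{|\hat h(x)|}$. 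Because $\hat h\in X_\gamma^*$ is a centered Gaussian random variable on $(X,\gamma)$ by Corollary \ref{X^* Gauss}, all exponential moments $\int_X e^{\lambda |\hat h(x)|}\,\gamma(\df x)$ are finite (alternatively this is a direct consequence of the Fernique Theorem), so this dominating function is $\gamma$-integrable and dominated convergence legitimises the passage to the limit. Collecting the two computations of $\Phi'(0)$ then gives \eqref{i.b.p.Banach}.
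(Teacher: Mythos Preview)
Your proposal is correct and follows essentially the same route as the paper: both use the Cameron--Martin Theorem to rewrite $\int_X f(x+th)\,\gamma(\df x)$ as $\int_X f(x)\rho_{th}(x)\,\gamma(\df x)$, differentiate (equivalently, take difference quotients) at $t=0$, and justify passage to the limit on each side via dominated convergence, using the Gaussianity of $\hat h$ and the elementary bound $|e^a-1|\le |a|e^{|a|}$ for the density side. The only cosmetic difference is that the paper splits the exponential $e^{t\hat h(x)-t^2\|h\|_H^2/2}$ into two factors before estimating, whereas you bound the whole expression at once; both yield an $L^1(\gamma)$ dominating function of the same type.
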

\begin{proof}
By the Cameron-Martin Theorem \ref{CM_thm}, for every $t \in \mathbb{R}$ we have 
\begin{equation*}
\int_Xf(x+th)\,\gamma({\rm d}x)=\int_X f(x)e^{t\hat h(x)-\frac{t^2\|h\|^2_H}{2}}\, \gamma({\rm d}x).
\end{equation*}
Thus, for $0<|t|\le 1$,
\begin{equation*}
\int_X \frac{f(x+th)-f(x)}{t}\, \gamma({\rm d}x) = \int_X f(x)\frac{e^{t\hat h(x)-\frac{t^2\|h\|^2_H}{2}}-1}{t}\, \gamma({\rm d}x).
\end{equation*}
As $t \rightarrow 0$, the integral in the l.h.s. converges to $\int_X\langle \nabla_H f(x), h\rangle_H \, \gamma({\rm d}x)$, by the Dominated Convergence Theorem. For what concerns the r.h.s. we have that
\begin{equation*}
\frac{e^{t\hat h(x)-\frac{t^2\|h\|^2_H}{2}}-1}{t} \rightarrow \hat h(x),
\end{equation*}
for every $x \in X$ as $t \rightarrow 0$. In fact, using the inequality $|(e^{ta}-1)/t|\le |a|e^{|a|}$ for $a \in \mathbb{R}$, we estimate for $0<|t| \le 1$
\begin{equation*}
\left|\frac{e^{t\hat h(x)-\frac{t^2\|h\|^2_H}{2}}-1}{t} \right|
=\left|\frac{e^{-\frac{t^2\|h\|^2_H}{2}}\left(e^{t \hat h (x)}-1\right)}{t}+ \frac{e^{-\frac{t^2\|h\|^2_H}{2}}-1}{t} \right|
\le|\hat h(x)|e^{\left| \hat h(x)\right|} + \sup_{0 < t\le 1}\left| \frac{e^{-\frac{t^2\|h\|^2_H}{2}}-1}{t} \right|,
\end{equation*}
where the function $x \mapsto  \left| \hat h(x)e^{\left| \hat h(x)\right|}\right|$ belongs to $L^1(X, \gamma)$ since $\hat h$ is a Gaussian random variable. Therefore, applying the Dominated Convergence Theorem the thesis follows.
 \end{proof}

The proof of the previous result relies on the fact that the Gaussian measure is quasi-invariant for translations only by moving in the Cameron-Martin directions. The Cameron-Martin space then becomes the natural choice of Hilbert space to be considered for obtaining integration by parts formulae with respect to Gaussian measures.

Using the integration by parts formula \eqref{i.b.p.Banach} one can prove the following result (see e.g. \cite[Proposition 9.3.7]{Lunardi}).
\begin{proposition}
 The operator $\nabla_H:C_b^1(X) \rightarrow L^p(X,\gamma;H)$ is closable as an unbounded operator from $L^p(X, \gamma)$ to $L^p(X, \gamma;H)$, for any $1\le p<\infty$.
\end{proposition}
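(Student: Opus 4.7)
The plan is to mimic the strategy used for Proposition \ref{closableProp}, with the integration by parts formula \eqref{i.b.p.Banach} playing the role that Corollary \ref{coribp} played there. Take a sequence $\{f_n\} \subset C_b^1(X)$ with $f_n \to 0$ in $L^p(X,\gamma)$ and $\nabla_H f_n \to \eta$ in $L^p(X,\gamma;H)$; we must show $\eta = 0$ $\gamma$-a.e. The key idea is to pair $\nabla_H f_n$ against a test function $g$ (regular enough that $f_n g$ lies in $C_b^1(X)$ and the i.b.p.\ formula applies) and an element $h \in H$, then use the product rule to transfer the derivative off $f_n$.

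Suppose first $1 < p < \infty$ with conjugate exponent $p'$. Fix $h \in H$ and choose $g$ in a class $\mathcal{T}$ of smooth cylindrical functions on $X$ whose values \emph{and} $H$-directional derivatives are bounded; such $\mathcal{T}$ is dense in $L^{p'}(X,\gamma)$. Since $f_n g \in C_b^1(X)$, the product rule $\nabla_H(f_n g) = g\,\nabla_H f_n + f_n\,\nabla_H g$ combined with \eqref{i.b.p.Banach} applied to $f_n g$ gives
\begin{equation*}
\int_X g\, \langle \nabla_H f_n, h\rangle_H\, \gamma(\df x) \;=\; \int_X f_n g\, \hat h\, \gamma(\df x) \;-\; \int_X f_n\, \langle \nabla_H g, h\rangle_H\, \gamma(\df x).
\end{equation*}
Now pass $n \to \infty$. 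By Hölder's inequality the left-hand side converges to $\int_X g\,\langle \eta, h\rangle_H\, \gamma(\df x)$ since $g$ is bounded and $\nabla_H f_n \to \eta$ in $L^p(X,\gamma;H)$. On the right-hand side, both integrals tend to zero: $f_n \to 0$ in $L^p$, while $g\hat h \in L^{p'}(X,\gamma)$ (using boundedness of $g$ together with Fernique's theorem, which yields $\hat h \in L^q(X,\gamma)$ for every $q < \infty$) and $\langle \nabla_H g, h\rangle_H$ is bounded by choice of $\mathcal{T}$. Hence $\int_X g\,\langle \eta, h\rangle_H\, \gamma(\df x) = 0$ for every $g \in \mathcal{T}$, and density of $\mathcal{T}$ in $L^{p'}(X,\gamma)$ forces $\langle \eta, h\rangle_H = 0$ $\gamma$-a.e. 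Separability of $H$ (a countable dense subset $\{h_k\}$ suffices) then gives $\eta = 0$ $\gamma$-a.e.

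The main obstacle is the case $p = 1$, since the Hölder duality above degenerates: $L^1/L^\infty$ is not a dual pairing of the right type, and one cannot directly conclude $\langle \eta, h\rangle_H = 0$ from vanishing against bounded test functions. The standard workaround is to refine the test-function argument by exploiting the Cameron--Martin formula and the regularising effect of translations along $h \in H$: one considers $g_\varepsilon(x) = g(x)\,\rho_{-\varepsilon h}(x)/\rho_{-\varepsilon h}$-style shifts (or, equivalently, passes through a convolution/Ornstein--Uhlenbeck smoothing of $f_n$) and uses the uniform integrability provided by the quasi-invariance in Theorem \ref{CM_thm}. Once the integration by parts identity is shown to pass to the limit against a rich enough family of bounded test functions separating points of $L^1$, the conclusion $\eta = 0$ follows as before.
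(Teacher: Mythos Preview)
Your argument for $1<p<\infty$ is correct and is exactly the route the paper intends: it does not prove this proposition in-text but refers to \cite[Proposition 9.3.7]{Lunardi} after remarking that the proof uses the integration by parts formula \eqref{i.b.p.Banach}, and the analogous abstract result (Proposition \ref{closableProp}) is proved by precisely the duality-plus-product-rule method you carry out.

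Your diagnosis of the $p=1$ obstacle is slightly off, though. The difficulty is \emph{not} that vanishing against bounded test functions fails to identify $\langle\eta,h\rangle_H$: for any $\phi\in L^1(X,\gamma)$, the condition $\int_X g\phi\,\gamma(\df x)=0$ for all bounded measurable $g$ (take $g=\operatorname{sgn}\phi$) already forces $\phi=0$ a.e. The real problem lies one step earlier, in showing $\int_X f_n g\,\hat h\,\gamma(\df x)\to 0$. For $p>1$ you used $g\hat h\in L^{p'}$, but when $p=1$ you would need $g\hat h\in L^\infty$, and $\hat h$ is an unbounded Gaussian variable. A clean fix---simpler than the Cameron--Martin shifting you sketch---is to enlarge the coordinate system of your cylindrical test functions to include $\hat h$: take $g=\phi(\hat h,\hat k_1,\dots,\hat k_m)$ with $\phi\in C_c^\infty(\mathbb{R}^{m+1})$. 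Then $g\hat h$ is genuinely bounded (since $\hat h$ is bounded on $\operatorname{supp}\phi$), the limit passes, and this class of $g$ is still rich enough to separate $L^1$ functions. Incidentally, the paper makes the same omission for the abstract operator $D$: in the proof of Proposition \ref{closableProp} it writes ``the proof for $p=1$ requires a specific argument'' and leaves it at that.
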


For any $1\le p<\infty$ the Sobolev space $W^{1,p}(X, \gamma)$ is defined as the domain of the closure of the operator $\nabla_H$, still denoted by $\nabla_H$. $W^{1,p}(X, \gamma)$ is a Banach space with the norm
\begin{equation}
\label{norm_D_H}
\|f\|_{L^p(X, \gamma)}+\|\nabla_Hf\|_{L^p(X,\gamma; H)}.
\end{equation}
Moreover, the integration by parts formula \eqref{i.b.p.Banach} holds for any $\varphi \in W^{1,p}(X, \gamma)$ and $h \in H$ (see \cite[Proposition 9.3.10]{Lunardi}).

\subsubsection{Bogachev's Malliavin derivative $\nabla_H$}
\label{Bogachev}
The operator $\nabla_H$ is the \textit{Malliavin derivative} considered in \cite{Bogachev}, see also \cite{Lunardi}. It can be interpreted as a particular example of the general notion of Malliavin derivative seen in Section \ref{Malliavin_sec}. Referring to Section \ref{Malliavin_sec}, we need to identify the Gaussian Hilbert space and to characterize it in terms of a separable Hilbert space and a unitary operator. 
In \cite{Bogachev} the reference probability space is $(\Omega, \mathcal{A}, \mathbb{P})=(X, \mathcal{B}(X), \gamma)$ where $X$ is a separable Banach space \begin{footnote}{Actually, \cite{Bogachev} considers the case of a locally convex topological space. We consider here the case of Banach spaces for the sake of exposition (see also \cite{Lunardi}).}\end{footnote} and $\gamma$ a Gaussian measure on it: the framework is the one described in Section \ref{Gaus_meas_Ban}. The Gaussian Hilbert space $\mathcal{H}_1$ is  $X_\gamma^*$, the space $\mathcal{H}$ is the Cameron-Martin space $H$ and  the unitary operator $W$ is the operator $\hat \cdot=R_\gamma^{-1}$. 
\\
With these identifications, by comparing the integration by parts formula derived in \cite{Bogachev}:
\begin{equation*}
\int_X \langle \nabla_H \varphi(x), h\rangle_H \, \gamma({\rm d}x)= \int_X \varphi(x) \hat h(x) \, \gamma ({\rm d}x),  \qquad \forall \varphi \in W^{1,2}(X, \gamma)=\text{Dom}(\nabla_H), \ \forall h \in H
\end{equation*}
with the one we obtained in Proposition \ref{i.b.p.}:
\begin{equation*}
\mathbb{E}\left[ \langle D\varphi, h \rangle_{\mathcal{H}}\right]=\mathbb{E}\left [W(h)\varphi\right], \qquad \forall \varphi \in \mathbb{D}^{1,2}=\text{Dom}(D), \ \forall h \in \mathcal{H},
\end{equation*}
we immediately see that the gradient operator $\nabla_H$ of  \cite{Bogachev} is in fact a Malliavin derivative operator in the sense of Section \ref{Malliavin_sec}.

\subsection{Malliavin derivative in the sense of Da Prato}
\label{Gaussian_Hilbert_sec}
For this part we mainly refer to \cite{CDP96} and \cite{DaPrato}.
\\
Throughout this Subsection by $X$ we denote an infinite dimensional separable Hilbert space with inner product $\langle \cdot, \cdot\rangle_X$. As usual, we identify $X$ with $X^*$ via the Riesz representation.  Of course, all the results of Section \ref{Gaussian_Banach_sec} apply in the case of Hilbert spaces. However, the Hilbert space structure allows for further characterizations of a variety of objects in play.

\subsubsection{Gaussian measures in Hilbert spaces}
\label{Gau_Hil_sec}

Let $\gamma$ be a Gaussian measure on $(X, \mathcal{B}(X))$. According to Proposition \ref{CF_gamma_prop} 
 the characteristic function of $\gamma$ is
\begin{equation*}
\widehat \gamma(f)= e^{ia_\gamma(f)- \frac 12 K_\gamma(f,f)}, \quad f \in X^*,
\end{equation*}
where $a_\gamma:X^* \rightarrow \mathbb{R}$ is a linear continuous map and $K_\gamma:X^*\times X^*\rightarrow \mathbb{R}$ is a bilinear symmetric and continuous map.
The identification of $X$ with $X^*$ yields the existence of a vector $a \in X$ and a self-adjoint operator $Q \in \mathcal{L}(X)$ such that $a_\gamma(f)=\langle f, a \rangle_X$ and $K_\gamma (f,g)=\langle Qf,g\rangle_X$ for every $f,g \in X^*=X$.\begin{footnote}{Recall that if $X$ is a real Hilbert space, and $B:X \times X \rightarrow \mathbb{R}$ is bilinear, symmetric
and continuous, then there exists a unique self-adjoint operator $Q \in \mathcal{L}(X)$ such that
$B(x,y)=\langle Qx, y\rangle_X$, for every $x,y \in X$.}\end{footnote} 
As usual, we identify the linear map $x \mapsto \langle x, f\rangle_X$ with the element $f \in X$. Thus 
\begin{equation}
\label{mean_Hilbert}
a_\gamma(f)=\langle a, f \rangle_X= \int_X \langle x, f\rangle\, \gamma({\rm d}x), \quad f \in X,
\end{equation}
\begin{equation}
\label{Q_def}
K_\gamma(f,g)=\langle Qf,g \rangle_X= \int_X \langle f,x-a\rangle\langle g,x-a\rangle\\, \gamma({\rm d}x), \quad f,g \in X,
\end{equation}
and 
\begin{equation}
\label{CF_gamma_Hilbert}
\widehat \gamma (f)
=e^{i \langle f, a\rangle_X - \frac 12\langle Qf,f,\rangle_X}, \quad f \in X.
\end{equation}
After such identifications, the element $a \in X$ is called the \textit{mean} of $\gamma$ and the self-adjoint operator $Q \in \mathcal{L}(X)$ is called the \textit{covariance} of $\gamma$.

The following result is analogous to  Proposition \ref{CF_gamma_prop} but with an important difference. In Proposition \ref{CF_gamma_prop} a measure is given and we have a criterium to see whether it is Gaussian. Proposition \ref{CF_gamma_Hilbert_prop} provides instead a characterization of all Gaussian measures in $X$ (see \cite[Theorem 2.3.1]{Bogachev} for the proof).

\begin{proposition}
\label{CF_gamma_Hilbert_prop}
If $\gamma$ is a Gaussian measure on $X$ then its characteristic function is given by \eqref{CF_gamma_Hilbert}, where $a \in X$ and $Q$ is a nonnegative self-adjoint trace-class operator. 
Conversely, for every $a \in X$ and for every $Q$ nonnegative self-adjoint trace-class operator, the function $\widehat \gamma$ in \eqref{CF_gamma_Hilbert} is the characteristic function of a Gaussian measure with mean $a$ and covariance operator $Q$.
\end{proposition}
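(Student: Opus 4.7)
The plan is to treat the two directions separately, leaning heavily on Proposition \ref{CF_gamma_prop}, the Riesz identification $X\simeq X^*$ already used to produce \eqref{mean_Hilbert}--\eqref{CF_gamma_Hilbert}, and, for the forward part, Fernique's theorem to upgrade continuity of $K_\gamma$ to trace-class regularity of $Q$.

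For the forward direction, I start from the expression $\widehat\gamma(f)=e^{ia_\gamma(f)-\frac12 K_\gamma(f,f)}$ of Proposition \ref{CF_gamma_prop}. The linear continuous functional $a_\gamma$ and the bilinear continuous symmetric form $K_\gamma$ on $X=X^*$ produce, by Riesz, a vector $a\in X$ and a self-adjoint operator $Q\in\mathcal{L}(X)$ satisfying \eqref{mean_Hilbert}--\eqref{Q_def}. Nonnegativity is immediate from $\langle Qf,f\rangle_X = K_\gamma(f,f)=\int_X \langle f,x-a\rangle_X^2\,\gamma({\rm d}x)\ge 0$. The key step is trace-class: fix any orthonormal basis $\{e_k\}_{k\in\mathbb{N}}$ of $X$ and compute, by Tonelli,
\begin{equation*}
\sum_{k=1}^\infty \langle Qe_k,e_k\rangle_X
= \sum_{k=1}^\infty \int_X \langle e_k,x-a\rangle_X^2\,\gamma({\rm d}x)
= \int_X \|x-a\|_X^2\,\gamma({\rm d}x).
\end{equation*}
The Fernique theorem (cited in the discussion preceding this proposition) ensures that $\int_X\|x\|_X^2\,\gamma({\rm d}x)<\infty$, so the right-hand side is finite and independent of the basis, giving $\operatorname{Tr}(Q)<\infty$. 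This is the step I expect to be the main obstacle, since it is the only place where a nontrivial integrability fact (Fernique) is used; the rest is bookkeeping via Riesz.

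For the converse, given $a\in X$ and a nonnegative self-adjoint trace-class $Q$, I will construct a Gaussian random variable with the desired law by series. Using the spectral theorem write $Q e_k=\lambda_k e_k$ with $\lambda_k\ge 0$, $\sum_k \lambda_k=\operatorname{Tr}(Q)<\infty$, on some orthonormal basis $\{e_k\}$. On a probability space $(\Omega,\mathcal{F},\mathbb{P})$ take i.i.d.\ standard Gaussians $\{\xi_k\}_{k\in\mathbb{N}}$ (whose existence follows from Kolmogorov's extension theorem or from Proposition \ref{Gaussian_isometries}) and set
\begin{equation*}
Y_n := a + \sum_{k=1}^n \sqrt{\lambda_k}\,\xi_k\, e_k.
\end{equation*}
Since $\mathbb{E}\bigl[\|Y_{n+p}-Y_n\|_X^2\bigr]=\sum_{k=n+1}^{n+p}\lambda_k\to 0$, the sequence is Cauchy in $L^2(\Omega;X)$ and hence converges to some $Y\in L^2(\Omega;X)$; by Kolmogorov's inequality or the Itô--Nisio theorem the convergence is also $\mathbb{P}$-a.s. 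Define $\gamma:=Y_{\sharp}\mathbb{P}$.

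It remains to show $\gamma$ is Gaussian with characteristic function \eqref{CF_gamma_Hilbert}. For any $f\in X$, $\langle Y_n,f\rangle_X-\langle a,f\rangle_X=\sum_{k=1}^n \sqrt{\lambda_k}\langle e_k,f\rangle_X\xi_k$ is a centered Gaussian and converges in $L^2(\Omega)$ to $\langle Y,f\rangle_X-\langle a,f\rangle_X$, which is therefore centered Gaussian by Theorem \ref{closure_linear_space}, with variance $\sum_{k=1}^\infty \lambda_k \langle e_k,f\rangle_X^2=\langle Qf,f\rangle_X$. Hence each $\gamma\circ f^{-1}$ is Gaussian, so $\gamma$ is a Gaussian measure, and its characteristic function is
\begin{equation*}
\widehat\gamma(f)=\mathbb{E}\!\left[e^{i\langle Y,f\rangle_X}\right]
= e^{i\langle a,f\rangle_X}\,\mathbb{E}\!\left[e^{i(\langle Y,f\rangle_X-\langle a,f\rangle_X)}\right]
= e^{i\langle a,f\rangle_X-\frac12\langle Qf,f\rangle_X},
\end{equation*}
which is \eqref{CF_gamma_Hilbert}. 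That the mean and covariance so identified coincide with $a$ and $Q$ is then read off \eqref{mean_Hilbert}--\eqref{Q_def}, concluding the proof.
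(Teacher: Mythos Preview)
Your proof is correct and complete. The paper itself does not prove this proposition but simply refers the reader to \cite[Theorem 2.3.1]{Bogachev}, so your argument actually supplies what the paper omits. Your forward direction---using Parseval and Tonelli to identify $\operatorname{Tr}(Q)=\int_X\|x-a\|_X^2\,\gamma({\rm d}x)$ and then invoking Fernique---and your converse via the $L^2(\Omega;X)$-convergent series $a+\sum_k\sqrt{\lambda_k}\,\xi_k e_k$ are both standard and sound; nothing further is needed.
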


From now on by $\gamma\sim \mathcal{N}(0,Q)$ we will denote a centered Gaussian measure with covariance $Q$. We call $\gamma$ non degenerate if Ker$(Q)=\{0\}$.

\subsubsection{A characterization of the The Cameron-Martin space}

The following result provides a characterization of the Cameron-Martin space associated to the Gaussian measure $\gamma \sim \mathcal{N}(0,Q)$; see \cite[Theorem 4.2.7]{Lunardi} 
(see also \cite[Remark 3.3.8]{Bogachev}, \cite[Chapter III.1.2, Lemma 1.2]{vtc} 
 for an extension to the Banach setting).
We fix an orthonormal basis $\{e_k\}_{k \in \mathbb{N}}$ given by eigenvectors of $Q$ such that $Qe_k=\lambda_k e_k$ for any $k \in \mathbb{N}$ (see Proposition \ref{spectral}). For any $x \in X$, $k \in \mathbb{N}$, we set $x_k :=\langle x, e_k\rangle_X$. 

\begin{theorem}\label{Hilbert_varie}Let $\gamma$ be a Gaussian measure with covariance operator $Q$. The Cameron-Martin space is the range of $Q^{1/2}$, that is \begin{equation*}H=Q^{1/2}(X)=\left\{x \in X: \ \sum_{k \in \mathbb{N},\lambda_k \ne 0} \frac{x_k}{\sqrt{\lambda_k}}< \infty\right\},\end{equation*}and the two spaces have the same inner product and norm, that is
 \begin{equation}
\label{eq_scal_pro}
\langle h, l\rangle_H = \sum_{k \in \mathbb{N},\lambda_k \ne 0} \frac{h_kl_k}{\lambda_k}=\langle h, l\rangle_{Q^{1/2}(X)}=\langle Q^{-1/2}h, Q^{-1/2}l\rangle_X, \qquad \|h\|_H=\|Q^{-\frac12}h\|_X,\end{equation}for all $h, l \in H$. 
\\
Moreover, let $h=Q^{1/2}x$ for some $x\in X$. If \emph{Ker}$(Q)=\{0\}$, then \begin{equation}\label{iso_Q^12}\|h\|_H=\|x\|_X.
\end{equation}
\end{theorem}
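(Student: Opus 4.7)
The plan is to leverage Proposition \ref{CM_space} (and its consequence Corollary \ref{char_X^*_gam-H}), which says that $H = R_\gamma(X_\gamma^*)$ and that $R_\gamma \colon X_\gamma^* \to H$ is an onto isometry. So the whole task reduces to computing $R_\gamma$ explicitly in the Hilbert setting and then identifying its image with $Q^{1/2}(X)$.

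First I would identify $X$ with $X^*$ so that, since $\gamma$ is centered, $j(f)(x) = \langle f, x\rangle_X$ for $f \in X$. Using \eqref{R_gamma_def} and \eqref{Q_def}, a direct calculation gives $R_\gamma(\langle f, \cdot\rangle_X) = Q f$ for every $f \in X$; in particular $R_\gamma(\langle e_k, \cdot\rangle_X) = \lambda_k e_k$. The next step is to produce an orthonormal basis of $X_\gamma^*$. Setting $\phi_k := \lambda_k^{-1/2}\langle e_k, \cdot\rangle_X$ for those $k$ with $\lambda_k \ne 0$, orthonormality follows from
\[
\int_X \langle e_k, x\rangle_X \langle e_j, x\rangle_X \, \gamma(\df x) = \langle Q e_k, e_j\rangle_X = \lambda_k \delta_{kj}.
\]
Completeness is obtained by observing that for every $f \in X$ the partial sums $\sum_{k\le N}\langle f, e_k\rangle_X \langle e_k, \cdot\rangle_X$ converge to $j(f)$ in $L^2(X,\gamma)$ (since the tail has $L^2$-norm $\sum_{k>N}\langle f,e_k\rangle_X^2 \lambda_k \to 0$ as $\langle Qf,f\rangle_X<\infty$), and eigenvectors with $\lambda_k=0$ contribute the zero element of $L^2(X,\gamma)$; thus the closed span of $\{\phi_k\}_{\lambda_k\ne 0}$ contains the dense set $j(X^*)$ and hence equals $X_\gamma^*$.

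With the ONB $\{\phi_k\}_{\lambda_k\ne 0}$ in hand, any $\hat h \in X_\gamma^*$ expands as $\hat h = \sum_{\lambda_k\ne 0} a_k \phi_k$ with $\sum a_k^2 = \|\hat h\|_{L^2(X,\gamma)}^2 <\infty$, and applying $R_\gamma$ termwise (using $R_\gamma(\phi_k) = \lambda_k^{-1/2}\cdot \lambda_k e_k = \sqrt{\lambda_k}\,e_k$) yields
\[
R_\gamma(\hat h) = \sum_{\lambda_k\ne 0} a_k \sqrt{\lambda_k}\, e_k = Q^{1/2}\Big(\sum_{\lambda_k\ne 0} a_k e_k\Big) \in Q^{1/2}(X).
\]
Conversely, given $h = Q^{1/2}x \in Q^{1/2}(X)$ with components $h_k = \sqrt{\lambda_k}\,x_k$, choosing $a_k := x_k$ (for $\lambda_k \ne 0$) produces a valid $\hat h \in X_\gamma^*$ with $R_\gamma(\hat h) = h$. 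This establishes $H = Q^{1/2}(X)$.

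Finally, reading off the coefficients $a_k = h_k/\sqrt{\lambda_k}$ and using the isometry $\|h\|_H = \|\hat h\|_{L^2(X,\gamma)}$ from Proposition \ref{CM_space} gives
\[
\|h\|_H^2 = \sum_{\lambda_k\ne 0} \frac{h_k^2}{\lambda_k}, \qquad \langle h, l\rangle_H = \sum_{\lambda_k\ne 0} \frac{h_k l_k}{\lambda_k},
\]
which also equals $\langle Q^{-1/2}h, Q^{-1/2}l\rangle_X$ once one interprets $Q^{-1/2}$ as the pseudoinverse on $\overline{\mathrm{Range}(Q)}$. When $\mathrm{Ker}(Q)=\{0\}$ there is a unique $x\in X$ with $h=Q^{1/2}x$, namely $x_k = h_k/\sqrt{\lambda_k}$ for all $k$, so $\|h\|_H^2 = \sum x_k^2 = \|x\|_X^2$, giving \eqref{iso_Q^12}. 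The only mildly delicate point is handling the degenerate directions cleanly (i.e.\ verifying that eigenvectors with $\lambda_k=0$ are null in $L^2(X,\gamma)$ and that the series defining $\hat h$ from $h$ has the right summability); this is essentially bookkeeping using the spectral decomposition of $Q$.
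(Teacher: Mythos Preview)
Your argument is correct and self-contained. The paper does not actually prove Theorem~\ref{Hilbert_varie}: it merely cites \cite[Theorem 4.2.7]{Lunardi} (and related references) for the result, and only adds the one-line observation that \eqref{iso_Q^12} follows from \eqref{eq_scal_pro} via Proposition~\ref{inj}. So there is nothing in the paper to compare your method against at the level of strategy.

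That said, your route is exactly the natural one in this framework: you invoke Proposition~\ref{CM_space} to reduce the question to computing $R_\gamma(X_\gamma^*)$, build the orthonormal basis $\phi_k=\lambda_k^{-1/2}\langle e_k,\cdot\rangle_X$ of $X_\gamma^*$, and read off $R_\gamma(\phi_k)=\sqrt{\lambda_k}\,e_k$, which identifies the image with $Q^{1/2}(X)$ and the norm with $\|Q^{-1/2}\cdot\|_X$. The only point worth making explicit is why you may apply $R_\gamma$ termwise to $\hat h=\sum a_k\phi_k$: this is legitimate because $R_\gamma$ is an isometry from $X_\gamma^*$ onto $H$ (Proposition~\ref{CM_space}) and $H\hookrightarrow X$ continuously, so $R_\gamma:X_\gamma^*\to X$ is bounded. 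Everything else (handling $\lambda_k=0$, the summability of $(a_k)$ in the converse direction, and the final identity when $\mathrm{Ker}(Q)=\{0\}$) is correct as you wrote it.
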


Notice that \eqref{iso_Q^12} follows from \eqref{eq_scal_pro}, recalling that the pseudo inverse $Q^{-1/2}$ coincides with the inverse of $Q^{1/2}$ if Ker$(Q)=\{0\} \iff \gamma$ is non-degenerate, in virtue of Proposition \ref{inj}.




\subsubsection{The Gaussian Hilbert space $X^*_\gamma$}

When $X$ is an Hilbert space one can, obviously, introduce the Gaussian Hilbert space $X_\gamma^*$ as in Definition \ref{Xstargammadef} and characterize it as in Corollary \ref{char_X^*_gam-H}, that is
\begin{equation*}
X_\gamma^*=\{\hat h: h \in H=Q^{1/2}(X)\}.
\end{equation*}
The operator $R_\gamma$ can be introduced as in \eqref{R_gamma_def}.
Observe that while the covariance operator $Q$ is defined in $X$, the operator $R_\gamma$ is defined in $X_\gamma^*\subset L^2(X, \gamma)$. Recalling \eqref{R_gamma_def} and \eqref{equality_R_gamma}, for any $f,g \in X^*$ we have
\begin{equation*}
R_\gamma(jf)(g)=\int_X (f(x)-a_\gamma(f))(g(x)-a_\gamma g)\, \gamma({\rm d}x)=g(R_\gamma(jf)
\end{equation*}
and keeping in mind \eqref{mean_Hilbert} and \eqref{Q_def}, we get 
\begin{equation*}
\langle g, R_\gamma(jf)\rangle =\langle Qf,g\rangle_X,
\end{equation*}
so that 
\begin{equation*}
R_\gamma(jf)=Qf,
\end{equation*}
that is, the natural extension of $Q$ to $X_\gamma^*$ coincides with $R_\gamma$.


From now on we will restrict to the case of a \emph{non degenerate} Gaussian measure $\gamma$: in this case the Cameron-Martin space $H=Q^{1/2}(X)$ is dense in $X$ in virtue of Proposition \ref{inj}.






 \begin{proposition}
 \label{ext_prop}
 Let $\gamma \sim \mathcal{N}(0,Q)$ be a non degenerate gaussian measure. The map $\hat \cdot =R_\gamma^{-1}:H=Q^{1/2}(X) \rightarrow X_\gamma^*\subset L^2(X, \gamma)$ can be uniquely extended to a linear isometry $\mathcal{W}_{\bullet}$ defined on the whole $X$ with range $X^*_\gamma$. 
 \end{proposition}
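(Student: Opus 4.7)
The plan is to define $\mathcal{W}$ explicitly by the composition formula
$$\mathcal{W}_x := \widehat{Q^{1/2}x}, \qquad x \in X,$$
and then verify linearity, isometry, surjectivity, and uniqueness. Since $Q^{1/2}(X) = H$ by definition, the element $Q^{1/2}x$ lies in $H$ for every $x \in X$, so its image under the unitary $\hat\cdot = R_\gamma^{-1}: H \to X_\gamma^*$ from Corollary \ref{char_X^*_gam-H} is a bona fide element of $X_\gamma^*$.

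Linearity of $\mathcal{W}$ is immediate from the linearity of both $Q^{1/2}$ and $\hat\cdot$. For the isometry property I would chain together the two available norm identities: Corollary \ref{char_X^*_gam-H} gives $\|\hat h\|_{L^2(X,\gamma)} = \|h\|_H$ for every $h \in H$, and Theorem \ref{Hilbert_varie} (formula \eqref{iso_Q^12}) gives $\|Q^{1/2}x\|_H = \|x\|_X$ precisely thanks to the non-degeneracy assumption $\mathrm{Ker}(Q) = \{0\}$. Setting $h = Q^{1/2}x$ yields
$$\|\mathcal{W}_x\|_{L^2(X,\gamma)} = \|\widehat{Q^{1/2}x}\|_{L^2(X,\gamma)} = \|Q^{1/2}x\|_H = \|x\|_X.$$
Surjectivity is equally direct: any $\xi \in X_\gamma^*$ is of the form $\xi = \hat h$ for a unique $h \in H$ by Corollary \ref{char_X^*_gam-H}, and writing $h = Q^{1/2}x$ for the unique $x \in X$ afforded by the bijectivity of $Q^{1/2}: X \to H$ (Proposition \ref{inj}), one finds $\mathcal{W}_x = \xi$, so $\mathcal{W}(X) = X_\gamma^*$.

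The key conceptual point, and what I expect to be the main subtlety, concerns the sense of the word ``extension''. The map $\hat\cdot$ is an isometry from $(H, \|\cdot\|_H)$, not from $(H, \|\cdot\|_X)$, and since $\|\cdot\|_H = \|Q^{-1/2}\cdot\|_X$ dominates $\|\cdot\|_X$ in an unbounded way on $H$, one cannot extend $\hat\cdot$ from the dense subspace $H \subset X$ to all of $X$ by plain continuity in the $X$-norm. The natural extension must instead be routed through the isometric isomorphism $Q^{1/2}: (X,\|\cdot\|_X)\to (H,\|\cdot\|_H)$ which is a unitary precisely under non-degeneracy: under this identification the ``same'' element indexed by $h \in H$ corresponds to $Q^{-1/2}h \in X$, and the natural parametrization of $X_\gamma^*$ by the larger space $X$ is exactly $\mathcal{W}_x = \widehat{Q^{1/2}x}$. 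Uniqueness then follows from the fact that $\mathcal{W}$ is fully determined by this compatibility with the original unitary $\hat\cdot$ on $H$, together with the bijectivity of $Q^{1/2}: X \to H$.
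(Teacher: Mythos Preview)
Your proof is correct, and in fact more careful than the paper's. The paper's argument is a two-line density claim: since $\hat\cdot = R_\gamma^{-1}$ is an isometry from $H$ onto $X_\gamma^*$ and $H=Q^{1/2}(X)$ is dense in $X$, it ``can be uniquely extended'' to an isometry $\mathcal{W}_\bullet$ on all of $X$. Taken literally this is exactly the step you flag as problematic: $\hat\cdot$ is isometric for $\|\cdot\|_H$, not for $\|\cdot\|_X$, so the standard bounded-extension-from-a-dense-subspace theorem does not apply to $\hat\cdot$ itself. Your route---defining $\mathcal{W}_x:=\widehat{Q^{1/2}x}$ as the composition of the two unitaries $Q^{1/2}:(X,\|\cdot\|_X)\to (H,\|\cdot\|_H)$ and $\hat\cdot:(H,\|\cdot\|_H)\to X_\gamma^*$---makes the construction explicit and the isometry and surjectivity transparent. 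That this is the intended object is confirmed downstream in the paper, where the identity $\hat h=\mathcal{W}_z$ for $h=Q^{1/2}z$ is used. One minor point: your uniqueness discussion is a bit soft. As you note, $\mathcal{W}$ does \emph{not} restrict to $\hat\cdot$ on $H$, so ``unique extension'' cannot mean the usual thing; the only honest uniqueness statement here is that $\mathcal{W}$ is the unique isometry $X\to X_\gamma^*$ satisfying $\mathcal{W}\circ Q^{-1/2}=\hat\cdot$ on $H$, which is tautological once you adopt your definition. You might simply say that $\mathcal{W}$ is determined by the formula and leave it at that.
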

 \begin{proof}
 From Corollary \ref{char_X^*_gam-H} we know that $\hat \cdot=R_\gamma^{-1}$ defines a linear isometry from $H$ onto $X_\gamma^* \subset L^2(X, \gamma)$. Since $H=Q^{1/2}(X)$ is dense in $X$ the map $\hat \cdot=R_\gamma^{-1}$ can be uniquely extended to a linear isometry $\mathcal{W}_{\bullet}$ defined on the whole $X$ with range $X^*_\gamma$.
 \end{proof}
The map $\mathcal{W}_\bullet$ is usually called \emph{white noise map}.
As a consequence of the above result we obtain a different characterization of the Gaussian Hilbert space $X_\gamma^*$.
\begin{corollary}
\label{corX_gam_Hil}
The map $\mathcal{W}_{\bullet}:X \rightarrow X^*_\gamma$ is a unitary operator and \begin{equation*}X_\gamma^*=\{\mathcal{W}_{z}: z \in X\},\end{equation*}where every  $\mathcal{W}_z$ is a centered Gaussian random variable with variance $\|\mathcal{W}_z\|^2_{L^2(X, \gamma)}=\|z\|^2_X$.
\end{corollary}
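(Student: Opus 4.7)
The statement is essentially a repackaging of Proposition~\ref{ext_prop} together with the fact (from Corollary~\ref{X^* Gauss}) that $X_\gamma^*$ is a Gaussian Hilbert space. There is no substantial new content to establish, so the plan is just to identify which earlier result delivers which part of the conclusion.

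First, I would recall that a unitary operator between Hilbert spaces is, by the definition given in Section~\ref{GHS_sec} (see the discussion preceding Proposition~\ref{Gaussian_isometries}), a linear isometry that is onto. Proposition~\ref{ext_prop} states precisely that $\mathcal{W}_\bullet \colon X \to X_\gamma^*$ is a linear isometry (as a map from $X$, with its Hilbert norm, into $L^2(X,\gamma)$) whose range equals $X_\gamma^*$. Consequently, $\mathcal{W}_\bullet$ is unitary as a map onto $X_\gamma^*$, and surjectivity immediately gives the set-theoretic characterization
\begin{equation*}
X_\gamma^* = \{\mathcal{W}_z \colon z \in X\}.
\end{equation*}

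Second, I would verify the claim about the law of each $\mathcal{W}_z$. Fix $z \in X$. Since $\mathcal{W}_z \in X_\gamma^*$ and $X_\gamma^*$ is a Gaussian Hilbert space by Corollary~\ref{X^* Gauss}, the random variable $\mathcal{W}_z$ is centered Gaussian. Because it is centered, its variance equals $\mathbb{E}[\mathcal{W}_z^2] = \|\mathcal{W}_z\|_{L^2(X,\gamma)}^2$, and the isometry property from Proposition~\ref{ext_prop} yields $\|\mathcal{W}_z\|_{L^2(X,\gamma)}^2 = \|z\|_X^2$. This gives $\mathcal{W}_z \sim \mathcal{N}(0,\|z\|_X^2)$ and completes the argument.

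The whole proof is therefore a two-line assembly: unitarity from Proposition~\ref{ext_prop}, Gaussianity from Corollary~\ref{X^* Gauss}, variance from the isometry identity. There is no real obstacle; the substantive work was already done in establishing Proposition~\ref{ext_prop} (extending $\hat{\,\cdot\,} = R_\gamma^{-1}$ from the dense subspace $H$ to all of $X$ via the identification $\|h\|_H = \|Q^{-1/2}h\|_X$ from Theorem~\ref{Hilbert_varie}) and in proving that $X_\gamma^*$ is a Gaussian Hilbert space back in Corollary~\ref{X^* Gauss}.
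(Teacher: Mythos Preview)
Your proof is correct and matches the paper's approach exactly: the corollary is presented in the paper without an explicit proof, simply as an immediate consequence of Proposition~\ref{ext_prop}, and you have correctly identified that unitarity and the set-theoretic characterization come from Proposition~\ref{ext_prop} while Gaussianity comes from Corollary~\ref{X^* Gauss}.
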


\subsubsection{Differentiable functions and Sobolev spaces}
The notions of Frech\'et and $H$-Frech\'et differentiability of Section \ref{diff_Banach_sec} still apply when $X$ is Hilbert.
We emphasize that if $f$ is differentiable at $\bar x$, the gradient and the $H$-gradient of $f$ at $\bar x$ do not coincide in general. Identifying $X^*$ with $X$, Proposition \ref{relation} yields 
\begin{equation}
\label{nablaQ}
\nabla_Hf(\bar x)=Q\nabla f(\bar x).
\end{equation}
Moreover, we have the following result.

  \begin{lemma}
For any $\varphi \in C^1(X)$, 
 \begin{equation}
 \label{equivalence_H_X}
 \langle \nabla_H\varphi(x), h \rangle_H= \langle Q^{1/2}\nabla \varphi(x), z\rangle_X, \quad \forall z \in X, \ \forall \ h   \in H \ \text{with} \ h=Q^{1/2} z.
  \end{equation}
 In particular,
 \begin{equation}
\label{equality_norm}
 \|\nabla _H\varphi(x)\|^2_H =\|Q^{1/2}\nabla \varphi(x)\|^2_X. 
 \end{equation}
\end{lemma}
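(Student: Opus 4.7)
The plan is to combine Proposition \ref{relation} (equivalently, the identity \eqref{nablaQ}: $\nabla_H\varphi(x)=Q\nabla\varphi(x)$ after identifying $X^*$ with $X$) with the characterization of the Cameron--Martin inner product provided in Theorem \ref{Hilbert_varie}: on $H=Q^{1/2}(X)$ one has $\langle h,l\rangle_H=\langle Q^{-1/2}h, Q^{-1/2}l\rangle_X$, where $Q^{-1/2}$ is well defined as the genuine inverse of $Q^{1/2}$ on its range $H$ thanks to the standing non-degeneracy assumption $\mathrm{Ker}(Q)=\{0\}$ (cf.\ Proposition \ref{inj}).

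First I would verify that $\nabla_H\varphi(x)$ actually belongs to $H$ so that $\langle \nabla_H\varphi(x), h\rangle_H$ is meaningful: since $\nabla\varphi(x)\in X$, writing $\nabla_H\varphi(x) = Q\nabla\varphi(x) = Q^{1/2}\bigl(Q^{1/2}\nabla\varphi(x)\bigr)$ exhibits it as an element of $Q^{1/2}(X)=H$.

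Next, for arbitrary $z\in X$ and $h=Q^{1/2}z\in H$, I would apply \eqref{eq_scal_pro} and compute
\begin{align*}
\langle \nabla_H\varphi(x), h\rangle_H
&= \bigl\langle Q^{-1/2}\bigl(Q\nabla\varphi(x)\bigr),\, Q^{-1/2}\bigl(Q^{1/2}z\bigr)\bigr\rangle_X \\
&= \bigl\langle Q^{1/2}\nabla\varphi(x),\, z\bigr\rangle_X,
\end{align*}
where the last equality uses $Q^{-1/2}\circ Q = Q^{1/2}$ and, by non-degeneracy, $Q^{-1/2}\circ Q^{1/2}=I$ on $X$. This establishes \eqref{equivalence_H_X}. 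The norm identity \eqref{equality_norm} then follows by specializing to the case $h=\nabla_H\varphi(x)=Q^{1/2}\bigl(Q^{1/2}\nabla\varphi(x)\bigr)$, i.e.\ $z=Q^{1/2}\nabla\varphi(x)\in X$.

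The only real obstacle is notational: one needs a clean interpretation of $Q^{-1/2}$ as the inverse of $Q^{1/2}$ on $H$. The non-degeneracy assumption made at the end of Section \ref{Gau_Hil_sec} removes this ambiguity, so that the manipulations $Q^{-1/2}Q^{1/2}z=z$ and $Q^{-1/2}Q\nabla\varphi(x)=Q^{1/2}\nabla\varphi(x)$ are literally valid rather than having to be read through a pseudo-inverse convention.
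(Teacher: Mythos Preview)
Your proof is correct and follows essentially the same route as the paper: both combine the identity $\nabla_H\varphi(x)=Q\nabla\varphi(x)$ from \eqref{nablaQ} with the Cameron--Martin inner product formula \eqref{eq_scal_pro}, reducing the computation to $Q^{-1/2}Q=Q^{1/2}$ and $Q^{-1/2}Q^{1/2}=I$ under the non-degeneracy assumption. Your version is in fact slightly more explicit, since you check that $\nabla_H\varphi(x)\in H$ and spell out how \eqref{equality_norm} follows by the particular choice $z=Q^{1/2}\nabla\varphi(x)$.
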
   
 \begin{proof}

 Let $h\in H$ with $h=Q^{1/2}z$ for $z \in X$.
Bearing in mind \eqref{eq_scal_pro}  and \eqref{nablaQ}, for any $\varphi \in C^1(X)$  we infer 
   \begin{equation*}
 \langle \nabla_H\varphi(x), h \rangle_H=\langle Q^{-1/2}\nabla_H\varphi(x), Q^{-1/2}h\rangle_X=\langle Q^{-1/2}Q\nabla\varphi(x), Q^{-1/2}Q^{1/2}z\rangle_X= \langle Q^{1/2}\nabla \varphi(x), z\rangle_X,
  \end{equation*}  
where in the last equality we used the fact that the pseudo inverse $Q^{-1/2}$ coincides with the inverse of $Q^{1/2}$ being Ker$(Q)=\{0\}$ since the measure is nondegenerate.
 \end{proof}

As a consequence, the integration by parts formula \eqref{i.b.p.Banach} can be equivalently written as
\begin{equation}
\label{ibpnew}
\int_X \langle  Q^{1/2}\nabla \varphi (x), z\rangle_X\, \gamma({\rm d}x) = \int_X \varphi (x) \mathcal{W}_z(x)\,\gamma({\rm d}x), \qquad z \in X, \ \varphi \in C_b^1(X),
\end{equation}
since for $h=Q^{1/2} z$, $\hat h=\mathcal{W}_z$.
Exploiting \eqref{ibpnew} one proves  the following result (see \cite{DaPrato} for a proof).
 
\begin{proposition}
 The operator $Q^{1/2}\nabla:C_b^1(X) \rightarrow L^p(X,\gamma;X)$ is closable as an unbounded operator from $L^p(X, \gamma)$ to $L^p(X, \gamma;X)$, for any $1\le p<\infty$.
\end{proposition}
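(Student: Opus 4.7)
The plan is to mirror the proof of Proposition \ref{closableProp}, replacing the abstract integration by parts formula \eqref{ibp_cor} with the Hilbert-space formula \eqref{ibpnew}, and to focus on $1<p<\infty$ since the case $p=1$ requires a separate argument exactly as noted there. By Proposition \ref{def_closable}, I need to show: if $\{\varphi_n\}_n\subset C_b^1(X)$ satisfies $\varphi_n\to 0$ in $L^p(X,\gamma)$ and $Q^{1/2}\nabla\varphi_n\to\eta$ in $L^p(X,\gamma;X)$, then $\eta=0$ $\gamma$-a.e.

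The core device is a localized integration by parts. For any test function $\psi\in C_b^1(X)$ and any $z\in X$, the product $\psi\varphi_n$ lies in $C_b^1(X)$, so the Leibniz rule and \eqref{ibpnew} applied to $\psi\varphi_n$ give
\begin{equation*}
\int_X \psi\,\langle Q^{1/2}\nabla\varphi_n, z\rangle_X\,\gamma(\mathrm{d}x)
= \int_X \psi\,\varphi_n\,\mathcal{W}_z\,\gamma(\mathrm{d}x)
-\int_X \varphi_n\,\langle Q^{1/2}\nabla\psi, z\rangle_X\,\gamma(\mathrm{d}x).
\end{equation*}
I then pass to the limit as $n\to\infty$ using H\"older's inequality with conjugate exponent $p'$. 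The left-hand side tends to $\int_X \psi\,\langle \eta, z\rangle_X\,\gamma(\mathrm{d}x)$ because $\psi\in L^\infty(X,\gamma)$ and $Q^{1/2}\nabla\varphi_n\to\eta$ in $L^p$. The second term on the right vanishes since $\langle Q^{1/2}\nabla\psi, z\rangle_X$ is bounded and $\varphi_n\to 0$ in $L^p$. The first term on the right also vanishes because $\psi\,\mathcal{W}_z\in L^{p'}(X,\gamma)$ — here I use that $\psi$ is bounded and, by Corollary \ref{corX_gam_Hil}, $\mathcal{W}_z$ is a centered Gaussian random variable on $(X,\mathcal{B}(X),\gamma)$, hence has finite moments of all orders.

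Consequently $\int_X \psi\,\langle \eta, z\rangle_X\,\gamma(\mathrm{d}x)=0$ for every $\psi\in C_b^1(X)$ and every $z\in X$. Since $C_b^1(X)$ is dense in $L^{p'}(X,\gamma)$, for each fixed $z$ the function $\langle\eta(\cdot),z\rangle_X$ vanishes $\gamma$-a.e. Choosing $z$ in a countable dense subset of $X$ and intersecting the corresponding null sets, I conclude $\eta=0$ $\gamma$-a.e., which proves closability for $1<p<\infty$.

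The main obstacle is the case $p=1$, where the argument above breaks down because the natural dual space is $L^\infty(X,\gamma)$ and one cannot simultaneously require $\psi\,\mathcal{W}_z\in L^\infty$ (the Gaussian random variable $\mathcal{W}_z$ is unbounded whenever $z\neq 0$). As pointed out in the proof of Proposition \ref{closableProp}, this case is handled by a specific argument exploiting the $L^1$/$L^\infty$ duality (for instance, via truncation of $\mathcal{W}_z$ and a uniform integrability argument), which I would carry out separately.
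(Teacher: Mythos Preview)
Your proposal is correct and follows precisely the approach the paper indicates: the paper does not spell out a proof here but writes ``Exploiting \eqref{ibpnew} one proves the following result (see \cite{DaPrato} for a proof)'', and your argument is exactly the natural transcription of the proof of Proposition \ref{closableProp} with the abstract formula \eqref{ibp_cor} replaced by the Hilbert-space integration by parts \eqref{ibpnew}. The handling of the $p=1$ case by a separate duality argument also matches what the paper does in Proposition \ref{closableProp}.
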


For any $1\le p<\infty$ the Sobolev space ${W}_{Q^{1/2}}^{1,p}(X, \gamma)$ is defined as the domain of the closure of the operator $Q^{1/2} \nabla$, denoted by $M$. It is a Banach space with the norm
\begin{equation}
\label{norm_M}
\|f\|_{L^p(X, \gamma)}+\|Q^{1/2}\nabla f\|_{L^p(X, \gamma; X)}.
\end{equation}
Moreover, the integration by parts formula \eqref{ibpnew} holds for any $\varphi \in {W}^{1,p}_{Q^{1/2}}(X, \gamma)$ and $z \in X$.

\subsubsection{Da Prato's Malliavin derivative $Q^{1/2}\nabla$}
\label{DaPrato}

The operator $M:=Q^{1/2}\nabla$ is the \textit{Malliavin derivative} considered in \cite{DaPrato}. It can be interpreted as a particular example of the general notion of Malliavin derivative seen in Section \ref{Malliavin_sec}. Referring to Section \ref{Malliavin_sec}, we need to identify the Gaussian Hilbert space and to characterize it in terms of a separable Hilbert space and a unitary operator. 
In \cite{DaPrato} the reference probability space is $(\Omega, \mathcal{A}, \mathbb{P})=(X, \mathcal{B}(X), \gamma)$ where $X$ is a separable Hilbert space and $\gamma=\mathcal{N}(0,Q)$ a centered non degenerate Gaussian measure on it: the framework is the one described in Section \ref{Gau_Hil_sec}. The Gaussian Hilbert space $\mathcal{H}_1$ is $X_\gamma^*$, the space $\mathcal{H}$ is $X$ and  the unitary operator $W$ is the map $\mathcal{W}_\bullet$. 
\\
\\
With these identifications, by comparing the integration by parts formula derived in \cite{DaPrato}:
\begin{equation*}
\int_X \langle Q^{1/2}\nabla \varphi (x), z\rangle_X\, \gamma({\rm d}x) = \int_X \varphi (x) \mathcal{W}_z(x)\,\gamma({\rm d}x),\qquad \forall \varphi \in W_{Q^{1/2}}^{1,2}(X, \gamma)=\text{Dom}(Q^{1/2}\nabla), \ \forall z \in X,
\end{equation*}
with the one we obtained in Proposition \ref{i.b.p.}:
\begin{equation*}
\mathbb{E}\left[ \langle D\varphi, h \rangle_{\mathcal{H}}\right]=\mathbb{E}\left [W(h)\varphi\right], \qquad \forall \varphi \in \mathbb{D}^{1,2}=\text{Dom}(D), \ \forall h \in \mathcal{H},
\end{equation*}
we immediately see that the gradient operator $M$ of \cite{DaPrato} is in fact a Malliavin derivative operator in the sense of Section \ref{Malliavin_sec}.




\subsection{Comparison of the Malliavin derivatives in the sense of Bogachev and Da Prato}
\label{comparison_sec}


If $X$ is a separable Hilbert space endowed with a centered nondegenerate Gaussian measure with covariance operator $Q$, we can compare the Malliavin derivative in the sense of Bogachev with the Malliavin derivative in the sense of Da Prato.

The Malliavin derivatives $\nabla_H$ and $M$ of Sections \ref{Bogachev} and \ref{DaPrato} are different. Indeed, \eqref{nablaQ} yields the relation 
\[
\nabla_H=Q^{\frac 12}M.
\] 
On the other hand, the domain of the two derivatives is the same, that is 
\[W^{1,p}={W}_{Q^{1/2}}^{1,p}\]
 since thanks to \eqref{equality_norm}, the closure of the space $C_b^1(X)$ with respect to the norm \eqref{norm_D_H} is the same as its closure with respect to the norm \eqref{norm_M}.
 
This should not be surprising in light of the general results of Section \ref{Malliavin_sec}: in Sections \ref{Bogachev} and \ref{DaPrato} the reference Gaussian Hilbert space $\mathcal{H}_1$ is the same, that is $X_\gamma^*$; thus Proposition \ref{charD12} ensures the two Malliavin derivatives $\nabla_H$ and $M$ to have the same domain.
What changes in \cite{DaPrato} and \cite{Bogachev} is how the space $\mathcal{H}_1$ is characterized.  In \cite{Bogachev} is considered the unitary operator $\hat \cdot =R_\gamma^{-1}$ between $H$ and $\mathcal{H}_1$, whereas in  \cite{DaPrato} is considered the unitary operator $\mathcal{W}_\bullet$ between $X$ and $\mathcal{H}_1$. This naturally leads to different Malliavin derivatives, having chosen different Hilbert spaces $\mathcal{H}$ and unitary operators $W$.

The following diagram (compare with \eqref{abstract_diagram}) summarizes the results of this Section.

\bigskip
\begin{equation*}
\xymatrix{
X\ar^{\mathcal{W}_{\bullet}}[r] \ar_{Q^{1/2}}[ddd] &X_\gamma^*
\ar@/ ^1pc/^{M\mathcal{W}_z=z}[drr] \ar@/ ^8pc/^{\nabla_H\hat h=h}[dddrr]  
\\
&\mathbb{D}^{1,2}\ar@{} [u] |{\cap} \ar^M[rr] \ar^{\nabla_H}[ddrr] && X \ar^{Q^{1/2}}[dd]
\\
 &L^2(X, \gamma)\ar@{} [u] |{\cap}
  \\
H \ar^{\hat \cdot}[uuur] &&& H
} 
\end{equation*}
\bigskip

The two Malliavin derivatives $\nabla_H$ and $M$ are different but with the same domain:
\begin{equation*}
\label{abstract_relation}
\nabla_H=Q^{1/2} M, \qquad \mathbb{D}^{1,2}=W^{1,2}={W}_{Q^{1/2}}^{1,2} \subset L^2(X, \gamma).
\end{equation*}
In particular, it holds 
\begin{equation*}
\nabla_H\hat h=h, \ \forall h \in H \quad \text{and} \quad M\mathcal{W}_z=z, \ \forall z \in X.
\end{equation*}

To conclude, the following diagram emphasizes that the Bogachev and Da Prato Malliavin derivatives are just two, among the infinitely many derivatives $D$, that one can construct in that framework. Given the Gaussian Hilbert space $X_\gamma^*$ one can consider infinitely many separable Hilbert space $\mathcal{H}$ to put in unitary correspondence with $X_\gamma^*$ through unitary operators $W$. This leads to the construction of infinitely many \textit{different} Malliavin derivative operators $D$ having the \textit{same} Sobolev space $\mathbb{D}^{1,2}$ as their domain.
\begin{equation*}
\xymatrix{
\mathcal{H} \ar^W[dr]
\\
X \ar^{\mathcal{W}_{\bullet}}[r] \ar_{Q^{1/2}}[ddd] &X_\gamma^*&&\mathcal{H}
\\
&\mathbb{D}^{1,2} \ar^{D}[rru]\ar@{} [u] |{\cap} \ar^{M}[rr] \ar^{\nabla_H}[ddrr] && X \ar^{Q^{1/2}}[dd]
\\
 &L^2(X, \gamma)\ar@{} [u] |{\cap}
  \\
H \ar^{\hat \cdot}[uuur] &&& H
}
\end{equation*}

\subsection{Final remarks}
We recalled above the construction of the Malliavin derivatives that one can find in the books \cite{Bogachev} and \cite{DaPrato}. This construction highlights the fact that, when working on a separable Banach or Hilbert space (that is a space with a linear topological structure), the Malliavin derivative has the usual interpretation as a gradient operator. Moreover, in this construction it is clear the role played by the Cameron-Martin space.

On the other hand, to introduce the Malliavin derivatives in the sense of Bogachev and Da Prato it would be sufficient to stop at the construction of the Gaussian Hilbert space $\mathcal{H}_1=X_\gamma^*$ and the separable Hilbert spaces $\mathcal{H}$ in correspondence with it through the unitary operators $W$. From here on, the abstract construction of Section \ref{Malliavin_sec} comes into play: the Malliavin derivative is nothing but the extension of the operator $D:=W^{-1}$ from $\mathcal{H}_1$ to $\mathbb{D}^{1,p}$. In particular, this abstract construction, requiring no topological linear assumptions on the space $(\Omega, \mathcal{F}, \mathbb{P})$, does not exploit the density of $C_b^1$ functions (since it makes no sense talking about continuity and differentiability in absence of a linear topological  structure) but of the polynomial functions $\mathcal{P}(\mathcal{H}_1)$.
 

\newpage
\appendix

\section{Some recalls on operator theory}

\subsection{Closed and closable operators}
\label{closable_op_sec}
In this Section we recall the definitions of closed and closable operators. These are important classes of operators that cover almost all interesting unbounded operators occurring in applications. For more details see \cite{Brezis} and \cite{Schmu}.

Let $E$ and $F$ be Banach spaces with the norms $\|\cdot\|_E$ and $\|\cdot \|_F$, respectively. We consider a linear operator $T:E \rightarrow F$.

\begin{definition}
 A linear operator $T$ is called \emph{bounded} if 
 there exists a constant $c\ge 0$ such that 
$
\|Tx\|_F \le c\|x\|_E,\ \forall x \in E.
$
\end{definition}
A linear operator $T$ is bounded iff it is \emph{continuous} that is, if $\lim_{n \rightarrow \infty}x_n=x$ in $E$ for $\{x_n\}_n, x \in E$, then $\lim_{n \rightarrow \infty}Tx_n=Tx$ in $F$.

The space of all bounded linear operators from $E$ to $F$ is denoted by $\mathcal{L}(E,F)$; for simplicity we write $\mathcal{L}(E)$ instead of $\mathcal{L}(E,E)$.
The norm of a bounded linear operator is defined as 
\begin{equation*}
\|T\|_{\mathcal{L}(E,F)}:= \sup_{x \ne 0}\frac{\|Tx\|_F}{\|x\|_E}.
\end{equation*}
\begin{proposition}
\label{lin_Ban}
The space $(\mathcal{L}(E,F), \|\cdot\|_{\mathcal{L}(E,F)})$ is a Banach space.
\end{proposition}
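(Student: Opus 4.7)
The plan is to verify first that $\|\cdot\|_{\mathcal{L}(E,F)}$ is a genuine norm on $\mathcal{L}(E,F)$, and then to prove completeness by constructing the limit of a Cauchy sequence pointwise and upgrading the pointwise convergence to norm convergence. Since $\mathcal{L}(E,F)$ is clearly a real vector space (pointwise sum and scalar multiplication of bounded operators are bounded), the only nontrivial parts of the norm axioms are (i) that $\|T\|_{\mathcal{L}(E,F)}=0$ forces $T=0$, which follows from the definition of the supremum, and (ii) the triangle inequality, which follows from $\|(S+T)x\|_F \le \|Sx\|_F + \|Tx\|_F$ and then taking suprema over $\|x\|_E = 1$. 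Absolute homogeneity is similarly immediate. So the content of the statement lies entirely in completeness.

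For completeness, I would start from an arbitrary Cauchy sequence $\{T_n\}_{n\in\mathbb{N}} \subset \mathcal{L}(E,F)$. The key observation is the elementary inequality
\begin{equation*}
\|T_n x - T_m x\|_F \le \|T_n - T_m\|_{\mathcal{L}(E,F)}\, \|x\|_E, \qquad x \in E,
\end{equation*}
which shows that, for every fixed $x \in E$, the sequence $\{T_n x\}_n$ is Cauchy in $F$. Since $F$ is a Banach space, this allows me to define
\begin{equation*}
T: E \to F, \qquad Tx := \lim_{n \to \infty} T_n x.
\end{equation*}
Linearity of $T$ is inherited from the linearity of each $T_n$ by passage to the limit (using continuity of the vector space operations in $F$).

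Next I would show boundedness of $T$. Any Cauchy sequence in a normed space is bounded, so there exists a constant $C \ge 0$ with $\sup_n \|T_n\|_{\mathcal{L}(E,F)} \le C$. Then for every $x \in E$,
\begin{equation*}
\|Tx\|_F = \lim_{n \to \infty} \|T_n x\|_F \le C\, \|x\|_E,
\end{equation*}
so $T \in \mathcal{L}(E,F)$ with $\|T\|_{\mathcal{L}(E,F)} \le C$. The final step, which I expect to be the main (though still modest) subtlety, is upgrading the pointwise convergence $T_n x \to Tx$ to convergence in operator norm. Given $\varepsilon > 0$, choose $N$ so that $\|T_n - T_m\|_{\mathcal{L}(E,F)} < \varepsilon$ for all $n, m \ge N$. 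For any $x \in E$ with $\|x\|_E \le 1$ one has $\|T_n x - T_m x\|_F < \varepsilon$; letting $m \to \infty$ and using the continuity of the norm, this yields $\|T_n x - Tx\|_F \le \varepsilon$ for all $n \ge N$. Taking the supremum over $\|x\|_E \le 1$ gives $\|T_n - T\|_{\mathcal{L}(E,F)} \le \varepsilon$ for $n \ge N$, i.e. $T_n \to T$ in $\mathcal{L}(E,F)$. This shows $\mathcal{L}(E,F)$ is complete, concluding the proof. The delicate point here is the standard swap of the supremum with the limit $m \to \infty$, which works precisely because we pass to the limit inside a fixed inequality before taking the supremum.
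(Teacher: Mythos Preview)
Your proof is correct and is the standard argument. The paper does not give a proof of this proposition; it is stated as a background fact in the appendix without any argument, so there is nothing to compare against. Your write-up is fine as is.
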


In many applications the linear operators one encounters are not bounded (hence not continuous). As a simple example think of the multiplication by $x$ on the Hilbert space $L^2(\mathbb{R})$. Clearly $\int_\mathbb{R} |xf(x)|^2\, {\rm d}x$ is not finite for all functions in $L^2(\mathbb{R})$, take e.g. $\frac{1}{\sqrt{1+x^2}}$.
One thus introduce the notion of \emph{unbounded} linear operators. As made clear by the previous example, in general, we cannot talk of unbounded everywhere defined operators and we have to restrict the domain of definition. We have a linear operator 
$$T:\text{Dom}(T) \subset E \rightarrow F,$$
where Dom$(T)$ is a linear subspace of $E$, the \emph{domain} of the operator $T$.
\begin{definition}
 An \emph{unbounded linear operator} from $E$ into $F$ is a linear map $T:\emph{Dom}(T)\subset E \rightarrow F$ defined on a linear subspace $\emph{Dom}(T) \subset E$ with values in $F$. 
 \end{definition}
An operator $S$ is an \emph{extension} of $T$ if Dom$(T)\subset $ Dom$(S)$ and $Tu=Su$ for all $u\in $ Dom$(T)$. We write $T \subset S$. Note that the domain becomes part of the definition of the operator. The same formal expression considered on different domains may lead to operators with different properties. Think for example to the Laplace operator considered with different boundary conditions.

In general, linear operators can be wild unless we impose some kind of continuity.
\begin{definition}
A linear operator $T:\emph{Dom}(T) \rightarrow F$ is \emph{closed} if for any sequence $\{x_n\}_n \subset \emph{Dom}(T)$ such that $\lim_{n \rightarrow \infty} x_n=x$ in $E$ and $\lim_{n \rightarrow \infty}T x_n=y$ in $F$, it follows that $x \in \emph{Dom}(T)$ and $Tx=y$.
\end{definition}

Bounded linear operators are obviously closed, in fact the convergence $x_n \rightarrow x$ in $E$ entails the convergence $Tx_n \rightarrow Tx$ in $F$.

 Another way of saying that an operator is closed is the following
 \begin{proposition}
 A linear operator $T:\emph{Dom}(T) \rightarrow F$ is \emph{closed} iff the domain \emph{Dom}$(T)$ endowed with the \emph{graph norm} $\|x\|_T:=\sqrt{\|x\|^2_E+\|Tx\|^2_F}$ is a Banach space.
 \end{proposition}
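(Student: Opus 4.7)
The plan is to show directly that the graph norm completeness characterization matches the sequential definition of closedness. First I would verify the preliminaries: the graph norm $\|x\|_T := \sqrt{\|x\|_E^2 + \|Tx\|_F^2}$ is a genuine norm on $\text{Dom}(T)$ (positive definiteness follows from that of $\|\cdot\|_E$; homogeneity and the triangle inequality follow from those of $\|\cdot\|_E$ and $\|\cdot\|_F$ via the Euclidean norm on $\mathbb{R}^2$). Since $\text{Dom}(T)$ is already a linear space, the only nontrivial issue is completeness, so the statement reduces to: $T$ is closed iff $(\text{Dom}(T), \|\cdot\|_T)$ is complete.

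For the forward direction, I would take a Cauchy sequence $\{x_n\} \subset \text{Dom}(T)$ with respect to $\|\cdot\|_T$. The inequalities $\|x_n - x_m\|_E \le \|x_n - x_m\|_T$ and $\|Tx_n - Tx_m\|_F \le \|x_n - x_m\|_T$ show that $\{x_n\}$ is Cauchy in $E$ and $\{Tx_n\}$ is Cauchy in $F$. Since both $E$ and $F$ are Banach spaces, there exist $x \in E$ and $y \in F$ with $x_n \to x$ in $E$ and $Tx_n \to y$ in $F$. Closedness of $T$ yields $x \in \text{Dom}(T)$ and $Tx = y$, whence $\|x_n - x\|_T^2 = \|x_n - x\|_E^2 + \|Tx_n - Tx\|_F^2 \to 0$, establishing convergence in the graph norm.

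For the converse, I would start from a sequence $\{x_n\} \subset \text{Dom}(T)$ with $x_n \to x$ in $E$ and $Tx_n \to y$ in $F$, and show $x \in \text{Dom}(T)$ and $Tx = y$. Since convergent sequences are Cauchy in $E$ and $F$ respectively, the identity $\|x_n - x_m\|_T^2 = \|x_n - x_m\|_E^2 + \|Tx_n - Tx_m\|_F^2$ shows $\{x_n\}$ is Cauchy for $\|\cdot\|_T$. By assumed completeness, there is $z \in \text{Dom}(T)$ with $\|x_n - z\|_T \to 0$, which forces simultaneously $x_n \to z$ in $E$ and $Tx_n \to Tz$ in $F$. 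Uniqueness of limits in $E$ gives $z = x$, and uniqueness in $F$ gives $Tz = y$, so $x \in \text{Dom}(T)$ with $Tx = y$, i.e.\ $T$ is closed.

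There is no real obstacle here — the argument is essentially a translation between two equivalent formulations. The only mild subtlety is keeping straight that completeness of $(\text{Dom}(T), \|\cdot\|_T)$ is equivalent to the graph $\{(x, Tx) : x \in \text{Dom}(T)\}$ being closed in the product Banach space $E \times F$ (with the obvious norm), which gives a cleaner but logically identical presentation; either route works and both sides of the equivalence pivot on extracting the same two limits $x$ and $y$ from Cauchy-ness in one norm and reconciling them through closedness or completeness.
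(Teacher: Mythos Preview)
Your proof is correct and entirely standard. The paper states this proposition without proof, so there is nothing to compare against; your argument supplies exactly the routine verification one would expect, handling both directions cleanly via the equivalence between graph-norm Cauchy sequences and pairs of Cauchy sequences in $E$ and $F$.
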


In practice it often occurs that an unbounded linear operator $T$ is not closed.
In fact, it may happen that $\text{Dom}(T)\ni x_n \rightarrow x \in E$ but $Tx_n$ has no limit. 
Moreover, it could happen that $\text{Dom}\ni x_n \rightarrow x$, $\text{Dom}(T) \ni \widetilde{x}_n \rightarrow x$, but $T x_n$ and $T \widetilde{x}_n$ have different limits. Any of these possibilities prevent $T$ to be extended to all the limits points of $\text{Dom}(T)$. Nevertheless, $T$ can be extended when the following less problematic situation occurs: not along all sequences $\text{Dom}(T)\ni x_n \rightarrow x \in E$, $Tx_n$ has a limit, but for all sequences in $\text{Dom}(T)$ converging to $x$ along which $T$ has a limit, this
limit is unique.
This circumstance make $T$ a closable operator.

\begin{definition}
A linear operator $T:\emph{Dom}(T) \rightarrow F$ is \emph{closable} if it has a closed extension. 
\end{definition}
The following characterization of closability is very useful in the concrete situations.
\begin{proposition}
\label{def_closable}
A linear operator $T:\emph{Dom}(T) \rightarrow F$ is  \emph{closable} if and only if for any sequence $\{x_n\}_n \subset \emph{Dom}(T)$ such that $\lim_{n \rightarrow \infty} x_n=0$ in $E$ and $\lim_{n \rightarrow \infty}T x_n=y$ in $F$, it follows that $y=0$. 
\end{proposition}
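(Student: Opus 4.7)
The plan is to prove both implications, with the harder direction being the construction of a closed extension from the cancellation hypothesis on null sequences.

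\smallskip

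\emph{Necessity (closable $\Rightarrow$ null-sequence property).} Assume $T$ has a closed extension $\widetilde{T}:\mathrm{Dom}(\widetilde{T})\subset E\to F$ with $T\subset \widetilde T$. Take any $\{x_n\}_n\subset \mathrm{Dom}(T)\subset\mathrm{Dom}(\widetilde T)$ with $x_n\to 0$ in $E$ and $Tx_n\to y$ in $F$. Since $\widetilde T x_n = Tx_n\to y$ and $x_n\to 0$, the closedness of $\widetilde T$ forces $0\in\mathrm{Dom}(\widetilde T)$ and $\widetilde T 0 = y$; linearity of $\widetilde T$ gives $\widetilde T 0=0$, hence $y=0$.

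\smallskip

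\emph{Sufficiency (null-sequence property $\Rightarrow$ closable).} The idea is to construct the minimal closed extension $\widetilde T$ explicitly. Set
\[
\mathrm{Dom}(\widetilde T) := \left\{x\in E:\ \exists\,\{x_n\}\subset\mathrm{Dom}(T),\ x_n\to x\ \text{in }E,\ \{Tx_n\}\ \text{converges in }F\right\},
\]
and for such $x$ define $\widetilde T x := \lim_{n\to\infty} Tx_n$. I would then verify in this order:
\begin{enumerate}
\item \textbf{Well-posedness of $\widetilde T$.} If $x_n\to x$ with $Tx_n\to y$ and $\widetilde x_n\to x$ with $T\widetilde x_n\to \widetilde y$, then $x_n-\widetilde x_n\to 0$ in $E$ and $T(x_n-\widetilde x_n)\to y-\widetilde y$ in $F$; the hypothesis yields $y=\widetilde y$. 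This is exactly the point where the null-sequence property is used.
\item \textbf{Extension.} For $x\in\mathrm{Dom}(T)$, the constant sequence $x_n\equiv x$ shows $x\in\mathrm{Dom}(\widetilde T)$ and $\widetilde T x=Tx$.
\item \textbf{Linearity.} Immediate from the linearity of $T$ and of limits.
\item \textbf{Closedness.} This is the one step that is not a one-liner. Given $\{x_n\}\subset\mathrm{Dom}(\widetilde T)$ with $x_n\to x$ in $E$ and $\widetilde T x_n\to y$ in $F$, for each $n$ pick $z_n\in\mathrm{Dom}(T)$ (from a defining sequence for $x_n$) such that
\[
\|z_n-x_n\|_E<\tfrac{1}{n},\qquad \|Tz_n-\widetilde T x_n\|_F<\tfrac{1}{n}.
\]
Then $z_n\to x$ in $E$ and $Tz_n\to y$ in $F$, so by the very definition of $\mathrm{Dom}(\widetilde T)$ we obtain $x\in\mathrm{Dom}(\widetilde T)$ and $\widetilde T x=y$.
\end{enumerate}

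\smallskip

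The main obstacle is step (4): one must upgrade convergence of sequences \emph{in $\mathrm{Dom}(\widetilde T)$} to convergence of sequences \emph{in $\mathrm{Dom}(T)$}, and the standard device for this is the diagonal approximation above, which works precisely because $\widetilde T$ has been defined as a limit along sequences from $\mathrm{Dom}(T)$. Once step (4) is in place, $\widetilde T$ is a closed extension of $T$ and closability of $T$ is established.
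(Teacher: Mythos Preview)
Your proof is correct and follows essentially the same route as the paper: the necessity direction is identical, and for sufficiency you construct the same candidate closure (points $x$ reachable by sequences $x_n\in\mathrm{Dom}(T)$ with $Tx_n$ convergent), verify well-posedness via the null-sequence hypothesis, and prove closedness by the same diagonal approximation trick. The only cosmetic difference is that you explicitly record the ``extension'' step, which the paper leaves implicit.
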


\begin{proof}
Assume that $T$ is closable and denote by $S$ its closed extension. If $x_n\in$ Dom$(T)$, then $x_n\in$ Dom$(S)$. Since $Sx_n=Tx_n \rightarrow y$ and since $x_n \rightarrow 0$, it follows that $y
=0$ since $S$ is closed.
\\
Conversely, consider the linear subspace of $E$
\begin{equation*}
D=\left\{x \in E \ : \ \exists \ y \in F \ \text{s.t.  $\forall \ \{x_n\}_n \subset \text{Dom}(T)$ with $x_n \rightarrow x $ in $E$, $Tx_n \rightarrow y$ in $F$} \right\}
.
\end{equation*}
On $D$ we define $\overline Tx=y$. We have to show that $y$ is independent of the sequence $x_n$. Let $\{u_n\}_n \subset$ Dom$(T)$ be another sequence with $u_n \rightarrow x$ and $Tu_n \rightarrow z$. We have to show that $y=z$. Since $x_n -u_n \rightarrow 0$ and since $T(x_n-u_n) \rightarrow y-z$ it follows that $y=z$. Hence $\overline T$ is defined on $D$ and one easily check that it is a linear operator. It remains to show that $\overline T$ is closed. Let $x_n$ be a sequence in $D$ such that $x_n \rightarrow x$ and $\overline Tx_n \rightarrow y$. We have to show that $x \in D$ and $\overline T x=y$. Since, for each $n$, $x_n \in D$, there exists $u_n \in$ Dom$(T)$ such that 
\begin{equation*}
\|x_n-u_n\|_E+ \|\overline Tx_n-Tu_n\|_F < \frac 1 n.
\end{equation*}
Hence $u_n \rightarrow x$ and $T u_n \rightarrow y$, i.e. $x \in D$ and $y=\overline T x$. Thus $\overline T$ is closed and this concludes the proof.
\end{proof}

If $T$ is closable there is a natural candidate for its closure (see the proof of Proposition \ref{def_closable}).
\begin{definition}
\label{def_closure}
If $T$ is closable, the \emph{closure} of $T$ is the operator $\overline T$ whose domain and action are 
\begin{itemize}
\item $\emph{Dom}(\overline T):= \left\{x \in E \ : \ \exists \ y \in F \ \text{s.t.  $\forall \ \{x_n\}_n \subset \emph{Dom}(T)$ with $x_n \rightarrow x $ in $E$, $Tx_n \rightarrow y$ in F} \right\}$,
\item $\overline T x:=y$ for any $x\in \emph{Dom}(\overline T)$.
\end{itemize}
\end{definition}
One can easily check that the above definition is well posed since $y$ is uniquely identified by $x$ and $\overline T$ defines a linear operator. It is clear that $\text{Dom}(T) \subset \text{Dom}(\overline T)$ and $\overline Tx=Tx$ for all $x \in \text{Dom}(T)$. The closure $\overline T$ is the smallest closed extension of $T$ in the sense that if $T \subset S$ and $S$ is closed, then $\overline{T} \subset S$.

From Proposition \ref{def_closable} it is clear that if an operator is continuous, then it is clearly closable. Closability can be considered in fact as a weakening of continuity for linear operators. Let $\{x_n\}_n$ be a sequence in Dom$(T)$ such that $\lim_{n \rightarrow \infty}x_n=0$ in $E$. If the operator $T$ is continuous, then $\lim_{n \rightarrow \infty}Tx_n=0$ in $F$. For $T$ being closable, one requires only that \textit{if} the sequence $\{Tx_n\}_n$ converges in $F$, then it has to converge to the "correct limit", that is $\lim_{n \rightarrow \infty}Tx_n=0$.


\subsection{Trace class and Hilbert-Schmidt operators}
\label{HS_sec}

In this Section we recall the definitions of trace class and Hilbert-Schmidt operators and some of their properties that we will need throughout these lecture notes. For more details see \cite{Conway}. We start by recalling some basic definitions.

Let $H$ and $K$ be two separable Hilbert spaces with scalar products $\langle \cdot, \cdot\rangle_H$, $ \langle \cdot, \cdot\rangle_K$, respectively. The space of continuous linear operators from $H$ to $K$ is denoted by $\mathcal{L}(H,K)$, and we set for convenience $\mathcal{L}(H):=\mathcal{L}(H,H)$. 

\subsubsection*{Adjoint operators}
The adjoint $T^*\in \mathcal{L}(K,H)$ of the operator $T\in \mathcal{L}(H,K)$ is given by $\langle T^*x,y\rangle_H=\langle Ty,x\rangle_K$ for all $x \in K$, $y \in H$. 
\\
This definition extends to unbounded operators. If $T:\text{Dom}(T) \subset H \rightarrow K$ is a linear 
 operator which is densely defined, i.e. Dom$(T)$ is dense in $H$, the \emph{adjoint} operator $T^*:\text{Dom}(T^*)\subset K \rightarrow H$ has domain
\[
\text{Dom}(T^*)=\left\{x \in K \ : \ \exists\ C \ge 0 : \forall y \in \text{Dom}(T) \\\quad |\langle Ty, x\rangle_K|\le C\|y\|_H\right\},
\]
and, if $x \in \text{Dom}(T^*)$, then $T^*x$ is the element of $H$ characterized by
$
\langle Ty,x\rangle_K=\langle y, T^*x\rangle_H$, $\forall y \in \text{Dom}(T).
$
Notice that $\text{Dom}(T^*)$ is non empty since $0 \in \text{Dom}(T^*)$. $T^*$ is a linear closed operator, even if $T$ is not closed or closable. 

\subsubsection*{Self-adjoint nonnegative operators}
The operator $T\in \mathcal{L}(H)$ is \textit{self-adjoint} (or \textit{symmetric}) if $T=T^*$ i.e. $\langle Tx,y\rangle_H= \langle x, Ty\rangle_H$ for all $x,y \in H$ and it is \emph{nonnegative} (\emph{positive}) if $\langle Tx,x\rangle_H \ge 0 \ (>0)$ for all $x \in H$. 

For all $T \in \mathcal{L}(H)$ self-adjoint and nonnegative there exists a unique $T^{1/2}\in \mathcal{L}(H)$ self-adjoint and nonnegative  such that $T=(T^{1/2})^2$.

For the proof of the following result we refer to \cite[Chapter III.1.1, Proposition 1.3]{vtc}). 
\begin{proposition}
\label{inj}
Let $T \in \mathcal{L}(H)$ be a self-adjoint positive operator. Then the following properties of $T$ are equivalent:
\begin{itemize}
\item [i)] $T$ is injective, i.e. \emph{Ker}$T=\{0\}$. 
\item [ii)] The quadratic form $x \mapsto \langle x,Tx\rangle_H$ is non-degenerate (i.e. $\langle x,Tx\rangle_H=0$ implies $x=0$).
\item [iii)] The set $T(X)$ is dense in X.
\end{itemize}
\end{proposition}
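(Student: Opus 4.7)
The plan is to prove the three equivalences by showing the cycle $(\mathrm{ii}) \Rightarrow (\mathrm{i}) \Rightarrow (\mathrm{ii})$ together with the standalone equivalence $(\mathrm{i}) \Leftrightarrow (\mathrm{iii})$, relying on two facts already available: the existence of the self-adjoint nonnegative square root $T^{1/2}$ with $(T^{1/2})^2 = T$, and the classical orthogonal decomposition $\overline{T(H)} = (\mathrm{Ker}\,T^*)^{\perp}$ for bounded operators (here both are phrased for $T$ self-adjoint, so $T^* = T$).

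First I would handle the easy direction $(\mathrm{ii}) \Rightarrow (\mathrm{i})$: if $Tx = 0$ then $\langle x, Tx\rangle_H = 0$, and non-degeneracy of the quadratic form forces $x = 0$, hence $\mathrm{Ker}\,T = \{0\}$. Next, for $(\mathrm{i}) \Rightarrow (\mathrm{ii})$, I would exploit the square root: for any $x \in H$,
\begin{equation*}
\langle x, T x\rangle_H = \langle x, T^{1/2}(T^{1/2}x)\rangle_H = \langle T^{1/2}x, T^{1/2}x\rangle_H = \|T^{1/2}x\|_H^2,
\end{equation*}
where the middle equality uses that $T^{1/2}$ is self-adjoint. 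Therefore $\langle x, Tx\rangle_H = 0$ implies $T^{1/2}x = 0$, so $Tx = T^{1/2}(T^{1/2}x) = 0$, and injectivity of $T$ gives $x = 0$. This establishes non-degeneracy.

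For $(\mathrm{i}) \Leftrightarrow (\mathrm{iii})$, I would invoke the orthogonal-complement identity valid for any $T \in \mathcal{L}(H)$, namely $\overline{T(H)} = (\mathrm{Ker}\,T^*)^{\perp}$. Since $T$ is self-adjoint we have $T^* = T$, so $\overline{T(H)} = (\mathrm{Ker}\,T)^{\perp}$. Consequently $T(H)$ is dense in $H$ if and only if $(\mathrm{Ker}\,T)^{\perp} = H$, which (in a Hilbert space) is equivalent to $\mathrm{Ker}\,T = \{0\}$.

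The only nontrivial step is $(\mathrm{i}) \Rightarrow (\mathrm{ii})$, and even there the "obstacle" is purely conceptual: one needs the square root $T^{1/2}$ to convert the bilinear vanishing $\langle x, Tx\rangle_H = 0$ into the genuine vanishing $T^{1/2}x = 0$. Since the existence of $T^{1/2}$ for self-adjoint nonnegative $T \in \mathcal{L}(H)$ has been recalled just above the statement, the argument reduces to the short computation displayed above, and no further analytic machinery is required.
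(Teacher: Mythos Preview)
Your proof is correct. The paper does not actually supply its own argument for this proposition; it simply cites \cite[Chapter III.1.1, Proposition 1.3]{vtc}, so there is no in-text proof to compare against. Your approach is the standard one: the square-root identity $\langle x, Tx\rangle_H = \|T^{1/2}x\|_H^2$ handles $(\mathrm{i}) \Leftrightarrow (\mathrm{ii})$, and the closed-range/kernel duality $\overline{T(H)} = (\mathrm{Ker}\,T)^{\perp}$ (valid for any bounded $T$ with $T^*=T$) gives $(\mathrm{i}) \Leftrightarrow (\mathrm{iii})$. One minor remark: the paper's appendix defines ``positive'' as $\langle Tx,x\rangle_H > 0$ for $x \neq 0$, under which all three conditions hold automatically; the proposition is really meant for \emph{nonnegative} self-adjoint $T$ (as is clear from how it is applied to covariance operators $Q$), and your argument covers that case as written since it only uses the existence of $T^{1/2}$, which requires only nonnegativity.
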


\subsubsection*{Pseudo-inverse operators}
Let $T \in \mathcal{L}(H,K)$. We denote by $P$ the orthogonal projection on the kernel of $T$, which is a closed subspace of $H$. $H$ is the direct sum of $P(H)$ and $(I-P)(H)$, so the restriction of $T$ to $(I-P)(H)$ is one to one. We define the \textit{pseudo-inverse} $T^{-1}:T(H)\rightarrow H$ as follows: for every $y \in T(H)$, $T^{-1}(y)$ is the unique $x\in (I-P)(H)$ such that $Tx=y$.
\\
If $T \in \mathcal{L}(H)$ is self-adjoint and nonnegative, the range of $T$ is an Hilbert space with the inner product
\begin{equation}
\label{range_prop}
\langle x,y\rangle_{T(H)}:=\langle T^{-1}x, T^{-1}y\rangle_H, \qquad x, y \in T(H).
\end{equation}

\subsubsection{Trace class operators}
Recall that in finite dimension $n$, we can recover the trace of a matrix $T$ as
\begin{equation*}
\sum_{k=1}^n\langle T e_k, e_k\rangle,
\end{equation*}
where $\{e_1,...,e_n\}$ is an orthonormal basis of the finite dimensional space. We might hope to extend this definition to an operator $T \in \mathcal{L}(H)$, by setting 
\begin{equation*}
\text{Tr}(T)=\sum_{k=1}^\infty\langle Te_k, e_k\rangle_{H},
\end{equation*}
where $\{e_k\}_{k=1}^\infty$ is an orthonormal basis of $H$. We run into two problems: the sum may diverge or it may differ depending on the choice of the orthonormal basis.
If order for the notion of trace to make sense in infinite dimension, we need to restrict ourselves to a class of operators for which we can prove the trace is finite and independent of the chosen orthonormal basis.

Given an operator $T \in \mathcal{L}(H)$ one has that $T^*T\in \mathcal{L}(H)$ is nonnegative and self-adjoint, hence it makes sense to define $|T|:=(T^*T)^{1/2} \in \mathcal{L}(H)$ as the unique nonnegative self-adjoint linear operator such that $|T|^2=T^*T$.
\begin{definition}
An operator $T \in \mathcal{L}(H)$ is \emph{trace class} if there is an orthonormal basis $\{e_k\}_{k=1}^\infty$ such that 
\begin{equation*}
\sum_{k=1}^\infty\langle |T|e_k, e_k\rangle_H < \infty.
\end{equation*}
The set of trace class operators on $H$ is denoted by $\mathcal{L}_1(H)$.
\end{definition}
One has to notice that this definition could depend on the choice of the
orthonormal basis. But there is the following result concerning trace class operators (see \cite[Corollary 18.2]{Conway} for a proof).
\begin{proposition}
Let $T \in \mathcal{L}_1(H)$, then 
\begin{equation*}
\sum_{k=1}^\infty\langle |T|e_k, e_k\rangle_H = \sum_{k=1}^\infty\langle |T|f_k, f_k\rangle_H
\end{equation*}
for all orthonormal bases $\{e_k\}_{k=1}^\infty$ and $\{f_k\}_{k=1}^\infty$ in $H$.
\end{proposition}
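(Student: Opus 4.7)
The plan is to reduce the claim to a symmetric double-sum identity using the square root of $|T|$ and Parseval's identity, which together make the basis-dependence disappear.

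First I would set $S := |T| = (T^*T)^{1/2}$, and invoke the recalled fact that since $S$ is a nonnegative self-adjoint operator in $\mathcal{L}(H)$ there exists a unique nonnegative self-adjoint $S^{1/2} \in \mathcal{L}(H)$ with $(S^{1/2})^2 = S$. This lets me rewrite each summand as a norm squared: for any $x \in H$,
\[
\langle Sx, x\rangle_H = \langle S^{1/2} x, S^{1/2} x\rangle_H = \|S^{1/2} x\|_H^2,
\]
using self-adjointness of $S^{1/2}$. In particular, for the given orthonormal basis $\{e_k\}_{k=1}^\infty$,
\[
\sum_{k=1}^\infty \langle |T| e_k, e_k\rangle_H = \sum_{k=1}^\infty \|S^{1/2} e_k\|_H^2,
\]
and analogously for $\{f_k\}_{k=1}^\infty$. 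So it suffices to prove that $\sum_{k} \|S^{1/2} e_k\|_H^2 = \sum_{j} \|S^{1/2} f_j\|_H^2$.

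Next I would expand each norm by Parseval's identity with respect to the other orthonormal basis. For every $k$,
\[
\|S^{1/2} e_k\|_H^2 = \sum_{j=1}^\infty |\langle S^{1/2} e_k, f_j\rangle_H|^2 = \sum_{j=1}^\infty |\langle e_k, S^{1/2} f_j\rangle_H|^2,
\]
where in the last step I used the self-adjointness of $S^{1/2}$. Summing over $k$ and exchanging the order of summation (legitimate by Tonelli's theorem, since all terms are nonnegative), I obtain
\[
\sum_{k=1}^\infty \|S^{1/2} e_k\|_H^2 = \sum_{j=1}^\infty \sum_{k=1}^\infty |\langle e_k, S^{1/2} f_j\rangle_H|^2 = \sum_{j=1}^\infty \|S^{1/2} f_j\|_H^2,
\]
by another application of Parseval, now expanding $\|S^{1/2} f_j\|_H^2$ with respect to $\{e_k\}_{k=1}^\infty$. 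Combining this chain of equalities with the first step gives the desired basis independence.

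There is really no hard obstacle here: the only point requiring minor care is the justification of interchanging the two infinite sums, but since every term $|\langle e_k, S^{1/2} f_j\rangle_H|^2$ is nonnegative, Tonelli's theorem applies unconditionally, with both iterated sums lying in $[0,\infty]$ (and finite by hypothesis that $T \in \mathcal{L}_1(H)$, though even without finiteness the equality holds in $[0,\infty]$). This also shows, as a byproduct, that if $\sum_k \langle |T| e_k, e_k\rangle_H < \infty$ for \emph{one} orthonormal basis, then it is finite for every orthonormal basis and the value coincides, which is exactly the trace-class property invoked in the subsequent definition of $\mathrm{Tr}(T)$.
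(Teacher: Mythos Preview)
Your argument is correct. The paper does not actually prove this proposition in-house; it simply cites \cite[Corollary 18.2]{Conway}. Your approach---rewriting $\langle |T|e_k,e_k\rangle_H = \|\,|T|^{1/2}e_k\|_H^2$ and then using Parseval with respect to the other basis together with Tonelli to symmetrize the resulting double sum---is the standard one, and it is exactly the technique the paper itself employs a few lines later in the proof of Proposition~\ref{ind_HS_bas} for the Hilbert--Schmidt case (there with $T$ and $T^*$ in place of your $S^{1/2}$). So your proof not only fills in what the paper leaves to a reference, but does so in a way fully consistent with the surrounding arguments.
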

In light of the above result, an operator $T$ is trace class if and only if $\sum_{k=1}^\infty\langle |T|e_k, e_k\rangle_H$ is finite for every choice of  the orthonormal basis $\{e_k\}_{k=1}^\infty$ and 
\begin{equation*}
\|T\|_{\mathcal{L}_1(H)}:= \sum_{k=1}^\infty\langle |T|e_k, e_k\rangle_H
\end{equation*}
is well defined, depending only on the operator $T$. The number $\|T\|_{\mathcal{L}_1(H)}$ is called \textit{trace norm} of $T$. The space $\mathcal{L}_1(H)$ is a vector space, the trace norm $\|T\|_{\mathcal{L}_1(H)}$ does indeed define a norm on $\mathcal{L}_1(H)$ and, with respect to this norm, $\mathcal{L}_1(H)$ is complete as stated in the following result.
\begin{proposition}
$\left(\mathcal{L}_1(H), \|\cdot\|_{\mathcal{L}_1(H)}\right)$ is a Banach space.
\end{proposition}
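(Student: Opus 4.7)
The plan is to first establish that $\|\cdot\|_{\mathcal{L}_1(H)}$ is a norm making $\mathcal{L}_1(H)$ into a vector space, and then prove completeness. The key preliminary step — from which nearly everything else follows with relative ease — is the pairing characterization
\[
\|T\|_{\mathcal{L}_1(H)} = \sup\Bigl\{\sum_{k\ge 1}|\langle T e_k, f_k\rangle_H| : \{e_k\}_k, \{f_k\}_k \text{ orthonormal sequences in } H\Bigr\}.
\]
From this identity the triangle inequality is immediate (hence also stability of $\mathcal{L}_1(H)$ under addition, so it is a vector space), positive homogeneity is clear, and one reads off at once that $\|T\|_{\mathcal{L}(H)} \le \|T\|_{\mathcal{L}_1(H)}$ by choosing orthonormal sequences starting with $e_1 = x$ (unit) and $f_1 = Tx/\|Tx\|_H$.

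Given this characterization, completeness is relatively direct. Let $\{T_n\}_n \subset \mathcal{L}_1(H)$ be Cauchy in the trace norm. The inequality $\|\cdot\|_{\mathcal{L}(H)} \le \|\cdot\|_{\mathcal{L}_1(H)}$ makes it Cauchy in $\mathcal{L}(H)$, so by Proposition \ref{lin_Ban} there exists $T \in \mathcal{L}(H)$ with $T_n \to T$ in operator norm. To show $T \in \mathcal{L}_1(H)$, fix a pair of finite orthonormal systems $\{e_k\}_{k=1}^K$, $\{f_k\}_{k=1}^K$. Operator-norm convergence justifies interchanging the limit with the finite sum, giving
\[
\sum_{k=1}^K |\langle T e_k, f_k\rangle_H| = \lim_{n \to \infty} \sum_{k=1}^K |\langle T_n e_k, f_k\rangle_H| \le \sup_n \|T_n\|_{\mathcal{L}_1(H)} =: M < \infty,
\]
with $M$ finite because Cauchy sequences are bounded. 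Taking the supremum over such pairs and then $K \to \infty$ yields $\|T\|_{\mathcal{L}_1(H)} \le M$. The same Fatou-style argument applied to $T_n - T_m$, letting $m \to \infty$, produces $\|T_n - T\|_{\mathcal{L}_1(H)} \le \liminf_{m\to\infty}\|T_n - T_m\|_{\mathcal{L}_1(H)}$, which tends to $0$ as $n \to \infty$ by the Cauchy property.

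The main obstacle is establishing the pairing characterization. This requires first showing that every $T \in \mathcal{L}_1(H)$ is compact; one does this by using the summability $\sum_k \langle |T|e_k,e_k\rangle_H < \infty$ to approximate $|T|$ in operator norm by the finite-rank truncations of its spectral decomposition, and then writing $T = U|T|$ via the polar decomposition. Once $|T|$ is known to admit a spectral representation $|T| = \sum_k \sigma_k \langle \cdot, \phi_k\rangle_H \phi_k$ with $\sum_k \sigma_k = \|T\|_{\mathcal{L}_1(H)}$, the characterization is verified by a Cauchy–Schwarz estimate
\[
\sum_k |\langle T e_k, f_k\rangle_H| = \sum_k \Bigl|\sum_j \sigma_j \langle e_k, \phi_j\rangle_H \langle U\phi_j, f_k\rangle_H\Bigr| \le \sum_j \sigma_j = \|T\|_{\mathcal{L}_1(H)},
\]
with equality achieved at $e_k = \phi_k$, $f_k = U\phi_k$. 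Everything else in the argument is a routine limiting procedure built on top of this characterization.
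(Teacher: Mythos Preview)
The paper does not actually prove this proposition; it simply cites \cite[Theorem 18.11]{Conway}. Your argument therefore goes well beyond what the paper provides, and it follows a standard and correct route (the pairing characterization of the trace norm, then a Fatou-type completeness argument---this is essentially how Conway does it).

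One small point worth tightening: in your compactness step you propose to ``approximate $|T|$ in operator norm by the finite-rank truncations of its spectral decomposition,'' but invoking a \emph{discrete} spectral decomposition of $|T|$ already presupposes compactness, which is precisely what you are trying to establish. The clean fix is to note that
\[
\sum_{k\ge 1}\bigl\|\,|T|^{1/2}e_k\bigr\|_H^2 = \sum_{k\ge 1}\langle |T|e_k,e_k\rangle_H < \infty,
\]
so $|T|^{1/2}$ is Hilbert--Schmidt, hence compact (it is an operator-norm limit of the finite-rank truncations $|T|^{1/2}P_N$, since $\|\,|T|^{1/2}(I-P_N)\|_{\mathcal{L}(H)}^2 \le \sum_{k>N}\|\,|T|^{1/2}e_k\|_H^2 \to 0$). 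Then $|T|=(|T|^{1/2})^2$ is compact, and so is $T=U|T|$. With that adjustment the rest of your argument---the Cauchy--Schwarz bound for the pairing, equality at $e_k=\phi_k$, $f_k=U\phi_k$, and the completeness proof---goes through as written.
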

\begin{proof}
See \cite[Theorem 18.11]{Conway}.
\end{proof}
In light of the following Proposition it is possible to define the trace of an operator in $\mathcal{L}_1(H)$.
\begin{proposition}
Let $T \in \mathcal{L}_1(H)$, then 
\begin{equation*}
\sum_{k=1}^\infty\langle Te_k, e_k\rangle_H
\end{equation*}
is finite and independent of the choice of the orthonormal basis $\{e_k\}_{k=1}^\infty$.
\end{proposition}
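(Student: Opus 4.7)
My plan is to exploit the polar decomposition $T=U|T|$, where $U\in \mathcal{L}(H)$ is a partial isometry with $\|U\|_{\mathcal{L}(H)}\le 1$, and to factor $T = (U|T|^{1/2})\,|T|^{1/2}$ as a product of two Hilbert--Schmidt operators. This will reduce the statement to two separate tasks: controlling the absolute sum by the Cauchy--Schwarz inequality, and recognising that the resulting quantity is an intrinsic Hilbert--Schmidt inner product.

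First I would verify that both factors are Hilbert--Schmidt. Since $|T|^{1/2}$ is self-adjoint nonnegative, the trace-class hypothesis gives
\[
\sum_{k=1}^\infty \||T|^{1/2}e_k\|_H^2 = \sum_{k=1}^\infty \langle |T|e_k,e_k\rangle_H = \|T\|_{\mathcal{L}_1(H)} < \infty,
\]
so $|T|^{1/2}$ is Hilbert--Schmidt; because the Hilbert--Schmidt class is a two-sided ideal in $\mathcal{L}(H)$, the composition $U|T|^{1/2}$ is Hilbert--Schmidt as well, with $\|U|T|^{1/2}\|_{HS}\le \|U\|_{\mathcal{L}(H)}\||T|^{1/2}\|_{HS}\le \||T|^{1/2}\|_{HS}$.

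Next I would establish absolute summability. Rewriting
\[
\langle Te_k,e_k\rangle_H = \langle U|T|^{1/2}\,|T|^{1/2}e_k,e_k\rangle_H = \langle |T|^{1/2}e_k,\,(U|T|^{1/2})^*e_k\rangle_H,
\]
two applications of Cauchy--Schwarz (first in $H$ for fixed $k$, then for the resulting series) yield
\[
\sum_{k=1}^\infty |\langle Te_k,e_k\rangle_H| \le \||T|^{1/2}\|_{HS}\cdot \|U|T|^{1/2}\|_{HS}<\infty,
\]
so in particular the series $\sum_k \langle Te_k,e_k\rangle_H$ converges and is finite, for any orthonormal basis $\{e_k\}$.

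For the independence of the basis I would invoke the fact that if $A,B\in \mathcal{L}(H)$ are Hilbert--Schmidt, then $\sum_k \langle Ae_k, Be_k\rangle_H$ does not depend on the orthonormal basis $\{e_k\}$ and coincides with the intrinsic Hilbert--Schmidt inner product $\langle A,B\rangle_{HS}$. Applied with $A:=|T|^{1/2}$ and $B:=(U|T|^{1/2})^*$, this immediately gives the desired basis independence. The underlying verification is a Parseval/Fubini computation: for a second orthonormal basis $\{f_j\}$, one expands $Be_k=\sum_j\langle Be_k,f_j\rangle f_j$ inside $\langle Ae_k,Be_k\rangle_H$, interchanges the summations (which is legitimate thanks to the absolute convergence just obtained and Cauchy--Schwarz applied to $A$ and $B^*$ in the basis $\{f_j\}$), and re-reads the double sum as $\sum_j \langle Af_j,Bf_j\rangle_H$ after symmetric manipulation.

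The main obstacle I expect is the bookkeeping of the double-sum Parseval/Fubini argument in the last step; everything else is routine once the polar decomposition is in hand and the Hilbert--Schmidt ideal property is available (both of which are standard results from operator theory, cf.\ \cite{Conway}).
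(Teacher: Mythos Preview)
Your argument is correct: the polar decomposition $T=U|T|$ together with the factorisation $T=(U|T|^{1/2})\,|T|^{1/2}$ into two Hilbert--Schmidt factors is exactly the standard route, and your Cauchy--Schwarz bound for absolute summability plus the identification $\sum_k\langle Te_k,e_k\rangle_H=\langle |T|^{1/2},(U|T|^{1/2})^*\rangle_{\mathcal{L}_2(H)}$ gives basis independence. The paper does not supply its own proof but simply refers to \cite[Proposition~18.9]{Conway}, where precisely this argument is carried out; so you have essentially reconstructed the cited proof. One small remark on your Parseval/Fubini sketch: the double-sum manipulation first produces $\sum_j\langle A^*f_j,B^*f_j\rangle_H$ rather than $\sum_j\langle Af_j,Bf_j\rangle_H$, and you then need one further symmetry step (e.g.\ apply the same identity with $\{e_k\}=\{f_j\}$ to see $\langle A,B\rangle_{\mathcal{L}_2(H)}=\langle A^*,B^*\rangle_{\mathcal{L}_2(H)}$) to close the loop---this is the ``symmetric manipulation'' you allude to, and it goes through without difficulty.
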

\begin{proof}
See \cite[Proposition 18.9]{Conway}.
\end{proof}
\begin{definition}
\label{trace_def}
Let $T \in \mathcal{L}_1(H)$. We definite the \emph{trace} of $T$ to be 
\begin{equation*}
\emph{Tr}(T):= \sum^\infty_{k=1} \langle Te_k, e_k\rangle_{H},
\end{equation*}
for some orthonormal basis $\{e_k\}_{k=1}^\infty$ in $H$.
\end{definition}
Notice that when $H$ is finite-dimensional, every operator is trace class and the above definition of trace of $T$ coincides with the definition of the trace of a matrix. 

We conclude this part with an important property of self-adjoint nonnegative trace-class operators (see \cite[Proposition 1.20]{DaPrato} and \cite[Proposition 6.11]{Brezis}).
\begin{proposition}
\label{spectral}
Let $T \in \mathcal{L}_1(H)$ be nonnegative and self-adjoint. Then there exists a complete orthonormal system $\{e_k\}_{k \in \mathbb{N}}$ of $H$ and a nonnegative real sequence $\{\lambda_k\}_{k \in \mathbb{N}}$ such that $Te_k=\lambda_k e_k$ for all $k \in \mathbb{N}$ and $\sum_{k \in \mathbb{N}}\lambda_k < \infty$.
\end{proposition}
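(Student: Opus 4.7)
The plan is to combine the standard spectral theorem for compact self-adjoint operators with the compactness of $T$, which will follow from the trace-class hypothesis, and then read off the summability of the eigenvalues from the very definition of the trace.

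First I would show that $T$ is compact. Since $T$ is nonnegative and self-adjoint, $|T|=T$, and the unique nonnegative self-adjoint square root $T^{1/2}\in\mathcal{L}(H)$ is well defined. For any orthonormal basis $\{f_k\}_{k\in\mathbb{N}}$ of $H$,
\begin{equation*}
\sum_{k=1}^\infty\|T^{1/2}f_k\|_H^2=\sum_{k=1}^\infty\langle T^{1/2}f_k,T^{1/2}f_k\rangle_H=\sum_{k=1}^\infty\langle Tf_k,f_k\rangle_H=\|T\|_{\mathcal{L}_1(H)}<\infty,
\end{equation*}
so $T^{1/2}$ is a Hilbert--Schmidt operator on $H$. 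Hilbert--Schmidt operators are compact (they are norm limits of their truncations $P_N T^{1/2}$ to a finite-dimensional subspace, each of which has finite rank), hence $T^{1/2}$ is compact. The composition of a compact operator with a bounded operator is compact, therefore $T=T^{1/2}\circ T^{1/2}$ is compact as well.

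Next I would invoke the classical spectral theorem for compact self-adjoint operators on a separable Hilbert space: there exists a complete orthonormal system $\{e_k\}_{k\in\mathbb{N}}$ of $H$ consisting of eigenvectors of $T$, with corresponding real eigenvalues $\{\lambda_k\}_{k\in\mathbb{N}}$ satisfying $\lambda_k\to 0$ as $k\to\infty$. The nonnegativity of $T$ is inherited by each eigenvalue: if $Te_k=\lambda_k e_k$, then
\begin{equation*}
\lambda_k=\langle Te_k,e_k\rangle_H\ge 0.
\end{equation*}
Finally, by the very definition of the trace (Definition \ref{trace_def}) applied to this particular orthonormal basis of eigenvectors,
\begin{equation*}
\sum_{k\in\mathbb{N}}\lambda_k=\sum_{k=1}^\infty\langle Te_k,e_k\rangle_H=\mathrm{Tr}(T)=\|T\|_{\mathcal{L}_1(H)}<\infty,
\end{equation*}
which gives the required summability and concludes the argument.

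The only nontrivial ingredient is the compactness step, where one has to know that Hilbert--Schmidt operators are compact (or, alternatively, cite the spectral theorem for trace-class operators directly). Everything else is either the standard spectral theorem for compact self-adjoint operators or a bookkeeping computation using the basis of eigenvectors.
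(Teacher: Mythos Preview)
Your argument is correct. The paper itself does not give a proof of this proposition but merely cites \cite[Proposition 1.20]{DaPrato} and \cite[Proposition 6.11]{Brezis}, so there is no ``paper's own proof'' to compare against in detail; your self-contained argument is precisely the standard one that underlies those references.

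One minor remark: within the paper's framework you could shortcut the compactness step. Proposition~\ref{proposition-HS}(v) already records that trace-class operators are compact, so you may invoke that directly rather than passing through $T^{1/2}\in\mathcal{L}_2(H)$. Your detour is of course valid (and in fact furnishes a proof of the trace-class case of (v) in the nonnegative self-adjoint situation), but citing (v) would align more tightly with what the appendix has set up. Everything else---the spectral theorem for compact self-adjoint operators, the nonnegativity of the eigenvalues, and the identification $\sum_k\lambda_k=\mathrm{Tr}(T)$ via Definition~\ref{trace_def}---is exactly right.
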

As a consequence of the above result, for $T \in \mathcal{L}(H)$ nonnegative and self-adjoint we have the representation formulae
\begin{equation*}
T x= \sum_{k \in \mathbb{N}}\lambda_k \langle x, e_k\rangle_H e_k, \qquad x \in H,
\end{equation*}
\begin{equation*}
T^{1/2} x= \sum_{k \in \mathbb{N}}\sqrt{\lambda_k} \langle x, e_k\rangle_H e_k, \qquad x \in H.
\end{equation*}
The orthogonal projection on the kernel of $T$ is given by $Px=\sum_{k \in \mathbb{N}: \lambda_k = 0} \langle x, e_k\rangle_H e_k$, so $(I-P)x=\sum_{k \in \mathbb{N}: \lambda_k \ne 0} \langle x, e_k\rangle_H e_k$. Thus 
\begin{equation*}
T^{-1/2} x= \sum_{k \in \mathbb{N}: \lambda_k \ne 0}\frac{\langle x, e_k\rangle_H e_k}{\sqrt{\lambda_k}}, \qquad x \in T^{1/2}(H)
\end{equation*}
and 
\begin{equation*}
\langle x,y\rangle_{T^{1/2}(H)}= \sum_{k \in \mathbb{N}: \lambda_k \ne 0}\frac{\langle x, e_k\rangle_H \langle y, e_k\rangle_H}{\lambda_k}, \qquad x,y \in T^{1/2}(H).
\end{equation*}

\subsubsection{Hilbert-Schmidt operators}

\begin{definition}
An operator $T \in \mathcal{L}(H,K)$ is called \emph{Hilbert-Schmidt} if 
\begin{equation*}
\sum_{k=1}^\infty \|Te_k\|^2_K < \infty,
\end{equation*}
where $\{e_k\}_{k=1}^\infty$ is an orthonormal basis of $H$.
\\
The space of Hilbert-Schmidt operators from $H$ to $K$ is denoted by $\mathcal{L}_2(H,K)$; for simplicity we write $\mathcal{L}_2(H)$ instead of $\mathcal{L}_2(H,H)$.
\end{definition}

\begin{proposition}
\label{ind_HS_bas}
The definition of Hilbert-Schmidt operator and the number $\sum_{k=1}^\infty \|Te_k\|^2_K$ does not depend on the choice of the basis.
\end{proposition}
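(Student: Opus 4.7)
The plan is to use Parseval's identity twice, in a way that produces an intermediate expression symmetric in the two bases of $H$. Concretely, fix an arbitrary orthonormal basis $\{g_\ell\}_{\ell=1}^\infty$ of $K$ and let $\{e_k\}_{k=1}^\infty$ be any orthonormal basis of $H$. For each fixed $k$, Parseval's identity in $K$ applied to the vector $Te_k$ gives
\begin{equation*}
\|Te_k\|_K^2 = \sum_{\ell=1}^\infty |\langle Te_k, g_\ell\rangle_K|^2.
\end{equation*}

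Next I would sum over $k$ and swap the order of summation (which is legitimate since all terms are nonnegative, by Tonelli/Fubini for series) to obtain
\begin{equation*}
\sum_{k=1}^\infty \|Te_k\|_K^2 = \sum_{\ell=1}^\infty \sum_{k=1}^\infty |\langle Te_k, g_\ell\rangle_K|^2 = \sum_{\ell=1}^\infty \sum_{k=1}^\infty |\langle e_k, T^*g_\ell\rangle_H|^2,
\end{equation*}
where in the last equality I used the definition of the adjoint $T^* \in \mathcal{L}(K,H)$. A second application of Parseval's identity, this time in $H$ with respect to the basis $\{e_k\}$, collapses the inner sum to $\|T^*g_\ell\|_H^2$, yielding
\begin{equation*}
\sum_{k=1}^\infty \|Te_k\|_K^2 = \sum_{\ell=1}^\infty \|T^*g_\ell\|_H^2.
\end{equation*}

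The right-hand side is manifestly independent of the choice of basis $\{e_k\}$ of $H$. Hence any two orthonormal bases of $H$ produce the same value $\sum_k\|Te_k\|_K^2$, and in particular the property of being Hilbert-Schmidt is independent of the basis chosen. There is no serious obstacle here: the only point to be careful about is that the double sum has nonnegative terms, so the interchange of summations and the possibility that the value equals $+\infty$ are both handled uniformly, so the argument applies whether or not $T$ is actually Hilbert-Schmidt. As a by-product one also obtains $\sum_k\|Te_k\|_K^2 = \sum_\ell\|T^*g_\ell\|_H^2$, so $T \in \mathcal{L}_2(H,K)$ if and only if $T^* \in \mathcal{L}_2(K,H)$, with equal Hilbert-Schmidt norms.
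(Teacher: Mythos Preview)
Your proof is correct and follows essentially the same route as the paper: expand $\|Te_k\|_K^2$ via Parseval in $K$, pass to the adjoint, and apply Parseval in $H$ to obtain $\sum_k\|Te_k\|_K^2=\sum_\ell\|T^*g_\ell\|_H^2$, an expression independent of $\{e_k\}$. Your version is in fact a bit tidier than the paper's, since you distinguish the bases of $H$ and $K$ and explicitly justify the interchange of sums via nonnegativity.
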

\begin{proof}
Let $\{e_k\}_{k=1}^\infty$, $\{f_k\}_{k=1}^\infty$ and $\{g_k\}_{k=1}^\infty$
be three orthonormal bases of $H$. We compute 
\begin{equation*}
\|T e_k\|^2_H
= \sum_{j=1}^\infty \langle Te_k, g_j\rangle_{\mathcal{H}}\langle Te_k, g_j\rangle_{\mathcal{H}}
=\sum_{j=1}^\infty \langle e_k, T^*g_j\rangle_{\mathcal{H}}\langle e_k, T^*g_j\rangle_{\mathcal{H}}.
\end{equation*}
Thus 
\begin{align*}
\sum_{k=1}^\infty \|Te_k\|^2_H= \sum_{k=1}^\infty\sum_{j=1}^\infty \langle e_k, T^*g_j\rangle_{\mathcal{H}}\langle e_k, T^*g_j\rangle_{\mathcal{H}}
=\sum_{j=1}^\infty \|T^*g_j\|^2_H < \infty.
\end{align*}
By a similar reasoning we get $\sum_{j=1}^\infty \|T^*g_j\|^2_H=\sum_{i=1}^\infty \|Tf_i\|^2_H$ and thus $\sum_{k=1}^\infty \|Te_k\|^2_H=\sum_{k=1}^\infty \|Tf_k\|^2_H$, which concludes the proof.
\end{proof}

\begin{remark}
\label{bound_L_LHS}
It is easy to prove (see e.g. \cite[Proposition 18.6]{Conway}) that, if $T \in \mathcal{L}_2(H)$, then $\|T\|_{\mathcal{L}(H)}\le \|T\|_{\mathcal{L}_2(H)}$.
\end{remark}
In view of Proposition \ref{ind_HS_bas} the number 
\begin{equation}
\label{HS_norm_def}
\|T\|_{\mathcal{L}_2(H,K)}:= \left( \sum_{k=1}^\infty \|Te_k\|^2_K\right)^{\frac 12}
\end{equation}
is well defined, depending only on the operator $T$; this number is called \textit{Hilbert-Schmidt norm} of $T$.
\begin{proposition}
The space $\mathcal{L}_2(H,K)$ equipped with the norm \eqref{HS_norm_def} is a separable Hilbert space with the scalar product
\begin{equation*}
\langle S, T\rangle_{\mathcal{L}_2(H,K)}:=\sum_{k=1}^\infty \langle Se_k, Te_k\rangle_{K}, \qquad S, T \in \mathcal{L}_2(H,K),
\end{equation*}
where $\{e_k\}_{k=1}^\infty$ is an orthonormal basis of $H$.
\end{proposition}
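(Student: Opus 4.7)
The plan is to establish four things in order: (a) the series defining $\langle S,T\rangle_{\mathcal{L}_2(H,K)}$ converges absolutely, is independent of the chosen orthonormal basis of $H$, and satisfies the inner-product axioms; (b) the induced norm coincides with $\|\cdot\|_{\mathcal{L}_2(H,K)}$ in \eqref{HS_norm_def}; (c) $\mathcal{L}_2(H,K)$ is complete; (d) $\mathcal{L}_2(H,K)$ is separable. Steps (a), (b) and (d) are short; the main work is in the completeness argument.

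For (a) and (b), Cauchy--Schwarz in $K$ gives $|\langle Se_k, Te_k\rangle_K| \le \|Se_k\|_K\|Te_k\|_K$ and then Cauchy--Schwarz in $\ell^2$ bounds the series by $\|S\|_{\mathcal{L}_2(H,K)}\|T\|_{\mathcal{L}_2(H,K)}$, so it converges absolutely. Basis independence is obtained from the polarization identity
\begin{equation*}
\langle S,T\rangle_{\mathcal{L}_2(H,K)} = \tfrac{1}{4}\bigl(\|S+T\|_{\mathcal{L}_2(H,K)}^2 - \|S-T\|_{\mathcal{L}_2(H,K)}^2\bigr),
\end{equation*}
together with Proposition \ref{ind_HS_bas}, which already asserts basis independence of the norm. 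Bilinearity, symmetry and positive-definiteness are inherited termwise from $\langle\cdot,\cdot\rangle_K$, and $\langle T,T\rangle_{\mathcal{L}_2(H,K)} = \|T\|_{\mathcal{L}_2(H,K)}^2$ by definition.

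For completeness, I would take a Cauchy sequence $\{T_n\} \subset \mathcal{L}_2(H,K)$. By Remark \ref{bound_L_LHS}, $\{T_n\}$ is also Cauchy in $\mathcal{L}(H,K)$, hence convergent in operator norm to some $T \in \mathcal{L}(H,K)$ by Proposition \ref{lin_Ban}. To upgrade this to convergence in $\|\cdot\|_{\mathcal{L}_2(H,K)}$, fix $\varepsilon > 0$ and $N_0$ with $\|T_n - T_m\|_{\mathcal{L}_2(H,K)} < \varepsilon$ for all $n,m \ge N_0$. Fixing an orthonormal basis $\{e_k\}$ of $H$, for any finite $M$ and $n,m \ge N_0$,
\begin{equation*}
\sum_{k=1}^M \|(T_n - T_m)e_k\|_K^2 \;\le\; \|T_n - T_m\|_{\mathcal{L}_2(H,K)}^2 \;<\; \varepsilon^2.
\end{equation*}
Sending $m \to \infty$ (using $T_m e_k \to T e_k$ in $K$ by operator-norm convergence, together with continuity of $\|\cdot\|_K$) yields $\sum_{k=1}^M \|(T_n - T)e_k\|_K^2 \le \varepsilon^2$ for every $M$; then monotone convergence in $M$ gives $\|T_n - T\|_{\mathcal{L}_2(H,K)} \le \varepsilon$. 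Hence $T_n - T \in \mathcal{L}_2(H,K)$, so $T = (T - T_n) + T_n \in \mathcal{L}_2(H,K)$, and $T_n \to T$ in $\|\cdot\|_{\mathcal{L}_2(H,K)}$.

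For separability I would exhibit an explicit countable orthonormal basis. Fix orthonormal bases $\{e_k\}$ of $H$ and $\{f_j\}$ of $K$ and define the rank-one operators $E_{j,k}(h) := \langle h, e_k\rangle_H \, f_j$. A direct check gives $\langle E_{j,k}, E_{j',k'}\rangle_{\mathcal{L}_2(H,K)} = \delta_{jj'}\delta_{kk'}$, and for any $T \in \mathcal{L}_2(H,K)$ one has $\langle T, E_{j,k}\rangle_{\mathcal{L}_2(H,K)} = \langle T e_k, f_j\rangle_K$, so Parseval in $K$ applied coordinate-wise gives
\begin{equation*}
\sum_{j,k} |\langle T, E_{j,k}\rangle_{\mathcal{L}_2(H,K)}|^2 = \sum_k \sum_j |\langle Te_k, f_j\rangle_K|^2 = \sum_k \|Te_k\|_K^2 = \|T\|_{\mathcal{L}_2(H,K)}^2.
\end{equation*}
Thus $\{E_{j,k}\}_{j,k \ge 1}$ is a countable orthonormal basis, and rational finite linear combinations of its elements provide a countable dense subset. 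The hardest part will be the double-limit justification in the completeness step: exchanging $\lim_{m\to\infty}$ with the finite sum $\sum_{k=1}^M$ (handled by norm-continuity and operator-norm convergence of $T_m$) and then passing $M \to \infty$ (handled by monotone convergence of partial sums of nonnegative terms).
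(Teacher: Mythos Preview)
Your proof is correct and follows essentially the same strategy as the paper: completeness is obtained by passing to the operator-norm limit (via Remark~\ref{bound_L_LHS} and Proposition~\ref{lin_Ban}) and then upgrading to Hilbert--Schmidt convergence, and separability is obtained from the rank-one operators $f_j\otimes e_k$ (your $E_{j,k}$). The only technical difference is in the upgrade step of completeness: the paper applies Fatou's lemma, whereas you use the finite-truncation-then-limit argument; you also spell out the well-definedness and inner-product axioms in (a)--(b), which the paper takes as understood.
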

\begin{proof}
We have to prove completeness and separability.
\begin{itemize}
\item [i)] $\mathcal{L}_2(H,K)$ is complete.
\\
Let $\{T_n\}_{n \in \mathbb{N}}$ be a Cauchy sequence in $\mathcal{L}_2(H,K)$. Then, in virtue of Remark \ref{bound_L_LHS}, $\{T_n\}_{n \in \mathbb{N}}$ is also a Cauchy sequence in $\mathcal{L}(H,K)$. Because of the completeness of $\mathcal{L}(H,K)$ (see Proposition \ref{lin_Ban}) there exists an element $T \in \mathcal{L}(H,K)$ such that $\|T_n-T\|_{\mathcal{L}(H,K)} \rightarrow 0$ as $n \rightarrow \infty$. In particular, the sequence $\{\|T_n\|_{\mathcal{L}_2(H,K)}\}_{n \in \mathbb{N}}$ is a real Cauchy sequence and thus convergent to an element $\lambda \in \mathbb{R}$.
By the Fatou lemma we have that, for any orthonormal basis $\{e_k\}_{k \in \mathbb{N}}$ of $H$,
\begin{align*}
\|T\|^2_{\mathcal{L}_2(H,K)}
&= \sum_{k=1}^\infty \|Te_k\|^2_K
=\sum_{k=1}^\infty \lim_{n \rightarrow \infty}\|T_n e_k\|^2_K 
\\
& \le \liminf_{n \rightarrow \infty}\sum_{k=1}^\infty \|T_n e_k\|^2_K 
= \liminf_{n \rightarrow \infty}\|T_n\|^2_{\mathcal{L}_2(H,K)}= \lambda< \infty,
 \end{align*}
which proves that $T \in \mathcal{L}_2(H,K)$.
Moreover, again by the Fatou lemma, we infer that 
\begin{align*}
\limsup_{n \rightarrow \infty} \|T_n-T\|_{\mathcal{L}_2(H,K)} 
&=\limsup_{n \rightarrow \infty} \sum_{k=1}^\infty\|T_ne_k-Te_k\|^2_H
\\
& \le \sum_{k=1}^\infty\limsup_{n \rightarrow \infty} \|T_ne_k-Te_k\|^2_H =0, \end{align*}
which proves that $T_n \rightarrow T$ in $\mathcal{L}_2(H,K)$ when $n \rightarrow \infty$ and concludes the proof of completeness.
\item [ii)] $\mathcal{L}_2(H,K)$ is separable. 
\\
Let $\{e_k\}_{k \in \mathbb{N}}$ and $\{f_j\}_{j \in \mathbb{N}}$ be orthonormal bases in $H$ and $K$, respectively. If we define $f_j \otimes e_k:=f_j\langle e_k, \cdot \rangle_H$ for $j, k \in \mathbb{N}$ \begin{footnote}{See Section \ref{app_ten_pro} for the definition of $v \otimes u$.}\end{footnote}, then it is clear that $f_j \otimes e_k \in \mathcal{L}_2(H,K)$ for all $j, k \in \mathbb{N}$ and for an arbitrary $T \in \mathcal{L}_2(H,K)$ we get that 
\begin{equation*}
\langle f_j \otimes e_k, T\rangle_{\mathcal{L}_2(H,K)}
= \sum_{n=1}^\infty \langle e_k, e_n\rangle_H \langle f_j, Te_n\rangle_K = \langle f_j,Te_k\rangle_K.
\end{equation*}
Therefore, it is easy to see that $f_j \otimes e_k$, $j, k \in \mathbb{N}$, is an orthonormal system in $\mathcal{L}_2(H,K)$. Moreover, $T=0$ if $\langle f_j \otimes e_k, T\rangle_{\mathcal{L}_2(H,K)}=0$ for all $j,k \in \mathbb{N}$, and therefore $\text{Span}\left( f_j \otimes e_k \ \text{s.t.} \ j,k \in \mathbb{N}\right)$ is a dense subspace of $\mathcal{L}_2(H,K)$.
\end{itemize}
\end{proof}

In the following result we collect some properties of trace class and Hilbert-Schmidt operators. For other properties see \cite{Conway}.
\begin{proposition}
\label{proposition-HS}
\begin{itemize}
\item [(i)] If $T \in \mathcal{L}_2(H)$, then $T^* \in \mathcal{L}_2(H)$ and $\|T\|_{\mathcal{L}_2(H)}=\|T^*\|_{\mathcal{L}_2(H)}$.
\item [(ii)] If $A \in \mathcal{L}(H)$ and $T \in \mathcal{L}_2(H)$, then $TA, AT \in \mathcal{L}_2(H)$ and $\|AT\|_{\mathcal{L}_2(H)}$, $\|TA\|_{\mathcal{L}_2(H)} \le \|A\|_{\mathcal{L}(H)}\|T\|_{\mathcal{L}_2(H)}$.
\item[(iii)] If $T \in \mathcal{L}_2(H,K)$ and $S \in \mathcal{L}_2(K,H)$ then $ST\in \mathcal{L}_1(H)$ and $\|ST\|_{\mathcal{L}_1(H)} \le \|S\|_{\mathcal{L}_2(K,H)}\|T\|_{\mathcal{L}_2(H,K)}$.
\item [(iv)] Let $T \in \mathcal{L}_2(H)$, then $TT^* \in \mathcal{L}_1(H)$ and $\emph{Tr}(TT^*)=\|T\|_{\mathcal{L}_2(H)}^2$.
\item [(v)] Trace class and Hilbert-Schmidt operators are compact operators.
\end{itemize}
\end{proposition}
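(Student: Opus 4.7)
The strategy is to handle the five items in an order that lets each step feed the next: (i) first, then (ii) from (i), then (iii) via polar decomposition using (i)--(ii), then (iv) as a direct corollary, and finally (v) by approximation with finite-rank operators.

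For (i), I would fix two orthonormal bases $\{e_k\}_{k=1}^\infty$ and $\{f_j\}_{j=1}^\infty$ of $H$ and use Parseval's identity to write
\[
\sum_{k=1}^\infty \|Te_k\|_H^2 = \sum_{k=1}^\infty \sum_{j=1}^\infty |\langle Te_k, f_j\rangle_H|^2 = \sum_{j=1}^\infty \sum_{k=1}^\infty |\langle e_k, T^*f_j\rangle_H|^2 = \sum_{j=1}^\infty \|T^*f_j\|_H^2 ,
\]
the interchange of sums being legal by Tonelli since the summands are nonnegative. This simultaneously proves $T^*\in \mathcal{L}_2(H)$ and equality of the norms. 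For (ii), the estimate for $AT$ is immediate from $\|ATe_k\|_H\le\|A\|_{\mathcal{L}(H)}\|Te_k\|_H$ and squaring; the estimate for $TA$ then follows by taking adjoints, $\|TA\|_{\mathcal{L}_2(H)}=\|(TA)^*\|_{\mathcal{L}_2(H)}=\|A^*T^*\|_{\mathcal{L}_2(H)}\le\|A^*\|_{\mathcal{L}(H)}\|T^*\|_{\mathcal{L}_2(H)}=\|A\|_{\mathcal{L}(H)}\|T\|_{\mathcal{L}_2(H)}$, using (i) and $\|A^*\|_{\mathcal{L}(H)}=\|A\|_{\mathcal{L}(H)}$.

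For (iii), the main step is the polar decomposition $ST=V|ST|$, where $V$ is a partial isometry with $\|V\|_{\mathcal{L}(H)}\le 1$ and $|ST|=V^*ST$. With an orthonormal basis $\{e_k\}$ of $H$ this gives
\[
\|ST\|_{\mathcal{L}_1(H)}=\sum_{k=1}^\infty \langle |ST|e_k,e_k\rangle_H=\sum_{k=1}^\infty \langle Te_k, S^*Ve_k\rangle_K ,
\]
and a Cauchy--Schwarz estimate in $\ell^2$ together with (i)--(ii) yields
\[
\|ST\|_{\mathcal{L}_1(H)}\le \Bigl(\sum_{k=1}^\infty \|Te_k\|_K^2\Bigr)^{1/2}\Bigl(\sum_{k=1}^\infty \|S^*Ve_k\|_H^2\Bigr)^{1/2}\le \|T\|_{\mathcal{L}_2(H,K)}\|S^*V\|_{\mathcal{L}_2(H)}\le \|T\|_{\mathcal{L}_2(H,K)}\|S\|_{\mathcal{L}_2(K,H)} .
\]
Finiteness of these sums shows $ST\in\mathcal{L}_1(H)$. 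Item (iv) is then immediate: by (iii), $TT^*\in\mathcal{L}_1(H)$; since $TT^*$ is nonnegative self-adjoint, $|TT^*|=TT^*$, so for any orthonormal basis
\[
\operatorname{Tr}(TT^*)=\sum_{k=1}^\infty \langle TT^*e_k,e_k\rangle_H=\sum_{k=1}^\infty \|T^*e_k\|_H^2 = \|T^*\|_{\mathcal{L}_2(H)}^2=\|T\|_{\mathcal{L}_2(H)}^2 ,
\]
using (i) in the last step.

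For (v), I would approximate any $T\in\mathcal{L}_2(H,K)$ by the finite-rank operators $T_n=TP_n$, where $P_n$ is the orthogonal projection onto $\mathrm{span}\{e_1,\ldots,e_n\}$. Then $(T-T_n)e_k=0$ for $k\le n$ and $(T-T_n)e_k=Te_k$ for $k>n$, so
\[
\|T-T_n\|_{\mathcal{L}(H,K)}\le \|T-T_n\|_{\mathcal{L}_2(H,K)}=\Bigl(\sum_{k=n+1}^\infty \|Te_k\|_K^2\Bigr)^{1/2}\xrightarrow[n\to\infty]{}0 ,
\]
by Remark \ref{bound_L_LHS} and the tail-of-convergent-series estimate, so $T$ is a norm limit of finite-rank operators and hence compact. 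For trace class, given $T\in\mathcal{L}_1(H)$ I would use the polar decomposition $T=U|T|$ and the factorisation $T=U|T|^{1/2}\cdot |T|^{1/2}$: since $\| |T|^{1/2}\|_{\mathcal{L}_2(H)}^2=\operatorname{Tr}(|T|)=\|T\|_{\mathcal{L}_1(H)}<\infty$, the factor $|T|^{1/2}$ is Hilbert--Schmidt, hence compact by the previous step, and $T$ inherits compactness as a product of a bounded and a compact operator.

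The main obstacle is item (iii): it is the only step that genuinely needs an external input (the polar decomposition theorem for bounded operators on a Hilbert space). Once that is in place, the rest of the chain (i) $\Rightarrow$ (ii) $\Rightarrow$ (iii) $\Rightarrow$ (iv) $\Rightarrow$ (v) is essentially bookkeeping with Parseval, Cauchy--Schwarz and the tail estimate of a convergent series.
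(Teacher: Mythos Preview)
Your argument is correct in all five items. The paper itself does not give a self-contained proof: it simply refers to Conway's textbook for items (i)--(ii), (iii) and (v), and observes that (iv) is a consequence of (i) and (iii). So your proposal is strictly more detailed than what the paper provides; the Parseval/Tonelli argument for (i), the adjoint trick for $TA$ in (ii), the polar-decomposition route to (iii), and the finite-rank approximation for (v) are exactly the standard arguments one finds in the cited reference, and your derivation of (iv) from (i) and (iii) matches the paper's one-line remark. One small cosmetic point: in (iii) you write $\|S^*V\|_{\mathcal{L}_2(H)}$, but $S^*V$ maps $H$ to $K$, so the norm should be $\|S^*V\|_{\mathcal{L}_2(H,K)}$; the estimate is unaffected since the proofs of (i)--(ii) carry over verbatim to $\mathcal{L}_2(H,K)$.
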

\begin{proof}
For items (i)-(ii) see \cite[Proposition 18.6]{Conway}, for item (iii) see \cite[Proposition 18.8]{Conway}, item (iv) is a consequence of items (i) and (iii), for item (v) see \cite[Corollary 18..7. and Theorem 18.11]{Conway}.
\end{proof}

\section{Tensor products}
\label{app_ten_pro}
As pointed out in \cite[Appendix E]{Janson} (to which we mainly refer for this part; see also \cite[Chapter II]{RS}), “ the name \textit{tensor product} denotes an idea rather than a specific construction". In fact, the tensor product of two vector spaces or Hilbert spaces is defined only up to isomorphism. By a standard abuse of language we usually refer to any of the isomorphic choices as the \textit{tensor product}. To be precise we should refer to different choices as different \textit{realizations} of the tensor product.

\subsection{Algebraic tensor product of vector spaces}
\begin{definition}
\label{def_TP}
If $V$ and $W$ are two vector spaces, then their \emph{(algebraic) tensor product} is a vector space, denoted by $V \otimes W$, together with a bilinear map $\tau: V \times W \rightarrow V \otimes W$, written as $(v,w)\mapsto v \otimes w \in V\otimes W$, with the following universal property: if $U$ is any vector space and $\varphi: V \times W \rightarrow U$ is a bilinear map, then there exists a unique linear map $\psi : V \otimes W\rightarrow U$ such that  the diagram 
\begin{equation*}
\xymatrix{
V \times W \ar^{\tau}[r] \ar_{\varphi}[d] & V \otimes W \ar@{-->}^{\psi}[dl]
 \\
U 
}
\end{equation*}
commutes, that is $\varphi= \psi \circ \tau$ (i.e. , $\varphi(v,w)=\psi(v \otimes w)$ for $v \in V$, $w \in W$).
\end{definition}
\begin{remark}
Notice that not every element in $V \otimes W$ is of the form $v \otimes w$, but it is always a finite sum of such elements. In other words, the \emph{elements $\{v \otimes w, v \in V, w \in W\}$ span $V \otimes W$}
. Notice also that the representation $v \otimes w$ is not unique, even when it exists. For example, $v \otimes 0=0=0 \otimes w$ for any $v,w$.
\end{remark}

The following results, which follows from the universal property, guarantees the uniqueness \textit{up to isomorphisms} of the tensor product. 
\begin{proposition}
Tensor products $V \otimes W$ are unique up to isomorphisms. That is, given two tensor products 
\begin{equation*}
\tau_1: V \times W \rightarrow T_1,
\end{equation*}
\begin{equation*}
\tau_1: V \times W \rightarrow T_2,
\end{equation*}
there is a unique isomorphism $i:T_1 \rightarrow T_2$ such that the diagram 
\begin{equation*}
\xymatrix{
&T_1 \ar@{-->}^i [dd]
\\
V \times W \ar^{\tau_1}[ur] \ar_{\tau_2}[dr] 
 \\
&T_2 
}
\end{equation*}
commutes i.e. $\tau_2=i \circ \tau_1$.
\end{proposition}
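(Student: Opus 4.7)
The plan is to prove this as a direct application of the universal property stated in Definition \ref{def_TP}, using the standard category-theoretic "two-step then compose" argument. The main obstacle is essentially bookkeeping: one must apply the universal property four separate times (twice to construct maps, twice more to verify that their compositions equal identities) and keep the diagrams straight. There is no real analytic content.

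First I would use the universal property of $(T_1, \tau_1)$ applied to the bilinear map $\tau_2 : V \times W \to T_2$ (which is bilinear by hypothesis, since $(T_2,\tau_2)$ is itself a tensor product). This produces a unique linear map $i : T_1 \to T_2$ with $\tau_2 = i \circ \tau_1$. Symmetrically, applying the universal property of $(T_2, \tau_2)$ to the bilinear map $\tau_1 : V \times W \to T_1$ yields a unique linear map $j : T_2 \to T_1$ with $\tau_1 = j \circ \tau_2$.

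Next I would show $j \circ i = \mathrm{id}_{T_1}$ and $i \circ j = \mathrm{id}_{T_2}$. Observe that $(j \circ i) \circ \tau_1 = j \circ (i \circ \tau_1) = j \circ \tau_2 = \tau_1$, so $j \circ i$ is a linear map $T_1 \to T_1$ making the diagram for $\varphi = \tau_1$ commute. But $\mathrm{id}_{T_1}$ is also such a map. By the \emph{uniqueness} clause of the universal property of $(T_1, \tau_1)$ applied to the bilinear map $\tau_1$ itself, we conclude $j \circ i = \mathrm{id}_{T_1}$. The identical argument, swapping the roles of $T_1$ and $T_2$, gives $i \circ j = \mathrm{id}_{T_2}$. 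Hence $i$ is a linear isomorphism with inverse $j$.

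Finally, the uniqueness of $i$ as an isomorphism making the stated triangle commute follows immediately: any linear map $i' : T_1 \to T_2$ with $\tau_2 = i' \circ \tau_1$ must coincide with $i$ by the uniqueness clause of the universal property applied to the bilinear map $\tau_2 : V \times W \to T_2$. This closes the argument.
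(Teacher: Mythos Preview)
Your proof is correct and follows essentially the same approach as the paper: both construct the two maps via the universal property and then use the uniqueness clause to show their compositions are identities. The only cosmetic difference is that the paper first isolates as a preliminary step the fact that the unique self-map $T\to T$ commuting with $\tau$ is the identity, whereas you invoke the uniqueness clause inline; the logical content is identical.
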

\begin{proof}
First we show that for a tensor product $\tau: V \times W \rightarrow T$ , the only map $f: T \rightarrow T$ that the makes the following diagram 
\begin{equation*}
\xymatrix{
&T \ar@{-->}^f[dd]
\\
V \times W \ar^{\tau}[ur] \ar_{\tau}[dr] 
 \\
&T 
}
\end{equation*}
commute is the identity. The definition of a tensor product demands that, given a bilinear map $\tau: V \times W \rightarrow T$ (with $T$ in place of the space $U$ of the definition) there is a unique linear map $\psi: T \rightarrow T$ such that the diagram 
\begin{equation*}
\xymatrix{
V \times W \ar^{\tau}[r] \ar_{\tau}[d] & T \ar@{-->}^{\psi}[dl]
 \\
T 
}
\end{equation*}
commutes i.e. $\tau= \psi \circ \tau$. The identity map on $T$ has this property, so it is the \textit{only} map $T \rightarrow T$ with this property.

Looking at two tensor products, first take $\tau_2: V \times W \rightarrow T_2$ in place of the $\varphi: V \times W \rightarrow U$ in the definition. That is, there is a unique linear $\Phi_1:T_1 \rightarrow T_2$ such that the diagram 
\begin{equation*}
\xymatrix{
V \times W \ar^{\tau_1}[r] \ar_{\tau_2}[d] & T_1 \ar@{-->}^{\Phi_1}[dl]
 \\
T_2
}
\end{equation*}
commutes, i.e. $\tau_2= \Phi_1 \circ \tau_1$. Similarly, reversing the roles, there is a unique linear map $\Phi_2: T_2 \rightarrow T_1$ such that the diagram
\begin{equation*}
\xymatrix{
V \times W \ar^{\tau_2}[r] \ar_{\tau_1}[d] & T_2 \ar@{-->}^{\Phi_2}[dl]
 \\
T_1
}
\end{equation*}
commutes, i.e. $\tau_1= \Phi_2 \circ \tau_2$.
Therefore, $\Phi_2 \circ \Phi_1$ makes the following diagram 
\begin{equation*}
\xymatrix{
V \times W \ar^{\tau_1}[r] \ar_{\tau_1}[ddr] & T_1 \ar@{-->}^{\Phi_1}[d]
\\
&T_2 \ar@{-->}^{\Phi_2}[d]
\\
&T_1
}
\end{equation*}
commute and so it is the identity, from the first part of the proof. Simmetrically, $\Phi_1 \circ \Phi_2$ is the identity too. Thus the maps $\Phi_1$ and $\Phi_2$ are mutually inverse i.e. $\Phi_2 \circ \Phi_1=\text{id}_{T_1}$ and $\Phi_1 \circ \Phi_2=\text{id}_{T_2}$, thus $\Phi_1$ and $\Phi_2$ are isomorphisms and this concludes the proof.
\end{proof}

The existence of the tensor product can be shown in several ways, for example by the example below.
\begin{example}
\label{ex_tp_1}
A useful construction of the tensor product is the following. Choose bases $\{e_i\}_{i \in I}$ in $V$ and $\{f_j\}_{j \in J}$ in $W$. Let $U$ be any vector space whose dimension equals the cardinality of $V\times W$, select a basis of $U$ and denotes its elements as $\{u_k\}_{k \in K}$, where $|K|=|I \times J|$. Then, a basis $\{e_i \otimes f_j\}_{(i,j) \in I \times J}$ for $V \otimes W$ can be constructed associating to each element $e_i \otimes f_j$ an element $u_k$. $V \otimes W$ can be thus constructed as the space generated by the elements of the basis $\{e_i \otimes f_j\}_{(i,j) \in I \times J}$. Notice that $\emph{dim}(V \otimes W)=\emph{dim}(V)\emph{dim}(W)$.

\end{example}

\begin{example}
As a particular case of Example \ref{ex_tp_1} consider two vector spaces $V$ and $W$ with \emph{dim}$(V)=3$ and \emph{dim}$(W)=2$. Let $U=\mathbb{R}^6$ and select its canonical basis $\{e_i\}_{i=1}^6$. We can construct a basis for $V \otimes W$ by setting $v_1 \otimes w_1=e_1$, $v_2 \otimes w_1=e_2$, $v_3 \otimes w_1=e_3$, $v_1 \otimes w_2=e_4$, $v_2 \otimes w_2=e_5$ and $v_3 \otimes w_2=e_6$.
 \end{example}

\subsection{Tensor product of Hilbert spaces}

If now $V$ and $W$ are Hilbert spaces, then their scalar products can be used to define a scalar product on their algebraic tensor product $V \otimes W$. This is in general not complete, but taking the completion we obtain a \emph{Hilbert space tensor product}, still denoted by $V \otimes W$.

\begin{definition}
\label{Htp_def}
Let $V$ and $W$ be Hilbert spaces. It can be seen (e.g. using the universal property twice) that there exists a unique inner product on the algebraic tensor product $V \otimes W$ such that
\begin{equation*}
\langle v \otimes w, \tilde v \otimes \tilde w\rangle_{V \otimes W}= \langle v, \tilde v\rangle_V \langle w,\tilde w\rangle_W.
\end{equation*}
The \emph{Hilbert space tensor product} of $V$ and $W$ is the Hilbert space obtained by completing the algebraic tensor product in the norm corresponding to this inner product.
\end{definition}
We still denote this tensor product by $V \otimes W$. One can show that the above definition is equivalent to the following one.
\begin{definition}
If $H_1$ and $H_2$ are two Hilbert spaces, their \emph{tensor product} is an Hilbert space $H_1 \otimes H_2$ together with a bilinear map $H_1 \times H_2 \rightarrow H_1 \otimes H_2$ denoted by $(f_1,f_2) \mapsto f_1 \otimes f_2\in H_1 \otimes H_2$ such that the range of this map is total
\begin{footnote}{We recall that a set in a topological vector space is said to be \emph{total} if the family of (finite) linear combinations of elements of the set is dense in the space. 
}\end{footnote} 
in $H_1\otimes H_2$ and for all $f_1, g_1\in H_1$, $f_2,g_2 \in H_2$
\begin{equation}
\label{sca_pro_ts}
\langle f_1 \otimes f_2, g_1 \otimes g_2\rangle_{H_1 \otimes H_2}= \langle f_1, g_1\rangle_{H_1} \langle f_2, g_2\rangle_{H_2}.
\end{equation}
\end{definition}
It follows from \eqref{sca_pro_ts} that for any $f_1 \in H_1$, $f_2 \in H_2$, 
\begin{equation*}
\|f_1 \otimes f_2\|_{H_1 \otimes H_2}=\|f_1\|_{H_1}\|f_2\|_{H_2}.
\end{equation*}

The Hilbert space tensor product can be constructed by completing the algebraic tensor product as in Definition \ref{Htp_def}. There are however also direct constructions. 
 In the following examples we provide some important constructions of tensor product Hilbert spaces in some relevant situations (we omit the detailed verifications).

\begin{example}
If $\{e_i\}_{i \in I}$ and $\{f_j\}_{j \in J}$ are orthonormal bases in $H_1$ and $H_2$, then $\{e_i \otimes f_j\}_{(i,j) \in I \times J}$ is an orthonormal basis in $H_1 \otimes H_2$. Notice that, if $H_1$ and $H_2$ are separable, then $H_1\otimes H_2$ is also separable.
\end{example}

\begin{example}
The tensor product of two $L^2$-spaces $L^2(X, \mu)$ and $L^2(Y, \nu)$, where $\mu$ and $\nu$ are $\sigma$-finite measures, has one possible concrete realization as $L^2(X, \mu)\otimes L^2(Y, \nu)=L^2(X\times Y, \mu\times \nu)$ with $(f \otimes g)(x, y)=f(x)g(y)$, for any $x \in X$, $y \in Y$.
\end{example}

\begin{example}
The tensor product $L^2(X, \mu) \otimes H$, where $(X, \mu)$ is a measurable space and $H$ is an Hilbert space, can be realized as $L^2(X, \mu) \otimes H=L^2(X, \mu;H)$ with $f \otimes h=fh$.
\end{example}

\begin{example}
\label{ex_TP_HS}
The tensor product of two Hilbert spaces $H_1$ and $H_2$ can be realized as the (Hilbert) space of Hilbert-Schmidt operators from $H_1$ to $H_2$, i.e. $H_1 \otimes H_2=\mathcal{L}_2(H_1, H_2)$ with $h_1 \otimes h_2= T_{h_1,h_2}\in \mathcal{L}_2(H_1, H_2)$ where
\begin{equation}
\label{def_TP_HS}
T_{h_1,h_2}(v_1):= \langle h_1, v_1\rangle_{H_1}h_2,\quad \text{for any} \  v_1 \in H_1.
\end{equation} 
\\
It is easy to verify that $T_{h_1,h_2}$ is a bilinear operator that satisfies the universal property of Definition \ref{def_TP} and that it is an Hilbert-Schmidt operator.
\end{example}

\begin{remark}
By comparing the previous examples we have that the tensor product of two $L^2$-spaces $L^2(X, \mu)$ and $L^2(Y, \nu)$, where $\mu$ and $\nu$ are $\sigma$-finite measures, has several realizations, all of which are isomorphic to each other,
\begin{equation*}
L^2(X) \otimes L^2(Y) 
\simeq L^2(X \times Y) \simeq L^2(X;L^2(Y)) \simeq \mathcal{L}_2(L^2(X),L^2(Y)).
\end{equation*} 
Plainly, one has to select the identification of the tensor product space 
that is well adapted to the specific problem at hand. 
\end{remark}

We have so far considered only tensor products of two spaces, but the definition is extendable to several spaces by considering multilinear maps instead of bilinear. Alternatively, one can define tensor products of several spaces by induction starting from the case of two spaces. 
For example, given the Hilbert spaces $H_1, H_2, H_3$, one can construct $H_1 \otimes H_2 \otimes H_3$ exploiting the isomorphisms $H_1 \otimes H_2 \otimes H_3 \simeq H_1 \otimes (H_2 \otimes H_3)  \simeq (H_1 \otimes H_2) \otimes H_3$.  In particular,  we can construct the tensor powers $H^{\otimes n}$ of any Hilbert space.

\section{Some recalls on semigroups}
\label{semigroup_sec}

We recall here the definition and some basic properties of $C_0$-semigroups. We mainly refer to \cite{Pazy}.

Let $H$ be a Hilbert space
\begin{footnote}{All the definitions and results of this Section can be stated working with Banach spaces (see \cite{Pazy}): for our purpose it is enough to deal with Hilbert spaces.}\end{footnote}. 
\begin{definition}
A \emph{semigroup of bounded linear operators} acting on $H$ is a family of operators $T_t \in \mathcal{L}(H)$ depending on the parameter $t \ge 0$ satisfying the following properties 
\begin{itemize}
\item [(S1)] $T_0=I$,
\item [(S2)] $T_{t+s}=T_tT_s$, for all $s,t \ge 0$.
\end{itemize}
\end{definition}

It is understood that $T_tT_s$ means $T_t \circ T_s$. Observe that (S2) implies that $T_s$ and $T_t$ commute for all $s,t \ge 0$.

\begin{definition}
A semigroup $\{T_t\}_{t \ge 0}$ of bounded linear operators on $H$ is a \emph{strongly continuous} or \emph{$C_0$-semigroup of bounded linear operators} if
\begin{itemize}
\item [(S3)] $\lim_{t \rightarrow 0} T_tx=x$, for all $x \in H$.
\end{itemize}
\end{definition}

\begin{proposition}
\label{bound_S_prop}
Let $\{T_t\}_{t \ge 0}$ be a $C_0$-semigroup. There exist constants $w \ge 0$ and $M \ge 1$ such that 
\begin{equation*}
\|T_t\|_{\mathcal{L}(H)} \le Me^{\omega t}, \qquad \forall \ t \ge 0.
\end{equation*}
\end{proposition}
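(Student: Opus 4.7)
The plan is to deduce the exponential bound from two ingredients: a local boundedness property of the semigroup near $t=0$ (obtained via the uniform boundedness principle), and the semigroup law (S2) to extrapolate from small $t$ to arbitrary $t \ge 0$.

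\textbf{Step 1: local boundedness near zero.} I would first show that there exist $\eta > 0$ and $M \ge 1$ with $\|T_t\|_{\mathcal{L}(H)} \le M$ for all $t \in [0,\eta]$. Assume by contradiction that no such $\eta$, $M$ exist; then one can pick $t_n \downarrow 0$ with $\|T_{t_n}\|_{\mathcal{L}(H)} \to \infty$. By the Banach--Steinhaus (uniform boundedness) theorem, there must exist some $x \in H$ for which $\sup_n \|T_{t_n}x\|_H = \infty$. But this contradicts the strong continuity assumption (S3), which gives $T_{t_n}x \to x$ and in particular $\sup_n \|T_{t_n}x\|_H < \infty$. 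Hence such $\eta > 0$ and a finite bound exist on $[0,\eta]$. Since $\|T_0\|_{\mathcal{L}(H)} = \|I\|_{\mathcal{L}(H)} = 1$, we may (and do) take $M \ge 1$.

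\textbf{Step 2: propagation via the semigroup law.} For an arbitrary $t \ge 0$, write $t = n\eta + r$ with $n \in \mathbb{N} \cup \{0\}$ and $r \in [0,\eta)$. Iterating property (S2) yields $T_t = T_r \, T_\eta^{\,n}$, so
\begin{equation*}
\|T_t\|_{\mathcal{L}(H)} \le \|T_r\|_{\mathcal{L}(H)} \, \|T_\eta\|_{\mathcal{L}(H)}^{\,n} \le M \cdot M^n = M^{n+1}.
\end{equation*}

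\textbf{Step 3: putting it in exponential form.} Define $\omega := \eta^{-1}\log M \ge 0$ (note $\omega \ge 0$ because $M \ge 1$). Since $n\eta \le t$, we have $M^n = e^{n\eta\,\omega} \le e^{\omega t}$, and therefore
\begin{equation*}
\|T_t\|_{\mathcal{L}(H)} \le M \cdot M^n \le M\, e^{\omega t}, \qquad \forall\, t \ge 0,
\end{equation*}
which is the claimed estimate.

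\textbf{Main obstacle.} The only non-routine step is Step 1: passing from pointwise strong continuity at $0$ to a uniform operator-norm bound on a neighborhood of $0$. This is precisely the point where the completeness of $H$ enters, through the uniform boundedness principle; without it, strong continuity alone does not give operator-norm boundedness. Steps 2 and 3 are then purely algebraic manipulations using (S2).
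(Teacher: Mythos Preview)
Your proof is correct and follows essentially the same route as the paper's: a contradiction argument using the uniform boundedness principle to get local boundedness on some $[0,\eta]$, followed by the decomposition $t = n\eta + r$ and the semigroup law to obtain $\|T_t\| \le M^{n+1} \le M e^{\omega t}$ with $\omega = \eta^{-1}\log M$. The paper's argument is identical up to notation (it uses $r$ for your $\eta$ and $\delta$ for your $r$).
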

\begin{proof}
We first show that there exists $r>0$ such that 
\begin{equation}
\label{sti1S}
\sup_{t \in [0,r]}\|T_t\|_{\mathcal{L}(H)} \le M,
\end{equation}
for some positive constant $M$. If not, there exists a sequence $t_n \rightarrow 0$ for which
\begin{equation}
\label{est_below}
\|T_{t_n}\|_{\mathcal{L}(H)} \ge n.
\end{equation}
Since $t_n \rightarrow 0$, from (S3) we infer $\|T_{t_n}x- x\|_H \rightarrow 0$ as $n \rightarrow \infty$, for any $x \in H$. By the triangular inequality this yields
\begin{equation*}
\sup_{n \in \mathbb{N}}\|T_{t_n} x\|_H \le \sup_{n \in \mathbb{N}}\|T_{t_n} x-x\|_H+ \|x\|_H \le \|x\|_H, \quad \forall \ x \in H.
\end{equation*}
By the Uniform Boundedness Principle we thus infer $\sup_{n \in \mathbb{N}}\|T_{t_n}\|_{\mathcal{L}(H)}< \infty$ which contradicts \eqref{est_below}. 
Thus estimates \eqref{sti1S} holds true and since $T_0=I$ it must be $M \ge 1$.

Next we set $\omega =\frac 1r\log(M)$. Given $t \ge 0$ we have $t=nr+\delta$ with $0\le \delta <r$ and therefore by the semigroup property (S2) 
\begin{align*}
\|T_t\|_{\mathcal{L}(H)}=\|(T_r)^nT_\delta\|_{\mathcal{L}(H)}\le \|T_r\|^n_{\mathcal{L}(H)}\|T_\delta\|_{\mathcal{L}(H)} \le M^{n+1}=Me^{\omega nr}=Me^{\omega t}Me^{-\delta t} \le Me^{\omega t}.
\end{align*}
\end{proof}

\begin{corollary}
If $\{T_t\}_{t \ge 0}$ is a $C_0$-semigroup then for every $x \in H$, $t \mapsto T_tx$ is a continuous function from $[0, + \infty)$ into $H$.
\end{corollary}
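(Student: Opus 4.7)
The plan is to establish continuity of $t \mapsto T_t x$ separately from the right and from the left at an arbitrary point $t_0 \ge 0$, reducing both cases to the strong continuity at $0$ provided by axiom (S3) together with the semigroup property (S2). The only nontrivial ingredient beyond (S2) and (S3) is the exponential bound from Proposition \ref{bound_S_prop}, which is needed to control the operator norm $\|T_{t_0-h}\|_{\mathcal{L}(H)}$ when approaching $t_0$ from the left.

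First I would treat right-continuity at $t_0 \ge 0$. For $h > 0$ the semigroup law (S2) gives $T_{t_0+h}x - T_{t_0}x = T_{t_0}(T_h x - x)$, so
\begin{equation*}
\|T_{t_0+h}x - T_{t_0}x\|_H \le \|T_{t_0}\|_{\mathcal{L}(H)}\,\|T_h x - x\|_H,
\end{equation*}
and the right-hand side tends to $0$ as $h \to 0^+$ by (S3), since $\|T_{t_0}\|_{\mathcal{L}(H)}$ is a fixed finite constant. This handles both the case $t_0 = 0$ (where it is really just (S3)) and the right-continuity at any $t_0 > 0$.

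Next I would address left-continuity at $t_0 > 0$. For $0 < h < t_0$ the semigroup law yields $T_{t_0-h}x - T_{t_0}x = T_{t_0-h}(x - T_h x)$, hence
\begin{equation*}
\|T_{t_0-h}x - T_{t_0}x\|_H \le \|T_{t_0-h}\|_{\mathcal{L}(H)}\,\|x - T_h x\|_H.
\end{equation*}
Here the main point is that, by Proposition \ref{bound_S_prop}, there exist $M \ge 1$ and $\omega \ge 0$ with $\|T_{t_0-h}\|_{\mathcal{L}(H)} \le M e^{\omega(t_0-h)} \le M e^{\omega t_0}$ uniformly in $h \in (0, t_0)$; thus the prefactor stays bounded while $\|x - T_h x\|_H \to 0$ by (S3).

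The step I expect to require the most care is this left-continuity argument, because it is precisely where one must avoid the circular trap of trying to conclude something from $T_{t_0-h}$ as $h \to 0^+$ without first knowing that the family $\{T_s\}$ is locally bounded near $t_0$; the appeal to Proposition \ref{bound_S_prop} resolves this cleanly. Once both one-sided limits at every $t_0 \ge 0$ are shown to vanish, continuity of $t \mapsto T_t x$ on $[0,+\infty)$ follows immediately.
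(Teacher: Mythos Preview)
Your proof is correct and follows essentially the same approach as the paper: factor $T_{t_0+h}x-T_{t_0}x=T_{t_0}(T_hx-x)$ for right-continuity and $T_{t_0-h}x-T_{t_0}x=T_{t_0-h}(x-T_hx)$ for left-continuity, then invoke (S3) together with the exponential bound from Proposition~\ref{bound_S_prop} to control $\|T_{t_0-h}\|_{\mathcal{L}(H)}$. The only minor difference is that the paper also cites the exponential bound in the right-continuity step, whereas you (correctly) observe that $\|T_{t_0}\|_{\mathcal{L}(H)}$ being a single finite constant already suffices there.
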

\begin{proof}
Let $t, h \ge 0$. From Proposition \ref{bound_S_prop} we infer
\begin{equation*}
\|T_{t+h}x-T_tx\|_H\le \|T_t\|_{\mathcal{L}(H)}\|T_h x-x\|_H \le Me^{\omega t}\|T_h x-x\|_H
\end{equation*}
and from (S3) we infer the right-continuity of $t \mapsto T_tx$. 
\\
Let $t \ge h\ge 0$, 
then 
\begin{equation*}
\|T_{t-h}x-T_tx\|_H \le \|T_{t-h}\|_{\mathcal{L}(H)}\|x-T_hx\|_H \le Me^{\omega t}\|x-T_hx\|_H
\end{equation*}
and from (S3) we obtain the left-continuity of $t \mapsto T_tx$.
\end{proof}

\begin{definition}
The linear operator $A$ of domain
\begin{equation*}
\emph{Dom}(A)=\left\{ x \in H \ : \ \exists \ \lim_{t \rightarrow 0} \frac{T_tx-x}{t}\right\}
\end{equation*}
defined by 
\begin{equation*}
Ax=\lim_{t \rightarrow 0} \frac{T_tx-x}{t},
\end{equation*}
is the \emph{infinitesimal generator} of the semigroup $\{T_t\}_{t \ge 0}$. \end{definition}

\begin{proposition}
\label{lem_S}
Let $T_t$ be a $C_0$-semigroup and let $A$ be its infinitesimal generator. Then 
\begin{itemize}
\item [a)] for $x \in H$, 
\begin{equation*}
\lim_{h \rightarrow 0} \frac 1 h \int_t^{t+h} T_sx\, {\rm d}s=T_tx,
\end{equation*}
\item [b)] for $x \in H$, $\int_0^t T_sx\, {\rm d}s \in \emph{Dom}(A)$ and 
\begin{equation*}
A\left(\int_0^tT_sx\,{\rm d}s\right)=T_tx-x,
\end{equation*}
\item [c)] for $x \in \emph{Dom}(A)$, $T_t x \in \emph{Dom}(A)$ and 
\begin{equation*}
\frac{{\rm d}}{{\rm d}t}T_tx=AT_tx=T_tAx,
\end{equation*}
\item [d)] for $x \in \emph{Dom}(A)$,
\begin{equation*}
T_tx-T_sx=\int_s^t T_rAx\,{\rm d}r=\int_s^tAT_rx\, {\rm d}r.
\end{equation*}
\end{itemize}
\end{proposition}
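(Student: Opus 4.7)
The plan is to prove the four items in order, since each builds on the previous ones. For (a), I would invoke the continuity of the map $s\mapsto T_sx$ on $[0,\infty)$ (proved in the corollary just before the statement): for any $\varepsilon>0$ there is $\delta>0$ such that $\|T_sx-T_tx\|_H<\varepsilon$ for $|s-t|<\delta$, and hence
\[
\Bigl\|\tfrac1h\!\int_t^{t+h}\!T_sx\,{\rm d}s-T_tx\Bigr\|_H\le \tfrac1{|h|}\!\int_t^{t+h}\!\|T_sx-T_tx\|_H\,{\rm d}s<\varepsilon
\]
for $|h|<\delta$. This is the usual Lebesgue/Bochner differentiation of the integral of a continuous $H$-valued function, and it should be essentially a one-liner.

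For (b), I would exploit the key computation
\[
\frac{T_h-I}{h}\int_0^tT_sx\,{\rm d}s=\frac1h\!\int_0^t(T_{s+h}-T_s)x\,{\rm d}s=\frac1h\!\int_t^{t+h}T_sx\,{\rm d}s-\frac1h\!\int_0^hT_sx\,{\rm d}s,
\]
where the first equality uses the linearity and boundedness of $T_h$ to bring it inside the Bochner integral, and the second equality is the change of variables $s\mapsto s+h$ together with additivity of the integral. Letting $h\to0^+$ and applying (a) at the endpoints $t$ and $0$ gives a limit equal to $T_tx-x$, so $\int_0^tT_sx\,{\rm d}s\in{\rm Dom}(A)$ and the stated formula holds.

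For (c), I would first observe that for $x\in{\rm Dom}(A)$ the semigroup law gives
\[
\frac{T_h-I}{h}T_tx=T_t\,\frac{T_hx-x}{h}\xrightarrow[h\to0^+]{}T_tAx,
\]
using continuity (boundedness) of $T_t$; this shows $T_tx\in{\rm Dom}(A)$ with $AT_tx=T_tAx$ and gives the right derivative $\tfrac{\rm d}{{\rm d}t^+}T_tx=T_tAx$. The delicate point, and in my view the main obstacle of the proof, is the left derivative: I would write, for $0<h<t$,
\[
\frac{T_tx-T_{t-h}x}{h}-T_tAx=T_{t-h}\!\left(\frac{T_hx-x}{h}-Ax\right)+\bigl(T_{t-h}Ax-T_tAx\bigr),
\]
and estimate using the local uniform bound $\|T_{t-h}\|_{\mathcal L(H)}\le Me^{\omega t}$ from Proposition \ref{bound_S_prop} (to handle the first term) together with the continuity of $r\mapsto T_rAx$ at $r=t$ (to handle the second term). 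This matching of the right and left derivatives produces $\tfrac{\rm d}{{\rm d}t}T_tx=T_tAx=AT_tx$.

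Finally, (d) is a direct consequence of (c): the map $r\mapsto T_rx$ is continuously differentiable on $[s,t]$ with derivative $T_rAx=AT_rx$, so the fundamental theorem of calculus in $H$ gives
\[
T_tx-T_sx=\int_s^t\frac{\rm d}{{\rm d}r}T_rx\,{\rm d}r=\int_s^tT_rAx\,{\rm d}r=\int_s^tAT_rx\,{\rm d}r.
\]
The second equality uses $\tfrac{\rm d}{{\rm d}r}T_rx=T_rAx$ from (c), and the third uses $AT_rx=T_rAx$, also from (c), so no closedness argument for $A$ is needed here.
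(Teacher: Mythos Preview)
Your proof is correct and follows the standard route; it is essentially the argument in Pazy's Theorem~2.4, which is exactly what the paper cites in lieu of a proof. There is nothing to add: your handling of the left derivative in (c) via the decomposition and the local uniform bound from Proposition~\ref{bound_S_prop} is precisely the point where care is needed, and you treat it correctly.
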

\begin{proof}
See \cite[Theorem 2.4]{Pazy}.
\end{proof}

\begin{proposition}
If $A$ is the infinitesimal generator of a $C_0$-semigroup $\{T_t\}_{t \ge 0}$, then $\emph{Dom}(A)$ is dense in $H$ and $A$ is a closed linear operator.
\end{proposition}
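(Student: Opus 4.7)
The plan is to treat the two assertions separately, using Proposition \ref{lem_S} as the main workhorse for both.

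First I would establish density of $\text{Dom}(A)$. Given any $x \in H$ and $t>0$, item (b) of Proposition \ref{lem_S} tells us that $x_t := \frac{1}{t}\int_0^t T_s x\, {\rm d}s$ lies in $\text{Dom}(A)$. Item (a) then gives $x_t \to T_0 x = x$ as $t \to 0^+$. Hence $x$ is a limit of elements of $\text{Dom}(A)$, so $\text{Dom}(A)$ is dense in $H$.

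Next, linearity of $A$ is immediate from linearity of $T_t$ and of the limit defining $A$ (plus the fact that $\text{Dom}(A)$ is a linear subspace, which also follows at once). The substantive step is closedness. Suppose $\{x_n\} \subset \text{Dom}(A)$ with $x_n \to x$ in $H$ and $A x_n \to y$ in $H$; I must show $x \in \text{Dom}(A)$ and $Ax = y$. Fix $t>0$ and apply item (d) of Proposition \ref{lem_S} to each $x_n$:
\begin{equation*}
T_t x_n - x_n = \int_0^t T_s A x_n \, {\rm d}s.
\end{equation*}
The left-hand side converges to $T_t x - x$ since each $T_t$ is bounded. For the right-hand side, Proposition \ref{bound_S_prop} gives $\|T_s\|_{\mathcal{L}(H)} \le M e^{\omega t}$ uniformly for $s \in [0,t]$, so
\begin{equation*}
\left\Vert \int_0^t T_s(A x_n - y)\, {\rm d}s \right\Vert_H \le t M e^{\omega t} \|A x_n - y\|_H \to 0.
\end{equation*}
Passing to the limit, I obtain $T_t x - x = \int_0^t T_s y \, {\rm d}s$. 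Dividing by $t$ and letting $t \to 0^+$, the right-hand side tends to $y$ by item (a) of Proposition \ref{lem_S} (applied to $y \in H$), while the left-hand side is the difference quotient defining $A$. Therefore the limit $\lim_{t \to 0}(T_t x - x)/t$ exists and equals $y$, so $x \in \text{Dom}(A)$ and $Ax = y$, proving $A$ is closed.

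No step looks like a serious obstacle: the only thing to be careful about is the interchange of limits in the integral identity, which is controlled by the uniform bound on $\|T_s\|_{\mathcal{L}(H)}$ on compact time intervals from Proposition \ref{bound_S_prop}. Both parts rely crucially on the integral representations in Proposition \ref{lem_S}, which is precisely why the lemma is proved first.
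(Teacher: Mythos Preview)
Your proof is correct and follows essentially the same approach as the paper: density via $x_t=\frac{1}{t}\int_0^t T_s x\,{\rm d}s$ using items (a) and (b) of Proposition~\ref{lem_S}, and closedness via the integral identity from item (d), passing to the limit and then applying item (a) after dividing by $t$. If anything, you are slightly more explicit than the paper in invoking Proposition~\ref{bound_S_prop} to justify the uniform-in-$s$ convergence of the integrand on $[0,t]$.
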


\begin{proof}
For every $x \in H$ set $x_t:=\frac 1t \int_0^t T_sx\,{\rm d}s$. Proposition \ref{lem_S} b) yields $x_t \in$ Dom$(A)$ for $t>0$ and Proposition \ref{lem_S} a) yields $x_t \rightarrow x$ as $t \rightarrow 0$. Thus the closure of Dom$(A)$ equals $H$, that is Dom$(A)$ is dense in $H$. One can easily see that $A$ is a linear operator, it remains to prove that it is closed. To prove the closedness of $A$ let $\{x_n\}_n \in$ Dom$(A)$ be a sequence such that $x_n \rightarrow x$ and $Ax_n \rightarrow y$ as $n \rightarrow \infty$. From Proposition \ref{lem_S}(d) we infer 
\begin{equation}
\label{star_S}
T_tx_n-x_n =\int_0^t T_sAx_n\, {\rm d}s.
\end{equation}
The integrand on the r.h.s. of \eqref{star_S} converges to $T_sy$ uniformly on bounded intervals. Therefore, letting $n \rightarrow \infty$  in \eqref{star_S} yields
\begin{equation}
\label{star_S_bis}
T_tx-x= \int_0^t T_sy \, {\rm d}s.
\end{equation}
Dividing \eqref{star_S_bis} by $t>0$ and letting $t$ go to zero, Proposition \ref{lem_S} a) yields $x \in$ Dom$A$ and $Ax=y$.
\end{proof}

\newpage


\bibliographystyle{abbrv}

\begin{thebibliography}{10}

\bibitem{Addo1}
D.~Addona, G.~Menegatti, and M.~Miranda, Jr.
\newblock On integration by parts formula on open convex sets in {W}iener
  spaces.
\newblock {\em J. Evol. Equ.}, 21(2):1917--1944, 2021.

\bibitem{AL21}
E.~Alós and D.~Lorite.
\newblock {\em {M}alliavin {C}alculus in {F}inance: Theory and Practice}.
\newblock 2021.

\bibitem{Bally1998}
V.~Bally and E.~Pardoux.
\newblock Malliavin calculus for white noise driven parabolic {SPDE}s.
\newblock {\em Potential Anal.}, 9(1):27--64, 1998.

\bibitem{Talay1}
V.~Bally and D.~Talay.
\newblock The law of the {E}uler scheme for stochastic differential equations.
  {I}. {C}onvergence rate of the distribution function.
\newblock {\em Probab. Theory Related Fields}, 104(1):43--60, 1996.

\bibitem{Talay2}
V.~Bally and D.~Talay.
\newblock The law of the {E}uler scheme for stochastic differential equations.
  {II}. {C}onvergence rate of the density.
\newblock {\em Monte Carlo Methods Appl.}, 2(2):93--128, 1996.

\bibitem{Bax}
P.~Baxendale.
\newblock Gaussian measures on function spaces.
\newblock {\em Amer. J. Math.}, 98(4):891--952, 1976.

\bibitem{BFFZ}
D.~A. Bignamini, S.~Ferrari, S.~Fornaro, and M.~Zanella.
\newblock Differentiability in infinite dimension and {M}alliavin calculus.
\newblock {\em arXiv:2308.05004}.

\bibitem{Bis1}
J.-M. Bismut.
\newblock Martingales, the {M}alliavin calculus and hypoellipticity under
  general {H}\"{o}rmander's conditions.
\newblock {\em Z. Wahrsch. Verw. Gebiete}, 56(4):469--505, 1981.

\bibitem{Bis2}
J.-M. Bismut.
\newblock {\em Large deviations and the {M}alliavin calculus}, volume~45 of
  {\em Progress in Mathematics}.
\newblock Birkh\"{a}user Boston, Inc., Boston, MA, 1984.

\bibitem{Bogachev}
V.~I. Bogachev.
\newblock {\em Gaussian measures}, volume~62 of {\em Mathematical Surveys and
  Monographs}.
\newblock American Mathematical Society, Providence, RI, 1998.

\bibitem{BDPT}
S.~Bonaccorsi, G.~Da~Prato, and L.~Tubaro.
\newblock Construction of a surface integral under local {M}alliavin
  assumptions, and related integration by parts formulas.
\newblock {\em J. Evol. Equ.}, 18(2):871--897, 2018.

\bibitem{BoTuZa}
S.~Bonaccorsi, L.~Tubaro, and M.~Zanella.
\newblock Surface measures and integration by parts formula on levels sets
  induced by functionals of the {B}rownian motion in {$\Bbb R^n$}.
\newblock {\em NoDEA Nonlinear Differential Equations Appl.}, 27(3):Paper No.
  27, 22, 2020.

\bibitem{BoZa2}
S.~Bonaccorsi and M.~Zanella.
\newblock Existence and regularity of the density for solutions of stochastic
  differential equations with boundary noise.
\newblock {\em Infin. Dimens. Anal. Quantum Probab. Relat. Top.},
  19(1):1650007, 24, 2016.

\bibitem{BoZa1}
S.~Bonaccorsi and M.~Zanella.
\newblock Absolute continuity of the law for solutions of stochastic
  differential equations with boundary noise.
\newblock {\em Stoch. Dyn.}, 17(6):1750045, 31, 2017.

\bibitem{BH}
N.~Bouleau and F.~Hirsch.
\newblock Propri\'{e}t\'{e}s d'absolue continuit\'{e} dans les espaces de
  {D}irichlet et application aux \'{e}quations diff\'{e}rentielles
  stochastiques.
\newblock In {\em S\'{e}minaire de {P}robabilit\'{e}s, {XX}, 1984/85}, volume
  1204 of {\em Lecture Notes in Math.}, pages 131--161. Springer, Berlin, 1986.

\bibitem{Brezis}
H.~Brezis.
\newblock {\em Functional analysis, {S}obolev spaces and partial differential
  equations}.
\newblock Universitext. Springer, New York, 2011.

\bibitem{CDP96}
P.~Cannarsa and G.~Da~Prato.
\newblock Schauder estimates for {K}olmogorov equations in {H}ilbert spaces.
\newblock In {\em Progress in elliptic and parabolic partial differential
  equations ({C}apri, 1994)}, volume 350 of {\em Pitman Res. Notes Math. Ser.},
  pages 100--111. Longman, Harlow, 1996.

\bibitem{Cardon-Weber2001}
C.~Cardon-Weber.
\newblock Large deviations for a {B}urgers'-type {SPDE}.
\newblock {\em Stochastic Process. Appl.}, 84(1):53--70, 1999.

\bibitem{Conway}
J.~B. Conway.
\newblock {\em A course in operator theory}, volume~21 of {\em Graduate Studies
  in Mathematics}.
\newblock American Mathematical Society, Providence, RI, 2000.

\bibitem{Crisan}
D.~Crisan, K.~Manolarakis, and N.~Touzi.
\newblock On the {M}onte {C}arlo simulation of {BSDE}s: an improvement on the
  {M}alliavin weights.
\newblock {\em Stochastic Process. Appl.}, 120(7):1133--1158, 2010.

\bibitem{DaPrato}
G.~Da~Prato.
\newblock {\em {\sc Introduction to stochastic analysis and {M}alliavin
  calculus}}, volume~13 of {\em Appunti. Scuola Normale Superiore di Pisa
  (Nuova Serie) [Lecture Notes. Scuola Normale Superiore di Pisa (New
  Series)]}.
\newblock Edizioni della Normale, Pisa, third edition, 2014.

\bibitem{DLT}
G.~Da~Prato, A.~Lunardi, and L.~Tubaro.
\newblock Surface measures in infinite dimension.
\newblock {\em Atti Accad. Naz. Lincei Rend. Lincei Mat. Appl.},
  25(3):309--330, 2014.

\bibitem{DalQue}
R.~C. Dalang and L.~Quer-Sardanyons.
\newblock Stochastic integrals for {SPDE}'s: a comparison.
\newblock {\em Expo. Math.}, 29(1):67--109, 2011.

\bibitem{FerZan}
B.~Ferrario and M.~Zanella.
\newblock Absolute continuity of the law for the two dimensional stochastic
  {N}avier-{S}tokes equations.
\newblock {\em Stochastic Process. Appl.}, 129(5):1568--1604, 2019.

\bibitem{GT}
B.~Gaveau and P.~Trauber.
\newblock L'intégrale stochastique comme opérateur de divergence dans
  l'espace fonctionnel.
\newblock {\em Journal of Functional Analysis}, 46(2):230--238, 1982.

\bibitem{GRO1}
L.~Gross.
\newblock Potential theory on {H}ilbert space.
\newblock {\em J. Functional Analysis}, 1:123--181, 1967.

\bibitem{HaiMat}
M.~Hairer and J.~C. Mattingly.
\newblock Ergodicity of the 2{D} {N}avier-{S}tokes equations with degenerate
  stochastic forcing.
\newblock {\em Ann. of Math. (2)}, 164(3):993--1032, 2006.

\bibitem{Jac_Pro}
J.~Jacod and P.~Protter.
\newblock {\em Probability essentials}.
\newblock Universitext. Springer-Verlag, Berlin, second edition, 2003.

\bibitem{Janson}
S.~Janson.
\newblock {\em {\sc Gaussian {H}ilbert spaces}}, volume 129 of {\em Cambridge
  Tracts in Mathematics}.
\newblock Cambridge University Press, Cambridge, 1997.

\bibitem{KS84}
S.~Kusuoka and D.~Stroock.
\newblock Applications of the {M}alliavin calculus. {I}.
\newblock In {\em Stochastic analysis ({K}atata/{K}yoto, 1982)}, volume~32 of
  {\em North-Holland Math. Library}, pages 271--306. North-Holland, Amsterdam,
  1984.

\bibitem{KS85}
S.~Kusuoka and D.~Stroock.
\newblock Applications of the {M}alliavin calculus. {II}.
\newblock {\em J. Fac. Sci. Univ. Tokyo Sect. IA Math.}, 32(1):1--76, 1985.

\bibitem{KS87}
S.~Kusuoka and D.~Stroock.
\newblock Applications of the {M}alliavin calculus. {III}.
\newblock {\em J. Fac. Sci. Univ. Tokyo Sect. IA Math.}, 34(2):391--442, 1987.

\bibitem{Lunardi}
A.~Lunardi, M.~Miranda, and D.~Pallara.
\newblock {\sc 19th {I}nternet {S}eminar: Infinite Dimensional Analysis,
  lecture notes}, 2016.

\bibitem{Mal78}
P.~Malliavin.
\newblock Stochastic calculus of variation and hypoelliptic operators.
\newblock In {\em Proceedings of the {I}nternational {S}ymposium on
  {S}tochastic {D}ifferential {E}quations ({R}es. {I}nst. {M}ath. {S}ci.,
  {K}yoto {U}niv., {K}yoto, 1976)}, Wiley-Intersci. Publ., pages 195--263. John
  Wiley \& Sons, New York-Chichester-Brisbane, 1978.

\bibitem{Malliavin}
P.~Malliavin.
\newblock {\em Stochastic analysis}, volume 313 of {\em Grundlehren der
  mathematischen Wissenschaften [Fundamental Principles of Mathematical
  Sciences]}.
\newblock Springer-Verlag, Berlin, 1997.

\bibitem{MNQS13}
C.~Marinelli, E.~Nualart, and L.~Quer-Sardanyons.
\newblock Existence and regularity of the density for solutions to semilinear
  dissipative parabolic {SPDE}s.
\newblock {\em Potential Anal.}, 39(3):287--311, 2013.

\bibitem{Marinelli2013}
C.~Marinelli, E.~Nualart, and L.~Quer-Sardanyons.
\newblock Existence and regularity of the density for solutions to semilinear
  dissipative parabolic {SPDE}s.
\newblock {\em Potential Anal.}, 39(3):287--311, 2013.

\bibitem{MCMS01}
D.~M{\'a}rquez-Carreras, M.~Mellouk, and M.~Sarr{\`a}.
\newblock On stochastic partial differential equations with spatially
  correlated noise: smoothness of the law.
\newblock {\em Stochastic Process. Appl.}, 93(2):269--284, 2001.

\bibitem{Marquez-Carreras2001}
D.~M{\'a}rquez-Carreras, M.~Mellouk, and M.~Sarr{\`a}.
\newblock On stochastic partial differential equations with spatially
  correlated noise: smoothness of the law.
\newblock {\em Stochastic Process. Appl.}, 93(2):269--284, 2001.

\bibitem{M95}
P.-L. Morien.
\newblock Regularity of the density of the solution of a white-noise driven
  parabolic {SPDE}.
\newblock {\em Pr{\'e}publication du Laboratoire de Probabilit{\'e} de Paris},
  6(295), 1995.

\bibitem{M99}
P.-L. Morien.
\newblock On the density for the solution of a {B}urgers-type {SPDE}.
\newblock {\em Ann. Inst. H. Poincar\'e Probab. Statist.}, 35(4):459--482,
  1999.

\bibitem{Morien1999}
P.-L. Morien.
\newblock On the density for the solution of a {B}urgers-type {SPDE}.
\newblock {\em Ann. Inst. H. Poincar\'e Probab. Statist.}, 35(4):459--482,
  1999.

\bibitem{MN08}
C.~Mueller and D.~Nualart.
\newblock Regularity of the density for the stochastic heat equation.
\newblock {\em Electron. J. Probab.}, 13:no. 74, 2248--2258, 2008.

\bibitem{Mueller2008}
C.~Mueller and D.~Nualart.
\newblock Regularity of the density for the stochastic heat equation.
\newblock {\em Electron. J. Probab.}, 13:no. 74, 2248--2258, 2008.

\bibitem{Neveu}
J.~Neveu.
\newblock {\em Processus al\'{e}atoires gaussiens}, volume No. 34 (\'Et\'e,
  1968) of {\em S\'{e}minaire de Math\'{e}matiques Sup\'{e}rieures [Seminar on
  Higher Mathematics]}.
\newblock Les Presses de l'Universit\'{e} de Montr\'{e}al, Montreal, QC, 1968.

\bibitem{NP}
I.~Nourdin and G.~Peccati.
\newblock {\em Normal approximations with {M}alliavin calculus: from {S}tein's
  method to universality}, volume 192 of {\em Cambridge Tracts in Mathematics}.
\newblock Cambridge University Press, Cambridge, 2012.

\bibitem{Nualart}
D.~Nualart.
\newblock {\em {\sc The {M}alliavin calculus and related topics}}.
\newblock Probability and its Applications (New York). Springer-Verlag, Berlin,
  second edition, 2006.

\bibitem{Sard_Nualart}
D.~Nualart and L.~Quer-Sardanyons.
\newblock Existence and smoothness of the density for spatially homogeneous
  {SPDE}s.
\newblock {\em Potential Anal.}, 27(3):281--299, 2007.

\bibitem{PZS93}
{\'E}.~Pardoux and T.~S. Zhang.
\newblock Absolute continuity of the law of the solution of a parabolic {SPDE}.
\newblock {\em J. Funct. Anal.}, 112(2):447--458, 1993.

\bibitem{Pardoux1993}
{\'E}.~Pardoux and T.~S. Zhang.
\newblock Absolute continuity of the law of the solution of a parabolic {SPDE}.
\newblock {\em J. Funct. Anal.}, 112(2):447--458, 1993.

\bibitem{Pazy}
A.~Pazy.
\newblock {\em Semigroups of linear operators and applications to partial
  differential equations}, volume~44 of {\em Applied Mathematical Sciences}.
\newblock Springer-Verlag, New York, 1983.

\bibitem{Sanz_Sardanyons}
L.~Quer-Sardanyons and M.~Sanz-Sol{\'e}.
\newblock Absolute continuity of the law of the solution to the 3-dimensional
  stochastic wave equation.
\newblock {\em J. Funct. Anal.}, 206(1):1--32, 2004.

\bibitem{RS}
M.~Reed and B.~Simon.
\newblock {\em Methods of modern mathematical physics. {I}}.
\newblock Academic Press, Inc. [Harcourt Brace Jovanovich, Publishers], New
  York, second edition, 1980.
\newblock Functional analysis.

\bibitem{Sanz}
M.~Sanz-Sol\'{e}.
\newblock Applications of {M}alliavin calculus to {SPDE}'s.
\newblock In {\em Stochastic partial differential equations and applications
  ({T}rento, 2002)}, volume 227 of {\em Lecture Notes in Pure and Appl. Math.},
  pages 429--442. Dekker, New York, 2002.

\bibitem{Schmu}
K.~Schm\"{u}dgen.
\newblock {\em Unbounded self-adjoint operators on {H}ilbert space}, volume 265
  of {\em Graduate Texts in Mathematics}.
\newblock Springer, Dordrecht, 2012.

\bibitem{Shi1}
I.~Shigekawa.
\newblock Derivatives of {W}iener functionals and absolute continuity of
  induced measures.
\newblock {\em J. Math. Kyoto Univ.}, 20(2):263--289, 1980.

\bibitem{Shi2}
I.~Shigekawa.
\newblock {\em Stochastic analysis}, volume 224 of {\em Translations of
  Mathematical Monographs}.
\newblock American Mathematical Society, Providence, RI, 2004.
\newblock Translated from the 1998 Japanese original by the author, Iwanami
  Series in Modern Mathematics.

\bibitem{Sko}
A.~V. Skorokhod.
\newblock On a generalization of stochastic integral.
\newblock {\em Teoriya Veroyatnostei I Ee Primeneniya}, 20(2):223--238, 1975.

\bibitem{Stroock}
D.~W. Stroock.
\newblock The {M}alliavin calculus, a functional analytic approach.
\newblock {\em J. Functional Analysis}, 44(2):212--257, 1981.

\bibitem{vtc}
N.~N. Vakhania, V.~I. Tarieladze, and S.~A. Chobanyan.
\newblock {\em Probability distributions on {B}anach spaces}, volume~14 of {\em
  Mathematics and its Applications (Soviet Series)}.
\newblock D. Reidel Publishing Co., Dordrecht, 1987.
\newblock Translated from the Russian and with a preface by Wojbor A.
  Woyczynski.

\bibitem{W84}
S.~Watanabe.
\newblock {\em Lectures on stochastic differential equations and {M}alliavin
  calculus}, volume~73 of {\em Tata Institute of Fundamental Research Lectures
  on Mathematics and Physics}.
\newblock Tata Institute of Fundamental Research, Bombay; by Springer-Verlag,
  Berlin, 1984.
\newblock Notes by M. Gopalan Nair and B. Rajeev.

\bibitem{NualartZaidi1997}
N.~L. Zaidi and D.~Nualart.
\newblock Burgers equation driven by a space-time white noise: absolute
  continuity of the solution.
\newblock {\em Stochastics Stochastics Rep.}, 66(3-4):273--292, 1999.

\bibitem{Zakai}
M.~Zakai.
\newblock The {M}alliavin calculus.
\newblock {\em Acta Appl. Math.}, 3(2):175--207, 1985.

\end{thebibliography}

\end{document}